\newtheorem{Thm}{Theorem}[subsection]
\newtheorem{Lem}[Thm]{Lemma}
\newtheorem{Prop}[Thm]{Proposition}
\newtheorem{Def}[Thm]{Definition}
\newtheorem*{Thm*}{Theorem 1}
\newtheorem{LemA}{Lemma}[section]
\newtheorem{PropA}[LemA]{Proposition}
\newcommand{\Z}{\mathbb{Z}}
\newcommand{\Q}{\mathbb{Q}}
\newcommand{\Hom}{\mathrm{Hom} \,}
\newcommand{\id}{\mathrm{id}}
\newcommand{\cl}{\mathrm{cl}}
\newcommand{\fg}{{\mathfrak g}}
\newcommand{\fn}{{\mathfrak n}}
\newcommand{\fS}{{\mathfrak S}}
\newcommand{\ga}{\alpha}
\newcommand{\gb}{\beta}
\newcommand{\gl}{\lambda}
\newcommand{\gL}{\Lambda}
\newcommand{\gd}{\delta}
\newcommand{\gD}{\Delta}
\renewcommand{\ggg}{\gamma}
\newcommand{\gs}{\sigma}
\newcommand{\gee}{\varepsilon}
\newcommand{\ol}{\overline}
\newcommand{\wti}{\widetilde}
\newcommand{\wt}{\mathrm{wt}}
\newcommand{\bB}{\mathbf{B}}
\newcommand{\normsq}[1]{\lVert #1 \rVert^2}
\renewcommand{\(}{(\!(}
\renewcommand{\)}{)\!)}
\title[Existence of KR crystals for near adjoint nodes]
{Existence of Kirillov--Reshetikhin crystals\\ for near adjoint nodes in exceptional types}
\author[K.~Naoi]{Katsuyuki Naoi}
\address[K.~Naoi]{%
Institute of Engineering \\
Tokyo University of Agriculture and Technology\\
2-24-16 Naka-cho, Koganei-shi, Tokyo 184-8588, JAPAN}
\email{naoik@cc.tuat.ac.jp}
\author[T.~Scrimshaw]{Travis Scrimshaw}
\address[T. Scrimshaw]{
School of Mathematics and Physics \\
The University of Queensland \\
St.\ Lucia, QLD 4072, AUSTRALIA}
\email{tcscrims@gmail.com}
\urladdr{https://people.smp.uq.edu.au/TravisScrimshaw/}
\keywords{affine quantum group, Kirillov--Reshetikhin crystal, crystal pseudobase}
\subjclass[2010]{17B37, 17B65, 17B10}
\begin{document}

\begin{abstract}
 We prove that, in types $E_{6,7,8}^{(1)}$, $F_4^{(1)}$ and $E_6^{(2)}$, every Kirillov--Reshetikhin module associated with the node adjacent to
 the adjoint one (near adjoint node) has a crystal pseudobase,
 by applying the criterion introduced by Kang \textit{et al.}
 In order to apply the criterion, we need to prove some statements concerning values of a bilinear form.
 We achieve this by using the global bases of extremal weight modules.
\end{abstract}

\maketitle

\section{Introduction}

 Let $\fg$ be an affine Kac--Moody Lie algebra, and denote by $U_q'(\fg)$ the associated quantum affine algebra without the degree operator.
 Kirillov--Reshetikhin (KR for short) modules are a distinguished family of finite-dimensional simple $U_q'(\fg)$-modules (see, for example,~\cite{CPbook}).
 In this article KR modules are denoted by $W^{r,\ell}$, where $r$ is a node of the Dynkin diagram of $\fg$ except 
 the node $0$ prescribed in~\cite{MR1104219} and $\ell$ is a positive integer.
 KR modules are known to have several good properties, such as their $q$-characters satisfy the $T$ ($Q$, $Y$)-system relations, fermionic formulas for their graded characters, and so on (see~\cite{MR1745263,MR1993360,MR2254805,Her10},
 for example, and references therein).

 Another important (conjectural) property of a KR module is the existence of a crystal base in the sense of Kashiwara, which was presented in~\cite{MR1745263,MR1903978}.
 In this article, we mainly consider a slightly weaker version of the conjecture, the existence of a crystal pseudobase (crystal base modulo signs, see 
 Subsection~\ref{Subsection:Crystal (pseudo)bases and global bases}).

 If a given KR module $W^{r,\ell}$ is multiplicity free as a $U_q(\fg_0)$-module, it is known to have a crystal pseudobase,
 where $\fg_0$ is the subalgebra of $\fg$ whose Dynkin diagram is obtained from that of $\fg$ by removing $0$.
 In nonexceptional types, in which all $W^{r,\ell}$ are multiplicity free, this was shown by Okado and Schilling~\cite{MR2403558}.
 Recently this was also proved for all multiplicity free $W^{r,\ell}$ of exceptional types by Biswal and the second author~\cite{biswal2019existence}
 in a similar fashion.

On the other hand, if $W^{r,\ell}$ is not multiplicity free, then the conjecture has been solved in only a few cases so far.
 Kashiwara showed for all affine types that all fundamental modules $W^{r,1}$ have crystal bases~\cite{MR1890649}, and in types $G_2^{(1)}$ and $D_4^{(3)}$, the first author  verified the existence of a crystal pseudobase for all $W^{r,\ell}$~\cite{naoi2018existence}.

 We say a node $r$ is \textit{near adjoint} if the distance from $0$ is precisely $2$.
 The goal of this paper is to show the conjecture for all KR modules associated with near adjoint nodes in exceptional types.
 This has already been done in~\cite{naoi2018existence} for types $G_2^{(1)}$ and $D_4^{(3)}$,
 and our main theorem below covers all remaining types.
 

 \begin{Thm*}\label{Thm:Main}
  Assume that $\fg$ is either of type $E_n^{(1)}$ $(n=6,7,8)$, $F_4^{(1)}$, or $E_6^{(2)}$, and $r$ is the near adjoint node.
  Then for every $\ell \in \Z_{>0}$, the KR module $W^{r,\ell}$ has a crystal pseudobase.
 \end{Thm*}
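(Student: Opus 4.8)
The plan is to verify the hypotheses of the crystal-pseudobase criterion of Kang \emph{et al.}\ for the module $\mathbf{M} = W^{r,\ell}$. First I would equip $W^{r,\ell}$ with a \emph{polarization}: a nondegenerate symmetric bilinear form $(\cdot\,,\cdot)$ on $\mathbf{M}$ satisfying $(x u, v) = (u, \varphi(x)\, v)$ for the standard antiautomorphism $\varphi$ of $U_q'(\fg)$, normalized so that the self-product of the extremal vector lies in $1 + q A_0$. Writing $\gl = \ell\gp_r$ for the level-zero extremal weight attached to the near adjoint node $r$, the structural conditions $\condC{}{}$ of the criterion—that $\gl$ is maximal among the weights of $\mathbf{M}$, that $\dim \mathbf{M}_\gl = 1$, and that $\mathbf{M} = U_q'(\fg)\,\mathbf{M}_\gl$—follow from the standard representation theory of KR modules and form the routine part of the argument.

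The substance lies in the remaining conditions $\condD{}{}$, which require that on every weight space $\mathbf{M}_\gm$ in which a multiplicity occurs, the Gram matrix of $(\cdot\,,\cdot)$ be congruent modulo $q A_0$ to a nondegenerate matrix in a suitable basis. Because $W^{r,\ell}$ is \emph{not} multiplicity free—precisely the case left open by~\cite{MR2403558,biswal2019existence}—the form is no longer diagonal inside these weight spaces, and these inner products genuinely have to be computed. To do so I would pass to Kashiwara's extremal weight module $V(\gl)$ of level zero, of which $W^{r,\ell}$ is a quotient, and exploit its canonical (global) basis $\{G(b)\}_b$. The crucial input is that this basis is \emph{almost orthonormal} for the polarization, $(G(b), G(b')) \in \gd_{b,b'} + q A_0$; transporting this property along the quotient map $V(\gl) \twoheadrightarrow W^{r,\ell}$, and identifying which $G(b)$ descend to a basis of each $(W^{r,\ell})_\gm$, is what yields the congruences demanded by $\condD{}{}$.

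The main obstacle is exactly this last step: determining, weight space by weight space in the non-multiplicity-free range, precisely which canonical basis vectors survive in the quotient and controlling their pairwise inner products modulo $q$. This is genuinely combinatorial and type-dependent—the multiplicities and the relevant weight spaces differ across $E_{6,7,8}^{(1)}$, $F_4^{(1)}$, and $E_6^{(2)}$—so I expect to need a uniform reduction isolating a small set of ``critical'' weights at which the Gram matrix could a priori degenerate, followed by an explicit inspection of the surviving global basis elements there. Once the near-orthonormality of the canonical basis of $V(\gl)$ has been pushed through to $W^{r,\ell}$ at these critical weights, the criterion of Kang \emph{et al.}\ applies and produces the crystal pseudobase for every $\ell \in \Z_{>0}$, completing the proof.
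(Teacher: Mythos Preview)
Your high-level plan---apply the Kang \emph{et al.}\ criterion and use almost orthonormality of the global basis of the level-zero extremal weight module $V(\ell\varpi_r)$---matches the paper's strategy. But the proposal, as stated, has two genuine gaps.

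First, your formulation of the criterion is off. The criterion (Proposition~\ref{Prop:criterion}) does not ask for Gram matrices on weight spaces to be nondegenerate mod $q$. It asks you to produce explicit weight vectors $u_1,\dots,u_m$ which are (a) $U_q(\fg_0)$-highest weight vectors giving the correct $U_q(\fg_0)$-decomposition, (b) almost orthonormal, and---crucially---(c) satisfy $\normsq{e_i u_k}\in q_i^{-2\langle h_i,\wt(u_k)\rangle-2}q_sA$ for every $i\in I_0$. You never mention (c), and in this paper it is by far the hardest condition: the case $i=2$ occupies all of Subsection~\ref{Subsection:C3} and the appendix, requires an induction on $\ell$, explicit coproduct computations (Lemma~\ref{Lem:explixit_m}), a delicate relation in $W^\ell$ (Proposition~\ref{Prop:A}), and careful estimates that do \emph{not} follow from almost orthonormality alone. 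The plan ``check that Gram matrices are nondegenerate mod $q$'' will not produce (c).

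Second, your mechanism for moving between $V(\ell\varpi_r)$ and $W^{r,\ell}$ is too vague to work. $W^{r,\ell}$ is not a quotient of $V(\ell\varpi_r)$ in a way that identifies the polarizations; the prepolarization on $W^{r,\ell}$ is defined via fusion and the $R$-matrix, not inherited from $V(\ell\varpi_r)$. The paper does not ``see which $G(b)$ survive''; it constructs explicit monomials $E^{\bm p}\in U_q(\fn_+)$ and proves that $E^{\bm p}v_{\ell\varpi_2}\in\pm\bB(\ell\varpi_2)\cup\{0\}$ (Proposition~\ref{Prop:being_global_basis_of_level0}), then establishes a chain of equalities $\normsq{E^{\bm p}v_{\ell\varpi_2}}_{V(\ell\varpi_2)}=\normsq{E^{\bm p}w_1^{\otimes\ell}}_{(W^1)^{\otimes\ell}}=\normsq{E^{\bm p}w_\ell}_{W^\ell}$ (Propositions~\ref{Prop:equal_of_prepolarization} and~\ref{Prop:equality}), each step requiring its own argument (Nakajima's comparison formula, a coproduct lemma, and a weight-compatibility lemma for the fusion pairing). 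Your proposal would need to supply both the explicit construction of the $u_k$ and this comparison chain before almost orthonormality of $\bB(\ell\varpi_r)$ becomes usable.
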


 In particular, since a KR module $W^{r,\ell}$ in type $E_6^{(1)}$ is multiplicity free if $r$ is not the near adjoint node,
Theorem~\ref{Thm:Main} solves the conjecture for all KR modules of this type.

 As with previous works~\cite{MR2403558,naoi2018existence,biswal2019existence}, Theorem~\ref{Thm:Main} is proved 
 by applying the criterion for the existence of a crystal pseudobase introduced in~\cite{MR1194953}.
 In our cases, however, this is much more involved and we need a new idea, which we will explain below.

 By the criterion, the existence of a crystal pseudobase is reduced to showing that certain vectors
 are almost orthonormal with respect to a prepolarization (bilinear form having some properties) and satisfy additional conditions concerning the values of the prepolarization.
 In the previous works these statements were proved by directly calculating the values of the prepolarization (although in~\cite{naoi2018existence} 
 the amount of calculations was reduced using an induction argument on $\ell$).
 However, this appears to be quite difficult to do in our cases.
 Hence we apply a more sophisticated method using the global basis of an extremal weight module introduced by Kashiwara~\cite{MR1262212}.
 For example, it is previously known that a global basis is almost orthonormal~\cite{MR2074599}, and therefore 
 the required almost orthonormality of given vectors is deduced by connecting them with a global basis.
 The other conditions are also proved in a similar spirit.

 Besides the KR modules treated in this paper, 
 there are several families of $W^{r,\ell}$ for which the existence of crystal pseudobases remain open:
 $r=3,5$ in type $E_7^{(1)}$, $3\leq r \leq 7$ in type $E_8^{(1)}$, and $r=3$ in types $F_4^{(1)}$ and $E_6^{(2)}$, 
 where the labeling of nodes are given in Figure~\ref{figure} in Subsection~\ref{Subsection:Three}. 
 We hope to study these in our future work.

 The paper is organized as follows. 
 In Section~\ref{Section:pre}, we recall the basic notions needed in the proof of the main theorem.
 In Subsection~\ref{Subsection:Three}, we reduce the main theorem to three statements (C1)--(C3),
 and these are proved in Subsections~\ref{Subsection:C1}--\ref{Subsection:C3}.
 In Subsection~\ref{Subsection:C3}, we use a certain relation~\eqref{eq:the_relation} in $W^{r,\ell}$,
 whose proof is postponed to Appendix~\ref{Appendix} since, while straightforward, it is slightly lengthy and technical.

\section*{Acknowledgments}

The authors would like to thank Rekha Biswal for helpful discussions.
This work benefited from computations using \textsc{SageMath}~\cite{sage}.
The first author was supported by JSPS Grant-in-Aid for Young Scientists (B) No.\ 16K17563.
The second author was partially supported by the Australian Research Council DP170102648. \\

\section*{Index of notation}

We provide for the reader's convenience a brief index of the notation which is used repeatedly in this paper:\\
 
\noindent Subsection~\ref{Subsection:QAA}: $\fg$, $I$, $C=(c_{ij})_{i,j \in I}$, $\ga_i$, $h_i$, $\gL_i$, $\gd$, $P$, $P^+$, $Q$, $Q^+$, $W$, $s_i$, $I_0$,
$\varpi_i$, $P^*$, $d$,\\ 
\phantom{Subsection} $P_\cl$, $q_i$, $D$, $q_s$, $U_q(\fg)$, $e_i$, $f_i$, $q^h$, $U_q'(\fg)$, $U_q(\fn_{\pm})$, $e_i^{(n)}$, $\wt_P$, $U_q(\fg_J)$, $t_i$, $\gD$,
$\wt_{P_\cl}$.\\
\noindent Subsection~\ref{Subsection:Crystal (pseudo)bases and global bases}: $\tilde{e}_i$, $\tilde{f}_i$, $A$, $\ol{ \phantom{a} }$.\\
\noindent Subsection~\ref{polarization}: $\normsq{u}$.\\
\noindent Subsection~\ref{Subsection:extremal weight modules}: $V(\gL)$, $v_\gL$, $L(\gL)$, $B(\gL)$, $\bB(\gL)$, $V(\gL)_\Z$, $\bB(\gL^1,-\gL^2)$.\\
\noindent Subsection~\ref{Subsection:fusion}: $M_a$, $\iota_a$, $\iota$, $W^{r,\ell}$, $z_r$, $L(W^{r,1})$, $w_\ell$, $\iota_k$.\\
\noindent Subsection~\ref{Subsection:Criterion}: $\fg_0$, $P_0$, $P_0^+$, $V_0(\gl)$.\\
\noindent Subsection~\ref{Subsection:Three}: $W^\ell$, $I_{01}$, $J$, $R$, $R^+$, $R_L^+$, $R_1$, $\theta_1$, $\theta_J$, $e_{\bm{r}}^{(p)}$, $E_{\bm{r}}^{(p)}$, $c_\fg$, $\bm{i}$, $\bm{j}$, $\bm{i}[k_2,k_1]$, $\bm{j}[k_2,k_1]$,\\
\phantom{Subsection 2.1} $s_{\bm{r}}$, $\gL_i^\vee$, $E^{\bm{p}}$ for $\bm{p} \in \Z^{6}$, $\wt$, $S_\ell$.\\
\noindent Subsection~\ref{Subsection:C2}: $\bm{\gee}_i$, $E^{\bm{p}}$ for $\bm{p} \in \Z^5$, $\ol{S}_\ell$, $m(\bm{p}_1,\ldots,\bm{p}_n\colon \bm{\gl})$.\\
\noindent Subsection~\ref{Subsection:C3}: $\bm{a}$, $m(\bm{p}_1,\bm{p}_2)$.

\section{Preliminaries}\label{Section:pre}

\subsection{Quantum affine algebra}\label{Subsection:QAA}

Let $\fg$ be an affine Kac--Moody Lie algebra not of type $A_{2n}^{(2)}$ over $\Q$ with index set $I=\{0,1,\cdots,n\}$ and Cartan matrix $C=(c_{ij})_{i,j\in I}$.
We assume that the index $0$ coincides with the one prescribed in~\cite{MR1104219} 
(we do not assume this for the other indices,
and in fact later we use another labeling, see Figure~\ref{figure} in Subsection~\ref{Subsection:Three}).
Let $\ga_i$ and $h_i$ ($i\in I$) be the simple roots and simple coroots respectively,
$\gL_i$ ($i \in I$) the fundamental weights, $\gd$ the generator of null roots,
$P = \bigoplus_i \Z \gL_i \oplus \Z\gd$ the weight lattice, $P^+ = \bigoplus_{i \in I} \Z_{\ge 0} \gL_i \oplus \Z \gd$ the set of dominant weights,
$Q = \bigoplus_{i\in I} \Z \ga_i$ the root lattice, $Q^+ = \sum_{i \in I} \Z_{\geq 0} \ga_i \subseteq Q$,
$W$ the Weyl group with reflections $s_i$ ($i \in I$),
and $(\ , \ )$ a nondegenerate $W$-invariant bilinear form on $P$ satisfying $(\ga_0,\ga_0) =2$.
Set $I_0 = I\setminus \{0\}$, and 
\[ \varpi_i = \gL_i - \langle K, \gL_i\rangle \gL_0 \ \ \ \text{for $i \in I_0$},
\]
where $K \in P^* = \Hom(P,\Z)$ is the canonical central element. Let $d \in P^*$ be the element satisfying 
$\langle d, \gL_i\rangle = 0$ ($i \in I$) and $\langle d, \gd\rangle =1$.
Set $P_{\cl} = P/\Z \gd$,
and let $\cl\colon P \twoheadrightarrow P_\cl$ be the canonical projection.
For simplicity of notation, we will write $\ga_i$, $\varpi_i$ for $\cl(\ga_i)$, $\cl(\varpi_i)$
when there should be no confusion.

Let $q$ be an indeterminate.
Set $q_i=q^{(\ga_i,\ga_i)/2}$,
\[ [m]_i = \frac{q_i^m-q_i^{-m}}{q_i-q_i^{-1}}, \ \ \ [n]_i! = [n]_i[n-1]_i\cdots[1]_i, \ \ \text{and} \ \ \begin{bmatrix} m \\ n \end{bmatrix}_i
= \frac{[m]_i[m-1]_i\cdots[m-n+1]_i}{[n]_i!}
\]
for $i \in I$, $m \in \Z$, $n \in \Z_{\ge 0}$.
Choose a positive integer $D$ such that $(\ga_i,\ga_i)/2 \in \Z D^{-1}$ for all $i \in I$, and set $q_s = q^{1/D}$.
Let $U_q(\fg)$ be the quantum affine algebra, which is an associative $\Q(q_s)$-algebra generated by 
$e_i$, $f_i$ ($i \in I$), $q^{h}$ ($h \in D^{-1}P^*$) with certain defining relations (see, for example,~\cite{MR1890649}).
Denote by $U_q'(\fg)$ the quantum affine algebra without the degree operator,
that is, the subalgebra of $U_q(\fg)$ generated by $e_i$, $f_i$ ($i\in I$) and $q^h$ ($h \in D^{-1}P_{\cl}^*$).
Let $U_q(\fn_+)$ (resp.\ $U_q(\fn_-)$) be the subalgebra generated by $e_i$ (resp.\ $f_i$) ($i \in I$).
For $i \in I$ and $n \in \Z$, set $e_i^{(n)} = e_i^n/[n]_i!$ if $n \geq 0$, and $e_i^{(n)} = 0$ otherwise.
Define $f_i^{(n)}$ analogously.
We define a $Q$-grading $U_q(\fg) = \bigoplus_{\ga \in Q} U_q(\fg)_\ga$ by
\[ U_q(\fg)_\ga = \{ X \in U_q(\fg) \mid q^hXq^{-h} = q^{\langle h, \ga\rangle}X \ \text{for} \ h \in D^{-1}P^*\}.
\]
If $0\neq X \in U_q(\fg)_\ga$, we write $\wt_P(X) = \ga$.
For a proper subset $J \subset I$, denote by $\fg_J$ the corresponding simple Lie subalgebra,
and by $U_q(\fg_J)$ (resp.\ $U_q(\fn_{+,J})$, $U_q(\fn_{-,J})$) the $\Q(q_s)$-subalgebra of $U_q(\fg)$ generated by $e_i,f_i,q^{\pm D^{-1}h_i}$ 
(resp.\ $e_i$, $f_i$) with $i \in J$.


Set $t_i = q^{(\ga_i,\ga_i)h_i/2}$ for $i \in I$, and denote by $\gD$ the coproduct of $U_q(\fg)$ defined by
\begin{align*}\label{eq:coproduct}
 \gD(q^h) = q^h &\otimes q^h,\ \ \ 
 \gD(e_i^{(m)}) = \sum_{k=0}^m q_i^{k(m-k)}e_i^{(k)} \otimes t_i^{-k}e_i^{(m-k)},\\ 
 &\gD(f_i^{(m)})= \sum_{k=0}^m q_i^{k(m-k)}t_i^{m-k}f_i^{(k)}\otimes f_i^{(m-k)}
\end{align*}
for $h \in D^{-1}P^*$, $i\in I$, $m \in \Z_{> 0}$.

For a $U_q(\fg)$-module (resp.\ $U_q'(\fg)$-module) $M$ and $\gl \in P$ (resp.\ $\gl \in P_\cl$), write
\[ M_\gl = \{ v \in M \mid q^h v = q^{\langle h, \gl\rangle}v \ \text{for $h \in D^{-1}P^*$ (resp.\ $h \in D^{-1}P^*_\cl$)}\},
\]
and if $v \in M_\gl$ with $v\neq 0$, we write $\wt_P(v) = \gl$ (resp.\ $\wt_{P_\cl}(v) = \gl$).
We will omit the subscript $P$ or $P_\cl$ when no confusion is likely.
We say a $U_q(\fg)$-module (or $U_q'(\fg)$-module) $M$ is \textit{integrable} if $M = \bigoplus_{\gl} M_\gl$
and the actions of $e_i$ and $f_i$ ($i\in I$) are locally nilpotent.

Throughout the paper we will repeatedly use the following assertions.
For $i,j \in I$ such that $i\neq j$ and $r,s \in \Z_{\geq 0}$, it follows from the Serre relations that
\begin{equation}\label{eq:commutation3}
 \begin{split}
  e_i^{(r)}e_j^{(s)} &\in U_q(\fn_+)_{s(\ga_j-c_{ij}\ga_i)} e_i^{(r+c_{ij}s)} \ \ \  \text{if $r+c_{ij}s > 0$},\\
  e_j^{(s)}e_i^{(r)} &\in e_i^{(r+c_{ij}s)}U_q(\fn_+)_{s(\ga_j-c_{ij}\ga_i)} \ \ \ \text{if $r+c_{ij}s > 0$},
 \end{split}
\end{equation}
where $U_q(\fn_+)_\ga = U_q(\fn_+) \cap U_q(\fg)_\ga$.
For $i,j \in I$ such that $c_{ij}=c_{ji}=-1$ and $r,s,t \in \Z_{\geq 0}$, we have
\begin{align}\label{eq:commutation2}
 e_i^{(r)}e_j^{(s)}e_i^{(t)}= \sum_{m=0}^{r-s+t} \begin{bmatrix} r-s+t \\ m \end{bmatrix}_i e_j^{(t-m)}e_i^{(r+t)}e_j^{(s-t+m)}
 \ \ \ \text{if} \ r+t\geq s,
\end{align}
see \cite[Lemma 42.1.2]{MR1227098}.
Given a $U_q(\fg)$-module $M$, $v \in M_\gl$ and $r,s \in \Z_{\ge 0}$, we have
\begin{subequations}\label{eq:commutation4}
 \begin{align}
 e_i^{(r)}f_i^{(s)}v&=\sum_{k =0}^{\min(r,s)} \begin{bmatrix} r-s+\langle h_i,\gl\rangle \\ k\end{bmatrix}_i
 f_i^{(s-k)}e_i^{(r-k)}v, \\
 f_i^{(r)}e_i^{(s)}v&=\sum_{k =0}^{\min(r,s)} \begin{bmatrix} r-s-\langle h_i,\gl\rangle \\ k\end{bmatrix}_i
 e_i^{(s-k)}f_i^{(r-k)}v
 \end{align}
\end{subequations}
for $i \in I$, and $e_i^{(r)}f_j^{(s)}=f_j^{(s)}e_i^{(r)}$ for $i,j\in I$ such that $i\neq j$,
see [\textit{loc.\ cit.}, Corollary~3.1.9].

\subsection{Crystal (pseudo)bases and global bases}\label{Subsection:Crystal (pseudo)bases and global bases}

Let $M$ be an integrable $U_q(\fg)$-module (or $U_q'(\fg)$-module).
For $i \in I$, we have
\[ M = \bigoplus_{\gl; \langle h_i,\gl\rangle \geq 0} \bigoplus_{n=0}^{\langle h_i,\gl\rangle} f_i^{(n)}(\ker e_i \cap M_\gl).
\]
Endomorphisms $\tilde{e}_i,\tilde{f}_i$ ($i\in I$) on $M$ called the \emph{Kashiwara operators}
are defined by
\[ \tilde{f}_i(f_i^{(n)}u)=f_i^{(n+1)}u, \ \ \ \tilde{e}_i(f_i^{(n)}u)=f_i^{(n-1)}u
\]
for $u \in \ker e_i \cap M_\gl$ with $0\leq n \leq \langle h_i, \gl \rangle$.
These operators also satisfy that
\[ \tilde{e}_i(e_i^{(n)}v) = e_i^{(n+1)}v, \ \ \ \tilde{f}_i(e_i^{(n)}v) = e_i^{(n-1)}v
\]
for $v \in \ker f_i \cap M_\mu$ with $0 \leq n \leq -\langle h_i,\mu\rangle$.
Let $A$ be the subring of $\Q(q_s)$ consisting of rational functions without poles at $q_s=0$.
A free $A$-submodule $L$ of $M$ is called a \textit{crystal lattice} of $M$ if
$M \cong \Q(q_s)\otimes_A L$, $L = \bigoplus_{\gl} L_\gl$ where $L_\gl = L \cap M_\gl$, and $\tilde{e}_i$, $\tilde{f}_i$ ($i \in I$)
preserve $L$.
\begin{Def}[\cite{MR1115118,MR1194953}]\normalfont \ \\[2pt]
  (1) A pair $(L,B)$ is called a \textit{crystal base} of $M$ if\\[2pt]
  (i) $L$ is a crystal lattice of $M$, \ \ \ \ (ii) $B$ is a $\Q$-basis of $L/q_sL$,\\[2pt]
  (iii) $B=\bigsqcup_{\gl} B_\gl$ where $B_\gl = B\cap \big(L_\gl/q_sL_\gl)$, \ \ \ \ (iv) $\tilde{e}_iB \subseteq B \cup \{0\}$,
  $\tilde{f}_iB \subseteq B \cup \{0\}$,\\[2pt] (v) for $b,b'\in B$ and $i \in I$, $\tilde{f}_ib = b'$ if and only if $\tilde{e}_ib' = b$.\\[3pt]
 (2) $(L,B)$ is called a \textit{crystal pseudobase} of $M$ if they satisfy the conditions (i), (iii)--(v), and
 (ii') $B= B' \sqcup (-B')$ with $B'$ a $\Q$-basis of $L/q_sL$.
\end{Def}

Recall that, if $M_1$ and $M_2$ are integrable $U_q(\fg)$-modules and $(L_i,B_i)$ is a crystal base of $M_i$ ($i=1,2$),
then $(L_1\otimes_A L_2, B_1\otimes B_2)$ is a crystal base of $M_1\otimes M_2$, where $B_1 \otimes B_2 = \{b_1 \otimes b_2 
\mid b_i \in B_i\}\subseteq (L_1\otimes_A L_2)/q_s(L_1\otimes_A L_2)$.

Let $\ol{ \phantom{a} }$ denote the automorphism of $\Q(q_s)$ sending $q_s$ to $q_s^{-1}$, and set $\ol{A} = \{ \ol{a}\mid a \in A\}$.
We also denote by $\ol{ \phantom{a}}$ the involutive $\Q$-algebra automorphism of $U_q(\fg)$ defined by
\[ \ol{e_i}=e_i, \ \ \ \ol{f_i}=f_i, \ \ \ \ol{q^h}=q^{-h}, \ \ \ \ol{a(q_s)x} = a(q_s^{-1})\ol{x}
\]
for $i \in I$, $h\in D^{-1}P^*$, $a(q_s) \in \Q(q_s)$ and $x \in U_q(\fg)$.
Let $U_q(\fg)_\Q$ be the $\Q[q_s,q_s^{-1}]$-subalgebra of $U_q(\fg)$ generated by $e_i^{(n)}$, $f_i^{(n)}$, $q^h$ for
$i \in I$, $n \in \Z_{>0}$, $h \in D^{-1}P^*$.

\begin{Def}[\cite{MR1115118}]\normalfont \ \\[2pt]
 (1) Let $V$ be a vector space over $\Q(q_s)$, $L_0$ a free $A$-submodule, $L_\infty$ a free $\ol{A}$-submodule,
  and $V_\Q$ a free $\Q[q_s,q_s^{-1}]$-submodule.
  We say that $(L_0,L_\infty,V_\Q)$ is \textit{balanced} if each of $L_0$, $L_\infty$, and $V_\Q$ generates $V$ as a 
  $\Q(q_s)$-vector space, and the canonical map 
  \[ L_0 \cap L_\infty \cap V_\Q \to L_0/q_sL_0
  \]
  is an isomorphism.\\
 (2)  Let $M$ be an integrable $U_q(\fg)$-module with a crystal base $(L,B)$,
  $\ol{\phantom{a}}$ be an involution of $M$ (called a \textit{bar involution}) 
  satisfying $\ol{xu}=\ol{x}\,\ol{u}$ for $x \in U_q(\fg)$ and $u \in M$,
  and $M_\Q$ a $U_q(\fg)_\Q$-submodule of $M$ such that 
  \[ \ol{M_\Q} = M_{\Q}, \ \ \ u-\ol{u}\in (q_s-1)M_\Q \ \ \text{for } u\in M_\Q.
  \] 
  Assume that $(L, \ol{L}, M_\Q)$ is balanced, where $\ol{L} = \{ \ol{u} \mid u\in L\}$.
  Then, letting $G$ be the inverse of $L \cap \ol{L} \cap M_\Q \stackrel{\sim}{\to} L/q_sL$,
  the set 
  \[ \mathbf{B}=\{G(b) \mid b \in B\}
  \]
  forms a basis of $M$
  called a \textit{global basis} of $M$ (with respect to the bar involution $\ol{\phantom{a}}$\,).
\end{Def}

Note that the global basis $\bB$ is an $A$-basis of $L$.

\subsection{Polarization}\label{polarization}

A $\Q(q_s)$-bilinear pairing $(\ ,\ )$ between $U_q(\fg)$-modules (resp.\ $U_q'(\fg)$-modules) $M$ and $N$ 
is said to be \textit{admissible} if it satisfies
\begin{equation}\label{eq:admissible}
 \begin{split}
 (q^hu,v) &= (u,q^hv), \ \ \ (e_i^{(m)}u,v)=(u,q_i^{-m^2}t_i^{-m}f_i^{(m)}v),\\
 &(f_i^{(m)}u,v) = (u,q_i^{-m^2}t_i^{m}e_i^{(m)}v)
 \end{split}
\end{equation}
for $h \in D^{-1}P^*$ (resp.\ $h\in D^{-1}P^*_\cl$), $i \in I$, $m \in \Z_{> 0}$, $u \in M$, $v \in N$.
A bilinear form $(\ , \ )$ on $M$ is called a \textit{prepolarization} if it is symmetric and satisfies~\eqref{eq:admissible} for $u,v \in M$.
A prepolarization is called a \textit{polarization} if it is positive definite with respect to the following total order on $\Q(q_s)$:
\[ f > g \text{ if and only if } f-g \in \bigsqcup_{n\in \Z} \left\{ q_s^n(c+ q_sA)\mid c \in \Q_{>0}\right\},
\]
and $f \geq g $ if $f=g$ or $f>g$.
Throughout the paper, we use the notation $\normsq{u} = (u,u)$ for $u \in M$.

\subsection{Extremal weight modules}\label{Subsection:extremal weight modules}

For an arbitrary $\gL \in P$, let $V(\gL)$ be the \textit{extremal weight module}~\cite{MR1262212}
with generator $v_\gL$, which is an integrable 
$U_q(\fg)$-module generated by $v_\gL$ of weight $\gL$ with certain defining relations.
If $\gL$ belongs to the $W$-orbit of a dominant (resp.\ antidominant) weight, say $\gL^\circ$, then $V(\gL)$ is a simple highest (resp.\ lowest) weight module 
with highest (resp. lowest) weight $\gL^\circ$.
In [\textit{loc. cit.}], it was shown for any $\gL\in P$ that $V(\gL)$ has a crystal base $\big( L(\gL), B(\gL)\big)$
and $\big(L(\gL), \ol{L(\gL)}, V(\gL)_\Q\big)$ is balanced,
where the bar involution is defined by $\ol{xv_\gL}=\ol{x}v_\gL$ 
for $x \in U_q(\fg)$, and $V(\gL)_\Q = U_q(\fg)_\Q v_\gL$.
We denote by 
\[ \bB(\gL) = \{G(b)\mid b \in B(\gL)\} \subseteq V(\gL)
\] 
the associated global basis.
Let $U_q(\fg)_\Z$ denote the $\Z[q_s,q_s^{-1}]$-subalgebra of $U_q(\fg)$ generated by $e_i^{(n)},f_i^{(n)}$ ($i\in I$, $n \in \Z_{>0}$) 
and $q^h$ ($h\in D^{-1}P^*$),
and set $V(\gL)_\Z = U_q(\fg)_\Z v_\gL \subseteq V(\gL)$.
The following proposition is due to~\cite{MR1115118} for highest and lowest weight cases, and~\cite{MR2074599}
for level zero cases.

\begin{Prop}\label{Prop:Nakajima_prepolarization}
 Let $\gL \in P$.\\[2pt]
 {\normalfont(1)} There exists a polarization $(\ ,\ )$ on $V(\gL)$ such that 
 $\normsq{v_\gL} =1$.\\[2pt]
 {\normalfont(2)} We have $\big(L(\gL),L(\gL)\big) \subseteq A$, and $(\tilde{e}_iu,v) \equiv (u,\tilde{f}_iv)$ mod $q_sA$ for $u,v \in L(\gL)$ and $i \in I$.\\[2pt]
 {\normalfont(3)} $\bB(\gL)$ is an almost orthonormal basis with respect to $(\ ,\ )$, that is,
 \[ (v,v') \in \gd_{vv'}+q_sA \ \ \ \text{for} \ v,v' \in \mathbf{B}(\gL).
 \]
 {\normalfont(4)} We have
 \[ L(\gL) = \big\{v \in V(\gL)\bigm| \normsq{v} \in A\big\}, \ \ \pm\bB(\gL) = \big\{v \in V(\gL)_\Z\bigm| \ol{v}=v,\ \normsq{v} \in 1+q_sA\big\}.
 \]
\end{Prop}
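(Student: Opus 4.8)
The plan is to take as given, from Subsection~\ref{Subsection:extremal weight modules}, the crystal base $\big(L(\gL),B(\gL)\big)$, the balancedness of $\big(L(\gL),\ol{L(\gL)},V(\gL)_\Q\big)$, and the global basis $\bB(\gL)$, and to deduce all four statements from a single contravariant bilinear form. First I would construct a prepolarization with $\normsq{v_\gL}=1$; then prove (2); from (2) extract the orthonormality of $B(\gL)$ at $q_s=0$; use this to establish positivity in (1); and finally read off (3) and (4). Thus the only genuinely analytic input is the construction and positivity of the form, and everything else is formal.

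For the form, I would use the $\Q(q_s)$-algebra anti-automorphism $\varphi$ of $U_q(\fg)$ determined by $\varphi(e_i)=q_i^{-1}t_i^{-1}f_i$, $\varphi(f_i)=q_i^{-1}t_ie_i$ and $\varphi(q^h)=q^h$; the admissibility relations~\eqref{eq:admissible} are exactly the requirement $(Xu,v)=(u,\varphi(X)v)$. Since $V(\gL)=U_q(\fg)v_\gL$ is cyclic, a symmetric $\varphi$-contravariant form is determined by $\normsq{v_\gL}=1$, and the substantive point is well-definedness: one must check that the form built on $U_q(\fg)v_\gL$ has the defining relations of the extremal weight module in its radical, so that it descends to $V(\gL)$. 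Symmetry is verified on the generator and propagated using $\varphi^2=\id$ up to the weight twist. I would postpone positivity: once (2) and the orthonormality mod $q_s$ are available, positivity follows because for $0\neq v$, after scaling so that $v\in L(\gL)\setminus q_sL(\gL)$, the reduction of $\normsq{v}$ modulo $q_s$ equals $\sum_b d_b^2>0$, where $\sum_b d_b\,b$ is the nonzero image of $v$ in $L(\gL)/q_sL(\gL)$.

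Statement (2) is a rank-one computation: the local finiteness of the $i$-action decomposes $L(\gL)$ into $i$-strings, and on each string the inclusion $\big(L(\gL),L(\gL)\big)\subseteq A$ and the congruence $(\tilde e_iu,v)\equiv(u,\tilde f_iv)\bmod q_sA$ follow from~\eqref{eq:admissible}, the weight action of $t_i$, and the definition of $\tilde e_i,\tilde f_i$, exactly as in Kashiwara's highest weight case. Reducing the form modulo $q_sA$ gives a $\Q$-valued symmetric form on $L(\gL)/q_sL(\gL)$ for which $\tilde e_i$ and $\tilde f_i$ are adjoint; starting from $\normsq{v_\gL}\equiv 1$ and propagating along the crystal graph from the class of $v_\gL$ one obtains $(b,b')\equiv\gd_{bb'}$ for $b,b'\in B(\gL)$. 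For highest and lowest weight $\gL$ the crystal is connected and this is immediate; for level-zero $\gL$ this orthonormality is the substantive input of~\cite{MR2074599}. Given this, (3) is immediate: since $G(b)\equiv b\bmod q_sL(\gL)$ and $\big(L(\gL),L(\gL)\big)\subseteq A$, the value $\big(G(b),G(b')\big)$ lies in $A$ and reduces to $\gd_{bb'}$, i.e.\ $\big(G(b),G(b')\big)\in\gd_{bb'}+q_sA$.

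For (4), the inclusion $L(\gL)\subseteq\{v:\normsq{v}\in A\}$ is immediate from (2). Conversely, if $\normsq{v}\in A$ but $v\notin L(\gL)$, then writing $v=\sum_b c_bG(b)$ and scaling by the least power $q_s^m$ with $q_s^mv\in L(\gL)\setminus q_sL(\gL)$, almost orthonormality gives $\normsq{q_s^mv}\in\Q_{>0}+q_sA$, so $\normsq{v}=q_s^{-2m}\normsq{q_s^mv}$ has a pole, a contradiction; thus positivity and (3) pin down $L(\gL)$. For the description of $\pm\bB(\gL)$, take $v\in V(\gL)_\Z$ with $\ol{v}=v$ and $\normsq{v}\in 1+q_sA$. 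Then $v\in L(\gL)$ by the lattice description, so expanding $v=\sum_b c_bG(b)$ in the global basis I get $c_b\in A\cap\Z[q_s,q_s^{-1}]=\Z[q_s]$ (using that $\bB(\gL)$ is an $A$-basis of $L(\gL)$ and a $\Z[q_s,q_s^{-1}]$-basis of $V(\gL)_\Z$); bar-invariance $\ol{c_b}=c_b$ then forces $c_b\in\Z$, since a polynomial fixed by $q_s\mapsto q_s^{-1}$ is constant. Finally almost orthonormality gives $\sum_b c_b^2=1$ with $c_b\in\Z$, so exactly one $c_b$ equals $\pm1$ and the rest vanish, i.e.\ $v\in\pm\bB(\gL)$; the reverse inclusion is (3) with $v=v'=G(b)$ together with bar-invariance of the global basis. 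I expect the real obstacle to be concentrated in (1) and (3) for level-zero $\gL$, where $V(\gL)$ is neither highest nor lowest weight: constructing a positive-definite polarization and proving the almost orthonormality of the canonical basis are exactly the content of~\cite{MR1115118,MR2074599}, while the passage to (2) and (4) is formal.
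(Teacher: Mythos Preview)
The paper does not prove this proposition; it simply attributes it to \cite{MR1115118} for the highest and lowest weight cases and to \cite{MR2074599} for the level-zero case, immediately before the statement. Your proposal is therefore not competing with any argument in the paper but rather sketching what those references contain, and you correctly identify that the genuine content---existence of the polarization and almost orthonormality of $\bB(\gL)$ when $\gL$ is level zero---lies in \cite{MR2074599}, while the deductions of (4) from (2) and (3) are formal. Your derivation of (4) is clean and correct.

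One point where your sketch is slightly optimistic: the claim that $(L(\gL),L(\gL))\subseteq A$ follows from a ``rank-one computation on $i$-strings, exactly as in Kashiwara's highest weight case'' is not self-contained in the level-zero setting. In the highest weight case $L(\gL)$ is by definition the $A$-span of vectors $\tilde f_{i_k}\cdots\tilde f_{i_1}v_\gL$, so an induction on $k$ together with the rank-one estimate suffices. For extremal weight modules $L(\gL)$ is constructed via the modified quantum group, and there is no a priori reason that every element of $L(\gL)$ is reached from $v_\gL$ by a path of Kashiwara operators inside $L(\gL)$ with norms controlled along the way; establishing $(L(\gL),L(\gL))\subseteq A$ in that case is part of what \cite{MR2074599} actually does, not a consequence of the string argument alone. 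You do flag the level-zero case as the substantive input, so this is more a matter of where to draw the line between ``formal'' and ``cited'' than a gap.
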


Let $\gL^1,\gL^2 \in P^+$. 
By~\cite{MR1180036} (see also~\cite{MR1262212}), the triple
\[ \big(L(\gL^1)\otimes_A L(-\gL^2),\ol{L(\gL^1)\otimes_A L(-\gL^2)}, V(\gL^1)_\Q\otimes_{\Q[q_s,q_s^{-1}]} V(-\gL^2)_\Q\big)
\]
in the tensor product $V(\gL^1) \otimes V(-\gL^2)$ is balanced.
Here the bar involution is defined by
\[ \ol{x(v_{\gL_1}\otimes v_{-\gL_2})} = \ol{x}(v_{\gL_1}\otimes v_{-\gL_2}) \ \ \ \text{for} \ x \in U_q(\fg).
\]
Denote the associated global basis by
\[ \bB(\gL^1,-\gL^2)=\{G(b) \mid b \in B(\gL^1)\otimes B(-\gL^2)\} \subseteq V(\gL^1)\otimes V(-\gL^2).
\]
It is easily checked from the definition that
\begin{equation}\label{eq:gl.base_of_hwlw}
 v_{\gL^1}\otimes \bB(-\gL^2) \subseteq \bB(\gL^1,-\gL^2).
\end{equation}

By the construction of the global basis of an extremal weight module in \cite[Subsection 8.2]{MR1262212}, the following lemma is obvious.

\begin{Lem}\label{Lem:hw-lw-lev0}
 Let $\gL \in P$, and suppose that $\gL^1, \gL^2 \in P^+$ satisfy $\gL^1 - \gL^2 = \gL$.
 There exists a unique surjective $U_q(\fg)$-module homomorphism $\Psi$ from $V(\gL^1)\otimes V(-\gL^2)$ to $V(\gL)$ mapping 
 $v_{\gL^1} \otimes v_{-\gL^2}$ to $v_{\gL}$, and 
 $\Psi$ maps the subset $\{X \in \bB(\gL^1,-\gL^2)\mid \Psi(X) \neq 0\}$ bijectively to $\bB(\gL)$.
\end{Lem}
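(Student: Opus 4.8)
The plan is to deduce the existence of $\Psi$ from the defining relations of the extremal weight module $V(\gL)$, and then to track what $\Psi$ does to global basis elements using the compatibility of global bases with surjections. First I would establish the existence and uniqueness of $\Psi$. Since $V(\gL)$ is by definition the universal integrable module generated by an extremal weight vector $v_\gL$ of weight $\gL = \gL^1 - \gL^2$ subject to the extremal weight relations, it suffices to verify that $v_{\gL^1}\otimes v_{-\gL^2} \in V(\gL^1)\otimes V(-\gL^2)$ is an extremal vector of weight $\gL$. Indeed, $v_{\gL^1}$ is a highest weight (hence extremal) vector of weight $\gL^1$ and $v_{-\gL^2}$ is a lowest weight (hence extremal) vector of weight $-\gL^2$; for each simple root one of the two tensor factors is annihilated by the relevant raising or lowering operator, so the Kashiwara action on the tensor product realizes the required $W$-action, making $v_{\gL^1}\otimes v_{-\gL^2}$ extremal of weight $\gL$. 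The universal property of $V(\gL)$ then furnishes a unique homomorphism $\Psi$ with $\Psi(v_{\gL^1}\otimes v_{-\gL^2}) = v_\gL$, and surjectivity is automatic since $v_\gL$ generates $V(\gL)$.

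Next I would address the behavior on global bases, which is the substance of the lemma. The key input is Kashiwara's construction in \cite[Subsection 8.2]{MR1262212}, where the global basis $\bB(\gL)$ of the extremal weight module is built precisely as the image of the global basis $\bB(\gL^1,-\gL^2)$ under such a projection. Concretely, I would invoke the general principle that a $U_q(\fg)$-module surjection respecting the bar involutions and the integral and crystal-lattice structures carries a global basis element either to a global basis element or to zero, and never maps two distinct basis elements to the same nonzero vector. To apply this I must check that $\Psi$ is compatible with all the structures entering the balanced-triple definition: that $\Psi$ commutes with the bar involutions (both are normalized by $\ol{v_{\gL^1}\otimes v_{-\gL^2}} = v_{\gL^1}\otimes v_{-\gL^2}$ and $\ol{v_\gL} = v_\gL$), that $\Psi$ sends the crystal lattice $L(\gL^1)\otimes_A L(-\gL^2)$ into $L(\gL)$ and commutes with $\tilde{e}_i,\tilde{f}_i$ modulo $q_s$, and that $\Psi$ sends $V(\gL^1)_\Q\otimes V(-\gL^2)_\Q$ into $V(\gL)_\Q$. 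Each of these follows because $\Psi$ is a $U_q(\fg)$-homomorphism fixing the distinguished generators, so it intertwines the operators used to define the Kashiwara operators, the bar involutions, and the $U_q(\fg)_\Q$-structures.

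Granting these compatibilities, the induced map on crystals sends $b \in B(\gL^1)\otimes B(-\gL^2)$ to an element of $B(\gL)\cup\{0\}$, and the commutation of $\Psi$ with the balanced structure forces $\Psi(G(b)) = G(\ol{b})$ whenever the image is nonzero, where $\ol{b}\in B(\gL)$ is the image crystal element. Injectivity on the nonvanishing locus then reduces to injectivity of the induced crystal map on its support, which again is part of Kashiwara's construction of $\bB(\gL)$ as a bijective image. Assembling these points yields the stated bijection from $\{X\in\bB(\gL^1,-\gL^2)\mid \Psi(X)\neq 0\}$ onto $\bB(\gL)$.

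The main obstacle I anticipate is not the existence of $\Psi$, which is a formal consequence of the universal property, but rather verifying that $\Psi$ is genuinely compatible with the \emph{balanced triple} structure on the tensor product well enough to conclude the clean dichotomy ``$\Psi(G(b))$ is a global basis element or zero.'' This is exactly the content that \cite[Subsection 8.2]{MR1262212} provides, and the honest statement is that the lemma is obtained by reading off Kashiwara's construction rather than reproving it; the delicate point to cite correctly is that the global basis of $V(\gL)$ is \emph{defined} as the image of the tensor-product global basis under $\Psi$, so that the bijectivity onto the nonzero images holds essentially by construction.
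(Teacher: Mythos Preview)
Your proposal is correct and follows essentially the same approach as the paper, which simply states that the lemma is obvious from Kashiwara's construction of the global basis of an extremal weight module in \cite[Subsection~8.2]{MR1262212}. You have unpacked what that citation entails---the extremality of $v_{\gL^1}\otimes v_{-\gL^2}$, the compatibility of $\Psi$ with the balanced-triple structures, and the fact that $\bB(\gL)$ is defined as the image of $\bB(\gL^1,-\gL^2)$---but the paper offers no independent argument beyond pointing to that reference.
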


\subsection{Kirillov--Reshetikhin modules}\label{Subsection:fusion}

Given a $U_q'(\fg)$-module $M$, we define a $U_q(\fg)$-module $M_{\mathrm{aff}}=\Q(q_s)[z,z^{-1}]\otimes M$
by letting $e_i$ and $f_i$ ($i\in I$) act by $z^{\gd_{0i}}\otimes e_i$ and $z^{-\gd_{0i}}\otimes f_i$ respectively,
and $q^{D^{-1}d}$ on $z^k \otimes M$ by the scalar multiplication by $q_s^{k}$.
Set $M_a = M_{\mathrm{aff}}/(z-a)M_{\mathrm{aff}}$ for nonzero $a \in \Q(q_s)$, which is again a $U_q'(\fg)$-module.
We denote by $\iota_a\colon M \stackrel{\sim}{\to} M_a$ the $\Q(q_s)$-linear (not $U_q'(\fg)$-linear) isomorphism defined by
$\iota_a(v) = p_a(1\otimes v)$,
where $p_a\colon M_{\mathrm{aff}} \to M_a$ is the projection.
If no confusion is likely, we will write $\iota$ for $\iota_a$ sometimes.

Let $r \in I_0$. 
In~\cite{MR1890649}, a $U_q'(\fg)$-module automorphism $z_r$ of weight $\gd$ 
is constructed on the level-zero fundamental extremal weight module $V(\varpi_r)$,
which preserves the global basis $\bB(\varpi_r)$.
Set 
\[ 
W^{r,1} = V(\varpi_r)/(z_r-1)V(\varpi_r),
\]
which is a finite-dimensional simple integrable $U_q'(\fg)$-module called a \textit{fundamental module}.
Note that $W^{r,1}_{\mathrm{aff}} \cong V(\varpi_r)$.
Let $p\colon V(\varpi_r) \to W^{r,1}$ be the canonical projection, and define a bilinear form $(\ ,\ )$ on $W^{r,1}$ by
\begin{equation}\label{eq:invariance_of_zk}
 \big(p(u),p(v)\big) = \sum_{k \in \Z} (z^k_ru,v) \ \ \ \text{for } u,v \in V(\varpi_r).
\end{equation}
Since $(u,v)= (z_ru,z_rv)$ holds for $u,v \in V(\varpi_r)$ by \cite[Lemma 4.7]{MR2074599}, this is a well-defined polarization on $W^{r,1}$.
Let $L(W^{r,1}) = p\big(L(\varpi_r)\big)$. 
It follows from Proposition~\ref{Prop:Nakajima_prepolarization} that
\begin{align}\label{eq:LWr1}
 L(W^{r,1}) = \{u \in W^{r,1}\mid \normsq{u} \in A\},
\quad \text{ and } \quad
(u,v) \in A \text{ for any } u,v \in L(W^{r,1}).
\end{align}

Fix $r \in I_0$ and $\ell \in \Z_{>0}$.
Let $w_1 \in W^{r,1}$ denote a vector such that $\wt_{P_\cl}(w_1)=\varpi_r$ and 
$\normsq{w_1} = 1$. 
Hereafter we write $\iota_k$ for $\iota_{q^k}$ ($k\in D^{-1}\Z$).
Set
\[ m= \begin{cases} (\ga_r,\ga_r)/2 & \fg\colon \text{nontwisted affine type},\\ 1 & \fg\colon \text{twisted affine type}.\end{cases}
\]
Let 
\[ \widetilde{W} = W^{r,1}_{q^{m(1-\ell)}} \otimes W^{r,1}_{q^{m(3-\ell)}} \otimes \cdots \otimes W^{r,1}_{q^{m(\ell-3)}} \otimes W^{r,1}_{q^{m(\ell-1)}},
\]
and denote by $w_\ell$ a vector of $\widetilde{W}$ defined by
\[ w_\ell = \iota_{m(1-\ell)}(w_1) \otimes \iota_{m(3-\ell)}(w_1)\otimes \cdots \otimes \iota_{m(\ell-3)}(w_1)\otimes \iota_{m(\ell-1)}(w_1).
\]
The $U_q'(\fg)$-submodule $W^{r,\ell} = U_q'(\fg)w_\ell \subseteq \wti{W}$
 is called the \textit{Kirillov--Reshetikhin module} (KR module for short) associated with $r,\ell$.

\begin{Prop}\label{Prop:properties_of_KR} Let $r\in I_0$, $\ell \in \Z_{>0}$.\\
 {\normalfont(1)} $W^{r,\ell}$ is a finite-dimensional simple integrable $U_q'(\fg)$-module.\\
 {\normalfont(2)} The weight space $W^{r,\ell}_{\ell\varpi_r}$ is $1$-dimensional and spanned by $w_\ell$.\\
 {\normalfont(3)} The weight set $\{\gl \in P_\cl \mid W^{r,\ell}_{\gl} \neq 0\}$ coincides with the intersection of 
  $\ell\varpi_r-\sum_{i \in I_0}\Z_{\geq 0}\ga_i$ and the convex hull of the $W$-orbit of $\ell\varpi_r$.\\
 {\normalfont(4)} The vector $w_\ell \in W^{r,\ell}$ satisfies
  \[ e_iw_\ell = 0 \ \ \text{if $i \in I_0$} \ \ \ \text{and} \ \ \ f_iw_\ell = 0 \ \ \text{if $i \in I\setminus\{r\}$}.
  \]
\end{Prop}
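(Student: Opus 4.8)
The plan is to treat (4) as the computational core, deduce (2) from it by a convexity argument, and then read off (1) and (3) from (4), (2) together with the structure theory of level-zero and Kirillov--Reshetikhin modules. I expect the simplicity assertion in (1) to be the only genuinely substantial input; everything else is weight bookkeeping.

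\emph{Part (4).} First I would reduce to $\ell=1$. Since the $\varpi_r$-weight space of $W^{r,1}$ is one-dimensional and spanned by $p(v_{\varpi_r})$, the vector $w_1$ is a nonzero scalar multiple of $p(v_{\varpi_r})$. Because $\langle h_i,\varpi_r\rangle=\gd_{ir}\ge 0$ for $i\in I_0$ and $\langle h_0,\varpi_r\rangle=-\langle K,\gL_r\rangle<0$, the defining relations of the extremal weight vector $v_{\varpi_r}$ give $e_iv_{\varpi_r}=0$ for all $i\in I_0$ and $f_iv_{\varpi_r}=0$ for all $i\in I\setminus\{r\}$; applying $p$ yields $e_iw_1=0$ $(i\in I_0)$ and $f_iw_1=0$ $(i\in I\setminus\{r\})$. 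For general $\ell$ I would propagate these along the tensor product. For $i\in I_0$ the operator $e_i$ acts on each factor $W^{r,1}_a$ by $\iota_a(e_i\,\cdot\,)$, so $e_i\iota_a(w_1)=0$; iterating $\gD(e_i)=e_i\otimes t_i^{-1}+1\otimes e_i$, the operator $\gD^{(\ell)}(e_i)$ is a sum of terms each acting as $e_i$ on a single factor and diagonally on the rest, so every summand of $e_iw_\ell$ vanishes, whence $e_iw_\ell=0$. Likewise, using $f_iw_1=0$ (and that $f_0$ acts on $W^{r,1}_a$ by $a^{-1}\iota_a(f_0\,\cdot\,)$) together with $\gD(f_i)=t_i\otimes f_i+f_i\otimes 1$ gives $f_iw_\ell=0$ for $i\in I\setminus\{r\}$.

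\emph{Part (2).} The vector $w_\ell$ has weight $\ell\varpi_r$. Each tensor factor $W^{r,1}_a$ is finite-dimensional with the same $P_\cl$-weights as $W^{r,1}$, all lying in the convex hull of $\cl(W\varpi_r)$, of which $\varpi_r$ is an extreme point with one-dimensional weight space. Hence $\ell\varpi_r$ is an extreme point of the corresponding Minkowski sum and admits the unique decomposition $\varpi_r+\cdots+\varpi_r$; consequently $\widetilde W_{\ell\varpi_r}$ is one-dimensional, spanned by $w_\ell$, and a fortiori $W^{r,\ell}_{\ell\varpi_r}=\Q(q_s)w_\ell$.

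\emph{Parts (1) and (3).} Finite-dimensionality and integrability are immediate: $\widetilde W$ is a finite tensor product of the finite-dimensional integrable modules $W^{r,1}$, and $W^{r,\ell}$ is a submodule. The main obstacle is simplicity. Here I would not attempt a direct computation; rather, simplicity of the cyclic module $U_q'(\fg)w_\ell$ is precisely the irreducibility of the fusion product at the Kirillov--Reshetikhin string of spectral parameters $q^{m(1-\ell)},\dots,q^{m(\ell-1)}$, which is the defining feature of these modules and is established through the theory of good and level-zero modules (Kashiwara, Nakajima, Chari--Pressley); I would invoke that theory. Granting (1), part (3) follows by a saturation argument: integrability forces all weights into the convex hull of $W\cdot\ell\varpi_r$, while (4) together with $\cl(\ga_0)\in-\sum_{i\in I_0}\Z_{\ge0}\ga_i$ confines them to $\ell\varpi_r-\sum_{i\in I_0}\Z_{\ge0}\ga_i$; conversely, since $W^{r,\ell}$ is simple with extremal weight $\ell\varpi_r$, its weight set is $W$-stable and saturated, so every lattice point of the stated intersection is attained.
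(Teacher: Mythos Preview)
Your argument is correct, but the logical route differs from the paper's. The paper's proof is almost entirely by citation: (1) is quoted from Okado--Schilling, (2) and (3) from Kashiwara's Theorem~5.17 on level-zero extremal weight modules, and then (4) is read off from (3) (the weights $\ell\varpi_r+\ga_i$ for $i\in I_0$ and $\ell\varpi_r-\ga_i$ for $i\neq r$ fail one of the two conditions in (3)). You instead prove (4) first, directly from the extremal relations $e_iv_{\varpi_r}=0$ $(i\in I_0)$, $f_iv_{\varpi_r}=0$ $(i\neq r)$ and the shape of the coproduct, then deduce (2) by an extreme-point argument on the Minkowski sum, and finally recover (1) and (3) by appeal to the same structure theory.

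What this buys: your treatment of (4) is genuinely more elementary and self-contained than the paper's, since it avoids routing through the full weight-set description (3). What it does not buy: for the converse inclusion in (3), your ``saturation'' claim is not a formal consequence of simplicity and $W$-stability alone---you still need the identification of $W^{r,\ell}$ as a quotient of $V(\ell\varpi_r)$ together with Kashiwara's description of the weights of the latter, which is precisely the citation the paper makes. So the net dependence on the literature is the same; you have just rearranged which parts are computed by hand and which are quoted.
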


\begin{proof}
 The assertion (1) is proved in \cite[Proposition 3.6]{MR2403558}. The assertions (2) and (3) follow from \cite[Theorem 5.17]{MR1890649},
 and (4) is proved from (3).
\end{proof}

Next we shall recall how to define a prepolarization on $W^{r,\ell}$.
There exists a unique $U_q'(\fg)$-module homomorphism
\[
R\colon W^{r,1}_{q^{m(\ell-1})} \otimes W^{r,1}_{q^{m(\ell-3)}} \otimes \cdots \otimes W^{r,1}_{q^{m(1-\ell)}} \to  
   W^{r,1}_{q^{m(1-\ell)}}  \otimes \cdots  \otimes W^{r,1}_{q^{m(\ell-3)}}\otimes W^{r,1}_{q^{m(\ell-1)}}
\]
mapping $\iota_{m(\ell-1)}(w_1) \otimes \cdots \otimes \iota_{m(1-\ell)}(w_1)$ to $w_\ell$, and its image is 
$W^{r,\ell}$ (see~\cite{MR2403558}).
The following lemma is proved straightforwardly.

\begin{Lem}\label{Lem:tensor_admissible}
 Assume that $\ell \in \Z_{>0}$, $M_k$, $N_k$ $(1\leq k \leq \ell)$ are $U_q'(\fg)$-modules, and $(\ , \ )_k\colon M_k \times N_k
 \to \Q(q_s)$ $(1\leq k \leq \ell)$ are admissible pairings.
 Then the $\Q(q_s)$-bilinear pairing $(\ ,\ )\colon (M_1\otimes \cdots \otimes M_\ell) \times (N_1\otimes \cdots \otimes N_\ell) \to \Q(q_s)$ defined by
 \[ (u_1\otimes u_2\otimes \cdots \otimes u_\ell, v_1\otimes v_2\otimes \cdots \otimes v_\ell) = (u_1,v_1)_1 (u_2,v_2)_2
    \cdots (u_\ell,v_\ell)_\ell
 \]
 is admissible.
\end{Lem}

The lemma gives an admissible pairing $(\ , \ )_0$ between $W^{r,1}_{q^{m(\ell-1)}} \otimes \cdots \otimes W^{r,1}_{q^{m(1-\ell)}}$
and $W^{r,1}_{q^{m(1-\ell)}}  \otimes \cdots  \otimes W^{r,1}_{q^{m(\ell-1})}$, which defines a bilinear form $(\ ,\ )$ on $W^{r,\ell}$
by
\begin{equation}\label{eq:construction_of_prepolarization} 
 \big(R(u),R(v)\big) = \big(u,R(v)\big)_0 \ \ \ \text{for} \ u,v \in W^{r,1}_{q^{m(\ell-1)}} \otimes \cdots \otimes W^{r,1}_{q^{m(1-\ell)}}.
\end{equation}
By \cite[Proposition 3.4.3]{MR1194953}, $(\ ,\ )$ is a nondegenerate prepolarization on $W^{r,\ell}$, and $\normsq{w_\ell} = 1$ holds.
We will use the following lemma later,
whose proof is similar to that of \cite[Lemma 3.6]{naoi2018existence}

\begin{Lem}\label{Lem:realization_of_KR}
 Let $r \in I_0$ and $\ell \in \Z_{>0}$, and set
 \[ W_1 = W^{r,1}_{q^{m(\ell-1)}}\otimes W^{r,\ell-1}_{q^{-m}} \ \ \text{and} \ \ W_2 = W^{r,1}_{q^{m(1-\ell)}} \otimes W^{r,\ell-1}_{q^m}.
 \] 
 There are unique $U_q'(\fg)$-module homomorphisms $R_1\colon W_1 \to W^{r,\ell}$ and $R_2\colon W^{r,\ell} \to W_2$ satisfying
 \[ R_1\big(\iota(w_1) \otimes \iota(w_{\ell-1})\big)=w_\ell \ \ \text{and} \ \ R_2(w_\ell) = \iota(w_1) \otimes \iota(w_{\ell-1})
 \]
 respectively, and for any $u,v \in W_1$ we have 
 \[ \big(R_1(u),R_1(v)\big) = \big(u,R_2\circ R_1(v)\big)_1,
 \]
 where $(\ ,\ )_1$ is the admissible pairing between $W_1$ and $W_2$ obtained from Lemma~\ref{Lem:tensor_admissible}.
\end{Lem}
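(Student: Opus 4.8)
The plan is to realize $W_1$ and $W_2$ inside tensor products of fundamental modules and to build $R_1,R_2$ out of the intertwiner $R$ together with its $(\ell-1)$-fold analogue. First I would record how the spectral parameters match. Since $W^{r,\ell-1}$ is the submodule of $W^{r,1}_{q^{m(2-\ell)}}\otimes\cdots\otimes W^{r,1}_{q^{m(\ell-2)}}$ generated by $w_{\ell-1}$, applying the shifts $q^{\pm m}$ shows that $W^{r,\ell-1}_{q^{-m}}$ (resp.\ $W^{r,\ell-1}_{q^{m}}$) is the submodule of $W^{r,1}_{q^{m(1-\ell)}}\otimes\cdots\otimes W^{r,1}_{q^{m(\ell-3)}}$ (resp.\ of $W^{r,1}_{q^{m(3-\ell)}}\otimes\cdots\otimes W^{r,1}_{q^{m(\ell-1)}}$) generated by $\iota(w_{\ell-1})$. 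In particular $W_2=W^{r,1}_{q^{m(1-\ell)}}\otimes W^{r,\ell-1}_{q^{m}}$ is a $U_q'(\fg)$-submodule of $\widetilde{W}$, and since a direct check gives $\iota(w_1)\otimes\iota(w_{\ell-1})=w_\ell$ under this identification, we obtain $W^{r,\ell}=U_q'(\fg)w_\ell\subseteq W_2$. Hence one may take $R_2$ to be the inclusion; its existence is then clear, and its uniqueness follows from Proposition~\ref{Prop:properties_of_KR}(1),(2) because any homomorphism out of the simple module $W^{r,\ell}$ is determined by the image of the cyclic vector $w_\ell$.

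For $R_1$, let $R'$ denote the $(\ell-1)$-fold analogue of $R$ shifted by $q^{-m}$, namely the $U_q'(\fg)$-homomorphism from $W^{r,1}_{q^{m(\ell-3)}}\otimes\cdots\otimes W^{r,1}_{q^{m(1-\ell)}}$ to $W^{r,1}_{q^{m(1-\ell)}}\otimes\cdots\otimes W^{r,1}_{q^{m(\ell-3)}}$ with image $W^{r,\ell-1}_{q^{-m}}$ sending the generator to $\iota(w_{\ell-1})$. Then $\id\otimes R'$ maps the domain of $R$ onto $W_1=W^{r,1}_{q^{m(\ell-1)}}\otimes W^{r,\ell-1}_{q^{-m}}$. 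The key structural input I would establish is the factorization $R=R_1\circ(\id\otimes R')$ for a $U_q'(\fg)$-homomorphism $R_1\colon W_1\to W^{r,\ell}$; this amounts to the associativity of the fusion construction, i.e.\ to $\ker(\id\otimes R')=\id\otimes\ker R'\subseteq\ker R$, which I would verify from the hexagon compatibility of the normalized $R$-matrices that transport the first tensor factor to the last slot. By construction $R_1\big(\iota(w_1)\otimes\iota(w_{\ell-1})\big)=w_\ell$. Uniqueness of $R_1$ follows because $\iota(w_1)\otimes\iota(w_{\ell-1})$ spans the one-dimensional weight space $(W_1)_{\ell\varpi_r}$ and generates $W_1$ at the present (fusion) spectral parameters, so any homomorphism to $W^{r,\ell}$ is pinned down by its value there, which must lie in the one-dimensional space $W^{r,\ell}_{\ell\varpi_r}$ of Proposition~\ref{Prop:properties_of_KR}(2).

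It then remains to prove the pairing identity, which I would deduce from the level-$(\ell-1)$ case of~\eqref{eq:construction_of_prepolarization}. Using the surjectivity of $\id\otimes R'$, write $u=(\id\otimes R')(x)$ and $v=(\id\otimes R')(y)$; the factorization gives $R_1(u)=R(x)$ and $R_1(v)=R(y)$, while $R_2\circ R_1(v)=R(y)$ viewed inside $W_2$. Thus the left-hand side equals $\big(R(x),R(y)\big)=\big(x,R(y)\big)_0$ by~\eqref{eq:construction_of_prepolarization}, and the right-hand side equals $\big((\id\otimes R')(x),R(y)\big)_1$. By Lemma~\ref{Lem:tensor_admissible} both $(\ ,\ )_0$ and $(\ ,\ )_1$ factor as products of admissible pairings in which the first factor $W^{r,1}_{q^{m(\ell-1)}}$ is paired identically against $W^{r,1}_{q^{m(1-\ell)}}$; after cancelling this common factor one is reduced to the identity $\big(x',z\big)_{0'}=\big(R'(x'),z\big)$ for $x'$ in the domain of $R'$ and $z\in W^{r,\ell-1}_{q^{m}}$, where $(\ ,\ )_{0'}$ is the $(\ell-1)$-fold admissible pairing and the right-hand form is the polarization of $W^{r,\ell-1}$. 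Since $R'$ is surjective, this is exactly~\eqref{eq:construction_of_prepolarization} for $W^{r,\ell-1}$, and the identity follows.

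I expect the main obstacle to be the factorization $R=R_1\circ(\id\otimes R')$ together with the cyclicity of $W_1$: both rest on the compatibility of the normalized $R$-matrices at the special (non-generic) fusion parameters rather than on any formal manipulation, and they are what make $R_1$ exist and be unique. Once these are in place the pairing identity is a genuine induction on $\ell$, reducing cleanly to~\eqref{eq:construction_of_prepolarization} one level down, in the spirit of \cite[Lemma 3.6]{naoi2018existence}.
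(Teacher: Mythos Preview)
Your proposal is correct and follows essentially the same route as the paper's intended proof (which is only indicated by the reference to \cite[Lemma~3.6]{naoi2018existence}): factor the full $\ell$-fold intertwiner $R$ through $\id\otimes R'$ via the Yang--Baxter/hexagon relations to obtain $R_1$, realize $R_2$ as the inclusion $W^{r,\ell}\hookrightarrow W_2\subseteq\widetilde{W}$, and deduce the pairing identity by peeling off the first tensor factor and applying~\eqref{eq:construction_of_prepolarization} at level $\ell-1$. The two points you flag as obstacles---the factorization at the non-generic fusion parameters and the cyclicity of $W_1$ on $\iota(w_1)\otimes\iota(w_{\ell-1})$---are exactly the substantive inputs, and both are standard consequences of the normalized $R$-matrix theory for KR modules.
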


\subsection{Criterion for the existence of a crystal pseudobase}\label{Subsection:Criterion}

Following the previous works~\cite{MR2403558,naoi2018existence,biswal2019existence}, we will prove Theorem~\ref{Thm:Main} by applying a criterion 
for the existence of a crystal pseudobase introduced in~\cite{MR1194953}.

We write $\fg_0=\fg_{I_0}$ for short.
We identify the weight lattice $P_0$ of $\fg_0$ with the subgroup $\bigoplus_{i \in I_0} \Z \varpi_i$ of $P_\cl$,
and set $P_0^+ = \sum_{i \in I_0} \Z_{\ge 0} \varpi_i$.
For $\gl \in P_0^+$, denote by $V_0(\gl)$ the simple integrable $U_q(\fg_0)$-module with highest weight $\gl$.

Let $A_\Z$ and $K_\Z$ be the subalgebras of $\Q(q_s)$ defined respectively by
\[ A_\Z = \{f(q_s)/g(q_s) \mid f(q_s),g(q_s) \in \Z[q_s], g(0)=1\}, \ \ \ K_\Z=A_\Z[q_s^{-1}].
\]
Let $U_q'(\fg)_{K_\Z}$ denote the $K_\Z$-subalgebra of $U_q'(\fg)$ generated by $e_i,f_i,q^h$ ($i \in I, h \in D^{-1}P_\cl^*$).


\begin{Prop}[{\cite[Propositions 2.6.1 and 2.6.2]{MR1194953}}]\label{Prop:criterion}
 Assume that $M$ is a finite-dimensional integrable $U_q'(\fg)$-module
 having a prepolarization $(\ , \ )$ and a $U_q'(\fg)_{K_\Z}$-submodule $M_{K_\Z}$ such that $(M_{K_\Z},M_{K_\Z})\subseteq
 K_\Z$.
 We further assume that there exist weight vectors $u_k \in M_{K_\Z}$ $(1\leq k \leq m)$ satisfying the following conditions:
 \begin{enumerate}
  \setlength{\parskip}{1pt} 
  \setlength{\itemsep}{1pt} 
  \item[(i)] $\wt (u_k) \in P_0^+$ for $1\leq k \leq m$ and $M \cong \bigoplus_{k=1}^m V_0\big(\wt(u_k)\big)$ as $U_q(\fg_0)$-modules,
  \item[(ii)] $(u_k,u_l) \in \gd_{kl} +q_sA$ for $1 \leq k,l \leq m$,
  \item[(iii)] $\normsq{e_iu_k}\in q_i^{-2\langle h_i,\wt(u_k)\rangle-2}q_sA$ for all $i \in I_0$ and $1\leq k \leq m$.
 \end{enumerate}
 Then $(\ , \ )$ is a polarization, and the pair $(L,B)$ with
 \[ L = \{u \in M \mid \normsq{u} \in A\} \text{ and } B= \{b \in (M_{K_\Z} \cap  L)/(M_{K_\Z} \cap q_sL)\mid (b,b)_0 = 1\},
 \]
 where $(\ , \ )_0$ is the $\Q$-valued bilinear form on $L/q_s L$ induced by $(\ , \ )$, is a crystal pseudobase of $M$.
\end{Prop}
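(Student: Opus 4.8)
The plan is to split the assertion into two logically independent parts: (a) the prepolarization $(\,,\,)$ is positive definite, hence a genuine polarization; and (b) once positivity is known, the pair $(L,B)$ of the stated form is a crystal pseudobase of $M$. Part (b) is essentially Kashiwara's passage from a polarization to a crystal base, so I expect the real difficulty to be concentrated in part (a), whose engine is a rank-one computation together with the precise normalization built into condition (iii).

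For part (a), observe first that positive definiteness with respect to the given order on $\Q(q_s)$ is insensitive to the node $0$: the admissibility relations \eqref{eq:admissible} hold for every $i \in I_0$, and by hypothesis (i) we have $M \cong \bigoplus_k V_0\big(\wt(u_k)\big)$ as $U_q(\fg_0)$-modules. I would therefore work entirely inside the finite type $\fg_0$ and reduce the problem to exhibiting a $\Q(q_s)$-basis of $M$ whose Gram matrix lies in $\mathrm{Id} + q_s(\text{matrix with entries in } A)$; any symmetric form with such a Gram matrix is automatically positive definite for this order, because the lowest-degree term of $(v,v)$ is then a positive rational for every $v \neq 0$.

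The engine is the following rank-one fact. For a single $i \in I_0$ and a genuine $i$-highest-weight vector $w \in \ker e_i$ of weight $\mu$ with $\normsq{w} \in 1 + q_sA$, the admissibility \eqref{eq:admissible} combined with \eqref{eq:commutation4} gives
\[
\normsq{f_i^{(n)} w} = q_i^{\,n(\langle h_i,\mu\rangle - n)} \begin{bmatrix} \langle h_i,\mu\rangle \\ n \end{bmatrix}_i \normsq{w},
\]
and the prefactor $q_i^{\,n(\langle h_i,\mu\rangle - n)}\begin{bmatrix} \langle h_i,\mu\rangle \\ n \end{bmatrix}_i$ lies in $1 + q_sA$; moreover vectors $f_i^{(n)}w$, $f_i^{(m)}w'$ of distinct weight are orthogonal, and at equal weight they inherit almost orthonormality from that of the tops $w,w'$. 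Thus the $f_i^{(n)}$-ladder built on an almost-orthonormal family of $i$-highest-weight vectors is again almost orthonormal. The obstacle is that the $u_k$ are only \emph{approximately} highest-weight for $\fg_0$, and this is exactly what condition (iii) repairs. Writing $u_k = \sum_{n \ge 0} f_i^{(n)} w_n$ with $w_n \in \ker e_i$ (an orthogonal decomposition into $i$-strings), the normalization factor $q_i^{-2\langle h_i,\wt(u_k)\rangle - 2}$ appearing in (iii) is calibrated so that, granting positivity at greater height, the non-highest-weight components satisfy $\normsq{w_n} \in q_sA$ for $n \ge 1$; hence $u_k$ agrees with its $i$-highest-weight part $w_0$ modulo $q_s$ and condition (ii) is preserved. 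Carrying this out \emph{simultaneously} for all $i \in I_0$ is the crux, and I would handle it by induction on $\dim M$ (processing weight spaces from the top down): peel off the common highest-weight layer, descend to the $\tilde f_i$-children through the rank-one estimate, and verify that analogues of (ii)--(iii) persist for the children so that the induction closes. This also resolves the apparent circularity above, since positivity at greater height is available from the inductive hypothesis when (iii) is invoked. The outcome is an almost-orthonormal $\Q(q_s)$-basis of $M$ indexed by the crystal bases of the components $V_0\big(\wt(u_k)\big)$, whence positivity by the preceding paragraph.

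For part (b), with $(\,,\,)$ now a polarization I would first check that $L = \{u \in M : \normsq{u} \in A\}$ is a crystal lattice: it is the orthogonal direct sum of its weight spaces, and the rank-one analysis shows that each $i$-string decomposes as an orthogonal sum of almost-orthonormal $f_i^{(n)}$-ladders along which $\tilde e_i$ and $\tilde f_i$ move, so that $\tilde e_i, \tilde f_i$ preserve the set of vectors of norm in $A$, i.e.\ preserve $L$. The induced $\Q$-valued form $(\,,\,)_0$ on $L/q_sL$ is then positive definite, and the hypothesis $(M_{K_\Z},M_{K_\Z}) \subseteq K_\Z$ ensures that $B$, defined by $(b,b)_0 = 1$, is a signed orthonormal $\Q$-basis of $L/q_sL$ lying in the image of $M_{K_\Z}$. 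The crystal-pseudobase axioms (iii)--(v) for $(L,B)$ then follow from the same rank-one picture, exactly as in \cite{MR1194953}; I expect this verification to be routine once the positivity of part (a) is in hand.
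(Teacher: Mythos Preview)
The paper does not prove this proposition; it is quoted verbatim from \cite[Propositions~2.6.1 and~2.6.2]{MR1194953} and used as a black box. So there is no proof in the present paper to compare your proposal against.

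That said, your outline is essentially the argument of \cite{MR1194953}. A few remarks on the sketch itself. Your rank-one computation is correct, and your reading of condition~(iii) is the right one: writing $u_k=\sum_{n\ge 0} f_i^{(n)}w_n$ with $w_n\in\ker e_i$, the orthogonality of the $i$-strings gives $\normsq{e_iu_k}=\sum_{n\ge1}[\mu_i+n+1]_i^2\,\normsq{f_i^{(n-1)}w_n}$ with $\mu_i=\langle h_i,\wt(u_k)\rangle$, and once positivity is available at the higher weight one extracts $\normsq{w_n}\in q_i^{2n-2}q_sA$ for $n\ge1$, exactly as you say. The phrase ``induction on $\dim M$'' is imprecise, since $M$ is fixed; what you actually need is a downward induction on the partial order on $P_0$ (or on height), so that when you invoke~(iii) at $\wt(u_k)$ you already know positivity on the weight spaces $\wt(u_k)+\alpha_i$. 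The real work in \cite{MR1194953} is checking that the analogues of~(ii) and~(iii) propagate to the $\tilde f_i$-descendants, so that the induction closes for \emph{all} $i\in I_0$ simultaneously; your proposal acknowledges this step but does not carry it out, which is acceptable for a sketch but is where any careful write-up would have to spend its effort. Part~(b) is, as you note, routine once positivity is in hand.
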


From~\cite{MR1194953}, we know the $U_q'(\fg)_{K_\Z}$-submodule $W^{r,\ell}_{K_\Z} = U_q'(\fg)_{K_\Z}w_\ell \subseteq W^{r,\ell}$ satisfies 
$\big(W^{r,\ell}_{K_\Z},W^{r,\ell}_{K_\Z}\big) \subseteq K_\Z$.
Hence if we show for $M= W^{r,\ell}$ the existence of weight vectors $u_1,\ldots,u_m$ satisfying (i)--(iii),
Theorem~\ref{Thm:Main} follows from Proposition~\ref{Prop:criterion}.
We will show this in the next section with an explicit construction of the vectors $u_1, \dotsc, u_m$.

\section{Proof of Theorem~\ref{Thm:Main}}


\subsection{Set of vectors}\label{Subsection:Three}

In the rest of this paper, assume that $\fg$ is either of type $E_{n}^{(1)}$ ($n=6,7,8$), $F_4^{(1)}$ or $E_6^{(2)}$
and the nodes of the Dynkin diagram is labeled as in Figure~\ref{figure}.
We have
\[ q_i = q^{1/2} \ \ (\fg\colon F_4^{(1)}, i=3,4), \ \ \ \ q_i=q^2 \ \ (\fg\colon E_6^{(2)}, i=3,4), \ \ \ \ q_i=q \ \ (\text{otherwise}).
\]
From now on, for $i \in I$ such that $q_i=q$ we write $[m]$ for $[m]_i$, $[n]!$ for $[n]_i!$, and $\begin{bmatrix} m \\ n \end{bmatrix}$ 
for $\begin{bmatrix} m \\ n \end{bmatrix}_i$.
Note that in all types $r=2$ is the unique near adjoint node.
In the sequel, we will consider $W^{2,\ell}$ only and, hence, write $W^\ell$ for $W^{2,\ell}$.

Let us prepare several notation.
Define two subsets $I_{01}$ and $J$ of $I$ by $I_{01} = I_0 \setminus \{1\}$, and 
\[ J = \begin{cases} \{2,3,4\} & (\fg\colon E_6^{(1)}),\\ \{2,3,4,5\} & (\fg\colon E_7^{(1)}), \\ \{2,3,4,5,6,7\} & (\fg\colon E_8^{(1)}),\\
   \{2,3\} & (\fg \colon F_4^{(1)}, E_6^{(2)}).\end{cases}
\]
Let $R\subseteq Q$ denote the root system of $\fg_0$, and $R^+ = R\cap Q^+$ the set of positive roots.
For a subset $L \subset I_0$ denote by $R_L$ the root subsystem of $R$ generated by the simple roots corresponding 
to the elements of $L$, and let $R_L^+ = R_L\cap R^+$.
We write $R_{1} = R_{I_{01}}$.
Let $\theta_{1}$ be the highest short root of $R_{1}$ if $\fg$ is of type $E_6^{(2)}$, and the highest root of $R_{1}$ otherwise.
Define $\theta_J \in R_J$ similarly (see Table~\ref{table}). 
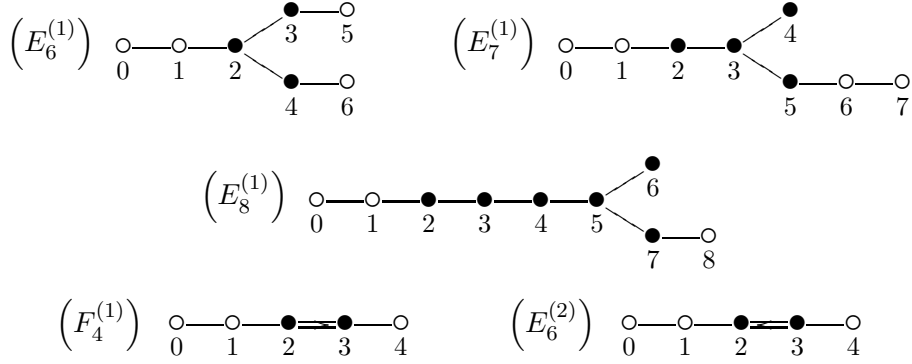
\begin{figure}[t]
\begin{align*} \raisebox{-12pt}{$\left(E_6^{(1)}\right)$} \ \ {\Large
\xymatrix@C=14pt@R=-3pt@M=0pt{
 & & & \underset{3}{\bullet} \ar@{-}[r] & \underset{5}{\circ} \\
\underset{0}{\circ} \ar@{-}[r] & \underset{1}{\circ} \ar@{-}[r] & \underset{2}{\bullet} \ar@{-}[ru] \ar@{-}[rd] & &  \\
 & & & \underset{4}{\bullet} \ar@{-}[r] & \underset{6}{\circ}}
} \hspace{35pt}
 \raisebox{-12pt}{$\left(E_7^{(1)}\right)$} \ \ {\Large
\xymatrix@C=14pt@R=-3pt@M=0pt{
 & & & & \underset{4}{\bullet} & & \\
\underset{0}{\circ} \ar@{-}[r] & \underset{1}{\circ} \ar@{-}[r] & \underset{2}{\bullet} \ar@{-}[r] & \underset{3}{\bullet} \ar@{-}[ru] \ar@{-}[rd]& & & \\
 & & & & \underset{5}{\bullet} \ar@{-}[r] & \underset{6}{\circ} \ar@{-}[r]& \underset{7}{\circ}}
}\end{align*}
\begin{align*}
\raisebox{-12pt}{$\left(E_8^{(1)}\right)$} \ \ {\Large
\xymatrix@C=14pt@R=-3pt@M=0pt{
 & & & & & & \underset{6}{\bullet} & \\
\underset{0}{\circ} \ar@{-}[r] & \underset{1}{\circ} \ar@{-}[r] & \underset{2}{\bullet} \ar@{-}[r] &\underset{3}{\bullet} \ar@{-}[r] & \underset{4}{\bullet} \ar@{-}[r] &
\underset{5}{\bullet}\ar@{-}[ru] \ar@{-}[rd] & &  \\
 & & & & & & \underset{7}{\bullet} \ar@{-}[r] & \underset{8}{\circ}}
}
\end{align*}
\begin{align*} 
\left(F_4^{(1)}\right) \ \ {\Large
\xymatrix@C=14pt@M=0pt{
 \underset{0}{\circ} \ar@{-}[r] & \underset{1}{\circ} \ar@{-}[r] & \underset{2}{\bullet} \ar@{=}[r]|(.7)@{>} & \underset{3}{\bullet}  \ar@{-}[r] & \underset{4}{\circ}}
} \hspace{35pt}
\left(E_6^{(2)}\right) \ \ {\Large
\xymatrix@C=14pt@M=0pt{
 \underset{0}{\circ} \ar@{-}[r] & \underset{1}{\circ} \ar@{-}[r] & \underset{2}{\bullet} \ar@{=}[r]|(.3)@{<} & \underset{3}{\bullet}  \ar@{-}[r] & \underset{4}{\circ}}
}
\end{align*}
\caption{Dynkin diagrams of types $E_{6,7,8}^{(1)}$, $F_4^{(1)}$, and $E_6^{(2)}$ ($\bullet$: nodes belonging to $J$)}\label{figure}
\end{figure}

\begin{table}[t]
\[
\begin{array}{|c|l|l|} \hline
\fg & \multicolumn{1}{|c|}{\theta_{1}}  & \multicolumn{1}{|c|}{\theta_J} \\ \hline 
E_6^{(1)} &  \ga_2+\ga_3+\ga_4+\ga_5+\ga_6  & \ga_2+\ga_3+\ga_4 \\ 
            &  =-\varpi_1+\varpi_5+\varpi_6  &  =-\varpi_1+\varpi_3+\varpi_4-\varpi_5-\varpi_6 \\ \hline 
E_7^{(1)} &  \ga_2+2\ga_3+\ga_4+2\ga_5+2\ga_6+\ga_7 &  \ga_2+2\ga_3+\ga_4+\ga_5 \\ 
            &   =-\varpi_1+\varpi_6                    & =-\varpi_1+\varpi_3-\varpi_6\\ \hline
E_8^{(1)} & \text{\scriptsize $\ga_2+2\ga_3+3\ga_4+4\ga_5+2\ga_6+3\ga_7+2\ga_8$}  & \ga_2+2\ga_3+2\ga_4+2\ga_5+\ga_6 +\ga_7 \\ 
            &   =-\varpi_1 +\varpi_8                                          &  =-\varpi_1+\varpi_3-\varpi_8\\ \hline
F_4^{(1)} &  \ga_2+2\ga_3+2\ga_4=-\varpi_1+2\varpi_4 &  \ga_2+2\ga_3=-\varpi_1+2\varpi_3-2\varpi_4 \\ \hline
E_6^{(2)} & \ga_2+\ga_3+\ga_4=-\varpi_1+\varpi_4 &  \ga_2+\ga_3=-\varpi_1+\varpi_3-\varpi_4 \\ \hline
\end{array}
\]
\caption{Explicit forms of $\theta_1$ and $\theta_J$}\label{table}
\end{table}

For $i \in I$ and $k \in \Z$, set
\[ E_i^{(k)}= \begin{cases} e_i^{(2k)} & \text{if $\fg$ is of type $F_4^{(1)}$ and $\ga_i$ is short},\\
                       e_i^{(k)} & \text{otherwise}.\end{cases}
\]
For $p \in \Z$ and a sequence $\bm{r}=(r_kr_{k-1}\cdots r_1)$ of elements of $I$ (in this paper we always read such sequences from right to left),
we use the abbreviations
\begin{equation}\label{eq:epEp}
 e^{(p)}_{\bm{r}} = e^{(p)}_{r_k} e^{(p)}_{r_{k-1}}\cdots e^{(p)}_{r_1} \ \ \text{and} \ \ E^{(p)}_{\bm{r}} = E^{(p)}_{r_k}\cdots E^{(p)}_{r_1}.
\end{equation}
Set
\[ c_{\fg} = \begin{cases} 2 & (\fg\colon F_4^{(1)}),\\ 1 & (\text{otherwise}),\end{cases}
\]
and choose a sequence $\bm{i}=(i_L i_{L-1} \cdots i_1i_{0})$ of elements of $I_{01}$ satisfying
\begin{equation}\label{equation:condition_for_i}
 i_0 = 2, \ \ s_{i_L}\cdots s_{i_1}(\ga_2) = \theta_{1}, \ \ \text{and} \ \ \langle h_{i_k}, s_{i_{k-1}}\cdots s_{i_1}(\ga_2)\rangle = -c_\fg \ \
  \text{for} \ 1\leq k \leq L.
\end{equation} 
Similarly, choose a sequence $\bm{j}=(j_{L'} j_{L'-1}\cdots j_1j_0)$ of elements of $J$ satisfying
\begin{equation*}
 \begin{split}
  j_0 = 2, \ \ s_{j_{L'}}\cdots s_{j_1}(\ga_2) = \theta_J, \ \ \text{and} \ \ 
  \langle h_{j_k}, s_{j_{k-1}}\cdots s_{j_1}(\ga_2)\rangle = -c_\fg  \ \  \text{for} \ 1\leq k \leq L'.
 \end{split}
\end{equation*} 
In the rest of this paper, we fix $\bm{i}=(i_L  \cdots i_{0})$ and $\bm{j}=(j_{L'} \cdots j_0)$
satisfying these conditions.
For $0\leq k_1 \leq k_2 \leq L$, denote by $\bm{i}[k_2,k_1]$ the subsequence $(i_{k_2}i_{k_2-1}\cdots i_{k_1})$ of $\bm{i}$,
and let $\bm{i}[k_2,k_1]$ be the empty set if $k_2<k_1$.
We define $\bm{j}[k_2,k_1]$ similarly.
For a sequence $\bm{r} = (r_\ell r_{\ell-1}\cdots r_1)$ of elements of $I$, set $s_{\bm{r}} = s_{r_\ell}\cdots s_{r_1} \in W$,
and let $s_{\bm{r}}$ be the identity element of $W$ if $\bm{r}$ is the empty set.
Let $h_\ga \in P^*$ ($\ga \in R$) denote the coroots,
and $\gL_i^\vee \in P^*\otimes_\Z \Q$ ($i \in I$) elements satisfying $\langle \gL_i^\vee, \ga_j\rangle =\gd_{ij}$ for $i,j \in I$.

\begin{Lem}\label{Lem:fundamental_properties} \ \\[2pt]
 {\normalfont (1)} Neither of the subsequences $\bm{i}[L,1]$ and $\bm{j}[L',1]$ contains $2$.\\
 {\normalfont (2)} We have $\langle h_i,\theta_1\rangle = 0$ for all $i \in J$.\\
 {\normalfont (3)} For any $p \in \Z_{\geq 0}$, we have 
  \[
  \wt_P\big(E_{\bm{i}[k,0]}^{(p)}\big)= ps_{\bm{i}[k,1]}(\ga_2) \ \ (0\leq k \leq L)
  \ \ \ \text{and} \ \ \ 
  \wt_P\big(E_{\bm{j}[k,0]}^{(p)}\big) =ps_{\bm{j}[k,1]}(\ga_2) \ \ (0\leq k \leq L').
  \]
  In particular, $\wt_P\big(E_{\bm{i}}^{(p)}\big)= p\theta_{1}$ and $\wt_P\big(E_{\bm{j}}^{(p)}\big)=p\theta_J$ hold.\\
 {\normalfont (4)} Both $s_{\bm{i}}$ and $s_{\bm{j}}$ are reduced expressions.\\
 {\normalfont (5)} If $\ga \in R^+$ satisfies $s_{\bm{i}[L,1]}^{-1}(\ga) \in -R^+$ 
  (resp.\ $s_{\bm{j}[L',1]}^{-1}(\ga) \in -R^+$), 
  then we have $\langle h_\ga,\theta_1\rangle >0$ (resp.\ $\langle h_\ga, \theta_J\rangle >0$).\\
 {\normalfont(6)} For any $p \in \Z_{\geq 0}$, 
  $E_{\bm{i}}^{(p)}$ (resp.\ $E_{\bm{j}}^{(p)}$) does not depend on the choice of $\bm{i}$ (resp.\ $\bm{j}$).
\end{Lem}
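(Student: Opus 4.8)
The plan is to deduce all six parts from the single recursion for the roots $\beta_k := s_{\bm{i}[k,1]}(\ga_2)$. Since $\langle h_{i_k},\beta_{k-1}\rangle=-c_{\fg}$ by \eqref{equation:condition_for_i}, we have $\beta_k=s_{i_k}(\beta_{k-1})=\beta_{k-1}+c_{\fg}\ga_{i_k}$, hence $\beta_k=\ga_2+c_{\fg}\sum_{m=1}^{k}\ga_{i_m}$ (and likewise for $\bm{j}$). For (1) I track the coefficient of $\ga_2$: it equals $1$ for $\beta_0=\ga_2$, increases by $c_{\fg}>0$ each time some $i_m=2$ with $m\geq1$, and equals $1$ for $\theta_1=\beta_{\ell_{\bm{i}}}$ and for $\theta_J$ by Table~\ref{table}; thus no entry of $\bm{i}[\ell_{\bm{i}},1]$ or $\bm{j}[\ell_{\bm{j}},1]$ is $2$. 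For (2) I use that Table~\ref{table} expresses $\theta_1$ as $-\varpi_1$ plus a combination of $\varpi_i$ with $i\notin J$, so $\langle h_i,\theta_1\rangle=0$ for $i\in J$ follows from $\langle h_i,\varpi_{i'}\rangle=\gd_{ii'}$ and $1\notin J$. For (3) I add weights: by (1) each $i_m$ with $m\geq1$ is short in type $F_4^{(1)}$ (and $c_{\fg}=1$ in the remaining types), so $\wt_P(E_{i_m}^{(p)})=c_{\fg}p\,\ga_{i_m}$ while $\wt_P(E_{2}^{(p)})=p\,\ga_2$; summing yields $p\beta_k=p\,s_{\bm{i}[k,1]}(\ga_2)$.

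For (4) I write $s_{\bm{i}}=u\,s_2$ with $u:=s_{\bm{i}[\ell_{\bm{i}},1]}$ and show $u$ is reduced, i.e.\ that each left multiplication by $s_{i_k}$ raises length. By (1), $u$ lies in the parabolic subgroup $W'$ generated by $\{s_i : i\in I_{01}\setminus\{2\}\}$, and $\ga_2$ is antidominant for $W'$ since $\langle h_i,\ga_2\rangle=c_{i2}\leq0$ for all such $i$; consequently every positive root of $W'$ pairs nonpositively with $\ga_2$. If left multiplication by $s_{i_k}$ did not raise length, then $u_{k-1}^{-1}(\ga_{i_k})$ (with $u_{k-1}=s_{\bm{i}[k-1,1]}$) would be a negative root, yet $\langle h_{u_{k-1}^{-1}(\ga_{i_k})},\ga_2\rangle=\langle h_{i_k},\beta_{k-1}\rangle=-c_{\fg}<0$ would force its positive opposite to pair positively with $\ga_2$, a contradiction. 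Since $u(\ga_2)=\theta_1\in R^+$, the criterion $\ell(u\,s_2)=\ell(u)+1$ applies, so $s_{\bm{i}}$ (and likewise $s_{\bm{j}}$) is reduced.

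Part (5) is then immediate. By (4) the roots with $u^{-1}(\ga)\in-R^+$ are exactly the inversions $\tilde\nu_k=s_{i_{\ell_{\bm{i}}}}\cdots s_{i_{k+1}}(\ga_{i_k})$; setting $\nu_k:=s_{i_1}\cdots s_{i_{k-1}}(\ga_{i_k})$ and using $s_{i_1}\cdots s_{i_{k-1}}(\beta_{k-1})=\ga_2$ together with $s_{i_1}\cdots s_{i_{k-1}}(\beta_k)=s_{\nu_k}(\ga_2)$, one finds $c_{\fg}\nu_k=s_{i_1}\cdots s_{i_{k-1}}(\beta_k-\beta_{k-1})=s_{\nu_k}(\ga_2)-\ga_2=-\langle h_{\nu_k},\ga_2\rangle\,\nu_k$, so $\langle h_{\nu_k},\ga_2\rangle=-c_{\fg}$. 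Since $u^{-1}(\tilde\nu_k)=-\nu_k$ and $u(\ga_2)=\theta_1$, this gives $\langle h_{\tilde\nu_k},\theta_1\rangle=\langle h_{-\nu_k},\ga_2\rangle=c_{\fg}>0$, as required (the argument for $\bm{j}$ is identical).

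The main obstacle is (6). My plan is to reduce it to commutation moves: whenever two consecutive entries $i_{k+1},i_k$ are non-adjacent in the Dynkin diagram, the corresponding divided powers commute, $E_{i_{k+1}}^{(p)}E_{i_k}^{(p)}=E_{i_k}^{(p)}E_{i_{k+1}}^{(p)}$, and one checks directly that transposing them sends a sequence satisfying \eqref{equation:condition_for_i} to another such sequence; so it suffices to connect any two admissible $\bm{i}$ by such transpositions. I would do this by induction on $\ell_{\bm{i}}$, peeling off the leftmost entry: the admissible leftmost entries of a sequence reaching a root $\gamma$ are those $a$ with $\langle h_a,\gamma\rangle=c_{\fg}$ and $\gamma-c_{\fg}\ga_a\in R^+$, and the crucial point is that any two distinct such $a,b$ are non-adjacent. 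Indeed, if $a\sim b$ then $\langle h_b,\gamma-c_{\fg}\ga_a\rangle=2c_{\fg}$, but a root $\eta$ with $\langle h_b,\eta\rangle=2c_{\fg}$ must equal $\ga_b$ in the simply-laced types and cannot exist in type $F_4^{(1)}$ (where $b$ is short), contradicting that $\gamma-c_{\fg}\ga_a$ has $\ga_2$-coefficient $1$. One can then commute $a$ and $b$ past each other through the common lower root $\gamma-c_{\fg}\ga_a-c_{\fg}\ga_b$ and invoke the inductive hypothesis, and $E_{\bm{i}}^{(p)}=E_{\bm{j}[\cdots]}^{(p)}$-type commutativity finishes the step. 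The technical crux I expect here is the bookkeeping of reachable roots needed to guarantee that this common lower root again admits an admissible sequence; this is cleanest to settle by first identifying the set of reachable roots with the positive roots of $R_1$ (resp.\ $R_J$) whose $\ga_2$-coefficient equals $1$. Alternatively, (6) can be obtained representation-theoretically, by realizing $E_{\bm{i}}^{(p)}v$ as a Kashiwara extremal weight vector in a suitable extremal weight module and invoking the independence of such vectors from the reduced word.
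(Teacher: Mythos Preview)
Parts (1)--(5) are correct and essentially match the paper's arguments; your formulation of (4) via antidominance of $\ga_2$ for the parabolic subgroup and your computation in (5) are equivalent rephrasings of the paper's direct pairing with $\theta_1$.

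For (6) your strategy is sound but not yet complete, and it is more elaborate than what the paper does. The paper first disposes of types $F_4^{(1)}$ and $E_6^{(2)}$, where $\bm{i}$ (and $\bm{j}$) is unique, so there is nothing to prove. In the remaining simply-laced types it does not peel from the left or invoke a characterization of reachable roots at all: given two admissible sequences $\bm{i}$ and $\bm{i}'$, let $r$ be the first index where they differ and $s>r$ the first index with $i'_s=i_r$. From
\[
\Big\langle h_{i_r},\,\sum_{k=0}^{r-1}\ga_{i_k}\Big\rangle=-1=\Big\langle h_{i'_s},\,\sum_{k=0}^{s-1}\ga_{i'_k}\Big\rangle
\]
and $i'_k\neq i_r$ for $r\le k<s$ one obtains $\sum_{k=r}^{s-1}\langle h_{i_r},\ga_{i'_k}\rangle=0$, and since each summand is $\le 0$ they are all zero. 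Hence $e_{i_r}$ commutes with each $e_{i'_k}$ for $r\le k<s$, and one slides $i'_s=i_r$ down to position $r$ without changing the product. Iterating matches $\bm{i}'$ to $\bm{i}$. This sidesteps entirely the ``bookkeeping'' you flag: you never need to know that $\gamma-c_{\fg}\ga_a-c_{\fg}\ga_b$ is reachable, because you work only with the two given sequences rather than constructing an auxiliary third one. Your inductive peel-from-the-left argument can be completed (the reachable roots are indeed the positive roots of $R_1$ with $\ga_2$-coefficient $1$, which one checks case by case or by a standard root-string argument), but the paper's ``bubble-sort'' from the bottom is both shorter and self-contained.
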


\begin{proof}
 The assertion (1) is obvious since $\langle \gL_2^\vee, \theta_1 \rangle = \langle \gL_2^\vee,\theta_J \rangle = 1$ (see Table~\ref{table}),
 and (2) is checked directly. The assertion (3) is easily seen from the conditions on $\bm{i}$ and $\bm{j}$.
 We will show the assertion (4) for $s_{\bm{i}}$ (the proof for $s_{\bm{j}}$ is similar).
 By the condition on $\bm{i}$, we have for any $0\leq k \leq L$ that
 \[ \langle s_{\bm{i}[L,k+1]}(h_{i_k}),\theta_1\rangle =\langle h_{i_k},s_{\bm{i}[k,1]}(\ga_{2})\rangle >0.
 \]
 Since $\langle h_i,\theta_1\rangle \geq 0$ for all $i\in I_{01}$ and $s_{\bm{i}[L,k+1]}(\ga_{i_k}) \in R_1$, this implies that
 $s_{\bm{i}[L,k+1]}(\ga_{i_k})$ is a positive root for any $k$, which implies that $s_{\bm{i}}$ is reduced.
 Let us show the assertion (5) for $s_{\bm{i}[L,1]}$ (the proof for $s_{\bm{j}[L',1]}$ is similar).
 There exists $1\leq k \leq L$ such that $\ga = s_{\bm{i}[L,k+1]}(\ga_{i_k})$, and we have
 \[ \langle h_\ga, \theta_1\rangle = \langle s_{\bm{i}[L,k+1]}(h_{i_k}),\theta_1\rangle = \langle h_{i_k},s_{\bm{i}[k,1]}(\ga_2)\rangle>0,
 \]
 as required. 
 Finally, let us show the assertion (6) for $E_{\bm{i}}^{(p)}$ (the proof for $E_{\bm{j}}^{(p)}$ is similar).
 If $\fg$ is either of type $F_4^{(1)}$ or $E_6^{(2)}$, $\bm{i}=(43)$ is the unique choice.
 Hence we may assume that $\fg$ is of type $E_n^{(1)}$ ($n=6,7,8$).
 Assume that $\bm{i}'=(i'_{L_0},\ldots,i_0')$ is another choice.
 Since $\sum_{k=0}^{L} \ga_{i_k} = \sum_{k=0}^{L_0}\ga_{i'_k}=\theta_{1}$, we have $L_0=L$.
 Let $r$ be the smallest number such that $i_r \neq i_r'$, and let $s$ be the smallest number such that $r<s$ and $i_r = i_s'$.
 Then since 
 \[ \langle h_{i_r},\sum_{k=0}^{r-1} \ga_{i_k}\rangle =-1 = \langle h_{i_s'},\sum_{k=0}^{s-1} \ga_{i_k'}\rangle
 \]
 and $i_k' \neq i_r$ for $r\leq k < s$,
 we have $\langle h_{i_r}, \ga_{i_k'}\rangle = 0$ for $r\leq k <s$.
 Hence setting 
 \[ \bm{i}''=(i_{L}'\cdots i_{s+1}'i_{s-1}'\cdots i_r'i_r\cdots i_0),
 \]
 we have $E_{\bm{i}'}^{(p)}=E_{\bm{i}''}^{(p)}$.
 By repeating this argument we can show that $E_{\bm{i}'}^{(p)} = E_{\bm{i}}^{(p)}$, and hence the assertion (6) is proved. 
\end{proof}
For $\bm{p} = (p_1,p_2,\ldots,p_6) \in \Z^{6}$, we write 
 \[ E^{\bm{p}} = e_0^{(p_6)}e_1^{(p_5)}e_2^{(p_4)}E_{\bm{j}}^{(p_3)}E_{\bm{i}}^{(p_2)}e_{10}^{(p_1)} \in U_q(\fn_+),
 \]
and define a map $\wt\colon \Z^{6} \to P_0$ by 
\begin{align*}
 &\wt(\bm{p})\\
&=(p_1-p_2-p_3-p_4+2p_5-p_6)\varpi_1 +(-p_1+2p_4-p_5)\varpi_2+(p_2-p_3)\ggg_1+(p_3-p_4)\ggg_2,
\end{align*} 
where we set 
\begin{equation}\label{eq:ggg}
 \ggg_1 = \varpi_1+\cl(\theta_1) \in P_0^+ \ \ \text{and} \ \  \ggg_2 =\varpi_1+\ggg_1+\cl(\theta_J) \in P_0^+.
\end{equation}
 For $\ell \in \Z_{>0}$, define a finite subset $S_\ell \subseteq \Z_{\geq 0}^{6}$ by
 \begin{align*}
  S_\ell= \big\{(p_1,\ldots,p_6)\in\Z_{\geq 0}^{6} \bigm| p_6\leq p_5\leq p_4\leq p_3\leq p_2,\ p_2+p_3+p_4-p_5\leq p_1 \leq p_4 +\ell\}.
 \end{align*}
Note that if $\bm{p}\in S_\ell$, then $\wt_{P_\cl}(E^{\bm{p}}w_\ell) = \wt(\bm{p})+\ell\varpi_2 \in P_0^+$.
As stated in the final part of the previous section, Theorem~\ref{Thm:Main} is proved once we show the following.

\begin{Prop}\label{Prop:critical_3}
  For any $\ell \in \Z_{>0}$, the vectors $\{E^{\bm{p}}w_\ell \mid \bm{p} \in S_\ell\} \subseteq W^{\ell}$ satisfy the following 
  conditions:
 \begin{enumerate}
  \setlength{\parskip}{1pt} 
  \setlength{\itemsep}{1pt} 
  \item[(C1)] $W^{\ell} \cong \bigoplus_{\bm{p} \in S_\ell} V_0\big(\wt(\bm{p}) + \ell\varpi_2\big)$ as $U_q(\fg_0)$-modules,
  \item[(C2)] $(E^{\bm{p}}w_\ell,E^{\bm{p}'}w_\ell) \in \gd_{\bm{p},\bm{p}'} + q_sA$ for $\bm{p},\bm{p}' \in S_\ell$,
  \item[(C3)] $\normsq{e_iE^{\bm{p}}w_\ell}\in q_i^{-2\langle h_i,\wt(\bm{p})\rangle-2\ell\gd_{i2}-2}q_sA$ for $i \in I_0$ and $\bm{p} \in S_\ell$.
 \end{enumerate}
\end{Prop}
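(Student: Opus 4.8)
The plan is to derive (C1)--(C3) from a single structural fact: after lifting the computation into an extremal weight module, each vector $E^{\bm p}w_\ell$ with $\bm p\in S_\ell$ represents, modulo $q_sL$ and up to sign, a global basis element that is moreover a $U_q(\fg_0)$-highest weight vector. Since the prepolarization on $W^\ell$ is assembled from the polarizations of the tensor factors $W^{2,1}$ through~\eqref{eq:invariance_of_zk} and~\eqref{eq:construction_of_prepolarization}, and each such polarization descends from the one on $V(\varpi_2)$ furnished by Proposition~\ref{Prop:Nakajima_prepolarization}, all pairings of interest can be transported upstairs where genuine polarizations and global bases are available. Concretely I would run an induction on $\ell$ using Lemma~\ref{Lem:realization_of_KR} to realize $W^\ell$ inside $W^{2,1}\otimes W^{\ell-1}$ and to propagate pairings through $R_1$ and $R_2$; at each stage the computation is further reduced, via Lemma~\ref{Lem:hw-lw-lev0} and the inclusion~\eqref{eq:gl.base_of_hwlw}, to the more tractable highest-times-lowest weight modules $V(\gL^1)\otimes V(-\gL^2)$, whose global bases one understands explicitly.

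Granting the structural fact, (C2) is immediate: the pairing $(E^{\bm p}w_\ell,E^{\bm{p}'}w_\ell)$ becomes, modulo $q_sA$, the pairing of two global basis vectors, which is $\gd_{\bm p,\bm{p}'}+q_sA$ by the almost orthonormality of Proposition~\ref{Prop:Nakajima_prepolarization}(3). The real content is to prove that $E^{\bm p}w_\ell$ represents a single global basis element $G(b_{\bm p})$ with a unit leading coefficient, and that $\bm p\mapsto b_{\bm p}$ is injective on $S_\ell$. Here the conditions~\eqref{equation:condition_for_i} imposed on the sequences $\bm i$ and $\bm j$ are decisive: by Lemma~\ref{Lem:fundamental_properties}(3)--(5) the operators $E_{\bm i}^{(p)}$ and $E_{\bm j}^{(p)}$ act like divided powers of the root vectors attached to $\theta_1$ and $\theta_J$, so that the $\tilde e$-strings traversed by $E^{\bm p}$ are straight and carry binomial coefficients equal to $1$. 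Using the commutation formulas~\eqref{eq:commutation3}--\eqref{eq:commutation4} together with the bar-invariance and integrality of $E^{\bm p}w_\ell$, I would verify that $E^{\bm p}w_\ell\equiv\pm G(b_{\bm p})\bmod q_sL$ and read off $b_{\bm p}$ from the string data.

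For (C3), I would first apply admissibility~\eqref{eq:admissible} with $m=1$ to obtain $\normsq{e_iE^{\bm p}w_\ell}=q_i^{-1-\langle h_i,\gl\rangle}\big(E^{\bm p}w_\ell,\,f_ie_iE^{\bm p}w_\ell\big)$, where $\gl=\wt(\bm p)+\ell\varpi_2$ and $\langle h_i,\gl\rangle=\langle h_i,\wt(\bm p)\rangle+\ell\gd_{i2}$, so that the exponent $-2\langle h_i,\gl\rangle-2$ in the desired estimate is exactly the one built into condition~(iii) of Proposition~\ref{Prop:criterion}. The estimate then reduces to the assertion that $E^{\bm p}w_\ell$ is a $U_q(\fg_0)$-highest weight vector modulo $q_s$, that is $\tilde e_iE^{\bm p}w_\ell\in q_sL$ for all $i\in I_0$; for a weight vector of the crystal lattice this is precisely the hypothesis under which $\normsq{e_iu}$ acquires the extra power of $q_s$ (the factor $q_i^2=q^{(\ga_i,\ga_i)}$ being absorbed into $q_sA$). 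On the crystal side this highest weight property is again the straightness of the $\tilde e$-strings from (C2); on the algebraic side it is made effective by the relation~\eqref{eq:the_relation}, which rewrites $e_iE^{\bm p}w_\ell$ inside $W^\ell$ and exhibits the gain of one power of $q_s$.

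It remains to treat (C1), which is the statement that $W^\ell$ and $\bigoplus_{\bm p\in S_\ell}V_0\big(\wt(\bm p)+\ell\varpi_2\big)$ have the same $U_q(\fg_0)$-character; the membership $\wt(\bm p)+\ell\varpi_2\in P_0^+$ was already recorded before the proposition and rests on Lemma~\ref{Lem:fundamental_properties}. I would establish it by matching the right-hand side, a purely combinatorial sum over $S_\ell$, against the classical decomposition of the Kirillov--Reshetikhin module $W^{2,\ell}$, which is accessible either from its known branching to $\fg_0$ or from its realization as a submodule of $\wti W$ cyclically generated by $w_\ell$. I expect the main obstacle to be the structural fact underlying (C2) and (C3): controlling the lower-order terms produced when $E^{\bm p}$ is applied to the reference vector, so as to pin down $E^{\bm p}w_\ell$ as a genuine, $\fg_0$-highest weight global basis element, and in particular proving the relation~\eqref{eq:the_relation}, is the technical heart of the argument.
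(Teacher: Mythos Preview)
Your outline for (C2) is close in spirit to the paper's argument, but there is a genuine gap in how you handle the prepolarization, and it becomes fatal in (C3).

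The prepolarization $(\ ,\ )_{W^\ell}$ is \emph{not} the restriction of the tensor-product polarization on $(W^1)^{\otimes\ell}$, nor of the polarization on $V(\ell\varpi_2)$: it is defined via the pairing~\eqref{eq:construction_of_prepolarization} between two \emph{different} ordered tensor products, and its values pick up powers of $q$ coming from the mismatch between $\wt_P(X_{k,j})$ and $\wt_P(Y_{m,j})$ in each factor. The paper isolates exactly when these powers cancel (Lemma~\ref{Lem:equality_of_prepolarizations_on_W}), namely when the coproduct pieces of $X$ and $Y$ acting on $w_1$ can be matched so that the $P$-weights agree factorwise. For $X=Y=E^{\bm p}$ this matching holds thanks to Lemmas~\ref{Lem:orthogonality} and~\ref{Lem:coproduct_rule}, and that is what yields $\normsq{E^{\bm p}w_\ell}_{W^\ell}=\normsq{E^{\bm p}w_1^{\otimes\ell}}_{(W^1)^{\otimes\ell}}=\normsq{E^{\bm p}v_{\ell\varpi_2}}_{V(\ell\varpi_2)}$. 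Your phrase ``all pairings of interest can be transported upstairs'' hides this step, which is the real content of (C2).

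For (C3) the gap is sharper. Your reduction to ``$\tilde e_iE^{\bm p}w_\ell\in q_sL$'' presupposes a crystal lattice on $W^\ell$, which is precisely what the proposition is meant to establish. One can try instead to prove the estimate upstairs and transport it down, and indeed this works for $i\in I_0\setminus\{1,2\}$ (the paper's Lemma~\ref{Lem:not12}). But for $i=2$ the weight-matching hypothesis of Lemma~\ref{Lem:equality_of_prepolarizations_on_W} \emph{fails}: by Lemma~\ref{Lem:weak_orthogonality}, $e_2E^{\bm p}w_1$ pairs nontrivially with both $E^{\bm p-\bm a}w_1$ and $E^{\bm p+\bm\gee_4}w_1$, and these have different $P$-weights (they differ by $\gd$). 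Consequently $\normsq{e_2E^{\bm p}w_\ell}_{W^\ell}\neq\normsq{e_2E^{\bm p}w_1^{\otimes\ell}}_{(W^1)^{\otimes\ell}}$ in general, and the estimate from Proposition~\ref{Prop:estimate} does not transfer. The paper handles this by an induction on $\ell$ that splits $\normsq{e_2E^{\bm p}w_{\ell+1}}$ into four pieces $Z_1,\dots,Z_4$ and controls each separately using~\eqref{eq:the_relation}, Lemma~\ref{Lem:min_max}, and several auxiliary inequalities on $m(\bm p_1,\bm p_2)$; the relation~\eqref{eq:the_relation} is one ingredient among many, not a one-line fix. Your plan as written does not account for this breakdown.
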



\subsection{Proof of (C1) in Proposition~\ref{Prop:critical_3}}\label{Subsection:C1}

By~\cite{MR1993360,MR2254805,MR2428305}, the multiplicities of a KR module are known to 
coincide with the cardinalities of \textit{highest weight rigged configurations}.
In our cases, explicit formulas for the number of them have been obtained using the Kleber algorithm~\cite{kleber1998finite},
and hence we have the following.

\begin{Prop}[{\cite[Section 9]{scrimshaw2017uniform}}]
\label{prop:hwrc}
 Let $\ell \in \Z_{>0}$. Define a subset $T_\ell \subseteq \Z_{\geq 0}^5$ by
 \[ T_\ell=\{\bm{r}=(r_1,r_2,\ldots,r_5) \in \Z_{\geq 0}^5 \mid r_1+r_2+r_3+r_4 \leq \ell, \ r_4+2r_5\leq r_2\},
 \]
 and a map $\wt_{T}\colon \Z_{\geq 0}^5 \to P_0$ by
 \[ \wt_{T}(\bm{r}) = (r_2-r_4-2r_5)\varpi_1+(-r_1-r_2-r_3-r_4+r_5)\varpi_2+r_3\ggg_1+r_4\ggg_2,
 \]
 where $\ggg_1,\ggg_2$ are given in~\eqref{eq:ggg}.
 Then we have 
 \begin{align*}
   W^\ell\cong \bigoplus_{\bm{r} \in T_\ell} V_0\big(\wt_{T}(\bm{r})+\ell\varpi_2\big)^{\oplus (1+r_2-r_4-2r_5)}
 \end{align*}
 as $U_q(\fg_0)$-modules.
\end{Prop}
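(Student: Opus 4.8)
The statement is combinatorial in nature once one invokes the fermionic formula. As recalled just before the proposition, for the types under consideration the multiplicity of $V_0(\mu)$ in $W^\ell$ equals the number of highest weight rigged configurations of weight $\mu$ attached to $W^{2,\ell}$ (this is the $X=M$ type result of \cite{MR1993360,MR2254805,MR2428305}). The plan is therefore to parametrize and count these rigged configurations, and to match the resulting weights and cardinalities with the right-hand side of the asserted isomorphism.

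A highest weight rigged configuration is a pair $(\nu,J)$ consisting of a tuple $\nu=(\nu^{(a)})_{a\in I_0}$ of partitions, whose total sizes are pinned down by the relation $\mu=\ell\varpi_2-\sum_{a\in I_0}|\nu^{(a)}|\ga_a$, together with a rigging $J$ bounded by the vacancy numbers $p^{(a)}_i(\nu)$. First I would describe the admissible $\nu$ explicitly. Because $r=2$ is the near adjoint node, the configurations are supported near $\{1,2\}\cup J$, and this localization is exactly what makes a type-independent description possible: I would encode the relevant $\nu$ by a $5$-tuple $\bm{r}=(r_1,\ldots,r_5)$ recording the multiplicities of the distinguished rows, organized so that the two nontrivial strings running along $J$ are measured by $\ggg_1$ and $\ggg_2$ from \eqref{eq:ggg}. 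Equivalently one may run the Kleber algorithm \cite{kleber1998finite} for $W^{2,\ell}$, whose tree is shallow here and whose nodes carry the same five parameters; this is the route taken in \cite{scrimshaw2017uniform}.

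Next I would compute the vacancy numbers $p^{(a)}_i(\nu(\bm{r}))$ and require them to be nonnegative, so that a valid rigging exists. The global bound $r_1+r_2+r_3+r_4\le\ell$ should appear as the constraint coming from the level, and the single internal inequality $r_4+2r_5\le r_2$ as a vacancy-number positivity condition; together these cut out precisely the index set $T_\ell$. For a fixed admissible $\nu(\bm{r})$ the number of riggings is the product $\prod_{a,i}\binom{p^{(a)}_i+m^{(a)}_i}{m^{(a)}_i}$, where $m^{(a)}_i$ is the number of rows of length $i$ in $\nu^{(a)}$; I expect all but one factor to be trivial, the exceptional factor coming from a single distinguished row of vacancy number $r_2-r_4-2r_5$, which contributes $\binom{r_2-r_4-2r_5+1}{1}=1+r_2-r_4-2r_5$ and supplies the multiplicity in the statement. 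A direct rewriting of $\ell\varpi_2-\sum_a|\nu^{(a)}(\bm{r})|\ga_a$ in the $\{\varpi_1,\varpi_2,\ggg_1,\ggg_2\}$-basis must then reproduce $\wt_T(\bm{r})+\ell\varpi_2$, completing the identification.

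The step I expect to be the main obstacle is the uniform computation of the vacancy numbers across all five Dynkin types. The partitions $\nu^{(a)}$ live on diagrams of different sizes, so one has to verify that in every case the admissible configurations collapse to the same five parameters, that the two inequalities defining $T_\ell$ arise identically, and that exactly one distinguished row remains free in the rigging count. This is precisely where the near adjoint hypothesis is essential, and checking it carefully (guided case by case by Table~\ref{table}) is the technical heart of the argument; the remaining matching of weights and the reading off of the multiplicity factor are then routine bookkeeping.
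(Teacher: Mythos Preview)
Your proposal is essentially correct and aligned with what the paper indicates, but note that the paper does not give its own proof of this proposition: it is stated as a citation from \cite[Section~9]{scrimshaw2017uniform}, with only the one-sentence justification preceding it (multiplicities equal cardinalities of highest weight rigged configurations by \cite{MR1993360,MR2254805,MR2428305}, and these are counted via the Kleber algorithm in \cite{scrimshaw2017uniform}). Your sketch expands exactly this route---fermionic formula plus an explicit Kleber-type enumeration---so there is nothing to compare beyond observing that you are reconstructing the argument the paper defers to the reference.
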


Now (C1) is easily deduced from Proposition~\ref{prop:hwrc}.
Indeed, the map $\phi\colon \Z^{6} \to \Z^{5}$ defined by
\[ \phi(p_1,\ldots,p_6) \mapsto (p_6,p_1-p_2-p_6,p_2-p_3,p_3-p_4,p_4-p_5)
\]
sends $S_\ell$ to $T_\ell$, $\wt_T\circ \phi = \wt$ holds, and for any $\bm{r}\in T_\ell$,
\begin{align*}
 \phi^{-1}(\bm{r}) \cap S_\ell = \{\bm{r}_0+k(1,1,1,1,1,0)\mid r_1\leq k \leq r_1+r_2-r_4-2r_5\},
\end{align*} 
where
\[ \bm{r}_0=(r_1+r_2+r_3+r_4+r_5,r_3+r_4+r_5,r_4+r_5,r_5,0,r_1),
\]
and hence Proposition~\ref{prop:hwrc} is equivalent to (C1).

\subsection{Proof of (C2) in Proposition~\ref{Prop:critical_3}}\label{Subsection:C2}

In this and next subsections, we need to consider prepolarizations on several types of modules 
(extremal weight modules, KR modules, or tensor products of them) simultaneously.
Therefore, when we would like to indicate what prepolarization we are considering, we will occasionally write $(\ ,\ )_M$ and $\normsq{\ \ }_M$ for $(\ ,\ )$ 
and $\normsq{\ \ }$ on a module $M$.

We begin with the following lemma.

\begin{Lem}\label{Lem:orthogonality}
 Let $M$ be a $U_q'(\fg)$-module with a prepolarization $(\ , \ )$, and $u \in M_\gl$ for some $\gl \in P_\cl$.
 Assume that $f_0u=e_1u=f_1u=0$.
 Then for any $\bm{p}, \bm{p}' \in \Z_{\geq 0}^{6}$ with $\bm{p} \neq \bm{p}'$, $(E^{\bm{p}}u,E^{\bm{p}'}u)=0$ holds.
\end{Lem}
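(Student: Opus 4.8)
The plan is to show that distinct vectors $E^{\bm{p}}u$ and $E^{\bm{p}'}u$ are orthogonal by using the weight grading together with the admissibility of the prepolarization to convert $e$'s into $f$'s. The key observation is that, by Lemma~\ref{Lem:fundamental_properties}(3), each operator $E^{\bm{p}}$ has a definite weight, so $\wt_P(E^{\bm{p}}u) = \wt(\bm{p}) + \gl$ (modulo $\delta$) where the map $\wt$ is injective on $\Z_{\geq 0}^6$. Hence if $\wt(\bm{p}) \neq \wt(\bm{p}')$, the two vectors lie in different weight spaces and are automatically orthogonal since any prepolarization pairs different weight spaces trivially (this follows from $(q^h u, v) = (u, q^h v)$ in~\eqref{eq:admissible}). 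So the only work is in the case $\wt(\bm{p}) = \wt(\bm{p}')$ but $\bm{p} \neq \bm{p}'$.

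First I would verify that $\wt$ is in fact injective on $\Z_{\geq 0}^6$, or more precisely isolate which pairs $\bm{p}\neq\bm{p}'$ can share a weight; inspecting the formula for $\wt(\bm{p})$ one reads off the coordinates $p_2-p_3$, $p_3-p_4$, $-p_1+2p_4-p_5$, and $p_1-p_2-p_3-p_4+2p_5-p_6$ from the coefficients of $\gamma_1,\gamma_2,\varpi_2,\varpi_1$, and these determine $\bm{p}$. Thus $\wt$ is injective and the hypothesis $\bm{p}\neq\bm{p}'$ already forces $\wt(\bm{p})\neq\wt(\bm{p}')$, reducing everything to the weight-orthogonality argument above.

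To make the weight-orthogonality precise, I would move all raising operators in $(E^{\bm{p}}u, E^{\bm{p}'}u)$ across the pairing using the second and third identities in~\eqref{eq:admissible}, which replace each $e_i^{(m)}$ acting on the left factor by a scalar times $t_i^{-m} f_i^{(m)}$ acting on the right factor. Since $E^{\bm{p}}$ is built from the divided powers $e_i^{(p)}$ (with the $F_4$ doubling absorbed into the definition of $E_i^{(k)}$), repeatedly applying admissibility rewrites $(E^{\bm{p}}u, E^{\bm{p}'}u)$ as $(u, X u)$ for some $X$ obtained from $E^{\bm{p}}$ by replacing each $e_i^{(m)}$ with $q_i^{-m^2} t_i^{-m} f_i^{(m)}$ and reversing the order, composed with $E^{\bm{p}'}$. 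The vanishing then comes from the fact that $u$ generates a lowest-weight situation for the relevant directions: the hypotheses $f_0 u = e_1 u = f_1 u = 0$, combined with $u \in M_\gl$, control the action of the $f_i$ coming from the $e_0, e_1$ factors and force $X u$ to lie in a weight space disjoint from $u$ unless $\wt(\bm p)=\wt(\bm p')$.

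The cleanest route, which I would adopt, is to avoid the full computation of $X$ and instead argue purely by weights: admissibility implies $(E^{\bm{p}}u, E^{\bm{p}'}u) = 0$ whenever $\wt_{P_\cl}(E^{\bm{p}}u) \neq \wt_{P_\cl}(E^{\bm{p}'}u)$, because the form pairs distinct $q^h$-eigenspaces to zero; and the weight computation from Lemma~\ref{Lem:fundamental_properties}(3) plus the injectivity of $\wt$ shows these weights differ precisely when $\bm{p}\neq\bm{p}'$. The main obstacle I anticipate is confirming injectivity of $\wt$ rigorously — one must check that the four linear combinations extracted above genuinely recover all six coordinates $p_1,\ldots,p_6$, and here the roles of $\gamma_1,\gamma_2$ as elements of $P_0^+$ (rather than multiples of $\varpi_1,\varpi_2$) matter, so I would expand $\gamma_1,\gamma_2$ via~\eqref{eq:ggg} and Table~\ref{table} to ensure no accidental degeneracy among the coefficients collapses two distinct $\bm{p}$'s to the same weight.
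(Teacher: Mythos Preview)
There is a genuine gap: the weight map $\wt\colon \Z_{\geq 0}^6 \to P_0$ is \emph{not} injective. You correctly extract the four linear combinations $p_2-p_3$, $p_3-p_4$, $-p_1+2p_4-p_5$, and $p_1-p_2-p_3-p_4+2p_5-p_6$ from the coefficients of $\ggg_1,\ggg_2,\varpi_2,\varpi_1$, but four linear forms in six variables cannot separate all points. Concretely, $\wt(2,1,1,1,0,0)=\wt(0,0,0,0,0,1)=-\varpi_1$, so $E^{(2,1,1,1,0,0)}u$ and $E^{(0,0,0,0,0,1)}u=e_0u$ have the same $P_\cl$-weight and your argument gives no information about their pairing. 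The kernel of $\wt$ is two-dimensional, generated by $(2,1,1,1,0,-1)$ and $(1,0,0,0,1,3)$, so there are many such collisions in $\Z_{\geq 0}^6$.

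This is exactly why the hypotheses $f_0u=e_1u=f_1u=0$ are present; your sketch never actually uses them, which should have been a warning sign. The paper's proof handles the degeneracy in two steps. First it uses admissibility and $f_0u=0$ (together with~\eqref{eq:commutation3},~\eqref{eq:commutation4}, and $e_1u=0$) to reduce to $p_6=p_6'=0$. Then, if $p_5\neq p_5'$ (say $p_5>p_5'$), it moves $e_1^{(p_5)}$ across the form and shows the resulting vector $f_1^{(p_5)}E^{\bm{p}'}u$ lies in $e_0U_q(\fg)u$; since $f_0E^{\bm{p}-p_5\bm{\gee}_5}u=0$, the pairing vanishes. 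Only after establishing $p_5=p_5'$ and $p_6=p_6'$ does the pure weight argument apply, and indeed one checks (as you essentially computed) that the restriction of $\wt$ to the slice $\{p_5,p_6 \text{ fixed}\}$ \emph{is} injective.
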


\begin{proof}
 Set $\bm{p}=(p_1,\ldots,p_6)$ and $\bm{p}'=(p_1',\ldots,p_6')$.
 We may assume that $p_6 \geq p_6'$.
 By the admissibility, we have
 \[ (E^{\bm{p}}u,E^{\bm{p}'}u) = q^c(E^{\bm{p}-p_6{\bm{\gee}}_6}u,f_0^{(p_6)}E^{\bm{p}'}u),
 \]
 where $c$ is a certain integer and $\bm{\gee}_i = ( \underbrace{0,\ldots,0,1}_i,0,\ldots,0)$ ($1\leq i \leq 6$) is the standard basis of $\Z^6$.
 Since $e_1^{(a)}e_0^{(b)}u=0$ if $a>b$ by~\eqref{eq:commutation3}, it follows from~\eqref{eq:commutation4} that
 \[ f_0^{(p_6)}E^{\bm{p}'}u = \gd_{p_6p_6'} q^{c'}E^{\bm{p}'-p_6\bm{\gee}_6}u
 \] 
 with $c' \in \Z$, and hence we may (and do) assume that $p_6=p_6'=0$.
 If we further assume that $p_5=p_5'$, then $\bm{p} \neq \bm{p}'$ implies $\wt_{P_\cl}(E^{\bm{p}}u) \neq \wt_{P_\cl}(E^{\bm{p}'}u)$,
 which forces $(E^{\bm{p}}u,E^{\bm{p}'}u)=0$.

 Hence we may assume that $p_5 > p_5'$.
 In this case, we have
 \begin{equation}\label{eq:applying_addmissible}
  (E^{\bm{p}}u,E^{\bm{p}'}u)= q^{c''}(E^{\bm{p}-p_5\bm{\gee}_5}u, f_1^{(p_5)}E^{\bm{p}'}u)
 \end{equation}
 with $c''\in \Z$, and by applying~\eqref{eq:commutation3} and~\eqref{eq:commutation4}, it is easily proved that
 \[ f_1^{(p_5)}E^{\bm{p}'}u \in e_0U_q(\fg)u.
 \]
 Since $f_0E^{\bm{p}-p_5\bm{\gee}_5}u=0$,~\eqref{eq:applying_addmissible} implies $(E^{\bm{p}}u,E^{\bm{p}'}u)=0$, and the assertion is proved.
\end{proof}

Since the vector $w_\ell\in W^{\ell}$ satisfies the assumption of the lemma,
$(E^{\bm{p}}w_\ell,E^{\bm{p}'}w_\ell) = 0$ follows if $\bm{p} \neq \bm{p}'$.
In order to verify (C2) in Proposition~\ref{Prop:critical_3}, it remains to show $\normsq{E^{\bm{p}}w_\ell} \in 1+q_sA$ for $\bm{p}\in S_\ell$.

\begin{Lem}\label{Lem:reduction_of_(ii)}
 For any $\bm{p}=(p_1,\ldots,p_6) \in \Z^6_{\geq 0}$ such that $p_1-p_5+p_6\leq 3\ell$,
 we have $\normsq{E^{\bm{p}}w_\ell} \in (1+qA)\normsq{E^{\bm{p}-p_6\bm{\gee}_6}w_\ell}$. 
\end{Lem}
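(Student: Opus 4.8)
Write $v = E^{\bm p - p_6\bm{\gee}_6}w_\ell$, so that $E^{\bm p}w_\ell = e_0^{(p_6)}v$ and the assertion becomes $\normsq{e_0^{(p_6)}v}\in(1+qA)\normsq{v}$. Since $(\ga_0,\ga_0)=2$ we have $q_0=q$ and $t_0 = q^{h_0}$, so applying the admissibility~\eqref{eq:admissible} to move the outermost divided power across the pairing gives
\[ \normsq{e_0^{(p_6)}v} = q^{-p_6^2}\bigl(v,\,t_0^{-p_6}f_0^{(p_6)}e_0^{(p_6)}v\bigr). \]
The plan is to expand $f_0^{(p_6)}e_0^{(p_6)}v$ by~\eqref{eq:commutation4}, isolate the leading ($k=p_6$) term, and show the remaining summands contribute only to $qA\cdot\normsq v$.

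The first step is a weight computation. The generators $e_0,e_1$ are the only ones pairing nontrivially with $h_0$, and by Lemma~\ref{Lem:fundamental_properties}(3) the operators $E_{\bm i}^{(p_2)},E_{\bm j}^{(p_3)}$ raise the weight by $p_2\theta_1,p_3\theta_J$, which (like $\ga_2$) are supported away from the nodes $0,1$; hence $E^{\bm p - p_6\bm{\gee}_6}$ raises the weight by a vector pairing to $p_1-p_5$ against $h_0$. Combined with $\langle h_0, \ell\varpi_2\rangle = -3\ell$ (the comark at node $2$ equals $3$ in every type under consideration), this gives $\langle h_0,\wt v\rangle = -N$ with $N := 3\ell - p_1 + p_5$. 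The hypothesis $p_1-p_5+p_6\leq 3\ell$ is \emph{precisely} $N\geq p_6$, which is what drives the argument.

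Expanding by~\eqref{eq:commutation4} yields $f_0^{(p_6)}e_0^{(p_6)}v = \sum_{k=0}^{p_6}\begin{bmatrix} N \\ k \end{bmatrix}e_0^{(p_6-k)}f_0^{(p_6-k)}v$. The $k=p_6$ term is $\begin{bmatrix} N \\ p_6 \end{bmatrix}v$, and after applying $t_0^{-p_6}$ (scalar $q^{p_6 N}$) and the prefactor $q^{-p_6^2}$ its coefficient is $q^{-p_6^2+p_6N}\begin{bmatrix} N \\ p_6 \end{bmatrix}$; using $[m]=q^{1-m}(1+q^2+\cdots)$ together with $N\geq p_6$, this coefficient lies in $1+qA$, producing the desired contribution $(1+qA)\normsq v$. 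For the remaining terms, set $m=p_6-k\geq 1$. Since $f_0$ commutes with $e_1,e_2,E_{\bm i},E_{\bm j}$ and $f_0 w_\ell=0$ by Proposition~\ref{Prop:properties_of_KR}(4), only the innermost $e_0^{(p_1)}$ is affected, and a single application of~\eqref{eq:commutation4} gives $f_0^{(m)}v = \begin{bmatrix} N-p_5+m \\ m \end{bmatrix}\tilde v_m$ (and $f_0^{(m)}v=0$ for $m>p_1$), where $\tilde v_m = e_1^{(p_5)}e_2^{(p_4)}E_{\bm j}^{(p_3)}E_{\bm i}^{(p_2)}e_1^{(p_1)}e_0^{(p_1-m)}w_\ell$. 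Moving $e_0^{(m)}$ back across by admissibility turns each such summand into a multiple of $\normsq{f_0^{(m)}v}=\begin{bmatrix} N-p_5+m \\ m \end{bmatrix}^2\normsq{\tilde v_m}$.

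The main obstacle is precisely the control of these cross terms uniformly in $\bm p$. Collecting the scalar factors and the two $q$-binomials, the prefactor multiplying $\normsq{\tilde v_m}$ has $q$-order $2m(m-p_6+p_5)$, which is positive for every $m\geq 1$ whenever $p_5\geq p_6$ (in particular on the range of $S_\ell$) and then places each such term in $qA\cdot\normsq v$; the delicate case is $p_6>p_5$, where this order can be nonpositive for small $m$ and one must in addition bound $\normsq{\tilde v_m}$ against $\normsq v$ — note $\tilde v_m$ carries $m$ fewer factors $e_0$ — using the lattice and almost-orthonormality properties from Proposition~\ref{Prop:Nakajima_prepolarization} and~\eqref{eq:LWr1} to ensure every $m\geq 1$ contribution still lands in $qA\cdot\normsq v$. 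Assembling the $m=0$ and $m\geq 1$ parts then gives $\normsq{e_0^{(p_6)}v}\in(1+qA)\normsq v$, as required.
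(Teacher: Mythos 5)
Write $v=E^{\bm{p}-p_6\bm{\gee}_6}w_\ell$ and $N=3\ell-p_1+p_5$ as in your proposal. Your reduction via admissibility, the weight computation $\langle h_0,\wt(v)\rangle=-N$, the expansion of $f_0^{(p_6)}e_0^{(p_6)}v$ by~\eqref{eq:commutation4}, and the leading-term estimate $q^{p_6(N-p_6)}\begin{bmatrix} N\\ p_6\end{bmatrix}\in 1+qA$ (using the hypothesis $N\geq p_6$) all agree with the paper's argument. The gap is in the cross terms with $m=p_6-k\geq 1$: in the case $p_6>p_5$ you do not prove they lie in $qA\,\normsq{v}$, you only assert that one ``must'' bound $\normsq{\tilde v_m}$ against $\normsq{v}$ using Proposition~\ref{Prop:Nakajima_prepolarization} and~\eqref{eq:LWr1}. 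Those statements concern $V(\gL)$ and $W^{r,1}$, not $W^{\ell}$; at this point of the paper the form on $W^{\ell}$ is only known to be a prepolarization (positive definiteness is a \emph{conclusion} of Proposition~\ref{Prop:criterion}), so no comparison between $\normsq{\tilde v_m}$ and $\normsq{v}$ is available. Worse, if $\normsq{v}=0$ the claimed membership $\normsq{E^{\bm{p}}w_\ell}\in(1+qA)\normsq{v}$ forces every cross term to vanish exactly, which no norm estimate of the kind you sketch can deliver. As written, the proof is therefore incomplete.

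The missing observation --- which is the entire content of the paper's proof --- is that all of your cross terms vanish identically, because $f_0v=0$. Your own formula displays this: $\tilde v_m$ contains the factor $e_1^{(p_1)}e_0^{(p_1-m)}w_\ell$, and for $m\geq 1$ relation~\eqref{eq:commutation3} with $(i,j)=(1,0)$, $c_{10}=-1$ gives $e_1^{(p_1)}e_0^{(p_1-m)}\in U_q(\fn_+)\,e_1^{(m)}$, while $e_1w_\ell=0$ by Proposition~\ref{Prop:properties_of_KR}~(4); hence $\tilde v_m=0$, i.e.\ $f_0^{(m)}v=0$ for all $m\geq 1$. With this,~\eqref{eq:commutation4} collapses to the single term $f_0^{(p_6)}e_0^{(p_6)}v=\begin{bmatrix} N\\ p_6\end{bmatrix}v$, and the leading-term computation you already carried out finishes the lemma:
\[ \normsq{E^{\bm{p}}w_\ell}=q^{p_6(N-p_6)}\begin{bmatrix} N\\ p_6\end{bmatrix}\normsq{v}\in(1+qA)\normsq{v}. \]
So your approach is the correct one, but the ``delicate case'' you worry about is vacuous; you stopped one observation short of closing the argument.
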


\begin{proof}
 We have
 \begin{align}\label{eq:e06ep}
  \normsq{E^{\bm{p}}w_\ell}= q^{p_6(3\ell-p_1+p_5-p_6)}(E^{\bm{p}-p_6\bm{\gee}_6}w_\ell,f_0^{(p_6)}E^{\bm{p}}w_\ell).
 \end{align}
 Since $f_0E^{\bm{p}-p_6\bm{\gee}_6}w_\ell=0$ holds, it follows from~\eqref{eq:commutation4}
 that 
 \[ \eqref{eq:e06ep} = q^{p_6(3\ell-p_1+p_5-p_6)}\begin{bmatrix} 3\ell-p_1+p_5\\p_6\end{bmatrix} \lVert E^{\bm{p}-p_6\bm{\gee}_6}w_\ell \rVert^2 \in (1+qA)
    \lVert E^{\bm{p}-p_6\bm{\gee}_6}w_\ell \rVert^2.
 \]
 The lemma is proved.
\end{proof}

In the sequel, we regard $\Z^5$ as a subgroup of $\Z^6$ via $\Z^5 \ni \bm{p} \hookrightarrow (\bm{p},0) \in \Z^6$.
Hence for $\bm{p} = (p_1,\ldots,p_5) \in \Z^5$, we have
\[ E^{\bm{p}}=e_1^{(p_5)}e_2^{(p_4)}E_{\bm{j}}^{(p_3)}E_{\bm{i}}^{(p_2)}e_{10}^{(p_1)}.
\]
For $\ell \in \Z_{>0}$, set 
\[ \ol{S}_\ell = S_\ell \cap \Z^5 = \{(p_1,\ldots,p_5)\mid p_5\leq p_4\leq p_3\leq p_2, \ p_2+p_3+p_4-p_5 \leq p_1 \leq p_4+\ell\}.
\]
By the lemma, the proof of the assertion $\normsq{E^{\bm{p}}w_\ell} \in 1+q_sA$ for $\bm{p} \in S_\ell$ is reduced to the case 
$\bm{p} \in \ol{S}_\ell$.
An idea for the proof of this assertion is to use the 
almost orthonormality of $\bB(\ell\varpi_2)$, the global basis of the extremal weight module $V(\ell\varpi_2)$.
To do this we need to show that $E^{\bm{p}}v_{\ell\varpi_2} \in \pm\bB(\ell\varpi_2) \cup \{0\}$ for 
$\bm{p} \in \Z_{\geq 0}^{5}$.
For this purpose, we prepare several lemmas.

\begin{Lem}\label{Lem:being_global_basis}
 Let $\gL \in P$ and $i \in I$, and assume that $u \in \pm \bB(\gL)$.\\
 {\normalfont(1)} If 
 \[ f_i^{(n)}u \in \pm \bB(\gL) \cup \{0\} \ \ \text{for all} \ n > 0,
 \]
 then we have $e_i^{(n)}u \in \pm\bB(\gL) \cup \{0\}$ for all $n >0$.\\[3pt]
 {\normalfont(2)} In particular, if $f_iu=0$ then $e_i^{(n)}u \in \pm\bB(\gL) \cup \{0\}$ for all $n >0$.

\end{Lem}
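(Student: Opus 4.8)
The plan is to prove part (1) directly, and then derive (2) as the special case where the hypothesis on $f_i^{(n)}u$ is automatic. Since $u \in \pm\bB(\gL)$, I may assume $u = G(b)$ (up to sign) for some $b \in B(\gL)$, so $u$ lies in the crystal lattice $L(\gL)$ and $u \bmod q_sL(\gL) = \pm b$. First I would fix attention on the single node $i$ and reduce the whole question to the $U_q(\fg_{\{i\}})$-structure: the element $u$ sits inside some $i$-string, and the Kashiwara operators $\tilde e_i, \tilde f_i$ move along that string. The key combinatorial input is condition (v) in the definition of a crystal base, which says $\tilde f_i b = b'$ iff $\tilde e_i b' = b$, so the string through $b$ in $B(\gL)$ is a genuine $\tilde e_i/\tilde f_i$-chain $\{b_0, b_1, \ldots, b_m\}$ with $\tilde f_i b_k = b_{k+1}$.

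The crucial step is to connect the raising operators $e_i^{(n)}$ to the lowering ones $f_i^{(n)}$ along this $i$-string by invoking the bar-invariance of the global basis together with the $\sigma_i$-symmetry of $U_q(\fg_{\{i\}})$. Concretely, I would argue as follows. The hypothesis $f_i^{(n)}u \in \pm\bB(\gL)\cup\{0\}$ for all $n>0$ says that every element obtained by applying $f_i^{(n)}$ to $u$ is again (a sign times) a global basis vector; equivalently, the divided-power action of $f_i$ sends $u$ along the string to the elements $\pm G(\tilde f_i^{\,n} b)$, with no lower-order correction terms from other global basis vectors. I would then use the fact that for the rank-one situation the global basis is governed entirely by the $\tilde e_i, \tilde f_i$ crystal structure on a single string, where the raising and lowering divided powers are related by the obvious reflection $k \mapsto m-k$ indexing the string. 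Because $u = \pm G(b)$ and $f_i^{(n)}u = \pm G(\tilde f_i^{\,n} b)$ for all $n$, the vector $u$ must be, up to sign, the leading term of a full $i$-string of global basis elements; applying $e_i^{(n)}$ then moves $u$ upward along this same string and, by the identity $e_i^{(n)}G(\tilde f_i^{\,n}\tilde e_i^{\,p} b)$ matching $\pm G(\tilde e_i^{\,p}b)$ within the string, produces $\pm G(\tilde e_i^{\,n}b) \in \pm\bB(\gL)\cup\{0\}$.

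The main obstacle I anticipate is justifying that applying $e_i^{(n)}$ produces \emph{exactly} a global basis element with no admixture of lower terms in the $q_s$-expansion; that is, controlling the potential error terms in $L(\gL)/q_sL(\gL)$. The cleanest route is to combine Proposition~\ref{Prop:Nakajima_prepolarization}(4), which characterizes $\pm\bB(\gL)$ as those bar-invariant vectors $v \in V(\gL)_\Z$ with $\normsq{v}\in 1+q_sA$, with the admissibility relations~\eqref{eq:admissible}. Since $e_i^{(n)}u$ is manifestly bar-invariant (as $\ol{e_i}=e_i$ and $\ol{u}=u$) and lies in $V(\gL)_\Z$, it suffices to verify $\normsq{e_i^{(n)}u}\in 1+q_sA$ whenever $e_i^{(n)}u \neq 0$. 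This norm computation is where the hypothesis $f_i^{(n)}u\in\pm\bB(\gL)\cup\{0\}$ enters decisively: using admissibility to rewrite $\normsq{e_i^{(n)}u} = (e_i^{(n)}u, e_i^{(n)}u)$ as a pairing involving $f_i^{(n)}$ applied to an $\tilde e_i$-shifted vector, and then invoking the almost-orthonormality of $\bB(\gL)$ from Proposition~\ref{Prop:Nakajima_prepolarization}(3), pins the norm to $1+q_sA$. Finally, part (2) follows immediately: if $f_iu=0$ then $u$ is the top of its $i$-string, so $f_i^{(n)}u=0$ for all $n>0$, the hypothesis of (1) holds vacuously, and the conclusion applies.
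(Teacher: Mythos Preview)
Your outline correctly identifies the reduction: since $e_i^{(n)}u$ is bar-invariant and lies in $V(\gL)_\Z$, by Proposition~\ref{Prop:Nakajima_prepolarization}(4) it suffices to show $\normsq{e_i^{(n)}u}\in 1+q_sA$ whenever the vector is nonzero. This is exactly how the paper begins. The problem is the norm computation itself, where your argument becomes circular and the crystal-string heuristic does not do the work you need.

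You propose to use admissibility to rewrite $\normsq{e_i^{(n)}u}$ as a pairing involving $f_i^{(n)}$ applied to something, and then invoke almost-orthonormality. Concretely this gives $(e_i^{(n)}u,e_i^{(n)}u)=q^c(u,f_i^{(n)}e_i^{(n)}u)$ for some power $c$, and $f_i^{(n)}e_i^{(n)}u$ expands via~\eqref{eq:commutation4} into a linear combination of $e_i^{(n-k)}f_i^{(n-k)}u$. Even knowing $f_i^{(n-k)}u\in\pm\bB(\gL)\cup\{0\}$, you then need control of $e_i^{(n-k)}$ applied to a global basis element, which is precisely the statement you are trying to prove. The string picture in your second paragraph does not rescue this: the hypothesis does \emph{not} say that $u$ is a single crystal-string vector $f_i^{(k)}u_k$ with $u_k\in\ker e_i$; in general $u$ decomposes as a sum $u=\sum_k f_i^{(k)}u_k$ over several such components, and the claim $f_i^{(n)}u=\pm G(\tilde f_i^{\,n}b)$ already requires an argument.

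The paper supplies the missing idea: write $u=\sum_{k}f_i^{(k)}u_k$ with $u_k\in\ker e_i$, and use the hypothesis on $f_i^{(n)}u$ together with the pairwise orthogonality of the $f_i^{(l)}u_k$ to show by \emph{descending induction on $k$} that each $u_k\in q_i^{k(\gl_i+k)}L_1$, where $L_1=\{v:\normsq{v}\in 1+q_sA\}$. Once these estimates on the individual $u_k$ are in hand, the explicit formula $e_i^{(n)}u=\sum_k\begin{bmatrix}k+n+\gl_i\\n\end{bmatrix}_if_i^{(k-n)}u_k$ and orthogonality immediately give $e_i^{(n)}u\in L_1$. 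This component-by-component analysis is what replaces your string heuristic and breaks the circularity.
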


\begin{proof}
 Let us prove the assertion (1) (note that (2) is just a special case).
 Since $u \in \pm\bB(\gL)$, it follows from Proposition~\ref{Prop:Nakajima_prepolarization} (4) 
 that $e_i^{(n)}u$ is bar-invariant and $e_i^{(n)}u\in V(\gL)_\Z$
 for any $n>0$.
 Hence, again by the same proposition, it suffices to show that $\normsq{e_i^{(n)}u} \in 1+q_sA$ for $n>0$ such that
 $e_i^{(n)}u \neq 0$.
 Set
 \[ L_1 = \{ v \in V(\gL) \mid \normsq{v} \in 1+q_sA \} \subseteq L(\gL).
 \]
 Let $\gl \in P$ be the weight of $u$, and set $\gl_i = \langle h_i,\gl\rangle \in \Z$.
 Write
 \[ u = \sum_{k=\max(0,-\gl_i)}^N f_i^{(k)}u_k, \ \ \ \text{where } u_k \in \ker e_i\cap V(\gL)_{\gl+k\ga_i}.
 \]
 Here we set $N = \max\{k\in \Z_{\geq 0} \mid u_k \neq 0\}$.
 By Proposition~\ref{Prop:Nakajima_prepolarization} (2),
 it follows for every $u_k$ that
 \begin{equation}\label{eq:invariance_of_prepolarization}
  \normsq{f^{(m)}_iu_k} = \normsq{\tilde{f}^{m}_iu_k} \in (1+q_sA)\normsq{u_k}\ \ \ \text{if } 0 \leq m \leq 2k+\gl_i.
 \end{equation}
 We shall show that $u_k \in q_i^{k(\gl_i+k)}L_1$ for every $k$ by the descending induction.
 For $0\leq n\leq N+\gl_i$, we have
 \begin{equation}\label{eq:f_iu}
  0\neq f_i^{(n)}u=\sum_{k=\max(0,-\gl_i)}^N \begin{bmatrix} k+n\\ k\end{bmatrix}_i f_i^{(k+n)}u_k \in \pm \bB(\gL) \subseteq L_1
 \end{equation}
 by the assumption.
 Since $f_i^{(k+N+\gl_i)}u_k = 0$ for $k<N$,~\eqref{eq:f_iu} with $n=N+\gl_i$ implies $\begin{bmatrix} 2N+\gl_i\\ N\end{bmatrix}_if_i^{(2N+\gl_i)}u_N \in L_1$. 
 Hence we have $u_N \in q_i^{N(N+\gl_i)}L_1$ by~\eqref{eq:invariance_of_prepolarization}, and the induction begins.
 Next let $k_0$ be an integer such that $\max(0,-\gl_i)\leq k_0<N$. 
 By~\eqref{eq:f_iu} with $n=k_0+\gl_i$, we have
 \begin{equation}\label{eq:inL1}
  \sum_{k=k_0}^N \begin{bmatrix} k+k_0+\gl_i\\ k\end{bmatrix}_i f_i^{(k+k_0+\gl_i)}u_k \in L_1.
 \end{equation}
 It is easily checked from the admissibility that $f_i^{(k+k_0+\gl_i)}u_k$'s are pairwise orthogonal with respect to the polarization, 
 and then it follows from~\eqref{eq:inL1} that $f_i^{(2k_0+\gl_i)}u_{k_0} \in q^{k_0(k_0+\gl_i)}L_1$,
 since the induction hypothesis implies for $k>k_0$ that
 \[ \begin{bmatrix} k+k_0+\gl_i\\ k\end{bmatrix}_i f_i^{(k+k_0+\gl_i)}u_k \in q_i^{k(k-k_0)}L_1 \subseteq q_sL(\gL).
 \]
 Hence $u_{k_0} \in q_i^{k_0(k_0+\gl_i)}L_1$ holds by~\eqref{eq:invariance_of_prepolarization}, as required.

 Now assume that $0< n  \leq N$.
 It follows from~\eqref{eq:commutation4} that
 \begin{equation}\label{eq:eiu}
  e_i^{(n)} u = \sum_{k=\max(0,-\gl_i)}^N \begin{bmatrix} k+n+\gl_i \\ n \end{bmatrix}_i f_i^{(k-n)}u_k,
 \end{equation}
 and since we have
 \[ \begin{bmatrix} k+n+\gl_i \\ n \end{bmatrix}_if_i^{(k-n)}u_k \begin{cases} \in q_i^{(k-n)(k+\gl_i)}L_1 & (k \geq n),\\
                               = 0 & (\text{otherwise})\end{cases}
 \] 
 by the above argument,~\eqref{eq:eiu} and the pairwise orthogonality of $f_i^{(l)}u_k$'s imply $e_i^{(n)} u \in L_1$.
 Since $e_i^{(n)}u = 0$ for $n>N$, this completes the proof. 
\end{proof}

\begin{Lem}\label{Lem:1}
 Let $\bm{p}=(p_1,p_2,p_3,p_4) \in \Z_{\geq 0}^4$. In $V(-\ell\gL_0)$, we have the following: \\
 {\normalfont(1)} For any $1\leq k \leq L$, we have 
 \[  f_{i_{k}}E_{\bm{i}[k-1,0]}^{(p_3)}e_1^{(p_2)}e_0^{(p_1)}v_{-\ell\gL_0}=e_{i_k}
  E_{\bm{i}[k,0]}^{(p_3)}e_1^{(p_2)}e_0^{(p_1)}v_{-\ell\gL_0}=0.
 \]
 {\normalfont(2)} For any $i \in I_{01}$ such that $\langle h_i,\theta_1\rangle =0$, we have $f_iE_{\bm{i}}^{(p_3)}e_1^{(p_2)}e_0^{(p_1)}v_{-\ell\gL_0} = 0$.\\
 {\normalfont(3)} For any $1\leq k \leq L'$, we have
 \[  f_{j_{k}}E_{\bm{j}[k-1,0]}^{(p_4)}E_{\bm{i}}^{(p_3)}e_1^{(p_2)}e_0^{(p_1)}v_{-\ell\gL_0} =
    e_{j_{k}}E_{\bm{j}[k,0]}^{(p_4)}E_{\bm{i}}^{(p_3)}e_1^{(p_2)}e_0^{(p_1)}v_{-\ell\gL_0}=0.
 \]
 {\normalfont(4)} For any $i \in J$ such that $\langle h_i,\theta_J\rangle =0$, we have $f_iE_{\bm{j}}^{(p_4)}E_{\bm{i}}^{(p_3)}e_1^{(p_2)}e_0^{(p_1)}=0$.
\end{Lem}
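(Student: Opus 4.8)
The plan is to realize each of the four vectors as an \emph{extremal vector} for a suitable Levi subalgebra, so that every vanishing asserted in the lemma becomes an instance of the defining annihilation properties of extremal vectors. Two elementary facts will be used constantly. First, $-\ell\gL_0$ is antidominant, so $V(-\ell\gL_0)$ is a lowest weight module and $f_iv_{-\ell\gL_0}=0$ for all $i\in I$. Second, by the relation $e_i^{(r)}f_j^{(s)}=f_j^{(s)}e_i^{(r)}$ for $i\neq j$, the operator $f_i$ passes freely through any product of divided powers $e_j^{(n)}$ with $j\neq i$. I will also use the standard $sl_2$-fact that if $f_iu=0$ and $\langle h_i,\wt u\rangle=m$, then $e_i^{(n)}u=0$ for all $n>-m$.

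First I would prove (1) and (2). Set $w=e_1^{(p_2)}e_0^{(p_1)}v_{-\ell\gL_0}$ and $u_k=E_{\bm i[k,0]}^{(p_3)}w$. Since every $i\in I_{01}$ satisfies $i\neq 0,1$, the first fact gives $f_iw=e_1^{(p_2)}e_0^{(p_1)}f_iv_{-\ell\gL_0}=0$, so $w$ is a lowest weight vector for $U_q(\fg_{I_{01}})$. Commuting $f_i$ ($i\neq2$) past $E_2^{(p_3)}=e_2^{(p_3)}$ shows that $u_0=e_2^{(p_3)}w$ is a lowest weight vector, hence an extremal vector, for the smaller Levi subalgebra $U_q(\fg_{I_{01}\setminus\{2\}})$. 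By Lemma~\ref{Lem:fundamental_properties}(1) the letters $i_1,\dots,i_{\ell_{\bm i}}$ all lie in $I_{01}\setminus\{2\}$, and from the defining conditions on $\bm i$ together with $\langle h_{i_k},\wt w\rangle=0$ one computes $\langle h_{i_k},\wt u_{k-1}\rangle=p_3\langle h_{i_k},s_{\bm i[k-1,1]}(\ga_2)\rangle=-c_\fg p_3$, while $E_{i_k}^{(p_3)}=e_{i_k}^{(c_\fg p_3)}$. Thus $u_k=e_{i_k}^{(c_\fg p_3)}u_{k-1}$ realizes Kashiwara's reflection $S_{i_k}u_{k-1}$, and by induction every $u_k$ is an extremal vector for $U_q(\fg_{I_{01}\setminus\{2\}})$. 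Extremality then yields $f_{i_k}u_{k-1}=0$ (its $i_k$-weight is $-c_\fg p_3\le0$) and $e_{i_k}u_k=0$ (its $i_k$-weight is $+c_\fg p_3\ge0$), which is (1); and for $i\in I_{01}\setminus\{2\}$ with $\langle h_i,\theta_1\rangle=0$ one has $\langle h_i,\wt u_{\ell_{\bm i}}\rangle=p_3\langle h_i,\theta_1\rangle=0$, whence $f_iu_{\ell_{\bm i}}=0$, which is (2) for such $i$.

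The only case of (2) left out by the extremal argument is $i=2$. Here $f_2$ commutes through $E_{\bm i[\ell_{\bm i},1]}^{(p_3)}$, whose letters are all $\neq2$, so $f_2u_{\ell_{\bm i}}=E_{\bm i[\ell_{\bm i},1]}^{(p_3)}f_2u_0$. Since $f_2w=0$ and $\langle h_2,\wt w\rangle=-p_2$, the vector $f_2u_0$ is a nonzero scalar multiple of $e_2^{(p_3-1)}w$, which is again lowest weight for $U_q(\fg_{I_{01}\setminus\{2\}})$ and has $i_1$-weight $\langle h_{i_1},\wt w+(p_3-1)\ga_2\rangle=-c_\fg(p_3-1)$; since $c_\fg p_3>c_\fg(p_3-1)$, the operator $e_{i_1}^{(c_\fg p_3)}$ annihilates it, so $f_2u_{\ell_{\bm i}}=0$. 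Finally, (3) and (4) follow by running the identical argument with $\bm j$, $\theta_J$, $J$, the Levi $U_q(\fg_{J\setminus\{2\}})$, and base vector $w'=E_{\bm i}^{(p_3)}w=u_{\ell_{\bm i}}$ in place of $w$: the needed input $f_iw'=0$ for all $i\in J$ is precisely (2), because $J\subseteq I_{01}$ and $\langle h_i,\theta_1\rangle=0$ for every $i\in J$ by Lemma~\ref{Lem:fundamental_properties}(2).

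The step I expect to be the main obstacle is the very first reflection. The vector $u_0=e_2^{(p_3)}w$ is in general \emph{not} the extremal vector of the $\ga_2$-string through $w$ (that would force $p_3=p_2$), so the reflection machinery cannot be run inside $U_q(\fg_{I_{01}})$; the whole argument hinges on descending to $U_q(\fg_{I_{01}\setminus\{2\}})$, for which $u_0$ genuinely is lowest weight, and on checking via Lemma~\ref{Lem:fundamental_properties}(1) that the entire sequence $\bm i[\ell_{\bm i},1]$ stays inside $I_{01}\setminus\{2\}$. The two statements involving the node $2$ then lie outside the extremal picture and must be treated by the separate shifted-string computation above, whose bookkeeping, and its analogue for $\bm j$, is the most delicate part.
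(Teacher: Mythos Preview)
Your argument is correct, but it follows a genuinely different route from the paper's. The paper proves every vanishing by a pure weight--support computation in the lowest weight module $V(-\ell\gL_0)$: for (1), one applies $s_{\bm i[k-1,1]}^{-1}$ to the weight of $f_{i_k}E_{\bm i[k-1,0]}^{(p_3)}v$ and obtains $\gL+p_3\ga_2-s_{\bm i[k-1,1]}^{-1}(\ga_{i_k})$; since $s_{\bm i[k-1,1]}^{-1}(\ga_{i_k})$ is a positive root in $R_{I_0\setminus\{1,2\}}$ (by the reducedness in Lemma~\ref{Lem:fundamental_properties}(4)), this weight lies outside $-\ell\gL_0+Q^+$ and the vector is zero. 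Parts (2)--(4) are handled identically, with Lemma~\ref{Lem:fundamental_properties}(5) supplying the needed positivity, and in particular the case $i=2$ in (2) and (4) requires no separate treatment.

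Your approach instead packages the vanishings structurally: you recognise $u_0=e_2^{(p_3)}w$ as a lowest weight vector for the Levi $U_q(\fg_{I_{01}\setminus\{2\}})$, observe that each $u_k$ is obtained from $u_{k-1}$ by an exact reflection $S_{i_k}$, and read off (1) and most of (2) from the extremal property; (3) and (4) then follow by the same mechanism one level up, with (2) and Lemma~\ref{Lem:fundamental_properties}(2) providing the lowest-weight input. The price is that the node $i=2$ lies outside your Levi, so you must handle it by a separate $sl_2$--string computation (your scalar $[p_2-p_3+1]$ may vanish, but then the conclusion is immediate, so this causes no problem). The paper's argument is more uniform and uses nothing beyond the root combinatorics already recorded in Lemma~\ref{Lem:fundamental_properties}; yours is more conceptual and explains \emph{why} the vectors behave well, at the cost of the $i=2$ case split.
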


\begin{proof}
 Set $v=e_1^{(p_2)}e_0^{(p_1)}v_{-\ell\gL_0}$ and $\gL = \wt_{P}(v)=-\ell\gL_0+p_1\ga_0+p_2\ga_1$.

 (1) We have
  \[ s_{\bm{i}[k-1,1]}^{-1}\wt_P(f_{i_{k}}E_{\bm{i}[k-1,0]}^{(p_3)}v) = \gL +p_3\ga_2 -s_{\bm{i}[k-1,1]}^{-1}(\ga_{i_k}),
  \]
  and since $s_{\bm{i}[k-1,1]}^{-1}(\ga_{i_k})$ is a positive root in $R_{I_0\setminus \{1,2\}}$ by Lemma~\ref{Lem:fundamental_properties} (1) and (4),
  the right-hand side does not belong to $-\ell\gL_0 + Q^+$. 
  Hence $f_{i_{k}}E_{\bm{i}[k-1,0]}^{(p_3)}v=0$ holds. Since $\langle h_{i_k}, \wt(E_{\bm{i}[k-1,0]}^{(p_3)}v)\rangle = -c_{\fg}p_3$, 
  we also have $e_{i_k}^{(c_{\fg}p_3+1)}E_{\bm{i}[k-1,0]}^{(p_3)}v=0$, and the proof of (1) is complete.
 
 (2) We have
  \begin{equation}\label{eq:orbit}
   s_{\bm{i}[L,1]}^{-1}\wt(f_iE_{\bm{i}}^{(p_3)}v)= \gL +p_3\ga_2 -s_{\bm{i}[L,1]}^{-1}(\ga_i),
  \end{equation}
  and $s_{\bm{i}[L,1]}^{-1}(\ga_i) \in R_1^+$ by Lemma~\ref{Lem:fundamental_properties} (5). 
  Moreover, we have $s_{\bm{i}[L,1]}^{-1}(\ga_i) \neq \ga_2$ since $\ga_i \neq \theta_1$, and hence the right-hand side of~\eqref{eq:orbit} does not 
  belong to $-\ell\gL_0+Q^+$, which implies (2).

 (3) Set $W=U_q(\fg_J)E_{\bm{i}}^{(p_3)}v$.
  The assertion (2), together with Lemma~\ref{Lem:fundamental_properties} (2), implies that $W_\gl = 0$ unless 
  $\gl \in \gL + p_3\theta_1 + Q^+$.
  Using this, the assertion (3) is proved by a similar argument to that of (1).
  Finally the proof of the assertion (4) is similar to that of (2).   
\end{proof}



\begin{Lem}\label{Lem:belonging_to_B}
 Let $\ell \in \Z_{>0}$.\\
 {\normalfont(1)} For any $(p_1,\ldots,p_5) \in \Z_{\geq 0}^{5}$, the vector
  \[ e_2^{(p_5)}E_{\bm{j}}^{(p_4)}E_{\bm{i}}^{(p_3)}e_1^{(p_2)}e_0^{(p_1)}v_{-\ell\gL_0}
  \]
  in $V(-\ell\gL_0)$ belongs to $\pm\bB(-\ell\gL_0) \cup \{0\}$.\\
 {\normalfont(2)} For any $\bm{p} = (p_1,\ldots,p_5) \in \Z_{\geq 0}^{5}$, $E^{\bm{p}}v_{-\ell \gL_0} \in V(-\ell\gL_0)$
  belongs to $\pm\bB(-\ell\gL_0) \cup \{0\}$. 
\end{Lem}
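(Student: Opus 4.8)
The plan is to prove both assertions by building the vectors up one divided power at a time, starting from the lowest weight vector $v_{-\ell\gL_0}$ and repeatedly invoking Lemma~\ref{Lem:being_global_basis}. Since $-\ell\gL_0$ is antidominant, $V(-\ell\gL_0)$ is a lowest weight module and $v_{-\ell\gL_0}\in\bB(-\ell\gL_0)$, which supplies the base case. At every subsequent step a vector already known to lie in $\pm\bB(-\ell\gL_0)\cup\{0\}$ is acted on by some $e_i^{(n)}$, and to stay inside $\pm\bB(-\ell\gL_0)\cup\{0\}$ it suffices to verify the hypothesis of Lemma~\ref{Lem:being_global_basis}; note that each factor $E_i^{(k)}$ is a divided power of a single $e_i$, so the lemma applies to it verbatim, and whenever an intermediate vector is $0$ the assertion is trivial.

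For assertion (1), I would process the factors from right to left. The vectors $e_0^{(p_1)}v_{-\ell\gL_0}$ and $v:=e_1^{(p_2)}e_0^{(p_1)}v_{-\ell\gL_0}$ lie in $\pm\bB(-\ell\gL_0)\cup\{0\}$ by Lemma~\ref{Lem:being_global_basis}(2), since $f_0$ and $f_1$ annihilate $v_{-\ell\gL_0}$ and $f_1$ commutes with $e_0$. To apply $E_{\bm{i}}^{(p_3)}=E_{i_{\ell_{\bm{i}}}}^{(p_3)}\cdots E_{i_0}^{(p_3)}$ I would induct on $k$: the case $k=0$ (applying $E_2^{(p_3)}=e_2^{(p_3)}$) uses $f_2v=0$, which holds because $f_2$ commutes with $e_0,e_1$ and kills $v_{-\ell\gL_0}$, while the passage from $E_{\bm{i}[k-1,0]}^{(p_3)}v$ to $E_{\bm{i}[k,0]}^{(p_3)}v$ uses exactly the vanishing $f_{i_k}E_{\bm{i}[k-1,0]}^{(p_3)}v=0$ furnished by Lemma~\ref{Lem:1}(1). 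The same induction applied to $E_{\bm{j}}^{(p_4)}$ uses Lemma~\ref{Lem:1}(3), its base case $f_2E_{\bm{i}}^{(p_3)}v=0$ coming from Lemma~\ref{Lem:1}(2) (here $\langle h_2,\theta_1\rangle=0$ by Lemma~\ref{Lem:fundamental_properties}(2)). Finally the outermost $e_2^{(p_5)}$ is handled by Lemma~\ref{Lem:being_global_basis}(2) together with $f_2E_{\bm{j}}^{(p_4)}E_{\bm{i}}^{(p_3)}v=0$, which is Lemma~\ref{Lem:1}(4) applied to $i=2$ (using $\langle h_2,\theta_J\rangle=0$). This gives (1).

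For assertion (2), the new feature is that $E^{\bm{p}}$ contains $e_{10}^{(p_1)}=e_1^{(p_1)}e_0^{(p_1)}$, so node $1$ occurs twice and the outermost factor is again $e_1^{(p_5)}$. Writing $w=e_2^{(p_4)}E_{\bm{j}}^{(p_3)}E_{\bm{i}}^{(p_2)}e_1^{(p_1)}e_0^{(p_1)}v_{-\ell\gL_0}$, assertion (1) with shifted indices already yields $w\in\pm\bB(-\ell\gL_0)\cup\{0\}$, and it remains to treat $E^{\bm{p}}v_{-\ell\gL_0}=e_1^{(p_5)}w$. One cannot invoke Lemma~\ref{Lem:being_global_basis}(2) directly here, since $f_1w\neq 0$ in general; instead I would use part (1) of that lemma. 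A short computation with~\eqref{eq:commutation4}, using that $f_1$ commutes with $e_2,E_{\bm{j}},E_{\bm{i}}$ and that $f_1$ annihilates $e_0^{(p_1)}v_{-\ell\gL_0}$, gives $f_1^{(n)}w=e_2^{(p_4)}E_{\bm{j}}^{(p_3)}E_{\bm{i}}^{(p_2)}e_1^{(p_1-n)}e_0^{(p_1)}v_{-\ell\gL_0}$, which is again an instance of assertion (1) and hence lies in $\pm\bB(-\ell\gL_0)\cup\{0\}$ for every $n>0$. Lemma~\ref{Lem:being_global_basis}(1) then gives $e_1^{(n)}w\in\pm\bB(-\ell\gL_0)\cup\{0\}$ for all $n>0$, in particular for $n=p_5$.

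The main obstacle is precisely this last bootstrap. Because node $1$ appears on both ends of the word, the naive route through Lemma~\ref{Lem:being_global_basis}(2) fails, and the point is to recognize that the lowered vectors $f_1^{(n)}w$ collapse --- via the $e_1f_1$ commutation relation --- back into the family already controlled by assertion (1), so that the stronger Lemma~\ref{Lem:being_global_basis}(1) becomes applicable. Once Lemmas~\ref{Lem:being_global_basis} and~\ref{Lem:1} are available, the remaining steps are routine.
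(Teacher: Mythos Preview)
Your proposal is correct and follows essentially the same approach as the paper: build the vector up one divided power at a time using Lemma~\ref{Lem:being_global_basis}(2) together with the vanishing results of Lemma~\ref{Lem:1} for part~(1), and for part~(2) compute $f_1^{(n)}w=e_2^{(p_4)}E_{\bm{j}}^{(p_3)}E_{\bm{i}}^{(p_2)}e_1^{(p_1-n)}e_0^{(p_1)}v_{-\ell\gL_0}$ via~\eqref{eq:commutation4} to feed into Lemma~\ref{Lem:being_global_basis}(1). Your write-up spells out the intermediate steps (in particular the use of $\langle h_2,\theta_1\rangle=\langle h_2,\theta_J\rangle=0$ for the base cases) more explicitly than the paper does, but the argument is the same.
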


\begin{proof}
  Obviously,
  \[ f_0v_{-\ell\gL_0} = f_1e_0^{(p_1)}v_{-\ell\gL_0}= f_2e_1^{(p_2)}e_0^{(p_1)}v_{-\ell\gL_0}=0
  \] 
  holds.
  Then the assertion (1) is proved by applying Lemma~\ref{Lem:being_global_basis} (2) repeatedly using Lemma~\ref{Lem:1}.
  For any $n>0$, it is easily seen using~\eqref{eq:commutation4} that
  \[ f_1^{(n)}e_2^{(p_4)}E_{\bm{j}}^{(p_3)}E_{\bm{i}}^{(p_2)}e_{10}^{(p_1)}v_{-\ell\gL_0} 
     = e_2^{(p_4)}E_{\bm{j}}^{(p_3)}E_{\bm{i}}^{(p_2)}e_1^{(p_1-n)}e_{0}^{(p_1)}v_{-\ell\gL_0},
  \]
  which belongs to $\pm\bB(-\ell\gL_0) \cup \{0\}$ by (1).
  Hence it follows from Lemma~\ref{Lem:being_global_basis} that $E^{\bm{p}}v_{-\ell\gL_0} \in \pm\bB(-\ell\gL_0)\cup \{0\}$.
  The assertion (2) is proved.
\end{proof}

Now we prove the following.

\begin{Prop}\label{Prop:being_global_basis_of_level0}
 Let $\ell \in \Z_{>0}$. 
 For any $\bm{p} \in \Z_{\geq 0}^5$, the vector $E^{\bm{p}}v_{\ell\varpi_2} \in V(\ell\varpi_2)$ belongs to $\pm \bB(\ell\varpi_2) \cup \{0\}$.
\end{Prop}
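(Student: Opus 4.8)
The plan is to deduce the level-zero assertion from the highest/lowest weight case already settled in Lemma~\ref{Lem:belonging_to_B}, transporting it through the homomorphism furnished by Lemma~\ref{Lem:hw-lw-lev0}. Write $a=\langle K,\gL_2\rangle\in\Z_{>0}$ and note that $\gL_2-\varpi_2=a\gL_0$, so the dominant weights $\gL^1=\ell\gL_2$ and $\gL^2=\ell a\gL_0$ satisfy $\gL^1-\gL^2=\ell\varpi_2$. By Lemma~\ref{Lem:hw-lw-lev0} there is a surjective $U_q(\fg)$-module homomorphism
\[ \Psi\colon V(\ell\gL_2)\otimes V(-\ell a\gL_0)\longrightarrow V(\ell\varpi_2), \qquad v_{\ell\gL_2}\otimes v_{-\ell a\gL_0}\mapsto v_{\ell\varpi_2}, \]
which carries $\{X\in\bB(\ell\gL_2,-\ell a\gL_0)\mid \Psi(X)\neq 0\}$ bijectively onto $\bB(\ell\varpi_2)$; being $\Q(q_s)$-linear, it maps $\pm\bB(\ell\gL_2,-\ell a\gL_0)\cup\{0\}$ into $\pm\bB(\ell\varpi_2)\cup\{0\}$.

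The key step is to compute $E^{\bm{p}}(v_{\ell\gL_2}\otimes v_{-\ell a\gL_0})$. Since $\ell\gL_2$ is dominant, $V(\ell\gL_2)$ is a highest weight module and $v_{\ell\gL_2}$ is its highest weight vector, so $e_i^{(k)}v_{\ell\gL_2}=0$ for all $i\in I$ and $k>0$. Applying the coproduct $\gD(e_i^{(m)})=\sum_{k}q_i^{k(m-k)}e_i^{(k)}\otimes t_i^{-k}e_i^{(m-k)}$ to a vector $v_{\ell\gL_2}\otimes w$, only the $k=0$ term survives, giving $e_i^{(m)}(v_{\ell\gL_2}\otimes w)=v_{\ell\gL_2}\otimes e_i^{(m)}w$. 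As each factor of $E^{\bm{p}}$ is a divided power $e_i^{(m)}$ and the first tensor component stays equal to $v_{\ell\gL_2}$ throughout, iterating this identity yields
\[ E^{\bm{p}}(v_{\ell\gL_2}\otimes v_{-\ell a\gL_0})=v_{\ell\gL_2}\otimes E^{\bm{p}}v_{-\ell a\gL_0}. \]

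I would then invoke Lemma~\ref{Lem:belonging_to_B}(2) with $\ell a$ in place of $\ell$ (that lemma is stated uniformly for an arbitrary positive integer level) to obtain $E^{\bm{p}}v_{-\ell a\gL_0}\in\pm\bB(-\ell a\gL_0)\cup\{0\}$, and then use~\eqref{eq:gl.base_of_hwlw} to conclude $v_{\ell\gL_2}\otimes E^{\bm{p}}v_{-\ell a\gL_0}\in\pm\bB(\ell\gL_2,-\ell a\gL_0)\cup\{0\}$. Since $\Psi$ is a module homomorphism, the computation of the previous paragraph gives $\Psi\big(v_{\ell\gL_2}\otimes E^{\bm{p}}v_{-\ell a\gL_0}\big)=E^{\bm{p}}\Psi(v_{\ell\gL_2}\otimes v_{-\ell a\gL_0})=E^{\bm{p}}v_{\ell\varpi_2}$, and by the first paragraph this lies in $\pm\bB(\ell\varpi_2)\cup\{0\}$, as desired.

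The argument is essentially a chain of transports, so the mathematical content is concentrated in the already-proved Lemma~\ref{Lem:belonging_to_B} and in the sliding identity of the second paragraph. The only point demanding care is the bookkeeping of signs through the two global-basis maps, namely the inclusion~\eqref{eq:gl.base_of_hwlw} and the projection $\Psi$; both are $\Q(q_s)$-linear and hence respect $\pm\bB\cup\{0\}$, so no genuine obstacle arises there. I would also make sure to record that $a=\langle K,\gL_2\rangle$ is a positive integer (a comark), which is what legitimizes substituting the level $\ell a$ into Lemma~\ref{Lem:belonging_to_B}.
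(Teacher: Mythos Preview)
Your proof is correct and follows essentially the same approach as the paper: the paper specializes your $a=\langle K,\gL_2\rangle$ to its explicit value $3$ (since $\varpi_2=\gL_2-3\gL_0$ in all types considered), applies Lemma~\ref{Lem:belonging_to_B}(2) and~\eqref{eq:gl.base_of_hwlw} to obtain $v_{\ell\gL_2}\otimes E^{\bm{p}}v_{-3\ell\gL_0}\in\pm\bB(\ell\gL_2,-3\ell\gL_0)\cup\{0\}$, and then projects via Lemma~\ref{Lem:hw-lw-lev0}. Your added detail on the coproduct computation justifying $E^{\bm{p}}(v_{\ell\gL_2}\otimes v_{-\ell a\gL_0})=v_{\ell\gL_2}\otimes E^{\bm{p}}v_{-\ell a\gL_0}$ is implicit in the paper's argument.
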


\begin{proof}
 By Lemma~\ref{Lem:belonging_to_B} (2) and~\eqref{eq:gl.base_of_hwlw}, we have
 \[ v_{\ell\gL_2} \otimes E^{\bm{p}}v_{-3\ell\gL_0} \in \pm\bB(\ell\gL_2,-3\ell\gL_0) \cup \{0\},
 \]
 and then Lemma~\ref{Lem:hw-lw-lev0} implies that $E^{\bm{p}}v_{\ell\varpi_2} \in \pm\bB(\ell\varpi_2) \cup \{0\}$ as required,
 since $\varpi_2 = \gL_2-3\gL_0$.
\end{proof}

Next we will show that $\normsq{E^{\bm{p}}v_{\ell\varpi_2}}_{V(\ell\varpi_2)} = \normsq{E^{\bm{p}}w_1^{\otimes \ell}}_{(W^1)^{\otimes\ell}}$ for $\bm{p} \in \Z_{\geq 0}^5$.
Before doing that we prepare a lemma,
which is also used in the next subsection.

\begin{Lem}\label{Lem:coproduct_rule}
 Let $M_1,\ldots,M_n$ be integrable $U_q'(\fg)$-modules, $\bm{\gl}=(\gl_1,\ldots,\gl_n)$ an $n$-tuple of elements of $P_\cl$,
 and $u_k \in \big(M_k)_{\gl_k}$ $(1\leq k \leq n)$.
 Assume that each $u_k$ satisfies $e_iu_k=0$ for $i \in I_0$.
 Then for any $\bm{p} \in \Z_{\geq 0}^5$, the vector $E^{\bm{p}}(u_1\otimes \cdots \otimes u_n) 
 \in M_1 \otimes \cdots \otimes M_n$ can be written in the form
 \begin{equation}\label{eq:coproduct_rule}
  E^{\bm{p}}(u_1\otimes \cdots \otimes u_n)
   =\sum_{\begin{smallmatrix}\bm{p}_1,\ldots,\bm{p}_n \in \Z_{\geq 0}^5; \\ \bm{p}_1+\cdots+\bm{p}_n=\bm{p}\end{smallmatrix}}
    q^{m(\bm{p}_1,\ldots,\bm{p}_n:\bm{\gl})}E^{\bm{p}_1}u_1\otimes \cdots \otimes E^{\bm{p}_n}u_n,
 \end{equation}
 where $m(\bm{p}_1,\ldots,\bm{p}_n:\bm{\gl}) \in D^{-1}\Z$ are certain numbers depending only on $\bm{p}_1,\ldots,\bm{p}_n$ and $\bm{\gl}$.
\end{Lem}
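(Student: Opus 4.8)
The plan is to reduce to the case $n=2$ and then to process the word $E^{\bm{p}}$ from the left, handling its ``blocks'' one at a time. Throughout, once the diagonal form of the right-hand side of \eqref{eq:coproduct_rule} is established, the universality of the exponent will be automatic: every coefficient produced is a product of the $q_i^{ab}$- and $t_i^{-a}$-weight factors appearing in the coproduct formula, and these depend only on the chosen divided-power splittings $\bm{p}_1,\ldots,\bm{p}_n$ and on the weights $\bm{\gl}$ through the pairings $\langle h_i,\cdot\rangle$; so $m(\bm{p}_1,\ldots,\bm{p}_n:\bm{\gl})$ is a well-defined function of $(\bm{p}_1,\ldots,\bm{p}_n,\bm{\gl})$, independent of the modules $M_k$.

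First I would reduce to $n=2$. Writing $U=u_2\otimes\cdots\otimes u_n$, one checks that $e_iU=0$ for all $i\in I_0$: since $\gD(e_i)=e_i\otimes t_i^{-1}+1\otimes e_i$, the operator $e_i$ annihilates a tensor product of $e_i$-highest weight vectors, so the claim for $U$ follows by induction on $n$. Using coassociativity $\gD^{(n)}=(\gD\otimes\id^{\otimes(n-2)})\circ\gD^{(n-1)}$, the $n=2$ case applied to $M_1$ and $M_2\otimes\cdots\otimes M_n$ splits off $u_1$, after which the inductive hypothesis expands $E^{\bm{p}'}U$. The accumulated exponents combine into a function of the splitting and of $\bm{\gl}$, as required.

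For $n=2$ I would induct on the total degree of $E^{\bm{p}}$ and peel operators from the left. The two leftmost singleton factors $e_1^{(p_5)}$ and $e_2^{(p_4)}$ cost nothing: applying $e_1^{(a)}$ to a term $E^{\bm{q}}u_k$ whose fifth component vanishes simply produces $E^{\bm{q}+a\bm{\gee}_5}u_k$, and $e_2^{(a)}$ increments the fourth component, so the coproducts of $e_1^{(p_5)}$ and $e_2^{(p_4)}$ preserve the diagonal form and contribute only universal factors. The problem thus reduces to a \emph{block statement}: each of the three blocks $e_{10}^{(p_1)}=e_1^{(p_1)}e_0^{(p_1)}$, $E_{\bm{i}}^{(p_2)}$, and $E_{\bm{j}}^{(p_3)}$, applied to the relevant two-factor tensor, acts diagonally with a \emph{uniform} splitting of its divided-power parameter. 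For $e_{10}^{(p_1)}$ this is immediate: after splitting $e_0^{(p_1)}$ as $\sum_{a+b=p_1}$, the relation \eqref{eq:commutation3} gives $e_1^{(a')}e_0^{(a)}\in U_q(\fn_+)\,e_1^{(a'-a)}$, so the $e_1$-step kills every term with $a'>a$ because $e_1u_k=0$, and conservation of degree forces $a'=a$ in each factor. For $E_{\bm{i}}^{(p_2)}$ and $E_{\bm{j}}^{(p_3)}$ the same mechanism runs letter by letter: when $e_{i_k}$ is applied, \eqref{eq:commutation3} (and \eqref{eq:commutation2} where a node repeats) lets one absorb the preceding adjacent letter and move the surplus power $e_{i_k}^{(c-m)}$ past the remaining operators, whereupon the adjacency and weight data of Lemma~\ref{Lem:fundamental_properties} together with $e_{i_k}u_k=0$ make it annihilate $u_k$; this is the tensor-factor analogue of the vanishing established in Lemma~\ref{Lem:1}. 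Conservation of the block's total degree then forces the uniform splitting, and reading off the $q_i$- and $t_i^{-a}$-powers yields the universal exponent.

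The main obstacle I anticipate is the block statement for $E_{\bm{i}}^{(p_2)}$ and $E_{\bm{j}}^{(p_3)}$ in types $E_7^{(1)}$ and $E_8^{(1)}$, where $\theta_1$ and $\theta_J$ have simple-root coefficients larger than $1$ and so the sequences $\bm{i},\bm{j}$ contain repeated nodes. There the surplus operator cannot simply be commuted down to the generator, since moving it past earlier letters at the same or adjacent nodes produces several straightening terms through \eqref{eq:commutation2}, and one must check that all contributions carrying a non-uniform splitting cancel. I would organize this exactly as in the proof of Lemma~\ref{Lem:1}, exploiting the defining conditions $\langle h_{i_k},s_{\bm{i}[k-1,1]}(\ga_2)\rangle=-c_\fg$ and the reducedness of $s_{\bm{i}}$ and $s_{\bm{j}}$ from Lemma~\ref{Lem:fundamental_properties} to keep the bookkeeping (and, in type $F_4^{(1)}$, the doubling $E_i^{(k)}=e_i^{(2k)}$ at short nodes) under control.
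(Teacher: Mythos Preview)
Your overall strategy is sound: the reduction to $n=2$, the handling of the singleton factors $e_1^{(p_5)}$, $e_2^{(p_4)}$, and the treatment of the $e_{10}^{(p_1)}$ block via \eqref{eq:commutation3} are all correct (the paper works directly with $n$ factors rather than reducing to two, but your reduction is fine). The real difference is in how you handle the blocks $E_{\bm{i}}^{(p_2)}$ and $E_{\bm{j}}^{(p_3)}$, and here you are making life harder than necessary.

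You propose to establish the uniform-splitting property for these blocks by running a Serre-relation argument letter by letter inside each tensor factor $M_k$, and you correctly flag the repeated-node types as the obstacle. But your plan to ``organize this exactly as in the proof of Lemma~\ref{Lem:1}'' does not quite transfer as stated: that proof is a \emph{weight} argument specific to the lowest weight module $V(-\ell\gL_0)$ (vectors vanish because their weights fall outside $-\ell\gL_0+Q^+$), not a Serre-relation computation, so it cannot be replayed verbatim in an arbitrary $M_k$. The paper's fix is to \emph{use} Lemma~\ref{Lem:1} rather than reprove it. Since $M_k$ is integrable, $e_0$ acts locally nilpotently on $u_k$, so for $\ell$ sufficiently large the presentation of $V(-\ell\gL_0)$ as a $U_q(\fn_+)$-module (generator $v_{-\ell\gL_0}$, relations $e_0^{\ell+1}v_{-\ell\gL_0}=0$ and $e_iv_{-\ell\gL_0}=0$ for $i\in I_0$) is satisfied by $u_k$; this yields a $U_q(\fn_+)$-module homomorphism $V(-\ell\gL_0)\to M_k$ sending $v_{-\ell\gL_0}\mapsto u_k$. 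The vanishing statements of Lemma~\ref{Lem:1} then push forward along this map and force the uniform splitting of each block at once, with no case distinction for repeated nodes. This one-line shortcut replaces the entire combinatorial obstacle you anticipated.
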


\begin{proof} 
 By the definition of the coproduct, $E^{\bm{p}}(u_1\otimes \cdots \otimes u_n)$ is a sum of vectors of the form
 \begin{equation}\label{eq:vector}
  q^m\bigotimes_{k=1}^n e_1^{(s_k)}e_2^{(r_k)}e_{j_{L'}}^{(h_{kL'})}\cdots e_{j_0}^{(h_{k0})}
  e_{i_{L}}^{(g_{kL})}\cdots e_{i_0}^{(g_{k0})}e_1^{(b_k)}e_0^{(a_k)}u_k.
 \end{equation}
 Since $e_1^{(b_k)}e_0^{(a_k)}u_k = 0$ if $b_k>a_k$ by~\eqref{eq:commutation3} and $\sum_k a_k = \sum_k b_k = p_1$,
 the vector~\eqref{eq:vector} becomes $0$ unless $a_k = b_k$ for all $k$.

 Take a sufficiently large positive integer $\ell$.
 For any $k$, there is a $U_q(\fn_+)$-module homomorphism from $V(-\ell\gL_0)$ to $M_k$ mapping $v_{-\ell\gL_0}$ to $u_k$, which follows from the well-known fact that
 $V(-\ell\gL_0)$ is generated by $v_{-\ell\gL_0}$ as a $U_q(\fn_+)$-module with relations
 \[ e_0^{\ell+1}v_{-\ell\gL_0}= 0 \ \ \text{and} \ \ e_iv_{-\ell\gL_0}=0 \ \ (i \in I_0).
 \]
 Then since $\sum_k g_{kt} = 2p_2$ if $\fg$ is of type $F_4^{(1)}$ and $t\neq 0$ and $\sum_k g_{kt} =p_2$ otherwise,
 we see from Lemma~\ref{Lem:1} (1) that the vector~\eqref{eq:vector} becomes $0$ unless 
 $c_{\fg}g_{k0}=g_{k1}=\cdots =g_{kL}$ for all $k$.
 By a similar argument using Lemma~\ref{Lem:1} (3), we also see that the vector~\eqref{eq:vector} with 
 $c_{\fg}g_{k0}=g_{k1}=\cdots =g_{kL}$ 
 becomes $0$ unless $c_{\fg}h_{k0}=h_{k1}=\cdots=h_{kL'}$ for all $k$. The proof is complete.
\end{proof}

\begin{Prop}\label{Prop:equal_of_prepolarization}
 Let $\ell \in \Z_{>0}$ and $\bm{p} \in \Z_{\geq 0}^5$.\\
 {\normalfont(1)} We have $\normsq{E^{\bm{p}}v_{\ell\varpi_2}}_{V(\ell\varpi_2)} = \normsq{E^{\bm{p}}w_1^{\otimes \ell}}_{(W^1)^{\otimes \ell}}$.\\
 {\normalfont(2)} If $E^{\bm{p}}w_1^{\otimes \ell} \neq 0$, we have $\normsq{E^{\bm{p}}w_1^{\otimes \ell}} \in 1+q_sA$.
\end{Prop}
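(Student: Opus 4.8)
The plan is to establish the exact norm identity (1) by expanding both sides over the coproduct and matching them factor by factor, and then to read off (2) from the global basis description already obtained in Proposition~\ref{Prop:being_global_basis_of_level0}.

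I would dispose of (2) first, granting (1). Since the tensor polarization on $(W^1)^{\otimes\ell}$ is positive definite and the form on $V(\ell\varpi_2)$ is a polarization by Proposition~\ref{Prop:Nakajima_prepolarization}(1), the hypothesis $E^{\bm{p}}w_1^{\otimes\ell}\neq 0$ together with (1) gives $\normsq{E^{\bm{p}}v_{\ell\varpi_2}}\neq 0$, hence $E^{\bm{p}}v_{\ell\varpi_2}\neq 0$. By Proposition~\ref{Prop:being_global_basis_of_level0} this vector then lies in $\pm\bB(\ell\varpi_2)$, so $\normsq{E^{\bm{p}}v_{\ell\varpi_2}}\in 1+q_sA$ by the almost orthonormality in Proposition~\ref{Prop:Nakajima_prepolarization}(3); applying (1) once more yields $\normsq{E^{\bm{p}}w_1^{\otimes\ell}}\in 1+q_sA$.

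For (1), the first step is to reduce the two sides to the same combinatorial sum. Both generators $w_1\in W^1$ and $v_{\varpi_2}\in V(\varpi_2)$ satisfy $e_iu=0$ for $i\in I_0$ and $f_0u=e_1u=f_1u=0$: for $w_1$ this is Proposition~\ref{Prop:properties_of_KR}(4), and for $v_{\varpi_2}$ it follows from extremality together with $\langle h_0,\varpi_2\rangle=-3<0$ and $\langle h_1,\varpi_2\rangle=0$. Hence Lemma~\ref{Lem:coproduct_rule} applies to both $(W^1)^{\otimes\ell}$ and $V(\varpi_2)^{\otimes\ell}$, and Lemma~\ref{Lem:orthogonality} kills every off-diagonal term of the resulting double sums, leaving
\[ \normsq{E^{\bm{p}}w_1^{\otimes\ell}}=\sum_{\bm{p}_1+\cdots+\bm{p}_\ell=\bm{p}}q^{2m(\bm{p}_1,\ldots,\bm{p}_\ell:\bm{\gl})}\prod_{k=1}^\ell\normsq{E^{\bm{p}_k}w_1}_{W^1} \]
and the identical expression for $\normsq{E^{\bm{p}}v_{\varpi_2}^{\otimes\ell}}$ with $\normsq{E^{\bm{p}_k}v_{\varpi_2}}_{V(\varpi_2)}$ replacing $\normsq{E^{\bm{p}_k}w_1}_{W^1}$; here $\bm{\gl}=(\varpi_2,\ldots,\varpi_2)$, and the exponents $m$ agree because Lemma~\ref{Lem:coproduct_rule} makes them depend only on the $\bm{p}_k$ and $\bm{\gl}$. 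The per-factor norms coincide: taking $w_1=p(v_{\varpi_2})$, formula~\eqref{eq:invariance_of_zk} writes $\normsq{E^{\bm{p}_k}w_1}_{W^1}=\sum_{j\in\Z}(z_2^jE^{\bm{p}_k}v_{\varpi_2},E^{\bm{p}_k}v_{\varpi_2})_{V(\varpi_2)}$, and since $z_2$ commutes with $E^{\bm{p}_k}\in U_q'(\fg)$ and shifts the $P$-weight by $j\gd$, orthogonality of distinct $P$-weight spaces leaves only $j=0$. Thus $\normsq{E^{\bm{p}}w_1^{\otimes\ell}}=\normsq{E^{\bm{p}}v_{\varpi_2}^{\otimes\ell}}_{V(\varpi_2)^{\otimes\ell}}$.

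It remains to identify $\normsq{E^{\bm{p}}v_{\ell\varpi_2}}_{V(\ell\varpi_2)}$ with $\normsq{E^{\bm{p}}v_{\varpi_2}^{\otimes\ell}}$, and this is the step I expect to be the main obstacle. The plan is to use that $v_{\varpi_2}^{\otimes\ell}$ is an extremal vector of weight $\ell\varpi_2$ in $V(\varpi_2)^{\otimes\ell}$ (verified on the crystal $B(\varpi_2)^{\otimes\ell}$), so that the universal property of the extremal weight module furnishes a $U_q(\fg)$-module homomorphism $\Phi\colon V(\ell\varpi_2)\to V(\varpi_2)^{\otimes\ell}$ with $\Phi(v_{\ell\varpi_2})=v_{\varpi_2}^{\otimes\ell}$; as $E^{\bm{p}}\in U_q'(\fg)$ this gives $\Phi(E^{\bm{p}}v_{\ell\varpi_2})=E^{\bm{p}}v_{\varpi_2}^{\otimes\ell}$, so it suffices to prove that $\Phi$ is an isometry. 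I would obtain this by pulling back the tensor polarization along $\Phi$: the pullback is again a prepolarization on $V(\ell\varpi_2)$, whose value at $v_{\ell\varpi_2}$ equals $(\normsq{v_{\varpi_2}})^\ell=1$, and since $V(\ell\varpi_2)$ is generated by $v_{\ell\varpi_2}$ with one-dimensional $\ell\varpi_2$-weight space, admissibility forces any prepolarization to be determined by its value at $v_{\ell\varpi_2}$; hence the pullback coincides with the polarization of Proposition~\ref{Prop:Nakajima_prepolarization}(1). The two delicate inputs, namely the extremality of $v_{\varpi_2}^{\otimes\ell}$ and the one-dimensionality of the extremal weight space, are exactly what make this step the crux; granting them, combining the three identities proves (1).
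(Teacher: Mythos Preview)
Your deduction of (2) from (1) and your identification
\(\normsq{E^{\bm{p}}w_1^{\otimes\ell}}_{(W^1)^{\otimes\ell}}=\normsq{E^{\bm{p}}v_{\varpi_2}^{\otimes\ell}}_{V(\varpi_2)^{\otimes\ell}}\)
are correct and agree with the paper. The problem is the final step, where you argue that
\(\Phi\colon V(\ell\varpi_2)\to V(\varpi_2)^{\otimes\ell}\) is an isometry by pulling back the
tensor polarization and invoking uniqueness of prepolarizations. That uniqueness hinges on
\(\dim V(\ell\varpi_2)_{\ell\varpi_2}=1\), and this is \emph{false} for \(\ell\ge 2\): for a
level-zero dominant weight \(\lambda=\sum m_i\varpi_i\), the \(P_{\cl}\)-weight space of
\(V(\lambda)\) at \(\lambda\) is a symmetric Laurent polynomial ring in \(\sum m_i\) variables,
and its degree-zero (i.e.\ \(P\)-weight \(\lambda\)) part is infinite-dimensional once
\(\sum m_i\ge 2\). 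Consequently there are many prepolarizations with the same value at
\(v_{\ell\varpi_2}\), and indeed \(\Phi\) does \emph{not} preserve the polarization in general;
the paper states this explicitly. So the ``delicate input'' you grant yourself is not available,
and the argument collapses at exactly the point you flagged as the crux.

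What the paper does instead is use Nakajima's formula \cite[Proposition~4.10]{MR2074599}
(equation~\eqref{eq:prepolarization_in_tensor}), which expresses \((u,v)_{V(\ell\varpi_2)}\) as
\(\frac{1}{\ell!}\) times the constant term of
\(\(\Phi(u),\Phi(v)\)\prod_{k\ne m}(1-t_kt_m^{-1})\), where \(\(\ ,\ \)\) is a \(t\)-deformed
form built from \(\(u,v\)_t=\sum_m t^m(z_2^{-m}u,v)\) on each tensor factor. The very
orthogonality you already exploit (Lemma~\ref{Lem:orthogonality}) combined with the
\(P\)-weight shift of \(z_2\) forces
\(\(E^{\bm{p}}v_{\varpi_2},E^{\bm{p}'}v_{\varpi_2}\)_t=(E^{\bm{p}}v_{\varpi_2},E^{\bm{p}'}v_{\varpi_2})_{V(\varpi_2)}\)
to be \(t\)-independent; then Lemma~\ref{Lem:coproduct_rule} makes
\(\(E^{\bm{p}}v_{\varpi_2}^{\otimes\ell},E^{\bm{p}}v_{\varpi_2}^{\otimes\ell}\)\) constant in all
\(t_k\), so the constant-term-with-Vandermonde collapses to
\(\normsq{E^{\bm{p}}v_{\varpi_2}^{\otimes\ell}}\cdot\frac{1}{\ell!}[\prod_{k\ne m}(1-t_kt_m^{-1})]_1
=\normsq{E^{\bm{p}}v_{\varpi_2}^{\otimes\ell}}\). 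In other words, the desired norm identity does
hold on these particular vectors, but for the combinatorial reason that the \(t\)-dependence
disappears, not because \(\Phi\) is globally isometric.
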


\begin{proof}
(1) First we show the following:
\begin{equation}
\label{eq:norm_equal_claim}
\normsq{E^{\bm{p}}v_{\ell\varpi_2}}_{V(\ell\varpi_2)} = \normsq{E^{\bm{p}}v_{\varpi_2}^{\otimes \ell}}_{V(\varpi_2)^{\otimes \ell}} \text{ for }
 \bm{p} \in \Z_{\geq 0}^5.
\end{equation}

 By~\cite{MR2074599}, there exists an injective $U_q(\fg)$-module homomorphism $\Phi$ from $V(\ell\varpi_2)$ to 
 $V(\varpi_2)^{\otimes \ell}$ mapping $v_{\ell\varpi_2}$ to $v_{\varpi_2}^{\otimes \ell}$.
 Although $\Phi$ does not preserve the values of the polarizations in general, 
 the relations between $(\ ,\ )_{V(\ell\varpi_2)}$ and $(\ ,\ )_{V(\varpi_2)^{\otimes \ell}}$ are explicitly described in [\textit{loc.\ cit.}], which we recall here.
 Define a $\Q(q_s)[t^{\pm 1}]$-valued bilinear form $(\!(\ ,\ )\!)_t$ on $V(\varpi_2)$ by
 \[ \(u,v\)_t = \sum_{k\in \Z} t^m (z_2^{-m}u,v)_{V(\varpi_2)},
 \] 
 where $z_2$ is the automorphism on $V(\varpi_2)$ in Subsection~\ref{Subsection:fusion}.
 Define a $\Q(q_s)[t_1^{\pm 1},\ldots,t_\ell^{\pm 1}]$-valued bilinear form $\(\ ,\ \)$ on $V(\varpi_2)^{\otimes \ell}$ by
 \[
\left(\!\!\!\left( \bigotimes_{k=1}^{\ell} u_k, \bigotimes_{k=1}^{\ell} v_k \right)\!\!\!\right) = \prod_{k=1}^\ell \(u_k,v_k\)_{t_k}.
 \]
 Then by \cite[Proposition 4.10]{MR2074599}, it holds for $u,v \in V(\ell\varpi_2)$ that
 \begin{equation}\label{eq:prepolarization_in_tensor}
  (u,v) = \frac{1}{\ell!}\left[\(\Phi(u),\Phi(v)\)\prod_{k\neq m} (1-t_kt_m^{-1})\right]_1,
 \end{equation}
 where $[f]_1$ denotes the constant term in $f$.

 For $\bm{p},\bm{p}'\in \Z_{\geq 0}^5$ such that $\bm{p}\neq \bm{p}'$, we have $(E^{\bm{p}}w_1,E^{\bm{p}'}w_1)_{W^1} =0$ by Lemma~\ref{Lem:orthogonality}.
 Then by~\eqref{eq:invariance_of_zk}, this, together with the weight consideration, implies
 \[ (z_2^{-m}E^{\bm{p}}v_{\varpi_2},E^{\bm{p}'}v_{\varpi_2})_{V(\varpi_2)} = 0 \ \ \text{unless } \bm{p} = \bm{p}' \text{ and } m=0.
 \]
 Hence in particular, it follows that 
 \begin{equation}\label{eq:equality_between_tensor}
 \(E^{\bm{p}}v_{\varpi_2}, E^{\bm{p}'}v_{\varpi_2}\)_t = (E^{\bm{p}}v_{\varpi_2}, E^{\bm{p}'}v_{\varpi_2})_{V(\varpi_2)} \ \ \ \text{for } \bm{p},\bm{p}'
  \in \Z_{\geq 0}^5, 
 \end{equation}
 which implies 
  $\(E^{\bm{p}}v_{\varpi_2}^{\otimes \ell}, E^{\bm{p}'}v_{\varpi_2}^{\otimes \ell}\) = (E^{\bm{p}}v_{\varpi_2}^{\otimes \ell},
    E^{\bm{p}'}v_{\varpi_2}^{\otimes \ell})_{V(\varpi_2)^{\otimes \ell}}$
 by Lemma~\ref{Lem:coproduct_rule}.
 Now Equation~\eqref{eq:prepolarization_in_tensor} implies for $\bm{p} \in \Z_{\geq 0}^5$ that
 \begin{align*}
  \normsq{E^{\bm{p}}v_{\ell\varpi_2}}_{V(\ell\varpi_2)} &= \frac{1}{\ell!}\left[\(E^{\bm{p}}v_{\varpi_2}^{\otimes \ell},E^{\bm{p}}v_{\varpi_2}^{\otimes \ell}\)
  \prod_{k\neq m} (1-t_kt_m^{-1})\right]_1\\
  &= \normsq{E^{\bm{p}}v_{\varpi_2}^{\otimes \ell}}_{V(\varpi_2)^{\otimes \ell}} \cdot \frac{1}{\ell!}\left[\prod_{k\neq m} (1-t_kt_m^{-1})\right]_1
   = \normsq{E^{\bm{p}}v_{\varpi_2}^{\otimes \ell}}_{V(\varpi_2)^{\otimes \ell}},
 \end{align*}
 and the claim~\eqref{eq:norm_equal_claim} is proved.

 In order to verify the assertion (1), by~\eqref{eq:norm_equal_claim} it suffices to show $\normsq{E^{\bm{p}}v_{\varpi_2}^{\otimes \ell}}_{V(\varpi_2)^{\otimes \ell}}
 =\normsq{E^{\bm{p}}w_1^{\otimes \ell}}_{(W^1)^{\otimes \ell}}$ for 
 $\bm{p} \in \Z_{\geq 0}^5$.
 We see from~\eqref{eq:invariance_of_zk} that
 \[ (p(u),p(v)) = \(u,v\)_t\big|_{t=1} \ \ \ \text{for } u,v \in V(\varpi_2).
 \] 
 Hence by~\eqref{eq:equality_between_tensor}, we have
 \[ (E^{\bm{p}}w_1,E^{\bm{p}'}w_1)_{W^1} = (E^{\bm{p}}v_{\varpi_2},E^{\bm{p}'}v_{\varpi_2})_{V(\varpi_2)} \ \ \ \text{for } \bm{p},\bm{p}'\in \Z_{\geq 0}^5,
 \]
 and then $\normsq{E^{\bm{p}}v_{\varpi_2}^{\otimes \ell}}_{V(\varpi_2)^{\otimes \ell}} = \normsq{E^{\bm{p}}w_1^{\otimes \ell}}_{(W^1)^{\otimes \ell}}$ 
 follows by Lemma~\ref{Lem:coproduct_rule}.
 The assertion (1) is proved.

 (2) Since $(\ , \ )_{(W^1)^{\otimes \ell}}$ is positive definite, $v \in (W^{1})^{\otimes \ell}$ satisfies 
 $\normsq{v} = 0$ if and only if $v=0$.
 Hence the assertion (2) follows from (1), Proposition~\ref{Prop:being_global_basis_of_level0} and Proposition~\ref{Prop:Nakajima_prepolarization}~(3).
\end{proof}

\begin{Prop}\label{Prop:being_global_basis_for_e^pv}
 Let $\ell \in \Z_{>0}$.
 If $\bm{p} \in \ol{S}_\ell$, then $E^{\bm{p}}w_1^{\otimes \ell} \neq 0$, and hence $\normsq{E^{\bm{p}}w_1^{\otimes \ell}} \in 1+q_sA$ follows 
 from Proposition~\ref{Prop:equal_of_prepolarization}.
\end{Prop}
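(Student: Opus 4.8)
The plan is to turn the statement into a combinatorial nonvanishing problem. First, by Proposition~\ref{Prop:equal_of_prepolarization}~(1) we have $\normsq{E^{\bm p}v_{\ell\varpi_2}}_{V(\ell\varpi_2)}=\normsq{E^{\bm p}w_1^{\otimes\ell}}_{(W^1)^{\otimes\ell}}$, and since both forms are polarizations, hence positive definite, the vanishing of either side is equivalent to the vanishing of $E^{\bm p}w_1^{\otimes\ell}$; the displayed conclusion $\normsq{E^{\bm p}w_1^{\otimes\ell}}\in 1+q_sA$ is then exactly Proposition~\ref{Prop:equal_of_prepolarization}~(2). Thus everything reduces to proving $E^{\bm p}w_1^{\otimes\ell}\neq 0$ for $\bm p\in\ol S_\ell$.

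Next I would expand this vector using Lemma~\ref{Lem:coproduct_rule}, writing $E^{\bm p}w_1^{\otimes\ell}=\sum_{\bm p_1+\cdots+\bm p_\ell=\bm p}q^{m(\ldots)}\,E^{\bm p_1}w_1\otimes\cdots\otimes E^{\bm p_\ell}w_1$. Because $w_1$ satisfies $f_0w_1=e_1w_1=f_1w_1=0$ (Proposition~\ref{Prop:properties_of_KR}~(4)), Lemma~\ref{Lem:orthogonality} applied to $W^1$ makes the distinct tensors mutually orthogonal, so there is no cancellation: the $q$-powers being invertible, $E^{\bm p}w_1^{\otimes\ell}\neq 0$ if and only if there is at least one decomposition $\bm p=\bm p_1+\cdots+\bm p_\ell$ with every $E^{\bm p_k}w_1\neq 0$. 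The problem becomes exhibiting, for each $\bm p\in\ol S_\ell$, a decomposition into $\ell$ ``boxes'' on each of which $E^{(\cdot)}w_1$ is nonzero.

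I would construct the decomposition from the parametrization of $\ol S_\ell$ in Subsection~\ref{Subsection:C1}: with $r_1=0$ one has $\bm p=r_2\bm c_2+r_3\bm c_3+r_4\bm c_4+r_5\bm c_5+k\bm c_k$ for the column vectors $\bm c_2=(1,0,0,0,0)$, $\bm c_3=(1,1,0,0,0)$, $\bm c_4=(1,1,1,0,0)$, $\bm c_5=(1,1,1,1,0)$, $\bm c_k=(1,1,1,1,1)$, subject to $r_2+r_3+r_4\leq\ell$, $r_4+2r_5\leq r_2$, $0\leq k\leq r_2-r_4-2r_5$. These constraints give $r_4+r_5+k\leq r_2$, so there are enough copies of $\bm c_2$ to adjoin one to each $\bm c_4$, each $\bm c_5$, and each $\bm c_k$; the resulting boxes take only the six shapes $(0,0,0,0,0)$, $(1,0,0,0,0)$, $(1,1,0,0,0)$, $(2,1,1,0,0)$, $(2,1,1,1,0)$, $(2,1,1,1,1)$, and after padding with empty boxes there are exactly $r_2+r_3\leq\ell$ nontrivial ones plus $\ell-r_2-r_3$ copies of $(0,0,0,0,0)$, i.e. $\ell$ boxes in all.

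The hard part is then the remaining base facts $E^{\bm q}w_1\neq 0$ for these six $\bm q$, i.e. the case $\ell=1$. The shapes $(0,0,0,0,0)$ and $(1,0,0,0,0)$ are immediate, and the three shapes with $p_1=2$ follow from the trace of Lemmas~\ref{Lem:1} and~\ref{Lem:belonging_to_B} in $V(-3\gL_0)$ together with Proposition~\ref{Prop:being_global_basis_of_level0} and the surjection $\Psi$ of Lemma~\ref{Lem:hw-lw-lev0}, since for them the relevant node-$2$ string in $B(-3\gL_0)$ has length $p_1=2>1=\langle h_2,\gL_2\rangle$, so the transfer through $\Psi$ manifestly preserves nonvanishing. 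The genuinely delicate shape is $(1,1,0,0,0)$, where the node-$2$ string length equals $\langle h_2,\gL_2\rangle$ and the naive transfer through $\Psi$ breaks down; here one must analyze the connected component of $u_{\gL_2}\otimes u_{-3\gL_0}$ in $B(\gL_2)\otimes B(-3\gL_0)$ at node $2$ directly. This node-$2$ bookkeeping for $\Psi$, rather than the packing, is where the real difficulty lies.
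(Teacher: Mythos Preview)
Your overall strategy---expand via Lemma~\ref{Lem:coproduct_rule}, use Lemma~\ref{Lem:orthogonality} to rule out cancellation, then exhibit a packing of $\bm{p}$ into $\ell$ ``boxes'' on each of which $E^{(\cdot)}w_1$ is nonzero---is exactly what the paper does; the paper phrases the packing as an induction on $\ell$ (splitting off one box at a time) but uses precisely the same set of box shapes. Your combinatorics is correct: the constraints $r_4+r_5+k\le r_2$ and $r_2+r_3\le\ell$ do hold, so the packing goes through.

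There is, however, a genuine gap. For the three shapes with $p_1=2$ your argument via $\Psi$ and ``node-$2$ string length'' is not needed and not quite right as stated; these follow directly from the elementary fact~\eqref{eq:elementary_fact}, applied step by step using Lemma~\ref{Lem:1} to guarantee the relevant $\langle h_i,\cdot\rangle$ are negative (this is exactly what the paper does in its inductive step). The real gap is the shape $(1,1,0,0,0)$, which you correctly flag as the hard case but do not resolve. Your suggested route---analyzing the kernel of $\Psi$ on the relevant $2$-string in $B(\gL_2)\otimes B(-3\gL_0)$---would require knowing which of the two global basis elements of that string lies in $\ker\Psi$, and nothing in the paper's toolkit gives that directly. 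The paper sidesteps this entirely: it shows $e_2e_1e_0w_1\neq0$ by a short direct computation of its norm using admissibility and Proposition~\ref{Prop:properties_of_KR}~(4), obtaining $\normsq{e_2e_1e_0w_1}=q[2]$, and then applies~\eqref{eq:elementary_fact} to pass to $E^{(1,1,0,0,0)}w_1=E_{\bm i}e_{10}w_1$. You should replace your crystal-theoretic sketch for this case with that norm computation.
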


\begin{proof}
 Let us prove the assertion by the induction on $\ell$.
 First assume that $\ell =1$. 
 In this case, we have
 \[ \ol{S}_1 = \{\bm{0}, \bm{\gee}_1, \bm{\gee}_1+\bm{\gee}_2, (2,1,1,1,1)\}.
 \]
 If $\bm{p} \in \ol{S}_1 \setminus \{\bm{\gee}_1+\bm{\gee}_2\}$, $E^{\bm{p}}v_{\varpi_2} \neq 0$ is checked from the following elementary 
 fact: for an integrable $U_q'(\fg)$-module $M$, $\gl \in P_\cl$ and $i \in I$,
 \begin{equation}\label{eq:elementary_fact}
  \text{if $u \in M_\gl\setminus \{0\}$, then $e^{(k)}_iu \neq 0$ for $0\leq k\leq -\langle h_i,\gl\rangle$}.
 \end{equation}
 On the other hand, by Proposition~\ref{Prop:properties_of_KR} we have 
 \begin{align*}
  \normsq{e_2e_1e_0w_1}&=q^{-1}(e_1e_0w_1,f_2e_2e_1e_0w_1)=q^{-1}(e_1e_0w_1,e_2e_1e_0f_2w_1)\\
  &=\normsq{e_1e_0f_2w_1} = q^{-1}(e_0f_2w_1,e_1e_0f_1f_2w_1)\\ 
  &=\normsq{e_0f_1f_2w_1} = q(f_1f_2w_1,f_0e_0f_1f_2w_1)=q[2]\normsq{f_1f_2w_1}=q[2].
 \end{align*}
 Hence we have $e_2e_1e_0w_1 \neq 0$, and then $E^{\bm{\gee}_1+\bm{\gee}_2}w_1 \neq 0$ is proved by applying~\eqref{eq:elementary_fact}.
 Thus the case $\ell=1$ is proved.

 Assume $\ell > 1$.
 By Lemma~\ref{Lem:coproduct_rule}, $E^{\bm{p}}w_1^{\otimes \ell}$ can be written in the form
 \[ E^{\bm{p}}(w_1 \otimes w_1^{\otimes (\ell-1)}) = \sum_{\bm{p}_1+\bm{p}_2=\bm{p}} 
    q^{m(\bm{p}_1,\bm{p}_2:\varpi_1,(\ell-1)\varpi_1)} E^{\bm{p}_1}w_1 \otimes E^{\bm{p}_2}w_1^{\otimes (\ell-1)},
 \]
 and for the vectors $\{E^{\bm{p}_1}w_1 \mid \bm{p}_1 \in \Z_{\geq 0}^5 \text{ such that } E^{\bm{p}_1}w_1\neq 0\}$ 
 are linearly independent by Lemma~\ref{Lem:orthogonality},
 it is enough to show the existence of $\bm{p}_1$ satisfying
 \begin{equation}\label{eq:nonzeroness_of_two}
  E^{\bm{p}_1}w_1 \neq 0 \ \ \ \text{and} \ \ \ E^{\bm{p}-\bm{p}_1} w_1^{\otimes(\ell-1)} \neq 0.
 \end{equation}    
 If $p_1< p_4 + \ell$, then $\bm{p}_1 = \bm{0}$ satisfies~\eqref{eq:nonzeroness_of_two} by the induction hypothesis since 
 $\bm{p} \in \ol{S}_{\ell-1}$.
 Assume that $p_1 =p_4+\ell$, and set $k_0 = \max\{1\leq k \leq 5\mid p_k \neq 0\}$.
 If $k_0 \neq 2$, set $\bm{p}_1=(\underbrace{2,1,\ldots,1}_{k_0},0,\ldots,0)$.
 That $E^{\bm{p}_1}w_1 \neq 0$ follows from~\eqref{eq:elementary_fact},
 and it is easily checked that $\bm{p}-\bm{p}_1 \in \ol{S}_{\ell-1}$. Therefore~\eqref{eq:nonzeroness_of_two} holds.
 Finally if $k_0 =2$, $\bm{p}_1=(1,1,0,0,0)$ satisfies~\eqref{eq:nonzeroness_of_two}.
 The proof is complete. 
\end{proof}

The following lemma connects values of the prepolarizations on $(W^{1})^{\otimes \ell}$ and $W^\ell$.

\begin{Lem}\label{Lem:equality_of_prepolarizations_on_W}
 Let $\ell \in \Z_{>0}$, and $X,Y \in U_q'(\fg)$.
 Suppose that the images of $X,Y$ under the $\ell$-iterated coproduct $\gD^{(\ell)}\colon U_q'(\fg) \to U_q'(\fg)^{\otimes \ell}$ are written in the forms
 \begin{align*}
  \gD^{(\ell)}(X) &= \sum_{k=1}^{N_1} f_k(q_s)X_{k,1}\otimes X_{k,2}\otimes \cdots \otimes X_{k,\ell}, \ \ \ \text{and}\\
  \gD^{(\ell)}(Y) &= \sum_{m=1}^{N_2} g_m(q_s)Y_{m,1}\otimes Y_{m,2}\otimes \cdots \otimes Y_{m,\ell}
 \end{align*}
 respectively, where $N_1,N_2 \in \Z_{\geq 0}$, $f_k,g_k \in \Q(q_s)$, and $X_{k,j}, Y_{m,j} \in U_q'(\fg)$ are vectors homogeneous with respect to the $Q$-grading.
 We further assume that, for any $1\leq k \leq N_1$ and $1\leq m \leq N_2$,
 \begin{equation}\label{eq:condition_for_equality}
  \text{if} \ \prod_{j=1}^\ell (X_{k,j}w_1,Y_{m,j}w_1)_{W^1} \neq 0,\ \text{then} \ \wt_P(X_{k,j}) =\wt_P(Y_{m,j})
    \ \text{for all} \ 1\leq j \leq \ell.
 \end{equation} 
 Then we have $(Xw_1^{\otimes \ell},Yw_1^{\otimes \ell})_{(W^1)^{\otimes \ell}}=(Xw_\ell,Yw_\ell)_{W^\ell}$.
\end{Lem}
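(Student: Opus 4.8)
The plan is to expand both sides through the $\ell$-fold coproduct and to reduce the right-hand side, via the defining relation \eqref{eq:construction_of_prepolarization} of the prepolarization on $W^\ell$, to the admissible pairing $(\ ,\ )_0$ on the relevant tensor product. The two expansions will then agree term by term, the only discrepancy being powers of the spectral parameters, and these cancel precisely because of hypothesis \eqref{eq:condition_for_equality}.

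First I would record how a homogeneous element $Z \in U_q'(\fg)$ acts on a shifted fundamental module: for $a \in \Q(q_s)\setminus\{0\}$ one has $Z\,\iota_a(w_1) = a^{\langle d,\wt_P(Z)\rangle}\,\iota_a(Zw_1)$, since under $\iota_a$ each $e_0$ contributes a factor $a$ and each $f_0$ a factor $a^{-1}$, and the net power $\#e_0-\#f_0$ equals the $\ga_0$-coefficient $\langle d,\wt_P(Z)\rangle$ of the weight. Applying $\gD^{(\ell)}(X)=\sum_k f_k\,X_{k,1}\otimes\cdots\otimes X_{k,\ell}$ to $w_1^{\otimes\ell}$ and likewise for $Y$, and using that the prepolarization on $(W^1)^{\otimes\ell}$ is the product of the polarizations on the factors (Lemma~\ref{Lem:tensor_admissible}), the left-hand side is immediately
\[ (Xw_1^{\otimes\ell},Yw_1^{\otimes\ell})_{(W^1)^{\otimes\ell}}=\sum_{k,m}f_kg_m\prod_{j=1}^{\ell}(X_{k,j}w_1,Y_{m,j}w_1)_{W^1}. \]

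For the right-hand side, let $R$ and $(\ ,\ )_0$ be the homomorphism and the admissible pairing of Subsection~\ref{Subsection:fusion} used to define the prepolarization on $W^\ell$, let $\tilde w$ be the source vector (the tensor of $\iota(w_1)$'s) with $R(\tilde w)=w_\ell$, and write $b_j=q^{m(\ell-2j+1)}$ and $a_j=q^{m(2j-1-\ell)}=b_j^{-1}$ for the spectral parameters of the $j$-th tensor factors of the source and target of $R$. Since $R$ is $U_q'(\fg)$-linear we have $Xw_\ell=R(X\tilde w)$ and $Yw_\ell=R(Y\tilde w)$, so \eqref{eq:construction_of_prepolarization} gives $(Xw_\ell,Yw_\ell)_{W^\ell}=(X\tilde w,Yw_\ell)_0$. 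Expanding $X\tilde w$ and $Yw_\ell$ by $\gD^{(\ell)}$ and the twisted-action formula, and using that $(\ ,\ )_0$ is the product of the canonical factor pairings $(\iota_{b_j}(u),\iota_{a_j}(v))\mapsto(u,v)_{W^1}$ (the normalization consistent with $\normsq{w_\ell}_{W^\ell}=1$), I obtain
\[ (Xw_\ell,Yw_\ell)_{W^\ell}=\sum_{k,m}f_kg_m\Big(\prod_{j=1}^{\ell}b_j^{\langle d,\wt_P(X_{k,j})\rangle}a_j^{\langle d,\wt_P(Y_{m,j})\rangle}\Big)\prod_{j=1}^{\ell}(X_{k,j}w_1,Y_{m,j}w_1)_{W^1}. \]

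It then remains to compare the two sums term by term. A summand is nonzero only if $\prod_j(X_{k,j}w_1,Y_{m,j}w_1)_{W^1}\neq0$, and in that case \eqref{eq:condition_for_equality} forces $\wt_P(X_{k,j})=\wt_P(Y_{m,j})$ for every $j$; hence $\langle d,\wt_P(X_{k,j})\rangle=\langle d,\wt_P(Y_{m,j})\rangle$ and the spectral prefactor collapses to $\prod_j(b_ja_j)^{\langle d,\wt_P(X_{k,j})\rangle}=1$ because $b_ja_j=1$. Thus the surviving summands of the two expansions coincide, proving the equality. The one genuinely delicate point is the spectral-parameter bookkeeping—identifying $(\ ,\ )_0$ with the product of the factor pairings in the normalization fixed by $\normsq{w_\ell}=1$ and keeping track of the powers $b_j^{\langle d,\cdot\rangle}$, $a_j^{\langle d,\cdot\rangle}$. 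Once this is set up, the cancellation $b_ja_j=1$ enforced by \eqref{eq:condition_for_equality} is the conceptual heart of the argument and is immediate.
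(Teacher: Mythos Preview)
Your argument is correct and follows essentially the same route as the paper's own proof: expand both sides via $\gD^{(\ell)}$, use $Z\iota_a(w_1)=a^{\langle d,\wt_P(Z)\rangle}\iota_a(Zw_1)$ to extract the spectral powers, invoke \eqref{eq:construction_of_prepolarization} and the product form of $(\ ,\ )_0$, and then observe that the reciprocity $b_ja_j=1$ of the spectral parameters makes the extra powers cancel exactly when \eqref{eq:condition_for_equality} holds. Your remark that the only delicate point is pinning down the factorwise normalization of $(\ ,\ )_0$ is apt; the paper leaves this implicit as well.
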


\begin{proof}
 By~\eqref{eq:construction_of_prepolarization}, we have
 \begin{align}\label{eq:epwl}
  (Xw_\ell,Yw_\ell)_{W^\ell} = \Big(X\big(\iota_{\ell-1}(w_1)\otimes \cdots \otimes \iota_{1-\ell}(w_1)\big),
    Y\big(\iota_{1-\ell}(w_1)\otimes\cdots \otimes \iota_{\ell-1}(w_1)\big)\Big)_0.
 \end{align}
 For an arbitrary homogeneous vector $Z\in U_q'(\fg)_\gb$ and $k \in \Z$, we have 
 \[ Z\iota_k(w_1)=q^{k\langle d,\gb\rangle}\iota_k(Zw_1).
 \]
 Hence setting $\wt_P(X_{k,j})= \gb_{k,j}$ and $\wt_P(Y_{m,j}) = \ggg_{m,j}$, it follows that 
 \[ X\big(\iota_{\ell-1}(w_1)\otimes \cdots \otimes \iota_{1-\ell}(w_1)\big)=\sum_k f_k(q_s)q^{\sum_j(\ell+1-2j)\langle d,\gb_{k,j}\rangle}
    \iota(X_{k,1}w_1)\otimes \cdots\otimes \iota(X_{k,\ell}w_1),
 \]
 and
 \[ Y\big(\iota_{1-\ell}(w_1)\otimes \cdots \otimes \iota_{\ell-1}(w_1)\big)=\sum_m g_m(q_s)q^{\sum_j(2j-\ell-1)\langle d,\ggg_{m,j}\rangle}
    \iota(Y_{m,1}w_1)\otimes \cdots\otimes \iota(Y_{m,\ell}w_1).
 \]
 Then we have 
\begin{align*}
\eqref{eq:epwl} & =\sum_{k,m}f_k(q_s)g_m(q_s)q^{\sum_j(\ell+1-2j)\langle d,\gb_{k,j}-\ggg_{m,j}\rangle}\prod_j(X_{k,j}w_1,Y_{m,j}w_1)_{W^1}\\
                 &=\sum_{k,m}f_k(q_s)g_m(q_s)\prod_j(X_{k,j}w_1,Y_{m,j}w_1)_{W^1}=(Xw_1^{\otimes \ell},Yw_1^{\otimes \ell})_{(W^1)^{\otimes \ell}}
 \end{align*}
 by the assumption, and the assertion is proved.
\end{proof}

Now the following proposition, together with Proposition~\ref{Prop:being_global_basis_for_e^pv}, completes the proof of (C2) in Proposition~\ref{Prop:critical_3}.

\begin{Prop}\label{Prop:equality}
 Let $\ell \in \Z_{>0}$. For any $\bm{p}\in \Z_{\geq 0}^5$, we have $\normsq{E^{\bm{p}}w_1^{\otimes \ell}}_{(W^1)^{\otimes \ell}} = \normsq{E^{\bm{p}}w_\ell}_{W^\ell}$.
\end{Prop}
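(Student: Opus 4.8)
The plan is to derive this from Lemma~\ref{Lem:equality_of_prepolarizations_on_W} applied with $X = Y = E^{\bm{p}}$. Since $E^{\bm{p}}$ is a product of divided powers $e_i^{(\cdot)}$, expanding $\gD^{(\ell)}(E^{\bm{p}})$ by means of the coproduct formula for $\gD(e_i^{(m)})$ writes it as a finite sum $\sum_k f_k(q_s)\, X_{k,1} \otimes \cdots \otimes X_{k,\ell}$ in which each factor $X_{k,j}$ is an ordered monomial in the $e_i^{(\cdot)}$ decorated by some $q^h$; concretely, each $X_{k,j}$ has the shape displayed in~\eqref{eq:vector}. I would take this as the expansion of both $X$ and $Y$ required by Lemma~\ref{Lem:equality_of_prepolarizations_on_W}, so that the whole problem collapses to checking the hypothesis~\eqref{eq:condition_for_equality}.

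First I would record that $w_1 \in W^1$ satisfies $f_0 w_1 = e_1 w_1 = f_1 w_1 = 0$ by Proposition~\ref{Prop:properties_of_KR}~(4) with $\ell = 1$; this makes both Lemma~\ref{Lem:orthogonality} and the vanishing analysis underlying Lemma~\ref{Lem:coproduct_rule} available with $u = w_1$. Now suppose $\prod_{j=1}^\ell (X_{k,j}w_1, Y_{m,j}w_1)_{W^1} \neq 0$ for some $k,m$. Then each factor is nonzero, so $X_{k,j}w_1 \neq 0$ and $Y_{m,j}w_1 \neq 0$ for every $j$. The alignment argument in the proof of Lemma~\ref{Lem:coproduct_rule}, which rests on Lemma~\ref{Lem:1}, forces a nonzero $X_{k,j}w_1$ to be a scalar multiple of $E^{\bm{p}^X_j}w_1$ for some $\bm{p}^X_j \in \Z_{\geq 0}^5$, and likewise $Y_{m,j}w_1$ to be a scalar multiple of $E^{\bm{p}^Y_j}w_1$.

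Next I would apply Lemma~\ref{Lem:orthogonality} with $M = W^1$ and $u = w_1$: it gives $(E^{\bm{p}^X_j}w_1, E^{\bm{p}^Y_j}w_1)_{W^1} = 0$ whenever $\bm{p}^X_j \neq \bm{p}^Y_j$. Since every factor of the product is nonzero, this forces $\bm{p}^X_j = \bm{p}^Y_j$ for all $j$. Finally, once $X_{k,j}w_1 \neq 0$ the aligned monomial $X_{k,j}$ has the same root content as $E^{\bm{p}^X_j}$ (the $q^h$ decorations being weightless), so $\wt_P(X_{k,j}) = \wt_P(E^{\bm{p}^X_j})$ and similarly for $Y$; hence $\bm{p}^X_j = \bm{p}^Y_j$ yields the equality $\wt_P(X_{k,j}) = \wt_P(Y_{m,j})$ demanded by~\eqref{eq:condition_for_equality}. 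Lemma~\ref{Lem:equality_of_prepolarizations_on_W} then delivers the claim.

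I expect the main obstacle to lie exactly in this last step, namely matching $\wt_P$ rather than merely $\wt_{P_\cl}$. Admissibility of the prepolarization only guarantees orthogonality of distinct \emph{classical} weight spaces, so the $\gd$-components of the tensor factors are not matched for free; pinning them down factor by factor is what forces the use of the full orthogonality of Lemma~\ref{Lem:orthogonality} (which sees each composition $\bm{p}_j$, including its $e_0$-count) in place of a bare weight count. Once this per-factor rigidity is established, the remainder is formal.
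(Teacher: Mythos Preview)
Your proposal is correct and follows essentially the same route as the paper: apply Lemma~\ref{Lem:equality_of_prepolarizations_on_W} with $X=Y=E^{\bm{p}}$, use the analysis behind Lemma~\ref{Lem:coproduct_rule} to reduce the surviving tensor factors to $E^{\bm{p}_j}$'s, and then invoke Lemma~\ref{Lem:orthogonality} on each factor to force $\bm{p}^X_j=\bm{p}^Y_j$, which pins down the full $P$-weight (including the $\gd$-component via the $e_0$-count). Your closing remark about why $\wt_P$ rather than $\wt_{P_\cl}$ is under control is exactly the point, and the paper's proof handles it in the same way (implicitly, since the surviving $E_{kj}$ are literally $E^{\bm{p}_j}$ up to a $q^h$ factor).
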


\begin{proof}
 It suffices to show that $X=Y=E^{\bm{p}}$ satisfy the assumptions of Lemma~\ref{Lem:equality_of_prepolarizations_on_W}.
 The vector $\gD^{(\ell)}(E^{\bm{p}})$ can be written in the form $\sum_{k} q_s^{m_k}q^{H_{k1}}E_{k1}\otimes \cdots \otimes  q^{H_{k\ell}}E_{k\ell}$,
 where $m_k \in \Z$, $E_{kj}$ are some products of $e_i^{(m)}$'s and $H_{kj} \in D^{-1}P_\cl^*$.
 By Lemma~\ref{Lem:coproduct_rule}, $q^{H_{k1}}E_{k1}w_1 \otimes \cdots \otimes q^{H_{k\ell}}E_{k\ell} w_1 = 0$ unless $E_{kj} =E^{\bm{p}_j}$ ($1\leq j \leq \ell$) 
 for some $\bm{p}_j \in \Z_{\geq 0}^5$, and then $\prod_j (q^{H_{kj}}E_{kj}w_1,q^{H_{mj}}E_{mj}w_1) \neq 0$ implies 
 $E_{kj}=E_{mj}$ for all $j$ by Lemma~\ref{Lem:orthogonality}.
 Hence~\eqref{eq:condition_for_equality} is obviously satisfied, and the proof is complete.
\end{proof}

\subsection{Proof of (C3) in Proposition~\ref{Prop:critical_3}}\label{Subsection:C3}

First we show the case $i=1$.
The proof is similar to \cite[proof of Eq.\,(3.3) with $i=1$]{naoi2018existence}.
We reproduce it here for the reader's convenience.

\begin{Lem}
 For any $\bm{p} \in S_\ell$, we have
 \[ \normsq{e_1E^{\bm{p}}w_\ell} \in q^{-2\langle h_1,\wt(\bm{p})\rangle}A.
 \]
\end{Lem}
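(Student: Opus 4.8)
The plan is to isolate the single quantity $\normsq{f_1 E^{\bm p}w_\ell}$ and reduce everything to it. Write $w=E^{\bm p}w_\ell$ and $\gl=\wt_{P_\cl}(w)=\wt(\bm p)+\ell\varpi_2$, and set $a=\langle h_1,\gl\rangle$. Since $\bm p\in S_\ell$ forces $\gl\in P_0^+$, and $\langle h_1,\varpi_2\rangle=0$, we have $a=\langle h_1,\wt(\bm p)\rangle\geq 0$. Using the admissibility~\eqref{eq:admissible} of the prepolarization together with the relation $e_1f_1-f_1e_1=(t_1-t_1^{-1})/(q_1-q_1^{-1})$ (recall $q_1=q$ and $t_1=q^{h_1}$), a short computation gives the identity
\[ \normsq{e_1 w}=q^{-2a}\normsq{f_1 w}-q^{-1-a}[a]\normsq{w}. \]

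Next I would dispose of the last term. By (C2), proved in Subsection~\ref{Subsection:C2}, we have $\normsq{w}\in 1+q_sA\subseteq A$; and an elementary computation gives
\[ q^{-1-a}[a]=q^{-2a}\sum_{i=0}^{a-1}q^{2i}\in q^{-2a}A \]
for $a\geq 0$. Hence $q^{-1-a}[a]\normsq{w}\in q^{-2a}A$, and the lemma is reduced to the single assertion $\normsq{f_1 w}\in A$.

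The main work, and the main obstacle, is therefore to prove $\normsq{f_1 E^{\bm p}w_\ell}\in A$. Since the prepolarization on $W^\ell$ is not yet known to be positive definite, I would transfer the computation to the extremal weight module $V(\ell\varpi_2)$, whose polarization is positive definite and whose global basis $\bB(\ell\varpi_2)$ is available. Using the norm equalities already established (Proposition~\ref{Prop:equality} through $(W^1)^{\otimes\ell}$, together with the identification of the $V(\ell\varpi_2)$-norms with the $V(\varpi_2)^{\otimes\ell}$-norms recalled in the proof of Proposition~\ref{Prop:equal_of_prepolarization}), extended from $E^{\bm p}$ to $f_1E^{\bm p}$ by the same weight-matching arguments used in the proof of Proposition~\ref{Prop:equality} (via Lemma~\ref{Lem:orthogonality} and Lemma~\ref{Lem:coproduct_rule}), I would reduce to showing $\normsq{f_1E^{\bm p}v_{\ell\varpi_2}}\in A$. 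By Proposition~\ref{Prop:being_global_basis_of_level0}, $\Omega:=E^{\bm p}v_{\ell\varpi_2}\in\pm\bB(\ell\varpi_2)\cup\{0\}$.

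Finally I would decompose $\Omega$ along the node-$1$ string: writing $\Omega=\sum_{k}f_1^{(k)}\Omega_k$ with $\Omega_k\in\ker e_1$ of weight $\gl+k\ga_1$, admissibility makes the summands mutually orthogonal, so that $\normsq{f_1\Omega}=\sum_{k}[k+1]^2\normsq{f_1^{(k+1)}\Omega_k}$. The essential input is the suppression of the deep components, namely $\Omega_k\in q^{k(a+k)}L_1$, which is exactly the estimate established inside the proof of Lemma~\ref{Lem:being_global_basis}. Combining it with the standard identity $\normsq{f_1^{(m)}\Omega_k}=q^{-m^2+m(a+2k)}\begin{bmatrix}a+2k\\ m\end{bmatrix}\normsq{\Omega_k}$ and a degree count shows, for each $k\geq 1$, that the pole $q^{-2k}$ contributed by $[k+1]^2$ is dominated by the factor $q^{2k(a+k)}$ coming from $\normsq{\Omega_k}$; hence every summand lies in $A$ and $\normsq{f_1\Omega}\in A$. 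I expect the delicate point to be making the norm transfer and, above all, the suppression estimate genuinely applicable to $f_1E^{\bm p}$ in the level-zero module $V(\ell\varpi_2)$ — that is, verifying that the vectors produced by the $f_1$-action remain governed by the global basis $\bB(\ell\varpi_2)$, and not only the vectors $E^{\bm p'}v_{\ell\varpi_2}$ already known to be global basis elements.
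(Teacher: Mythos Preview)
Your opening reduction—passing from $\normsq{e_1E^{\bm p}w_\ell}$ to $\normsq{f_1E^{\bm p}w_\ell}\in A$ via admissibility and (C2)—matches the paper exactly. The divergence is in how you attack $\normsq{f_1E^{\bm p}w_\ell}\in A$: you transfer to $V(\ell\varpi_2)$ and appeal to global-basis estimates, whereas the paper stays in $W^\ell$ and computes directly.

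Two gaps stand in the way of your route. First, every transfer result you cite (Propositions~\ref{Prop:being_global_basis_of_level0}, \ref{Prop:equal_of_prepolarization}, \ref{Prop:equality}) is stated only for $\bm p\in\Z_{\geq 0}^5$, i.e.\ $p_6=0$. For $\bm p\in S_\ell$ with $p_6>0$ the operator $E^{\bm p}$ begins with $e_0^{(p_6)}$, and none of your machinery handles that. One might hope to strip off $e_0^{(p_6)}$ first, but since $f_0f_1E^{\bm p-p_6\bm{\gee}_6}w_\ell\neq 0$ in general, the stripping produces a genuine second term; this is precisely the computation the paper carries out, and you cannot bypass it.

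Second—and you flag this yourself—the suppression estimate $\Omega_k\in q^{k(a+k)}L_1$ from the proof of Lemma~\ref{Lem:being_global_basis} requires $f_1^{(n)}\Omega\in\pm\bB(\ell\varpi_2)\cup\{0\}$ for all $n>0$, which is nowhere established for $\Omega=E^{\bm p}v_{\ell\varpi_2}$. Commuting $f_1^{(n)}$ past the leading $e_1^{(p_5)}$ via~\eqref{eq:commutation4} gives a nontrivial linear combination, not a single term. A weaker attack via Lemma~\ref{Lem:belonging_to_L0} (showing $e_1^{(n)}\Omega\in L(\ell\varpi_2)$ instead) also fails, since $e_1^{(n)}E^{\bm p}=\begin{bmatrix}n+p_5\\n\end{bmatrix}E^{\bm p+n\bm{\gee}_5}$ and the Gaussian binomial has order $q^{-np_5}$.

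The paper sidesteps both obstacles with a short explicit computation: after peeling off $e_0^{(p_6)}$ (which yields a sum of two pieces), it uses the elementary identities $e_1E^{\bm p-p_6\bm{\gee}_6}w_\ell=[p_5+1]E^{\bm p+\bm{\gee}_5-p_6\bm{\gee}_6}w_\ell$ and $f_0f_1E^{\bm p-p_6\bm{\gee}_6}w_\ell=[3\ell-p_1+1]E^{\bm p-\bm{\gee}_1-p_6\bm{\gee}_6}w_\ell$, and then just quotes (C2) for each resulting $\normsq{E^{\bm p'}w_\ell}$. No passage to extremal weight modules is needed for this lemma.
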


\begin{proof}
 Set
 \[ p=\langle h_1,\wt(\bm{p})\rangle = p_1-p_2-p_3-p_4+2p_5-p_6 \geq 0.
 \]
 We have 
 \begin{align*}
  \normsq{e_1E^{\bm{p}}w_\ell}&=q^{-p-1}(E^{\bm{p}}w_\ell,f_1e_1E^{\bm{p}}w_\ell)
   =q^{-p-1}\Big([-p]\normsq{E^{\bm{p}}w_\ell}
   +(E^{\bm{p}}w_\ell,e_1f_1E^{\bm{p}}w_\ell)\Big)\\
  &\equiv q^{-2p}\normsq{f_1E^{\bm{p}}w_\ell} \ \ \ \text{mod } q^{-2p}A,
 \end{align*}
 where we have used the fact $\normsq{E^{\bm{p}}w_\ell} \in 1+q_sA$ by (C2) (which we have already proved).
 Hence it suffices to show that $\normsq{f_1E^{\bm{p}}w_\ell} \in A$.
 Set $r=3\ell -p_1+p_5$.
 It is easily checked that $f_0^{(k)}f_1E^{\bm{p}-p_6\bm{\gee}_6}w_\ell = 0$ for $k>1$, and hence we have
 \begin{align}\label{eq:calculation1}
  &\normsq{f_1E^{\bm{p}}w_\ell} = q^{p_6(r-p_6-1)}(f_1E^{\bm{p}-p_6\bm{\gee}_6}w_\ell, f^{(p_6)}_0e_0^{(p_6)}f_1E^{\bm{p}-p_6\bm{\gee}_6}w_\ell)\nonumber\\
  &=q^{p_6(r-p_6-1)}\left(\begin{bmatrix}r-1 \\ p_6\end{bmatrix}\normsq{f_1E^{\bm{p}-p_6\bm{\gee}_6}w_\ell}
   + \begin{bmatrix} r-1\\ p_6-1\end{bmatrix}(f_1E^{\bm{p}-p_6\bm{\gee}_6}w_\ell,e_0f_0f_1E^{\bm{p}-p_6\bm{\gee}_6}w_\ell)\right)\nonumber\\
  &\in \normsq{f_1E^{\bm{p}-p_6\bm{\gee}_6}w_\ell}A + q^{2(r-p_6)}\normsq{f_0f_1E^{\bm{p}-p_6\bm{\gee}_6}w_\ell}A.
 \end{align}
 It follows that 
 \begin{align*}
  \normsq{f_1E^{\bm{p}-p_6\bm{\gee}_6}&w_\ell} = q^{p+p_6-1}(E^{\bm{p}-p_6\bm{\gee}_6}w_\ell,e_1f_1E^{\bm{p}-p_6\bm{\gee}_6}w_\ell)\\
  &=q^{p+p_6-1}\Big([p+p_6]\normsq{E^{\bm{p}-p_6\bm{\gee}_6}w_\ell}+(E^{\bm{p}-p_6\bm{\gee}_6}w_\ell,f_1e_1E^{\bm{p}-p_6\bm{\gee}_6}w_\ell)\Big)\\
  &=q^{p+p_6-1}\Big([p+p_6]\normsq{E^{\bm{p}-p_6\bm{\gee}_6}w_\ell}+q^{p+p_6+1}[p_5+1]^2\normsq{E^{\bm{p}+\bm{\gee}_5-p_6\bm{\gee}_6}w_\ell}\Big) \in A.
 \end{align*}
 Moreover, it is easily checked that 
 \[ f_0f_1E^{\bm{p}-p_6\bm{\gee}_6}w_\ell = [3\ell-p_1+1]E^{\bm{p}-\bm{\gee}_1-p_6\bm{\gee}_6},
 \]
 and hence it also follows that 
 \[ q^{2(r-p_6)}\normsq{f_0f_1E^{\bm{p}-p_6\bm{\gee}_6}w_\ell}= q^{2(r-p_6)}[3\ell-p_1+1]^2\normsq{E^{\bm{p}-\bm{\gee}_1-p_6\bm{\gee}_6}w_\ell}\in q^{2(p_5-p_6)}A\subseteq A.
 \]
 Hence $\normsq{f_1E^{\bm{p}}w_\ell} \in A$ follows from~\eqref{eq:calculation1}, and the proof is complete.
\end{proof}

When we show (C3) for $i \in I_0 \setminus \{1\}$, as we did in the proof of (C2),
we may assume that $\bm{p} \in \ol{S}_\ell\big(=S_\ell \cap \Z_{\geq 0}^5\big)$ by the following lemma.

\begin{Lem}
 For any $\bm{p}=(p_1,\ldots,p_6) \in \Z_{\geq 0}^6$ such that $p_1-p_5+p_6\leq 3\ell$ and $i \in I_0\setminus\{1\}$,
 we have $\normsq{e_iE^{\bm{p}}w_\ell} \in (1+qA)\normsq{e_iE^{\bm{p}-p_6\bm{\gee}_6}w_\ell}$. 
\end{Lem}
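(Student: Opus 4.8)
The plan is to reduce the statement to the computation already carried out in the proof of Lemma~\ref{Lem:reduction_of_(ii)}, exploiting that the node $0$ is adjacent only to the node $1$ in the Dynkin diagram. Concretely, for $i \in I_0\setminus\{1\}$ we have $\langle h_0,\ga_i\rangle = 0$, so $e_i$ commutes with both $e_0$ and $f_0$. First I would write $E^{\bm{p}} = e_0^{(p_6)}E^{\bm{p}-p_6\bm{\gee}_6}$ and commute the outer $e_0^{(p_6)}$ past $e_i$ to obtain
\[ e_iE^{\bm{p}}w_\ell = e_0^{(p_6)}\big(e_iE^{\bm{p}-p_6\bm{\gee}_6}w_\ell\big). \]
Setting $v = e_iE^{\bm{p}-p_6\bm{\gee}_6}w_\ell$, the task becomes showing $\normsq{e_0^{(p_6)}v} \in (1+qA)\normsq{v}$, which is exactly the shape of Lemma~\ref{Lem:reduction_of_(ii)}.

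Next I would verify the two inputs needed to run that computation for $v$. Since $f_0$ commutes with $e_i$ and $f_0E^{\bm{p}-p_6\bm{\gee}_6}w_\ell = 0$ (as established in the proof of Lemma~\ref{Lem:reduction_of_(ii)}), we get $f_0v = e_if_0E^{\bm{p}-p_6\bm{\gee}_6}w_\ell = 0$. Moreover, because $\langle h_0,\ga_i\rangle = 0$, applying $e_i$ leaves the node-$0$ eigenvalue unchanged, so $-\langle h_0,\wt_{P_\cl}(v)\rangle = -\langle h_0,\wt_{P_\cl}(E^{\bm{p}-p_6\bm{\gee}_6}w_\ell)\rangle = 3\ell - p_1 + p_5$. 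With these two facts in hand, the admissibility of the prepolarization together with~\eqref{eq:commutation4} (using $f_0^{(p_6)}e_0^{(p_6)}v = \begin{bmatrix} -\langle h_0,\wt(v)\rangle \\ p_6 \end{bmatrix} v$ since $f_0v=0$) yields
\[ \normsq{e_0^{(p_6)}v} = q^{p_6(3\ell-p_1+p_5-p_6)}\begin{bmatrix} 3\ell - p_1 + p_5 \\ p_6 \end{bmatrix}\normsq{v}, \]
exactly as in the displayed computation of Lemma~\ref{Lem:reduction_of_(ii)}.

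Finally, the hypothesis $p_1 - p_5 + p_6 \leq 3\ell$ is equivalent to $p_6 \leq 3\ell - p_1 + p_5$, and under this inequality the scalar factor $q^{p_6(3\ell-p_1+p_5-p_6)}\begin{bmatrix} 3\ell-p_1+p_5 \\ p_6 \end{bmatrix}$ lies in $1+qA$ (its lowest-order term in $q$ is $1$, the remaining terms being higher powers of $q$); this is precisely the conclusion drawn at the end of the proof of Lemma~\ref{Lem:reduction_of_(ii)}. Combining with the first paragraph gives $\normsq{e_iE^{\bm{p}}w_\ell}\in(1+qA)\normsq{e_iE^{\bm{p}-p_6\bm{\gee}_6}w_\ell}$, as desired. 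I do not anticipate a genuine obstacle here: the whole argument is the computation of Lemma~\ref{Lem:reduction_of_(ii)} run on $v$ in place of $E^{\bm{p}-p_6\bm{\gee}_6}w_\ell$, and the only new point to check is that inserting $e_i$ for $i \in I_0\setminus\{1\}$ disturbs neither the vanishing $f_0v=0$ nor the relevant node-$0$ weight, both of which follow immediately from $\langle h_0,\ga_i\rangle = 0$.
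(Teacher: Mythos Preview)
Your proposal is correct and follows exactly the paper's approach: the paper's proof is the one-liner ``Since $e_iE^{\bm{p}}w_\ell=e_0^{(p_6)}e_iE^{\bm{p}-p_6\bm{\gee}_6}w_\ell$, the same proof for Lemma~\ref{Lem:reduction_of_(ii)} holds here,'' and you have simply unpacked this by making explicit the commutation of $e_i$ with $e_0^{(p_6)}$ and $f_0$ (valid since node $0$ is adjacent only to node $1$), the resulting identities $f_0v=0$ and $-\langle h_0,\wt(v)\rangle = 3\ell-p_1+p_5$, and then the computation of Lemma~\ref{Lem:reduction_of_(ii)} applied to $v$.
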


\begin{proof}
 Since $e_iE^{\bm{p}}w_\ell=e_0^{(p_6)}e_iE^{\bm{p}-p_6\bm{\gee}_6}w_\ell$, the same proof for Lemma~\ref{Lem:reduction_of_(ii)} holds here.
\end{proof}

Our next goal is to give estimates for the values $\normsq{e_iE^{\bm{p}}w_1^{\otimes \ell}}$ ($i\in I_0\setminus \{1\}$).
For this purpose, let us prepare some lemmas.
The proof of the following lemma is almost the same with that of Lemma~\ref{Lem:being_global_basis}, with $L_1$ replaced by $L(\gL)$. 

\begin{Lem}\label{Lem:belonging_to_L0}
 Let $\gL \in P$ and $i \in I$, and assume that $u \in V(\gL)$ is a weight vector.
 If $f_i^{(n)}u \in L(\gL)$ for all $n \in \Z_{\geq 0}$, then $e_i^{(n)}u \in L(\gL)$ for all $n >0$.\\
\end{Lem}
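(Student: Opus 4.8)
The plan is to mimic the proof of Lemma~\ref{Lem:being_global_basis} verbatim, with the single modification that the role of the almost-unit lattice $L_1=\{v\mid \normsq{v}\in 1+q_sA\}$ is now played by the full crystal lattice $L(\gL)=\{v\mid \normsq{v}\in A\}$ (using Proposition~\ref{Prop:Nakajima_prepolarization}~(4)). The key point that makes the substitution work is that $L(\gL)$, just like $L_1$, is stable under the rescaled action of the Kashiwara operators in the sense captured by the norm-invariance~\eqref{eq:invariance_of_prepolarization}; indeed Proposition~\ref{Prop:Nakajima_prepolarization}~(2) gives $\normsq{\tilde f_i^{\,m}u_k}\in(1+q_sA)\normsq{u_k}$, and the outer factor $1+q_sA$ is a unit in $A$, so membership in $L(\gL)$ is preserved exactly as membership in $L_1$ was.

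First I would fix the weight $\gl$ of $u$, set $\gl_i=\langle h_i,\gl\rangle$, and decompose
\[
 u=\sum_{k=\max(0,-\gl_i)}^{N} f_i^{(k)}u_k,\qquad u_k\in \ker e_i\cap V(\gL)_{\gl+k\ga_i},
\]
with $N$ the largest index for which $u_k\neq 0$. The vectors $f_i^{(l)}u_k$ are pairwise orthogonal with respect to the polarization, which follows directly from the admissibility~\eqref{eq:admissible} together with the weight grading of the pairing. I would then run the descending induction on $k$ to show $u_k\in q_i^{k(\gl_i+k)}L(\gL)$ for every $k$: the hypothesis $f_i^{(n)}u\in L(\gL)$ for all $n\geq 0$, expanded via $f_i^{(n)}u=\sum_k \begin{bmatrix} k+n\\ k\end{bmatrix}_i f_i^{(k+n)}u_k$, gives at $n=N+\gl_i$ that the top term $f_i^{(2N+\gl_i)}u_N$ lies in $L(\gL)$, so $u_N\in q_i^{N(N+\gl_i)}L(\gL)$ by norm-invariance; the inductive step at $n=k_0+\gl_i$ isolates $f_i^{(2k_0+\gl_i)}u_{k_0}$ modulo $q_sL(\gL)$ using that the higher terms $\begin{bmatrix} k+k_0+\gl_i\\ k\end{bmatrix}_i f_i^{(k+k_0+\gl_i)}u_k\in q_i^{k(k-k_0)}L(\gL)\subseteq q_sL(\gL)$ for $k>k_0$.

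Finally, I would expand $e_i^{(n)}u$ via~\eqref{eq:commutation4} as $\sum_k \begin{bmatrix} k+n+\gl_i\\ n\end{bmatrix}_i f_i^{(k-n)}u_k$, observe that each surviving term ($k\geq n$) lies in $q_i^{(k-n)(k+\gl_i)}L(\gL)\subseteq L(\gL)$ by the established bound on $u_k$, and conclude $e_i^{(n)}u\in L(\gL)$ for all $n>0$. Since the entire argument is structurally identical to that of Lemma~\ref{Lem:being_global_basis}, I expect no genuine obstacle; the only thing to verify is that replacing $L_1$ by $L(\gL)$ does not disturb any coefficient estimate, and it does not, because every estimate only ever used that the ambient lattice is an $A$-submodule closed under the Kashiwara operators with the norm-invariance~\eqref{eq:invariance_of_prepolarization} — properties shared by $L_1$ and $L(\gL)$. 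In practice I would simply state that the proof is word-for-word that of Lemma~\ref{Lem:being_global_basis} with $L_1$ replaced by $L(\gL)$, as the surrounding text already announces.
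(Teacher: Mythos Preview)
Your proposal is correct and matches the paper's own approach exactly: the paper states that the proof of this lemma is almost the same as that of Lemma~\ref{Lem:being_global_basis}, with $L_1$ replaced by $L(\gL)$, and you have carried out precisely this substitution, correctly noting that all the needed properties (orthogonality of the $f_i^{(l)}u_k$, the norm-invariance from Proposition~\ref{Prop:Nakajima_prepolarization}~(2), and the characterization $L(\gL)=\{v\mid\normsq{v}\in A\}$) transfer without change.
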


\begin{Lem}\label{Lem:belonging_to_L}
 Let $\gL,\gl \in P$, $i \in I$, and $u \in V(\gL)_\gl$, and assume that
 \[ u \in q^aL(\gL), \ \ and \ \ f_iu\in q^bL(\gL)
 \]
 for some $a,b\in D^{-1}\Z$. Set $r_i=(\ga_i,\ga_i)/2$.\\
 {\normalfont(1)} 
 We have 
 \[ e_iu \in q^{\min(a,b-r_i\langle h_i,\gl\rangle)}L(\gL).
 \]
 {\normalfont(2)} Further assume that $\langle h_i,\gl\rangle \leq 0$ and $f_i^{(2)}u=0$.
 Then  we have
 \[ e_i^{(n)} u\in q^{\min(a,b-r_i(\langle h_i,\gl\rangle+n-1))}L(\gL)\ \ \ \text{for any } n >0.
 \]
\end{Lem}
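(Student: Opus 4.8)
The plan is to convert the membership statements into estimates on the polarization $(\ ,\ )$ on $V(\gL)$ supplied by Proposition~\ref{Prop:Nakajima_prepolarization}. Writing $\mathrm{ord}$ for the lowest power of $q$ occurring in an element of $\Q(q_s)$, Proposition~\ref{Prop:Nakajima_prepolarization}~(4) gives $L(\gL)=\{v\mid\normsq{v}\in A\}$, so for a weight vector $w$ and $c\in D^{-1}\Z$ one has $w\in q^cL(\gL)$ if and only if $\mathrm{ord}\,\normsq{w}\geq 2c$. Exactly as in the proof of Lemma~\ref{Lem:being_global_basis}, I would decompose $u=\sum_k f_i^{(k)}u_k$ with $u_k\in\ker e_i\cap V(\gL)_{\gl+k\ga_i}$ and $k\geq\max(0,-\gl_i)$, where $\gl_i=\langle h_i,\gl\rangle$. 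The two facts I would reuse from that proof are: (a) vectors $f_i^{(m)}u_k$ and $f_i^{(m')}u_{k'}$ of equal weight with $k\neq k'$ are orthogonal, by admissibility together with $u_k\in\ker e_i$; and (b) $\normsq{f_i^{(m)}u_k}\in(1+q_sA)\normsq{u_k}$ for $0\leq m\leq\gl_i+2k$, see~\eqref{eq:invariance_of_prepolarization}. Setting $d_k=\mathrm{ord}\,\normsq{u_k}$, positive-definiteness of the polarization rules out leading-term cancellation, so the order of any orthogonal sum of such vectors equals the minimum of the orders of its summands.

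Under this dictionary the hypotheses become $d_k\geq 2a$ for all $k$, and, after expanding $f_iu=\sum_k[k+1]_i f_i^{(k+1)}u_k$ and using $\mathrm{ord}([k+1]_i)=-r_ik$, the bound $d_k\geq 2b+2r_ik$ for every $k$ with $f_i^{(k+1)}u_k\neq 0$, i.e.\ for $k\geq 1-\gl_i$. For part~(1) I would expand, by~\eqref{eq:commutation4} applied to the highest-weight vectors $u_k$,
\[ e_iu=\sum_{k\geq 1}[\gl_i+k+1]_i\,f_i^{(k-1)}u_k, \]
and use $\mathrm{ord}([\gl_i+k+1]_i)=-r_i(\gl_i+k)$ (valid since $\gl_i+k\geq 0$ along the string) to get $\mathrm{ord}\,\normsq{e_iu}=\min_{k\geq 1}(d_k-2r_ik-2r_i\gl_i)$. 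For $k\geq 1-\gl_i$ the $f_iu$-bound makes this summand $\geq 2b-2r_i\gl_i$, while the only index not covered (and it arises only when $\gl_i<0$) is $k=-\gl_i$, for which $d_{-\gl_i}\geq 2a$ gives the summand $\geq 2a$. Hence $\mathrm{ord}\,\normsq{e_iu}\geq\min(2a,2b-2r_i\gl_i)$, which is the assertion.

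For part~(2), the hypothesis $f_i^{(2)}u=0$ forces $f_i^{(k+2)}u_k=0$ for every $k$, confining the support of the decomposition to $k\in\{-\gl_i,1-\gl_i\}$ (here $\gl_i\leq 0$ gives $\max(0,-\gl_i)=-\gl_i$). Expanding $e_i^{(n)}u=\sum_{k\geq n}\begin{bmatrix}n+k+\gl_i\\ n\end{bmatrix}_i f_i^{(k-n)}u_k$ by~\eqref{eq:commutation4}, the crux is the order computation $\mathrm{ord}\begin{bmatrix}n+k+\gl_i\\ n\end{bmatrix}_i=-r_in(k+\gl_i)$, which I would read off from $\begin{bmatrix}n+t\\ n\end{bmatrix}_i=\big(\prod_{j=1}^{n}[t+j]_i\big)/[n]_i!$ and the identity $\binom{n+t}{2}=\binom n2+\binom t2+nt$. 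Since $k+\gl_i\in\{0,1\}$ on the restricted support, this factor contributes order $0$ (for $k=-\gl_i$) or $-r_in$ (for $k=1-\gl_i$); combining the $k=-\gl_i$ summand, bounded below by $d_{-\gl_i}\geq 2a$, with the $k=1-\gl_i$ summand, where $f_i^{(k+1)}u_k\neq 0$ so $d_{1-\gl_i}\geq 2b+2r_i(1-\gl_i)$ and the summand is therefore $\geq 2b-2r_i(\gl_i+n-1)$, yields $\mathrm{ord}\,\normsq{e_i^{(n)}u}\geq\min(2a,2b-2r_i(\gl_i+n-1))$. I expect the main obstacle to be precisely this bookkeeping: pinning down the $q$-order of the quantum binomial coefficients and recognizing that $f_i^{(2)}u=0$ is exactly what caps $k+\gl_i$ at $1$, so that the dependence on $n$ stays linear and reproduces the exponent $-r_i(\gl_i+n-1)$; the reduction of orders of orthogonal sums to a minimum, which rests on positive-definiteness, is the other point needing care.
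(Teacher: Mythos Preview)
Your proposal is correct and follows essentially the same route as the paper: decompose $u=\sum_k f_i^{(k)}u_k$ with $u_k\in\ker e_i$, use orthogonality together with positive-definiteness of the polarization to extract bounds on each $u_k$ from the hypotheses on $u$ and $f_iu$, and then reassemble these bounds in the expansion of $e_i^{(n)}u$. The paper phrases each step as a membership statement $u_k\in q^cL(\gL)$ via Proposition~\ref{Prop:Nakajima_prepolarization}~(2),(4), whereas you translate everything into a $q$-adic valuation $\mathrm{ord}$ on $\normsq{\,\cdot\,}$; the content is identical, and in particular your observation that $f_i^{(2)}u=0$ confines the support to $k\in\{-\gl_i,1-\gl_i\}$ is exactly how the paper handles part~(2).
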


\begin{proof}
 Set $\gl_i = \langle h_i, \gl\rangle \in \Z$, and write
 \[ u= \sum_{k=\max(0,-\gl_i)}^N f_i^{(k)}u_k, \ \ \ \text{where } u_k \in \ker e_i \cap V(\gL)_{\gl+k\ga_i}.
 \]
 We have 
 \[ f_iu=\sum_{k=\max(0,-\gl_i)}^N [k+1]_if_i^{(k+1)}u_k \in q^bL(\gL),
 \]
 and since $f_i^{(k+1)}u_k$'s are pairwise orthogonal with respect to $(\ ,\ )$, it follows from Proposition~\ref{Prop:Nakajima_prepolarization} (4) that 
 $[k+1]_if_i^{(k+1)}u_k \in q^bL(\gL)$ for every $k$.
 Then since $f_i^{(k+1)}u_k \neq 0$ for $k \geq \max(0,-\gl_i+1)$ such that $u_k\neq 0$, we have 
 \begin{equation}\label{containment_of_uk}
  u_k \in q^{b+r_ik}L(\gL) \ \ \text{for } \max(0,-\gl_i+1) \leq k\leq N
 \end{equation}
 by  Proposition~\ref{Prop:Nakajima_prepolarization} (2).
 We have 
 \[ e_iu=\sum_{k=\max(1,-\gl_i)}^N[k+\gl_i+1]_if_i^{(k-1)}u_k,
 \]
 and hence if $\gl_i \geq 0$, \eqref{containment_of_uk} implies $e_iu \in q^{b-r_i\gl_i}L(\gL)$ and the assertion (1) holds.
 When $\gl_i <0$, we need to show further that 
 \begin{equation}\label{eq:further}
  f_i^{(-\gl_i-1)}u_{-\gl_i} \in q^{\min(a,b-r_i\gl_i)}L(\gL).
 \end{equation}
 Similarly as above, we see that $u \in q^aL(\gL)$ implies $u_k \in q^aL(\gL)$ for all $k$,
 and hence \eqref{eq:further} follows. 
 The proof of (1) is complete.
 Under the assumption of (2), we may put $N = -\gl_i+1$ and we have 
 $e_i^{(n)}u=f_i^{(-\gl_i-n)}u_{-\gl_i}+[n+1]_if_i^{(-\gl_i+1-n)}u_{-\gl_i+1}$, which belongs to 
 $q^{\min(a,b-r_i(\gl_i+n-1))}L(\gL)$. 
 Hence (2) is also proved.
\end{proof}

\begin{Lem}\label{Lem:setminus_J}
 Assume that the sequence $\bm{i}$ satisfies the following condition: there exists $1\leq m \leq L$ such that
 $i_m,i_{m+1},\ldots,i_L$ are pairwise distinct, $c_{2i_m} <0$, and $c_{i_{k}i_{k+1}} = -1$ for $m \leq k \leq L-1$.\footnote{
 If $\fg$ is not of type $E_6^{(1)}$, this condition, together with the condition~\eqref{equation:condition_for_i} on $\bm{i}$,
 uniquely determines the sequence $(i_L, i_{L-1}, \dotsc, i_m)$ (see Figure~\ref{figure} and Table~\ref{table}).
 In type $E_6^{(1)}$, on the other hand, there are two possibilities; $(5, 3)$ or $(6, 4)$.}
 Let $\ell \in \Z_{>0}$ and $(p_1,p_2,p_3,p_4) \in \Z_{\geq 0}^4$, and set
 \[ v_k = f_{i_k}f_{i_{k+1}}\cdots f_{i_L}E_{\bm{i}}^{(p_3)}e_1^{(p_2)}e_{0}^{(p_1)}v_{-\ell\gL_0}\in V(-\ell\gL_0)
    \ \ \ \text{for} \ m\leq k \leq L.
 \]
 {\normalfont(1)} We have
 \[ v_k = e_{\bm{i}[L,k]}^{(c_\fg p_3-1)}E_{\bm{i}[k-1,0]}^{(p_3)}e_1^{(p_2)}e_{0}^{(p_1)}v_{-\ell\gL_0}
    \ \ \ \text{for $m \leq k \leq L$}.
 \]
 {\normalfont (2)} We have $v_k \in  \pm \bB(-\ell\gL_0)\cup\{0\}$ for $m\leq k \leq L$.\\
 {\normalfont (3)} If $\fg$ is not of type $E_6^{(2)}$, we have
  \begin{equation}\label{eq:vector_in_B} 
   E_{\bm{i}[k-1,m]}^{(p_4)}e_2^{(p_4)}v_k \in \pm \bB(-\ell\gL_0) \cup \{0\} \ \ \ \text{for $m \leq k \leq L$}.
  \end{equation}
  On the other hand if $ \fg$ is of type $E_6^{(2)}$, we have
  \[ E_{\bm{i}[k-1,m]}^{(p_4)}e_2^{(p_4)}v_k \in q^{\min(0,p_3-p_4-1)}L(-\ell\gL_0) \ \ \ \text{for $m \leq k \leq L$}.
  \]
\end{Lem}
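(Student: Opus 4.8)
The plan is to prove the three parts in order, using downward induction on $k$ for (1) and (2), and to single out $E_6^{(2)}$ as the only genuinely hard case of (3).

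For (1) I would argue by downward induction on $k$, starting at $k=\ell_{\bm{i}}$. Writing $u_k=E_{\bm{i}[k-1,0]}^{(p_3)}e_1^{(p_2)}e_0^{(p_1)}v_{-\ell\gL_0}$, the chain hypothesis gives $E_{i_k}^{(p_3)}=e_{i_k}^{(c_{\fg}p_3)}$ for $k\geq m$, while Lemma~\ref{Lem:1}(1) supplies $f_{i_k}u_k=0$ together with $\langle h_{i_k},\wt_P(u_k)\rangle=-c_{\fg}p_3$. Substituting these into~\eqref{eq:commutation4} collapses $f_{i_k}e_{i_k}^{(c_{\fg}p_3)}u_k$ to $e_{i_k}^{(c_{\fg}p_3-1)}u_k$. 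Since $f_{i_k}$ commutes with every $e_{i_l}$ ($l\neq k$), the raising operators accumulated so far slide past $f_{i_k}$, and the inductive identity claimed in (1) follows at once.

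For (2) I would use the closed form $v_k=e_{\bm{i}[\ell_{\bm{i}},k]}^{(c_{\fg}p_3-1)}u_k$ from (1) and build $v_k$ up exactly as in the proof of Lemma~\ref{Lem:belonging_to_B}, applying $e_{i_k}^{(c_{\fg}p_3-1)},\dots,e_{i_{\ell_{\bm{i}}}}^{(c_{\fg}p_3-1)}$ one at a time and invoking Lemma~\ref{Lem:being_global_basis}(2) at each stage. The base vector $u_k$ lies in $\pm\bB(-\ell\gL_0)\cup\{0\}$ by the argument of Lemma~\ref{Lem:belonging_to_B}(1); the first step uses $f_{i_k}u_k=0$ from Lemma~\ref{Lem:1}(1); and at a later step the operator $f_{i_{k+j}}$ commutes past the previously applied $e_{i_l}$ (the tail indices being pairwise distinct) and then annihilates $u_k$, since for a suitable choice of $\bm{i}$ (Lemma~\ref{Lem:fundamental_properties}(6)) the tail node $i_{k+j}$ does not occur among the operators defining $u_k$. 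Thus every partial product remains in $\pm\bB(-\ell\gL_0)\cup\{0\}$.

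For (3), starting again from $v_k\in\pm\bB(-\ell\gL_0)\cup\{0\}$, I would apply first $e_2^{(p_4)}$ and then the chain $E_{i_m}^{(p_4)},\dots,E_{i_{k-1}}^{(p_4)}$. Away from $E_6^{(2)}$ the decisive fact is $f_2v_k=0$: for $k>m$ this comes from commuting $f_2$ past the non-adjacent tail operators and using $f_2\big(E_{\bm{i}}^{(p_3)}e_1^{(p_2)}e_0^{(p_1)}v_{-\ell\gL_0}\big)=0$ (Lemma~\ref{Lem:1}(2)), while for $k=m$ a short computation with~\eqref{eq:commutation3} shows that the factor pairing $e_2$ with its neighbouring node carries a strictly positive power of that node's raising operator throughout $E^{(1)}_{6,7,8}$ and $F_4^{(1)}$, hence kills the vector via $e_iv_{-\ell\gL_0}=0$. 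Granting $f_2v_k=0$, Lemma~\ref{Lem:being_global_basis}(2) returns $e_2^{(p_4)}v_k$ to $\pm\bB(-\ell\gL_0)\cup\{0\}$, and the successive operators $E_{i_j}^{(p_4)}$ are absorbed in the same manner, their $f_{i_j}$-vanishing coming from commutation and Lemma~\ref{Lem:1}; this gives the first, sharper conclusion.

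In type $E_6^{(2)}$ the double bond joining node $2$ to $3=i_m$ destroys this pattern: one finds $f_2v_m\neq0$ yet $f_2^{(2)}v_m=0$, so the $2$-string through $v_m$ has length $2$ and global-basis preservation fails. Here I would replace Lemma~\ref{Lem:being_global_basis} by the lattice estimates of Lemma~\ref{Lem:belonging_to_L}, applying it with $i=2$ (and with $i=3$ for the chain step, after recording the relevant values of $f_i(\cdot)$, of $\langle h_i,\wt_P(\cdot)\rangle$, and of the string-length conditions) to obtain membership in $q^{\min(0,p_3-p_4-1)}L(-\ell\gL_0)$, the remaining chain operators being handled by Lemma~\ref{Lem:belonging_to_L0}. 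This $E_6^{(2)}$ case is the hard part: because nodes $2$ and $3$ span a rank-two system of type $B_2$/$C_2$ rather than $A_2$, the clean principle ``$e_i^{(n)}$ carries $\pm\bB$ into $\pm\bB\cup\{0\}$'' is unavailable, and one must instead track the $q$-powers through the longer $e$- and $f$-strings around the double bond in order to pin down the exponent $\min(0,p_3-p_4-1)$. Establishing the precise string-length facts (such as $f_2^{(2)}v_m=0$) and the weight inequalities needed to feed Lemma~\ref{Lem:belonging_to_L}(2) is where essentially all the effort of the lemma is concentrated.
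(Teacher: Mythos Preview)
Your overall strategy is correct and part~(1) is fine, but two of your justifications break down.

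In~(2) you argue that $f_{i_{k'}}$ slides through to $u_k$ and then annihilates it because, ``for a suitable choice of $\bm{i}$,'' the tail node $i_{k'}$ does not occur in $\bm{i}[k-1,0]$. This is false in types $E_7^{(1)}$ and $E_8^{(1)}$: the tail is forced (it is the unique type-$A$ chain from node~$3$ to the unique node with $\langle h_i,\theta_1\rangle>0$), and every tail node has coefficient $\geq 2$ in $\theta_1$, so it must reappear in the initial segment of any admissible $\bm{i}$. Lemma~\ref{Lem:fundamental_properties}(6) only controls the full product $E_{\bm{i}}^{(p)}$, not partial ones, and gives you no freedom here. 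The paper obtains the needed vanishing by a weight argument instead: the tail hypothesis yields $\langle h_{i_r},\theta_1\rangle=0$ for $m\leq r\leq \ell_{\bm{i}}-1$, and then Lemma~\ref{Lem:fundamental_properties}(5) shows $s_{\bm{i}[k'-1,1]}^{-1}\wt_P\big(f_{i_{k'}}e_{\bm{i}[k'-1,k]}^{(c_\fg p_3-1)}u_k\big)\notin -\ell\gL_0+Q^+$.

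In~(3) for $\fg\neq E_6^{(2)}$ you assert that each $f_{i_j}$, $m\leq j\leq k-1$, annihilates $E_{\bm{i}[j-1,m]}^{(p_4)}e_2^{(p_4)}v_k$. This fails at the last step $j=k-1$: commuting $f_{i_{k-1}}$ through and using the definition of $v_k$ gives $f_{i_{k-1}}E_{\bm{i}[k-2,m]}^{(p_4)}e_2^{(p_4)}v_k=E_{\bm{i}[k-2,m]}^{(p_4)}e_2^{(p_4)}v_{k-1}$, which is not zero in general. The argument must therefore be by induction on $k$: for $j\leq k-2$ your vanishing via Lemma~\ref{Lem:1}(2) is correct, but for the final operator one records $f_{i_{k-1}}^{(p)}(\,\cdot\,)=\gd_{p1}\,E_{\bm{i}[k-2,m]}^{(p_4)}e_2^{(p_4)}v_{k-1}\in\pm\bB(-\ell\gL_0)\cup\{0\}$ by the inductive hypothesis and invokes Lemma~\ref{Lem:being_global_basis}(1), not~(2). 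Your sketch for $f_2v_m=0$ is also too vague; as in~(2), it is a weight statement rather than a commutation one. The $E_6^{(2)}$ outline via Lemma~\ref{Lem:belonging_to_L}(2) is on the right track.
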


\begin{proof}
 The assertion (1) is easily proved using~\eqref{eq:commutation4} and Lemma~\ref{Lem:1} (1).

 (2)  Set $v=e_{1}^{(p_2)}e_0^{(p_1)}v_{-\ell\gL_0}$ and $\gL = \wt_P(v)=-\ell\gL_0+p_1\ga_0+p_2\ga_1$,
 and fix $m\leq k \leq L$.
 By (the proof of) Lemma~\ref{Lem:belonging_to_B}, we have 
 \begin{equation}\label{eq:eikcp}
  e_{i_{k}}^{(c_\fg p_3-1)}E_{\bm{i}[k-1,0]}^{(p_3)}v \in \pm \bB(-\ell\gL_0) \cup \{0\}.
 \end{equation}
 For each $k < k' \leq L$, we have
 \begin{align}\label{eq:orbit2}
  s_{\bm{i}[k'-1,1]}^{-1}\wt_P(f_{i_{k'}}e_{\bm{i}[k'-1,k]}^{(c_\fg p_3-1)}E_{\bm{i}[k-1,0]}^{(p_3)}v) 
  &= \gL + p_3\ga_2 - s_{\bm{i}[k'-1,1]}^{-1}(\ga_{i_{k}}+\cdots+\ga_{i_{k'}}) \nonumber \\
  &= \gL +p_3\ga_2-s_{\bm{i}[L,1]}^{-1}(\ga_{i_{k}}+\cdots+\ga_{i_{k'-1}})
 \end{align}
 by the assumption on $\bm{i}$.
 We have $\langle h_{i_L},\theta_1\rangle >0$ by the condition~\eqref{equation:condition_for_i} on $\bm{i}$,
 and then it is easily checked that $\langle h_{i_r},\theta_1\rangle=0$ for $m\leq r\leq L-1$ (see Figure~\ref{figure} and Table~\ref{table}).
 Hence~\eqref{eq:orbit2} does not belong to $-\ell\gL_0+Q^+$ by Lemma~\ref{Lem:fundamental_properties} (5), 
 which implies $f_{i_{k'}}e_{\bm{i}[k'-1,k]}^{(c_\fg p_3-1)}E_{\bm{i}[k-1,0]}^{(p_3)}v=0$ for all $k'$.
 Now the assertion (2) follows from~\eqref{eq:eikcp} by applying Lemma~\ref{Lem:being_global_basis} (2) repeatedly.

 (3) First assume that $\fg$ is not of type $E_6^{(2)}$.
 We shall prove the assertion by the induction on $k$.
 In the case $k=m$, since $v_m \in \pm \bB(-\ell\gL_0) \cup \{0\}$ by (2) it suffices to show that $f_2v_m=0$, and as above, this is done by checking 
 $s_{\bm{i}[L,1]}^{-1}\wt_P(f_2v_m) \notin -\ell\gL_0+Q^+$. 
 Hence the induction begins.
 Assume that $k > m$.
 It follows from Lemma~\ref{Lem:1} (2) that
 \begin{align*}
  f_2v_k = f_{i_k}\cdots f_{i_L}f_2 E_{\bm{i}}^{(p_3)}v =0,
 \end{align*}
 and
 \begin{align*}
  f_{i_{k'}}E_{\bm{i}[k'-1,m]}^{(p_4)}e_2^{(p_4)}v_k =f_{i_{k}}\cdots f_{i_L}E_{\bm{i}[k'-1,m]}^{(p_4)}e_2^{(p_4)}f_{i_{k'}}E_{\bm{i}}^{(p_3)}v=0 \ \
  \text{for any $m \leq k' \leq k-2$}.
 \end{align*}
 Hence we have $E_{\bm{i}[k-2,m]}^{(p_4)}e_2^{(p_4)}v_k\in \pm \bB(-\ell\gL_0) \cup \{0\}$.
 Since
 \[ f_{i_{k-1}}^{(p)}E_{\bm{i}[k-2,m]}^{(p_4)}e_2^{(p_4)}v_k = \gd_{p1} E_{\bm{i}[k-2,m]}^{(p_4)}e_2^{(p_4)}v_{k-1}  \ \ \ \text{for $p \in \Z_{>0}$},
 \]
~\eqref{eq:vector_in_B} is now proved from the induction hypothesis and Lemma~\ref{Lem:being_global_basis}.

 Next assume that $\fg$ is of type $E_6^{(2)}$. In this case $\bm{i}=(432)$, $L=2$, $m=1$ and
 \[ v_k= e_4^{(p_3-1)}e_3^{(p_3-\gd_{k1})}e_2^{(p_3)}e_{1}^{(p_2)}e_0^{(p_1)}v_{-\ell\gL_0} \ \ \ (k=1,2).
 \] 
 We have
 \[ f_2^{(p)}v_1 = \begin{cases} [p_2-p_3+1] e_{432}^{(p_3-1)}e_{1}^{(p_2)}e_0^{(p_1)}v_{-\ell\gL_0} \in q^{-p_2+p_3}L(-\ell\gL_0) & (p=1), \\ 0 & (p \in \Z_{>1})
    \end{cases}
 \]
 (note that $v_1=0$ if $p_3>p_2$),
 and hence it follows from Lemma~\ref{Lem:belonging_to_L} (2) that $e_2^{(p_4)}v_1 \in q^{\min(0,p_3-p_4-1)}L(-\ell\gL_0)$.
 On the other hand, since $f_2v_2 =0$ we have $e_2^{(p_4)}v_2 \in \pm \bB(-\ell\gL_0) \cup \{0\}$,
 and then $e_3^{(p_4)}e_2^{(p_4)}v_2 \in q^{\min(0,p_3-p_4-1)}L(-\ell\gL_0)$ also follows since
 $f_3^{(p)}e_2^{(p_4)}v_2 = \gd_{p1}e_2^{(p_4)}v_1$ for $p \in \Z_{>0}$. 
 The proof is complete. 
\end{proof}

\begin{Lem}\label{Lem:belonging_to_L_of_e} 
 Let $\ell \in \Z_{>0}$ and $\bm{p}=(p_1,\ldots,p_5) \in \Z_{\geq 0}^5$.\\
 {\normalfont(1)} We have  
  \[ e_2E^{\bm{p}}v_{-\ell\gL_0} \in q^{\min(0,-p_4+p_5)}L(-\ell\gL_0).
  \]
 {\normalfont(2)} If $\fg$ is not of type $E_6^{(2)}$ and $i \in I_0\setminus \{1,2\}$, we have
  \[ e_iE^{\bm{p}}v_{-\ell\gL_0} \in q_i^{\min(0,-\langle h_i,\wt(\bm{p})\rangle)}L(-\ell\gL_0).
  \]
 {\normalfont(3)} If $\fg$ is of type $E_6^{(2)}$, we have
  \begin{align*}
   e_3E^{\bm{p}}v_{-\ell\gL_0} &\in q^{2\min(0,-p_3+p_4)-\gd_{p_3,p_4}}L(-\ell\gL_0), \ \ \text{and}\\
   e_4E^{\bm{p}}v_{-\ell\gL_0} &\in q^{2\min(0,-p_2+p_3)-\gd_{p_2,p_3}}L(-\ell\gL_0).
  \end{align*}
\end{Lem}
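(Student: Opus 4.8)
The plan is to treat all three parts with a single engine, namely the quantitative lattice estimate of Lemma~\ref{Lem:belonging_to_L}. Writing $u = E^{\bm{p}}v_{-\ell\gL_0}$, Lemma~\ref{Lem:belonging_to_B}(2) gives $u \in \pm\bB(-\ell\gL_0)\cup\{0\}\subseteq L(-\ell\gL_0)$, so the first input of Lemma~\ref{Lem:belonging_to_L} holds with $a=0$. Everything then reduces to locating $f_i u$ (and, where the sharp part~(2) of Lemma~\ref{Lem:belonging_to_L} is needed, checking $f_i^{(2)}u=0$). Throughout I will use that $\langle h_i,\wt_P(u)\rangle = \langle h_i,\wt(\bm{p})\rangle$ for $i\in I_0$, together with the recurring simplification that many configurations force $u=0$, making the estimate vacuous; pinning down these vanishing regimes is exactly what makes the surviving cases land on the stated exponent.

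For part~(1) I would first note that, since $f_2\big(E_{\bm{j}}^{(p_3)}E_{\bm{i}}^{(p_2)}e_1^{(p_1)}e_0^{(p_1)}v_{-\ell\gL_0}\big)=0$ by Lemma~\ref{Lem:1}(4) (here $\langle h_2,\theta_J\rangle=0$), the factor $e_2^{(p_4)}$ is applied to a vector of $h_2$-weight $-p_1$ that is annihilated by $f_2$; hence $u=0$ unless $p_4\le p_1$. Assuming $p_4\le p_1$, I commute $f_2$ past the outer $e_1^{(p_5)}$ (legitimate as $1\ne 2$) and apply~\eqref{eq:commutation4} together with the same vanishing to obtain $f_2 u = [\,p_1-p_4+1\,]\,E^{\bm{p}-\bm{\gee}_4}v_{-\ell\gL_0}$, a quantum integer times a vector lying in $L(-\ell\gL_0)$ by Lemma~\ref{Lem:belonging_to_B}(2). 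Reading off the $q$-order of $[\,p_1-p_4+1\,]$ and feeding $b=p_4-p_1$, $a=0$, $r_2=1$, $\langle h_2,\wt(\bm{p})\rangle=-p_1+2p_4-p_5$ into Lemma~\ref{Lem:belonging_to_L}(1) gives $e_2 u\in q^{\min(0,-p_4+p_5)}L(-\ell\gL_0)$ exactly.

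Parts~(2) and~(3) run on the same principle, the only new feature being that $e_i$ now sits inside the chains $E_{\bm{i}}^{(p_2)}$ or $E_{\bm{j}}^{(p_3)}$. Here I would exploit the freedom to reorder $\bm{i}$ and $\bm{j}$ (Lemma~\ref{Lem:fundamental_properties}(6)) so that $i$ occupies the left end of a type-$A$ tail meeting the hypotheses of Lemma~\ref{Lem:setminus_J}; after commuting $e_i$ past the outer $e_1^{(p_5)}$ (since $c_{i1}=0$) and past the $f_i$-stable factors, the computation of $f_i u$ is governed by the vectors $v_k$ of Lemma~\ref{Lem:setminus_J}, whose global-basis membership~(2) and lattice containments~(3) supply the required exponent $b$. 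For $\fg\ne E_6^{(2)}$ these containments are membership in $\pm\bB(-\ell\gL_0)\cup\{0\}$, and Lemma~\ref{Lem:belonging_to_L}(1) with $r_i=(\ga_i,\ga_i)/2$ reproduces $q_i^{\min(0,-\langle h_i,\wt(\bm{p})\rangle)}$. For $\fg=E_6^{(2)}$ the long roots $r_3=r_4=2$ produce both the factor $2$ and the $\gd$-corrections; here the $E_6^{(2)}$-clause of Lemma~\ref{Lem:setminus_J}(3) already delivers the containment in $q^{\min(0,p_3-p_4-1)}L(-\ell\gL_0)$, and combining it with the sharp Lemma~\ref{Lem:belonging_to_L}(2) (whose hypothesis $f_i^{(2)}u=0$ I would verify using Lemma~\ref{Lem:1}) yields the stated estimates for $e_3 u$ and $e_4 u$.

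The main obstacle is not the mechanism but pinning the exponents exactly. This requires, in each case, (i) a precise determination of the vanishing regimes of $u$ and of $f_i^{(n)}u$ — as in the $p_4\le p_1$ reduction above, these collapses are what remove otherwise-too-weak estimates; (ii) careful bookkeeping of the $q$-orders of the quantum binomial coefficients appearing in~\eqref{eq:commutation4}; and (iii) for $i\in I_0\setminus\{1,2\}$ a split according to the sign of $\langle h_i,\wt(\bm{p})\rangle$, deciding whether the crude Lemma~\ref{Lem:belonging_to_L}(1) suffices or the sharp part~(2) (with its extra hypotheses) must be invoked, the $E_6^{(2)}$ short/long interplay being the most delicate point.
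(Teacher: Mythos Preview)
Your treatment of part~(1) is correct and coincides with the paper's: one shows $f_2 u = [p_1-p_4+1]\,E^{\bm{p}-\bm{\gee}_4}v_{-\ell\gL_0}$ after the vanishing reduction $p_4\le p_1$, then feeds $a=0$, $b=p_4-p_1$ into Lemma~\ref{Lem:belonging_to_L}(1).

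For parts~(2) and~(3), however, there is a genuine gap. Commuting $f_i$ past $e_1^{(p_5)}$ gives $f_i u = e_1^{(p_5)}w$ for some $w$ related to the vectors of Lemma~\ref{Lem:setminus_J}, and you assert that the lattice estimates for $w$ supply the required exponent $b$. But applying $e_1^{(p_5)}$ need not preserve $L(-\ell\gL_0)$: nothing in your outline controls this step. The paper closes this gap via Lemma~\ref{Lem:belonging_to_L0}, which you never invoke: one must show that $f_1^{(n)}w \in L(-\ell\gL_0)$ for \emph{every} $n\ge 0$, not just $n=0$, and only then conclude $e_1^{(p_5)}w\in L(-\ell\gL_0)$. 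This is precisely why the paper introduces the one-parameter families $v_n$ and $u_n$ indexed by the $f_1$-shift, and why the $e_2^{(p_4)}$ layer also requires separate justification (via $f_2 v_n=0$ and Lemma~\ref{Lem:being_global_basis}(2)) before that machinery can be applied.

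Two smaller points. First, part~(2) needs an explicit trichotomy you omit: reordering so that $i$ sits at the end of $\bm{i}$ is possible only when $\langle h_i,\theta_1\rangle>0$, and at the end of $\bm{j}$ only when $\langle h_i,\theta_J\rangle>0$; when both pairings vanish, $f_i u=0$ directly by Lemma~\ref{Lem:1} and the estimate is immediate. Second, in part~(3) your plan to apply the sharp Lemma~\ref{Lem:belonging_to_L}(2) at the top level cannot work in general, since for $p_3>p_4$ one has $\langle h_3,\wt_P(u)\rangle=p_3-p_4>0$, violating its hypothesis. The paper instead uses part~(2) at an \emph{intermediate} layer (pushing an estimate through $e_2^{(p_4)}$, where the needed $f_2^{(2)}$-vanishing does hold), and only part~(1) for the final bound on $e_3 u$ and $e_4 u$.
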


\begin{proof}
 (1) It suffices to show that $f_2E^{\bm{p}}v_{-\ell\gL_0} \in q^{-p_1+p_4}L(-\ell\gL_0)$
 by Lemmas~\ref{Lem:belonging_to_B} and \ref{Lem:belonging_to_L}.
 Since $f_2E_{\bm{j}}^{(p_3)}E_{\bm{i}}^{(p_2)}E_{10}^{(p_1)}v_{-\ell\gL_0}=0$ by Lemma~\ref{Lem:1} (4),
 it follows from the weight consideration that $E^{\bm{p}}v_{-\ell\gL_0}=0$ if $p_4>p_1$,
 and hence we may assume that $p_4\leq p_1$. 
 By a direct calculation, we have
 \[ f_2E^{\bm{p}}v_{-\ell\gL_0} = [p_1-p_4+1]E^{\bm{p}-\bm{\gee}_4}v_{-\ell\gL_0},
 \]
 which belongs to $q^{-p_1+p_4}L(-\ell\gL_0)$, as required. 

 (2) It suffices to show that $f_iE^{\bm{p}}v_{-\ell\gL_0} \in L(-\ell\gL_0)$.
  The proof is divided into three cases.
  First assume that $\langle h_i, \theta_1\rangle = \langle h_i,\theta_J\rangle =0$.
  In this case Lemma~\ref{Lem:1} implies $f_iE^{\bm{p}}v_{-\ell\gL_0} =0$, and hence the assertion holds.
  Next assume that $\langle h_i,\theta_J\rangle >0$. 
  By Lemma~\ref{Lem:fundamental_properties} (6), we may assume that the sequence $\bm{j}$ is chosen so that
  $j_{L'} = i$.
  For each $n \in \Z_{\geq 0}$, set
  \[ v_n=e_i^{(c_\fg p_3-1)}E_{\bm{j}[L'-1,0]}^{(p_3)}E_{\bm{i}}^{(p_2)}e_1^{(p_1-n)}e_0^{(p_1)}v_{-\ell\gL_0}.
  \] 
  We easily see using Lemma~\ref{Lem:1} (3) that
  \begin{equation}\label{eq:fi}
   f_iE^{\bm{p}}v_{-\ell\gL_0} = e_1^{(p_5)}e_2^{(p_4)}v_0, \ \ \ \text{and} \ \ \ f_1^{(n)}e_2^{(p_4)}v_0 = e_2^{(p_4)}v_n \ \ \text{for any} \ n \in \Z_{\geq 0}.
  \end{equation}
  Hence by Lemma~\ref{Lem:belonging_to_L0}, it suffices to show that $e_2^{(p_4)}v_n \in \pm \bB(-\ell\gL_0) \cup \{0\} \subseteq L(-\ell\gL_0)$ for any $n$.
  We have $v_n \in \pm \bB(-\ell\gL_0) \cup \{0\}$ by (the proof of) Lemma~\ref{Lem:belonging_to_B}.
  Since $\ga_2 + \ga_i$ is a positive root (see Table~\ref{table}), we have
  \begin{align*}
   s_{\bm{j}[L',1]}^{-1}\wt_P (f_2v_n)
   &=\wt_P(E_{\bm{i}}^{(p_2)}e_1^{(p_1-n)}e_0^{(p_1)}v_{-\ell\gL_0})+p_3\ga_2 - s_{\bm{j}[L',1]}^{-1}(\ga_2+\ga_i)\\
   &\notin \wt_P(E_{\bm{i}}^{(p_2)}e_1^{(p_1-n)}e_0^{(p_1)}v_{-\ell\gL_0}) +Q^+,
  \end{align*}
  and the same argument as in the proof of Lemma~\ref{Lem:1} (3) shows that this implies $f_2v_n =0$.
  Hence $e_2^{(p_4)}v_n\in \pm \bB(-\ell\gL_0) \cup \{0\}$ holds, as required.
  Finally assume that $\langle h_i, \theta_1\rangle >0$.
  We may assume that the sequence $\bm{i}$ is chosen so that $i_{L}=i$, and the assumption of Lemma~\ref{Lem:setminus_J} is satisfied.
  Let $m$ be as in the assumption. 
  Further, we may also assume that the sequence $\bm{j}$ is chosen so that $j_{k}=i_{m+k-1}$ for $1\leq k \leq L-m$.
  For each $n \in \Z_{\geq 0}$, set 
  \[ u_n = f_iE_{\bm{i}}^{(p_2)}e_1^{(p_1-n)}e_0^{(p_1)}v_{-\ell\gL_0}=e_i^{(c_\fg p_2-1)}E_{\bm{i}[L-1,0]}^{(p_2)}e_1^{(p_1-n)}e_0^{(p_1)}v_{-\ell\gL_0}.
  \]
  As above it is enough to show for any $n$ that
  \begin{equation}\label{f1ne2}
   e_2^{(p_4)}E_{\bm{j}}^{(p_3)}u_n\in \pm \bB(-\ell\gL_0) \cup \{0\}.
  \end{equation}
  It follows from Lemma~\ref{Lem:setminus_J} (3) that $E_{\bm{j}[L-m,0]}^{(p_3)}u_n \in \pm \bB(-\ell\gL_0) \cup \{0\}$.
  We easily see from Figure~\ref{figure} and Table~\ref{table} that
  \[ \{ j \in J \mid c_{ij} \neq 0\} = \{ i_{L-1}\}, \ \ \text{and} \ \ \#\{ 1\leq k \leq L' \mid j_k = i_{L-1}\} = 1.
  \]
  Then, since $j_{L-m}=i_{L-1}$, 
  we have $c_{ij_k} = 0$ for $L-m < k \leq L'$, and hence we have
  \[ f_{j_k}E_{\bm{j}[k-1,0]}^{(p_3)}u_n = f_if_{j_k}E_{\bm{j}[k-1,0]}^{(p_3)}E_{\bm{i}}^{(p_2)}e_1^{(p_1-n)}e_0^{(p_1)}v_{-\ell\gL_0}=0 \ \ \ \text{for all $L-m < k \leq L'$}
  \]
  by Lemma~\ref{Lem:1}.
  Similarly, $f_2E_{\bm{j}}^{(p_3)}u_n = 0$ is proved.
  Now~\eqref{f1ne2} is shown using Lemma~\ref{Lem:being_global_basis}, and the proof of (2) is complete.

 (3) We shall prove 
  \begin{equation*}
   f_3E^{\bm{p}}v_{-\ell\gL_0} = e_1^{(p_5)}e_2^{(p_4)}e_3^{(p_3-1)}e_2^{(p_3)}e_{432}^{(p_2)}e_{10}^{(p_1)}v_{-\ell\gL_0} \in q^{\min(0,p_3-p_4-1)}L(-\ell\gL_0),
  \end{equation*}
  which implies the former assertion by Lemma~\ref{Lem:belonging_to_L},
  and for this it is enough to show for any $n \in \Z_{\geq 0}$ that 
  \begin{align}\label{eq:E62}
   f_1^{(n)}e_2^{(p_4)}e_3^{(p_3-1)}e_2^{(p_3)}e_{432}^{(p_2)}e_{10}^{(p_1)}v_{-\ell\gL_0} &
   =e_2^{(p_4)}e_3^{(p_3-1)}e_2^{(p_3)}e_{432}^{(p_2)}e_1^{(p_1-n)}e_0^{(p_1)}v_{-\ell\gL_0} \nonumber\\
   &\in q^{\min(0,p_3-p_4-1)}L(-\ell\gL_0)
  \end{align}
  by Lemma~\ref{Lem:belonging_to_L0}.
  We have 
  \[ e_3^{(p_3-1)}e_2^{(p_3)}e_{432}^{(p_2)}e_1^{(p_1-n)}e_0^{(p_1)}v_{-\ell\gL_0} \in \pm \bB(-\ell\gL_0) \cup \{0\} \subseteq L(-\ell\gL_0),
  \]
  and since 
  \begin{align*}
   f_2^{(p)}&e_3^{(p_3-1)}e_2^{(p_3)}e_{432}^{(p_2)}e_1^{(p_1-n)}e_0^{(p_1)}v_{-\ell\gL_0} \\
     &=\begin{cases} [p_1-n-p_3+1]e_{32}^{(p_3-1)}e_{432}^{(p_2)}e_1^{(p_1-n)}e_0^{(p_1)}v_{-\ell\gL_0}\in q^{-p_1+p_3+n}L(-\ell\gL_0) & (p=1),\\
                   0 & (p \in \Z_{>1})\end{cases}
  \end{align*}
  (note that the left-hand side is $0$ if $p_3>p_1-n$),
 ~\eqref{eq:E62} follows from Lemma~\ref{Lem:belonging_to_L}, as required.
  The latter assertion is proved in a similar manner using Lemma~\ref{Lem:setminus_J}.
\end{proof}

Now we obtain the following estimates for $\normsq{e_iE^{\bm{p}}w_1^{\otimes \ell}}$.

\begin{Prop}\label{Prop:estimate}
 Let $\ell \in \Z_{>0}$ and $\bm{p}=(p_1,\ldots,p_5) \in \Z_{\geq 0}^5$.\\
 {\normalfont(1)} We have
  \[ \normsq{e_2E^{\bm{p}}w_1^{\otimes \ell}} \in q^{2\min(0,-p_4+p_5)}A.
  \]
 {\normalfont(2)} If $i \in I_{0}\setminus\{1,2\}$, we have
  \[ \normsq{e_iE^{\bm{p}}w_1^{\otimes \ell}} \in q_i^{2\min(0,-\langle h_i,\wt(\bm{p})\rangle)-1}A.
  \]
\end{Prop}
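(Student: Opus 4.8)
The plan is to transport the lattice estimates of Lemma~\ref{Lem:belonging_to_L_of_e} (applied with $3\ell$ in place of $\ell$) from $V(-3\ell\gL_0)$ through $V(\ell\varpi_2)$ into $V(\varpi_2)^{\otimes\ell}$, and then to read off the norm on $(W^1)^{\otimes\ell}$ via the projection $p\colon V(\varpi_2)\to W^1$. Write $e_iE^{\bm p}v_{-3\ell\gL_0}\in q^cL(-3\ell\gL_0)$ for the estimate furnished by Lemma~\ref{Lem:belonging_to_L_of_e}, so that $c=\min(0,-p_4+p_5)$ when $i=2$; $q^c=q_i^{\min(0,-\langle h_i,\wt(\bm p)\rangle)}$ when $\fg$ is untwisted and $i\in I_0\setminus\{1,2\}$; and, for $E_6^{(2)}$ with $i\in\{3,4\}$, $c=2\min(0,-\langle h_i,\wt(\bm p)\rangle)-\gd_{p_3,p_4}$ resp.\ $-\gd_{p_2,p_3}$ (here $\langle h_3,\wt(\bm p)\rangle=p_3-p_4$ and $\langle h_4,\wt(\bm p)\rangle=p_2-p_3$, which follow from $\ggg_1=\varpi_4$, $\ggg_2=\varpi_3$). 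I stress at the outset that a factorwise argument on $(W^1)^{\otimes\ell}$ is hopeless: the single-factor exponents combine under the coproduct as $\sum_k\min(0,-x_k)$, which may be strictly smaller than the required $\min(0,-\sum_k x_k)$. Passing through $V(\ell\varpi_2)$ is precisely what yields the estimate for the whole tensor at once, and this is the crux of the proof.

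The first step is to cross $\Psi$. Since $\ell\varpi_2=\ell\gL_2-3\ell\gL_0$ and $v_{\ell\gL_2}$ is a highest weight vector, so that $e_jv_{\ell\gL_2}=0$ for all $j\in I$, the coproduct gives $e_iE^{\bm p}(v_{\ell\gL_2}\otimes v_{-3\ell\gL_0})=v_{\ell\gL_2}\otimes e_iE^{\bm p}v_{-3\ell\gL_0}$. Applying the homomorphism $\Psi$ of Lemma~\ref{Lem:hw-lw-lev0} and using $\Psi(v_{\ell\gL_2}\otimes v_{-3\ell\gL_0})=v_{\ell\varpi_2}$, I obtain
\[
 e_iE^{\bm p}v_{\ell\varpi_2}=\Psi\big(v_{\ell\gL_2}\otimes e_iE^{\bm p}v_{-3\ell\gL_0}\big).
\]
By Lemma~\ref{Lem:hw-lw-lev0}, $\Psi$ sends $\bB(\ell\gL_2,-3\ell\gL_0)$ into $\bB(\ell\varpi_2)\cup\{0\}$, hence carries $L(\ell\gL_2)\otimes_A L(-3\ell\gL_0)$ into $L(\ell\varpi_2)$; as $v_{\ell\gL_2}\in L(\ell\gL_2)$ and $e_iE^{\bm p}v_{-3\ell\gL_0}\in q^cL(-3\ell\gL_0)$, this gives $e_iE^{\bm p}v_{\ell\varpi_2}\in q^cL(\ell\varpi_2)$.

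The second step is to cross $\Phi$ and extract the norm. The injection $\Phi\colon V(\ell\varpi_2)\hookrightarrow V(\varpi_2)^{\otimes\ell}$ of~\cite{MR2074599} sends $v_{\ell\varpi_2}$ to $v_{\varpi_2}^{\otimes\ell}$ and respects the crystal lattices, so $\Phi(L(\ell\varpi_2))\subseteq L(\varpi_2)^{\otimes\ell}$; being $U_q(\fg)$-linear it satisfies $\Phi(e_iE^{\bm p}v_{\ell\varpi_2})=e_iE^{\bm p}v_{\varpi_2}^{\otimes\ell}$. Thus $U:=e_iE^{\bm p}v_{\varpi_2}^{\otimes\ell}\in q^cL(\varpi_2)^{\otimes\ell}$. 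Applying~\eqref{eq:invariance_of_zk} in each tensor factor and using $p^{\otimes\ell}(U)=e_iE^{\bm p}w_1^{\otimes\ell}$, I get
\[
 \normsq{e_iE^{\bm p}w_1^{\otimes\ell}}_{(W^1)^{\otimes\ell}}
 =\sum_{\bm m\in\Z^\ell}\big((z_2^{m_1}\otimes\cdots\otimes z_2^{m_\ell})U,\,U\big)_{V(\varpi_2)^{\otimes\ell}},
\]
a finite sum by weight considerations. Since $z_2$ preserves $\bB(\varpi_2)$, it preserves $L(\varpi_2)$, so each $z_2^{m_1}\otimes\cdots\otimes z_2^{m_\ell}$ preserves $L(\varpi_2)^{\otimes\ell}$; hence every summand lies in $\big(q^cL(\varpi_2)^{\otimes\ell},q^cL(\varpi_2)^{\otimes\ell}\big)\subseteq q^{2c}A$ by Proposition~\ref{Prop:Nakajima_prepolarization}(2). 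Therefore $\normsq{e_iE^{\bm p}w_1^{\otimes\ell}}\in q^{2c}A$.

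It remains to match $q^{2c}A$ against the asserted bounds. For $i=2$, $2c=2\min(0,-p_4+p_5)$, which is (1) verbatim. For untwisted $\fg$ and $i\in I_0\setminus\{1,2\}$, $q^{2c}=q_i^{2\min(0,-\langle h_i,\wt(\bm p)\rangle)}$, and since $q_i\in A$ this is contained in $q_i^{2\min(0,-\langle h_i,\wt(\bm p)\rangle)-1}A$, giving (2). For $E_6^{(2)}$ with $i=3$ (resp.\ $i=4$) one has $q_i=q^2$ and $2c=4\min(0,-\langle h_i,\wt(\bm p)\rangle)-2\gd_{p_3,p_4}$ (resp.\ $-2\gd_{p_2,p_3}$); as the Kronecker term is at most $1$, $q^{2c}\subseteq q^{4\min(0,-\langle h_i,\wt(\bm p)\rangle)-2}A=q_i^{2\min(0,-\langle h_i,\wt(\bm p)\rangle)-1}A$, again giving (2). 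The one genuinely delicate point is the breakdown of the factorwise estimate flagged above; it is circumvented entirely by obtaining the whole-tensor lattice bound $U\in q^cL(\varpi_2)^{\otimes\ell}$ from $V(\ell\varpi_2)$, after which the $z_2$-translate expansion converts it losslessly into the desired norm estimate.
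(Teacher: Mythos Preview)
Your proof is correct and follows essentially the same route as the paper: transport the lattice estimate of Lemma~\ref{Lem:belonging_to_L_of_e} through the chain $V(\ell\gL_2)\otimes V(-3\ell\gL_0)\xrightarrow{\Psi}V(\ell\varpi_2)\xrightarrow{\Phi}V(\varpi_2)^{\otimes\ell}$, then read off the norm in $(W^1)^{\otimes\ell}$. The only cosmetic difference is in the last step: the paper applies $p^{\otimes\ell}$ directly, using $p\big(L(\varpi_2)\big)=L(W^1)$ and~\eqref{eq:LWr1} to conclude $\big(L(W^1)^{\otimes\ell},L(W^1)^{\otimes\ell}\big)\subseteq A$, whereas you stay in $V(\varpi_2)^{\otimes\ell}$ and expand the $(W^1)^{\otimes\ell}$-norm as a finite sum of $z_2$-translated pairings --- both are valid and yield the same bound.
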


\begin{proof}
 By~\eqref{eq:gl.base_of_hwlw}, Lemma~\ref{Lem:hw-lw-lev0}, \cite[Theorem~1 (2)]{MR2074599} and the definition of $L(W^1)$, we have 
 \[ p^{\otimes \ell} \circ \Phi \circ \Psi \big(v_{\ell\gL_2} \otimes L(-3\ell\gL_0)\big) \subseteq L(W^{1})^{\otimes \ell},
 \]  
 where $\Psi\colon V(\ell\gL_2) \otimes V(-3\ell\gL_0) \to V(\ell\varpi_2)$ is the homomorphism given in the lemma,
 $\Phi\colon V(\ell\varpi_2) \hookrightarrow V(\varpi_2)^{\otimes \ell}$ is the one satisfying $\Phi(v_{\ell\varpi_2}) = v_{\varpi_2}^{\otimes \ell}$, 
 and $p\colon V(\varpi_2) \twoheadrightarrow W^{1}$ is the canonical projection.
 The assertions follow from this and Lemma~\ref{Lem:belonging_to_L_of_e}.
\end{proof}



%
%
%

Let $M_1,\ldots,M_n$ and $u_k\in (M_k)_{\gl_k}$ ($1\leq k\leq n$) be as in Lemma~\ref{Lem:coproduct_rule}.
We see that the vector $e_iE^{\bm{p}}(u_1\otimes \cdots\otimes u_n)$ for $i \in I$ and $\bm{p} \in \Z_{\geq 0}^5$
can be written in the form
\begin{equation}\label{eq:coproduct_rule2}
\begin{split}
 e_i&E^{\bm{p}}(u_1\otimes \cdots \otimes u_p) \\
 &=\sum_{k=1}^n\sum_{\begin{smallmatrix}\bm{p}_1,\ldots,\bm{p}_n \in \Z_{\geq 0}^5; \\ \bm{p}_1+\cdots+\bm{p}_n=\bm{p}\end{smallmatrix}}
  q^{m(\bm{p}_1,\ldots,\bm{p}_n:\bm{\gl},i,k)}
  E^{\bm{p}_1}u_1\otimes \cdots \otimes e_iE^{\bm{p}_k}u_k\otimes \cdots \otimes E^{\bm{p}_n}u_n
\end{split}
\end{equation}
with some $m(\bm{p}_1,\ldots,\bm{p}_n:\bm{\gl},i,k) \in D^{-1}\Z$.
Now the following lemma, together with Proposition~\ref{Prop:estimate} (2), completes the proof of (C3) for $i \in I_0\setminus \{1,2\}$.

\begin{Lem}\label{Lem:not12}
 Let $i \in I_0\setminus\{1,2\}$. \\
 {\normalfont(1)} If $\bm{p},\bm{p}' \in \Z_{\geq 0}^5$ satisfy $(e_iE^{\bm{p}}w_1,E^{\bm{p}'}w_1) \neq 0$, then we have $\wt_P(e_iE^{\bm{p}})=\wt_P(E^{\bm{p}'})$.\\
 {\normalfont(2)} For any $\bm{p} \in \Z_{\geq 0}^5$ and $\ell \in \Z_{>0}$, we have
 \[
 \normsq{e_iE^{\bm{p}}w_1^{\otimes \ell}}_{(W^1)^{\otimes \ell}} = \normsq{e_iE^{\bm{p}}w_\ell}_{W^\ell}.
 \]
\end{Lem}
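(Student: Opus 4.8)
The plan is to prove (1) as a weight-compatibility statement and then deduce (2) mechanically from Lemma~\ref{Lem:equality_of_prepolarizations_on_W}. For (1), I would first use that the form on $W^1$ is a polarization, hence admissible and symmetric, so that distinct $P_\cl$-weight spaces are orthogonal: from $(q^hu,v)=(u,q^hv)$ one gets $(u,v)=0$ whenever $\wt_{P_\cl}(u)\neq\wt_{P_\cl}(v)$. Thus $(e_iE^{\bm p}w_1,E^{\bm{p}'}w_1)\neq 0$ forces $\wt_{P_\cl}(e_iE^{\bm p}w_1)=\wt_{P_\cl}(E^{\bm{p}'}w_1)$, and since $\wt_{P_\cl}(w_1)=\varpi_2$ this is the same as $\ga_i+\wt_P(E^{\bm p})-\wt_P(E^{\bm{p}'})\in\Z\gd$. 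By Lemma~\ref{Lem:fundamental_properties}(3) one has $\wt_P(E^{\bm p})=p_1\ga_0+(p_1+p_5)\ga_1+p_4\ga_2+p_2\theta_1+p_3\theta_J$, and likewise for $\bm{p}'$; in particular both weights lie in the sublattice $\mathcal L=\Z\ga_0+\Z\ga_1+\Z\ga_2+\Z\theta_1+\Z\theta_J$ of $Q$.

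Next I would observe that $\gd\in\mathcal L$: checking Figure~\ref{figure} and Table~\ref{table}, the $\ga_0$- and $\ga_1$-components of $\gd$ lie in $\Z\ga_0+\Z\ga_1$, while its $I_{01}$-component equals $\ga_2+\theta_1+\theta_J$. Hence the membership above would force $\ga_i\in\mathcal L$. For every type other than $E_6^{(2)}$, a direct inspection of Table~\ref{table} shows $\ga_i\notin\mathcal L$ for all $i\in I_0\setminus\{1,2\}$ (writing $\ga_i=c\ga_2+d\theta_1+e\theta_J$ and comparing coefficients at the nodes far from node $2$ leads to a contradiction, the factor $2$ occurring in $\theta_1,\theta_J$ being what intervenes in type $F_4^{(1)}$). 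Therefore in these types the pairing vanishes identically and the asserted implication holds vacuously.

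The remaining, and I expect hardest, case is $E_6^{(2)}$, where $\bm i=(432)$, $\bm j=(32)$ and the coincidences $\ga_3=\theta_J-\ga_2$, $\ga_4=\theta_1-\theta_J$ give $\ga_i\in\mathcal L$, so the lattice argument no longer excludes a nonzero multiple $k\gd$ with $k=p_1'-p_1$. Here I would instead rewrite $(e_iE^{\bm p}w_1,E^{\bm{p}'}w_1)=(E^{\bm p}w_1,q_i^{-1}t_i^{-1}f_iE^{\bm{p}'}w_1)$ by admissibility, lift to $V(\varpi_2)$ through~\eqref{eq:invariance_of_zk} so that the degree $\langle d,\cdot\rangle$ singles out the unique surviving $z_2$-shift, and then combine $E^{\bm p}v_{\varpi_2}\in\pm\bB(\varpi_2)\cup\{0\}$ (Proposition~\ref{Prop:being_global_basis_of_level0}) with the explicit description of the $f_3,f_4$-action from Lemma~\ref{Lem:belonging_to_L_of_e}(3); the almost-orthonormality of $\bB(\varpi_2)$ should then upgrade ``nonzero pairing'' to the exact equality $k=0$, that is $\wt_P(e_iE^{\bm p})=\wt_P(E^{\bm{p}'})$. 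Controlling this single surviving term is the main obstacle.

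Finally, for (2) I would apply Lemma~\ref{Lem:equality_of_prepolarizations_on_W} with $X=Y=e_iE^{\bm p}$. Expanding $e_iE^{\bm p}(w_1\otimes\cdots\otimes w_1)$ as in~\eqref{eq:coproduct_rule2}, every tensor factor of every term is some $E^{\bm q}$ except for one slot carrying $e_iE^{\bm q}$. To verify~\eqref{eq:condition_for_equality} I check that a nonzero product of factorwise pairings forces factorwise equality of $\wt_P$: the pure $E^{\bm q}$-against-$E^{\bm{q}'}$ factors force $\bm q=\bm{q}'$ by Lemma~\ref{Lem:orthogonality}; if the two $e_i$-slots coincide, the matched off-slots together with the constraint that both splittings sum to $\bm p$ force the $e_i$-slot splittings to agree as well; and if the two $e_i$-slots differ, part~(1) supplies the weight equality at each mixed factor. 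In all cases every $\wt_P$ matches, so Lemma~\ref{Lem:equality_of_prepolarizations_on_W} gives $\normsq{e_iE^{\bm p}w_1^{\otimes \ell}}_{(W^1)^{\otimes \ell}}=\normsq{e_iE^{\bm p}w_\ell}_{W^\ell}$, as required.
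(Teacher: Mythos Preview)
Your lattice argument for (1) in the four types other than $E_6^{(2)}$ is correct and in fact proves the stronger statement that the pairing vanishes identically there. However, the $E_6^{(2)}$ case is a genuine gap: your proposed workaround is only a sketch, and the ingredients you invoke do not obviously assemble into a proof (Lemma~\ref{Lem:belonging_to_L_of_e}(3) concerns $e_3,e_4$ rather than $f_3,f_4$, and almost-orthonormality of $\bB(\varpi_2)$ controls values only up to $q_sA$, which does not by itself pin down the $z_2$-shift $k$).

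The paper avoids all of this with a single uniform observation you overlooked: for $i\in I_0\setminus\{1,2\}$ the node $i$ is adjacent neither to $0$ nor to $1$ in any of the five Dynkin diagrams, so $e_i$ commutes with $e_0,f_0,e_1,f_1$. One can therefore rerun the proof of Lemma~\ref{Lem:orthogonality} verbatim with the harmless factor $e_i$ carried along, which forces $p_5=p_5'$ whenever the pairing is nonzero. Combined with the constraint $\wt_P(e_iE^{\bm p})-\wt_P(E^{\bm{p}'})=k\gd$ that you already derived, comparing $\ga_0$- and $\ga_1$-coefficients gives $p_1-p_1'=k$ and $(p_1-p_1')+(p_5-p_5')=2k$ (since $a_0=1$, $a_1=2$ in every type considered), so $p_5=p_5'$ immediately yields $k=0$. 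No case analysis is needed, and in particular $E_6^{(2)}$ requires no special treatment.

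Your argument for (2) via Lemma~\ref{Lem:equality_of_prepolarizations_on_W} and the expansion~\eqref{eq:coproduct_rule2} matches the paper's.
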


\begin{proof}
 Since $(e_iE^{\bm{p}}w_1,E^{\bm{p}'}w_1) \neq 0$ implies $\wt_P(e_iE^{\bm{p}}) \in \wt_P(E^{\bm{p}'}) + \Z\gd$, in order to prove (1) it is enough to show that 
 $(e_iE^{\bm{p}}w_1,E^{\bm{p}'}w_1) = 0$ if $p_5\neq p_5$.
 Since $e_j,f_j$ ($j=0,1$) commute with $e_i$, this follows from the same argument as in the proof of Lemma~\ref{Lem:orthogonality}.
 Then we see from Lemma~\ref{Lem:orthogonality} and~\eqref{eq:coproduct_rule2} that 
 $X=Y=e_iE^{\bm{p}}$ satisfy the assumptions of Lemma~\ref{Lem:equality_of_prepolarizations_on_W},
 and hence the assertion (2) is proved.
\end{proof}

It remains to prove (C3) for $i = 2$ and $\bm{p} \in \ol{S}_\ell$, which is more involved. 
We will prove the following stronger statement,
and the proof will occupy the rest of this paper.

\begin{Prop}\label{Prop:e2epw}
 Let $\ell \in \Z_{>0}$.
 For any $\bm{p}=(p_1,p_2,p_3,p_4,p_5)\in \Z_{\geq 0}^5$, we have 
 \begin{equation}\label{eq:e2epw}
  \normsq{e_2E^{\bm{p}}w_\ell} \in q^{2\min(0,-p_4+p_5,p_1-p_4-\ell)-1}A.
 \end{equation}
\end{Prop}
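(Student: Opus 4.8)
The plan is to argue by induction on $\ell$, taking as the engine the realization of $W^\ell$ inside $W^1 \otimes W^{\ell-1}$ supplied by Lemma~\ref{Lem:realization_of_KR}. The base case $\ell=1$ is immediate: there $(W^1)^{\otimes 1}=W^1$, so $\normsq{e_2E^{\bm p}w_1}$ is bounded by Proposition~\ref{Prop:estimate}~(1), and since $2\min(0,-p_4+p_5)\geq 2\min(0,-p_4+p_5,p_1-p_4-1)-1$ this value already lies in the required set $q^{2\min(0,-p_4+p_5,p_1-p_4-1)-1}A$. Before the inductive step I would record the auxiliary fact that $\normsq{E^{\bm p}w_\ell}=\normsq{E^{\bm p}v_{\ell\varpi_2}}\in\{0\}\cup(1+q_sA)\subseteq A$ for every $\bm p\in\Z_{\geq 0}^5$, which follows by combining Proposition~\ref{Prop:equality}, Proposition~\ref{Prop:being_global_basis_of_level0} and Proposition~\ref{Prop:Nakajima_prepolarization}~(3); the norms of the ``spectator'' vectors appearing below will be controlled in exactly this way.

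For the inductive step I would use Lemma~\ref{Lem:realization_of_KR} to rewrite the norm as a cross pairing between the two realizations $W_1=W^1_{q^{m(\ell-1)}}\otimes W^{\ell-1}_{q^{-m}}$ and $W_2=W^1_{q^{m(1-\ell)}}\otimes W^{\ell-1}_{q^{m}}$. Since $R_1$ is a homomorphism with $R_1(\iota(w_1)\otimes\iota(w_{\ell-1}))=w_\ell$, and $R_2R_1$ carries this vector to the corresponding highest vector of $W_2$, one gets
\[
 \normsq{e_2E^{\bm p}w_\ell}_{W^\ell}=\big(e_2E^{\bm p}(\iota(w_1)\otimes\iota(w_{\ell-1}))_{W_1},\,e_2E^{\bm p}(\iota(w_1)\otimes\iota(w_{\ell-1}))_{W_2}\big)_1.
\]
I would then expand $e_2E^{\bm p}(\iota(w_1)\otimes\iota(w_{\ell-1}))$ by the coproduct formula \eqref{eq:coproduct_rule2}, so that the expression becomes a sum over decompositions $\bm p_1+\bm p_2=\bm p$ of terms in which $e_2$ acts on either the $W^1$ factor or the $W^{\ell-1}$ factor, each weighted by an explicit power of $q$ coming from the spectral parameters and from Lemma~\ref{Lem:coproduct_rule}. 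Because $(\ ,\ )_1$ factorizes over the two tensor slots and the vectors $E^{\bm p_1}w_1$ are pairwise orthogonal by Lemma~\ref{Lem:orthogonality} and Lemma~\ref{Lem:not12}, most off-diagonal contributions drop out, and the surviving ``aligned'' terms are products of a factor bounded by Proposition~\ref{Prop:estimate}~(1) (when $e_2$ hits $W^1$) or by the induction hypothesis for $\ell-1$ (when $e_2$ hits $W^{\ell-1}$) with spectator norms lying in $A$.

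The crux, and the step I expect to be the main obstacle, is the control of the terms in which $e_2$ lands on \emph{different} tensor factors in the two copies $W_1$ and $W_2$: these are precisely the $\gd$-shifted cross terms that make the equality condition \eqref{eq:condition_for_equality} of Lemma~\ref{Lem:equality_of_prepolarizations_on_W} fail for $e_2$ (in contrast to the case $i\neq 2$ treated in Lemma~\ref{Lem:not12}), and they are what produce the extra entry $p_1-p_4-\ell$ in the minimum together with the odd shift $-1$. Evaluating them requires an explicit identity in $W^\ell$, namely the relation \eqref{eq:the_relation}, which encodes how $e_0^{(p_1)}$ (the affine node) interacts with $e_2^{(p_4)}$ through the $\ell$-fold fusion structure; this is exactly the input whose proof is deferred to Appendix~\ref{Appendix}. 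Once that relation is in hand, I would finish by collecting the powers of $q$ contributed by the spectral parameters $q^{\pm m(\ell-1)},q^{\mp m}$ and by the quantum-integer coefficients, and checking that the minimum of the resulting exponents equals $2\min(0,-p_4+p_5,p_1-p_4-\ell)-1$; this bookkeeping is routine but, like the relation itself, somewhat lengthy.
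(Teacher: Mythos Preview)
Your overall strategy---induction on $\ell$, realization of $W^\ell$ inside $W^1\otimes W^{\ell-1}$ via Lemma~\ref{Lem:realization_of_KR}, expansion by the coproduct rule~\eqref{eq:coproduct_rule2}, and the relation~\eqref{eq:the_relation} to control the $\gd$-shifted cross terms---matches the paper's proof. But several steps you describe as routine are in fact the substance of the argument, and one of your citations is wrong.

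First, Lemma~\ref{Lem:not12} applies only to $i\in I_0\setminus\{1,2\}$ and says nothing about $e_2$; the correct replacement is Lemma~\ref{Lem:weak_orthogonality}, which shows that $(e_2E^{\bm p}w,E^{\bm p'}w)$ vanishes unless $\bm p'=\bm p-\bm a$ or $\bm p'=\bm p+\bm\gee_4$. This is what yields the four-term decomposition $Z_1+Z_2+Z_3+Z_4$ the paper works with, and without it you cannot isolate which cross terms survive. Second, the ``bookkeeping'' on the powers of $q$ is not routine: it rests on the explicit formula for $m(\bm p_1,\bm p_2)$ (Lemma~\ref{Lem:explixit_m}), the nonnegativity $m(\bm p_1,\bm p_2)\geq 0$ whenever both factors are nonzero (Lemma~\ref{Lem:nonnegativity0}, deduced indirectly from $\normsq{E^{\bm p}w_1^{\otimes(\ell+1)}}\in 1+q_sA$), and the sharper lower bounds of Lemma~\ref{Lem:nonnegativity}, which in turn require the nonvanishing results of Lemma~\ref{Lem:technical}. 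These are genuine ingredients, not exponent collection. Third, the paper does \emph{not} bound $Z_3+Z_4$ directly: instead it writes the analogous decomposition $W_1+W_2+W_3+W_4$ for $\normsq{e_2E^{\bm p}w_1^{\otimes(\ell+1)}}$, proves $W_3=Z_3$ and $W_4=Z_4$, and then bounds $W_1$ and $W_2$ separately so that the known estimate on $\normsq{e_2E^{\bm p}w_1^{\otimes(\ell+1)}}$ from Proposition~\ref{Prop:estimate}~(1) gives the bound on $Z_3+Z_4$. You do not mention this comparison trick, and without it the estimate for $Z_4$ (which involves $\normsq{e_2E^{\bm p_1}w_1}$ with no useful induction) is not clear.
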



\begin{Lem}\label{Lem:properties_of_W^1}
 Let $\ell \in \Z_{>0}$.
 If $\bm{p} \in \Z_{\geq 0}^5$ satisfies $(W^1)^{\otimes \ell} \ni E^{\bm{p}}w_1^{\otimes \ell} \neq 0$, 
 then $p_1 \leq 3\ell$ and $p_j \leq \min(2\ell,p_1)$ for $j \in \{2,3,4\}$. 
\end{Lem}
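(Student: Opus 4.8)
My plan is to reduce everything to the case $\ell=1$ and then read each bound off a single $\mathfrak{sl}_2$-triple. First I would expand $E^{\bm p}w_1^{\otimes\ell}$ by Lemma~\ref{Lem:coproduct_rule},
\[
 E^{\bm p}w_1^{\otimes\ell}=\sum_{\bm p_1+\cdots+\bm p_\ell=\bm p} q^{\bullet}\,E^{\bm p_1}w_1\otimes\cdots\otimes E^{\bm p_\ell}w_1 .
\]
By Lemma~\ref{Lem:orthogonality} the vectors $E^{\bm q}w_1$ $(\bm q\in\Z_{\ge 0}^5)$ are pairwise orthogonal, so the nonzero ones are linearly independent, and hence so are the tensor monomials attached to distinct tuples $(\bm p_1,\dots,\bm p_\ell)$. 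Therefore $E^{\bm p}w_1^{\otimes\ell}\neq 0$ holds if and only if $\bm p=\sum_k\bm p_k$ for some $\bm p_k\in\Z_{\ge 0}^5$ with every $E^{\bm p_k}w_1\neq 0$. Granting the case $\ell=1$, namely that $E^{\bm q}w_1\neq 0$ forces $q_1\le 3$ and $q_j\le 2$ for $j\in\{2,3,4\}$, summing over $k$ then gives $p_1\le 3\ell$ and $p_j\le 2\ell$.

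For the remaining bound $p_j\le p_1$ I would argue directly for all $\ell$ in the lowest weight module. Since $e_iw_1=0$ $(i\in I_0)$ and $f_0w_1=0$, and $\langle h_0,\ell\varpi_2\rangle=-3\ell$ (the comark of the near adjoint node equals $3$ in all five types), there is a $U_q(\fn_+)$-module homomorphism $\pi\colon V(-3\ell\gL_0)\to (W^1)^{\otimes\ell}$ sending $v_{-3\ell\gL_0}\mapsto w_1^{\otimes\ell}$; as $E^{\bm p}\in U_q(\fn_+)$ we get $E^{\bm p}w_1^{\otimes\ell}=\pi\bigl(E^{\bm p}v_{-3\ell\gL_0}\bigr)$, so it suffices to bound those $\bm p$ with $E^{\bm p}v_{-3\ell\gL_0}\neq 0$. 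The rightmost factor $e_0^{(p_1)}$ acts first and forces $p_1\le 3\ell$. Each of the three occurrences of $e_2^{(\cdot)}$ in $E^{\bm p}$, carrying the exponents $p_2,p_3,p_4$, is applied to a vector $u$ with $f_2u=0$ (Lemma~\ref{Lem:1}(4), since $2\in J$ and $\langle h_2,\theta_J\rangle=0$) and $\langle h_2,\wt u\rangle=-p_1$ (using $\langle h_2,\ga_0\rangle=0$, $\langle h_2,\ga_1\rangle=-1$, $\langle h_2,\theta_1\rangle=\langle h_2,\theta_J\rangle=0$); the $\mathfrak{sl}_2$-string through $u$ then yields $p_j\le-\langle h_2,\wt u\rangle=p_1$.

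The heart of the matter, and the step I expect to be the main obstacle, is the sharp bound $p_j\le 2\ell$, which genuinely needs the reduction to $\ell=1$. In $W^1$ each exponent $q_j$ $(j\in\{2,3,4\})$ is the power of an $e_2^{(\cdot)}$ applied to a vector $u$ with $\langle h_2,\wt u\rangle=1-q_1$ (the same weight computation as above, now with the contribution $\langle h_2,\varpi_2\rangle=1$ present). Nonvanishing of $e_2^{(q_j)}u$—necessary for $E^{\bm q}w_1\neq 0$—forces $1-q_1+2q_j\le \max_{\mu}\langle h_2,\mu\rangle$, the maximum taken over the weights $\mu$ of $W^1$. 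By Proposition~\ref{Prop:properties_of_KR}(3) these weights lie in the convex hull of $W\varpi_2$, over which $\max\langle h_2,\cdot\rangle$ equals the coefficient of $\ga_2^\vee$ in the highest coroot (the highest short coroot in type $E_6^{(2)}$); a uniform check shows this coefficient is $3$ in every case. Together with $q_1\le 3$ this gives $q_j\le (2+q_1)/2\le \tfrac52$, whence $q_j\le 2$. I would stress that this half-integer estimate is exactly what makes the reduction indispensable: carried out directly in $(W^1)^{\otimes\ell}$ the same weight inequality only yields $p_j\le\lfloor 5\ell/2\rfloor$, whereas passing through $\ell=1$ lets the estimate round down to $q_j\le 2$ before summing to the sharp $p_j\le 2\ell$.
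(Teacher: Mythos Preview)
Your argument is correct and complete, but it diverges from the paper's proof in two places, and there are a couple of minor imprecisions worth flagging.

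\textbf{Where you agree.} Both you and the paper reduce to $\ell=1$ via Lemma~\ref{Lem:coproduct_rule} together with the linear independence of the nonzero $E^{\bm q}w_1$ (Lemma~\ref{Lem:orthogonality}), and both read off $q_1\le 3$ from $f_0w_1=0$ and $\langle h_0,\varpi_2\rangle=-3$.

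\textbf{Where you differ.} For $p_j\le p_1$ the paper stays inside $W^1$ and uses $f_2^{(2)}$-vanishing of the three intermediate vectors (obtained from $V(\gL_2)\otimes V(-3\gL_0)$), while you pass to $V(-3\ell\gL_0)$ where the stronger $f_2$-vanishing is available directly; your route is cleaner and even works without the reduction to $\ell=1$. For the sharp bound $q_j\le 2$ the paper invokes extremality of $w_1$ to get $f_2e_{10}^{(3)}w_1=0$ (so the $\fk{sl}_2$-string at node $2$ has length $3$), whereas you bound the $h_2$-weight of $e_2^{(q_j)}u$ by the convex hull of $W_0\varpi_2$ and use $\max_w\langle h_2,w\varpi_2\rangle=3$. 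Your convex-hull argument is conceptually nice and avoids the extremality input.

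\textbf{Two imprecisions.} (i) Your appeal to ``Lemma~\ref{Lem:1}(4)'' for all three $f_2u=0$ claims is not accurate: part~(4) handles only the vector preceding the explicit $e_2^{(p_4)}$; for the vector preceding $e_2^{(p_3)}$ you need part~(2) with $i=2$ (since $\langle h_2,\theta_1\rangle=0$), and for the vector preceding $e_2^{(p_2)}$ the vanishing $f_2e_{10}^{(p_1)}v_{-3\ell\gL_0}=0$ is immediate. (ii) Your identification of $\max_w\langle h_2,w\varpi_2\rangle$ with ``the coefficient of $\ga_2^\vee$ in the highest coroot'' is not quite right in the non-simply-laced cases: the maximum is attained at the dominant coroot in the $W_0$-orbit of $h_2$, which is $\theta^\vee$ when $\ga_2$ is long (types $E_n^{(1)}$, $F_4^{(1)}$) and $\theta_s^\vee$ when $\ga_2$ is short (type $E_6^{(2)}$). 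In $F_4^{(1)}$ the literal ``highest coroot'' is $\theta_s^\vee$, whose $h_2$-coefficient is $4$, not $3$. The cleanest uniform justification is that this coefficient is always the comark $a_2^\vee=3$, equivalently $\max_w\langle h_2,w\varpi_2\rangle=-\langle h_0,\varpi_2\rangle=3$; with that correction your bound $q_j\le (2+q_1)/2\le 5/2$ goes through in every type.
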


\begin{proof}
 By Lemma~\ref{Lem:coproduct_rule}, it is enough to show the assertion for $\ell=1$.
 In this case, since $\langle h_0,\varpi_2\rangle =-3$ and $f_0w_1 =0$, $p_1 \leq 3$ follows.
 Moreover, since 
 \[ \langle h_2, \wt(e_{10}^{(p_1)}w_1)\rangle = -p_1+1 \ \  \text{and} \ \ f_2^{(2)}e_{10}^{(p_1)}w_1=0,
 \]
 we have $e_2^{(p_1+1)}e_{10}^{(p_1)}w_1=0$,
 which implies $p_2 \leq p_1$. 
 We easily see using  Lemma~\ref{Lem:1} (2) that
 \[ V(\ell\gL_2) \otimes V(-3\ell\gL_0) \ni f_2^{(2)}E_{\bm{i}}^{(p_2)}e_{10}^{(p_1)}(v_{\gL_2} \otimes v_{-3\gL_0}) = 0,
 \]
 and then the existence of the map $p\circ \Psi \colon V(\gL_2)\otimes V(-3\gL_0) \to W^1$ implies that
 $f_2^{(2)}E_{\bm{i}}^{(p_2)}e_{10}^{(p_1)}w_1=0$.
 Hence $p_3 \leq p_1$ is proved by the weight consideration. Similarly $p_4\leq p_1$ is proved from Lemma~\ref{Lem:1} (4).
 Finally we have to show that $p_j \leq 2$ for $j \in \{2,3,4\}$ even if $p_1=3$.
 Similarly as above, these are deduced from the fact that $f_2e_{10}^{(3)}w_1 = 0$, and this fact follows since $w_1$ is an extremal weight vector
 (see \cite[Theorem 5.17]{MR1890649}). The proof is complete.
\end{proof}

In the sequel, we use the symbol 
\[ \bm{a} = (1,1,1,0,1) \in \Z_{\geq 0}^5.
\]
The difficulty in the case $i=2$ is that the statements of Lemma~\ref{Lem:not12} for $i=2$ do not hold in general.
Instead, we have the following.

\begin{Lem}\label{Lem:weak_orthogonality}
 Let $\ell \in \Z_{>0}$, and assume that either $w=w_1^{\otimes \ell}\in (W^{1})^{\otimes \ell}$ or $w=w_\ell\in W^{\ell}$.
 For any $\bm{p},\bm{p}'\in\Z_{\geq 0}^5$, we have
 \[ (e_2E^{\bm{p}}w,E^{\bm{p}'}w) = 0 \ \ \ \text{unless } \ \bm{p}'=\bm{p}-\bm{a} \ \ \text{or} \ \ \bm{p}'=\bm{p}+\bm{\gee}_4.
 \]
\end{Lem}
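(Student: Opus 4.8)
The plan is to combine a weight computation in $P_\cl$, which pins $\bm{p}'$ down to a one-parameter family, with an admissibility-and-commutation argument that eliminates all but the two listed members. First I would observe that, since $(\ ,\ )$ respects the $P_\cl$-grading, the pairing vanishes unless $\wt_{P_\cl}(e_2E^{\bm{p}}w)=\wt_{P_\cl}(E^{\bm{p}'}w)$. Using $\wt_P(E^{\bm{q}})=q_1\ga_0+(q_1+q_5)\ga_1+q_4\ga_2+q_3\theta_J+q_2\theta_1$ (from Lemma~\ref{Lem:fundamental_properties}~(3)) together with the explicit expressions for $\theta_1,\theta_J$ in Table~\ref{table} and the marks $a_i$ of $\fg$ (the coefficients of $\gd=\sum_i a_i\ga_i$), a direct check gives $\ga_2+\wt_P(E^{\bm{a}})=\gd$; hence solving $\ga_2+\wt_P(E^{\bm{p}})-\wt_P(E^{\bm{p}'})\in\Z\gd$ forces $\bm{p}'=(p_1-n,p_2-n,p_3-n,p_4+1-n,p_5-n)$ for some $n\in\Z$, with $n=0$ and $n=1$ corresponding exactly to $\bm{p}+\bm{\gee}_4$ and $\bm{p}-\bm{a}$. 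It therefore remains to show that the pairing is zero whenever $n\notin\{0,1\}$.

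Next I would apply admissibility \eqref{eq:admissible} to the outer $e_2$, writing $(e_2E^{\bm{p}}w,E^{\bm{p}'}w)$ as a nonzero scalar times $(E^{\bm{p}}w,f_2E^{\bm{p}'}w)$, and then push $f_2$ to the right. Because $f_2$ commutes with every $e_i$ ($i\neq 2$) and with the torus, it passes freely through all non-$2$ letters of $E^{\bm{p}'}$, and at each of the three powers of $e_2$ occurring in $E^{\bm{p}'}$ — namely $e_2^{(p_4')}$ and the rightmost factors $e_2^{(p_3')}$, $e_2^{(p_2')}$ of $E_{\bm{j}}^{(p_3')}$, $E_{\bm{i}}^{(p_2')}$ (recall $j_0=i_0=2$) — the rule \eqref{eq:commutation4} produces a boundary term. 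This expands $f_2E^{\bm{p}'}w$ as (i) the clean term $c\,E^{\bm{p}'-\bm{\gee}_4}w$, (ii) boundary terms in which an interior power of $e_2$ is lowered by one, and (iii) a term proportional to $E^{\bm{p}'}(f_2w)$, where crucially $f_2w\neq 0$ since $r=2$ (Proposition~\ref{Prop:properties_of_KR}~(4)). The appearance of this last term is exactly what makes Lemma~\ref{Lem:not12} fail for $i=2$.

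Pairing term (i) with $E^{\bm{p}}w$ and invoking Lemma~\ref{Lem:orthogonality} — whose hypotheses $f_0w=e_1w=f_1w=0$ hold for both $w=w_1^{\otimes\ell}$ and $w=w_\ell$ — gives a nonzero value precisely when $\bm{p}'-\bm{\gee}_4=\bm{p}$, i.e. $n=0$. Term (iii) is, after collecting the outer operators, exactly $c_0(E^{\bm{p}}w,E^{\bm{p}'}f_2w)$; since $f_2$ commutes with $e_1,f_1,f_0$, the vector $f_2w$ again satisfies the hypotheses of Lemma~\ref{Lem:orthogonality}, and I would analyze this cross pairing by transporting the operators of $E^{\bm{p}}$ onto the right and peeling the $e_0,e_1$-structure as in that lemma. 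Because exactly one lowering $f_2$ has been spent, the resulting exact matching (a constraint strictly finer than the $P_\cl$-weight already used) can be met only for $\bm{p}'=\bm{p}-\bm{a}$, i.e. $n=1$.

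The main obstacle is the interior boundary terms (ii): unlike (i) and (iii) they are not of the form $E^{\bm{q}}(\text{generator})$, so Lemma~\ref{Lem:orthogonality} does not apply to them verbatim. I would dispatch them by the same transport-and-peel reduction, using \eqref{eq:commutation3}, \eqref{eq:commutation4} and the annihilation of $w$ by $f_0,e_1,f_1$ to show that, because only one $f_2$ has been applied, each such term can contribute only at $n\in\{0,1\}$ (or cancels). Carrying out this reduction cleanly — in particular tracking the $q$-power coefficients and verifying the exact matching for the non-clean terms — is the technical heart of the argument; everything else is the weight computation of the first step and repeated, routine use of \eqref{eq:admissible}, \eqref{eq:commutation3} and \eqref{eq:commutation4}, applied uniformly to both $w=w_1^{\otimes\ell}$ and $w=w_\ell$.
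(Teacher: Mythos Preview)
Your weight analysis in the first paragraph is correct and matches the paper's opening reduction. The problem is in your treatment of term (iii). You assert that ``$f_2$ commutes with $e_1,f_1,f_0$'' and hence that $f_2w$ satisfies the hypotheses of Lemma~\ref{Lem:orthogonality}. But $f_2$ does \emph{not} commute with $f_1$: the nodes $1$ and $2$ are adjacent in every Dynkin diagram under consideration. Consequently $f_1(f_2w)\neq 0$ (indeed $e_1(f_2w)=0$ and $\langle h_1,\ell\varpi_2-\ga_2\rangle=1$, so $f_2w$ sits at the top of a length-two $e_1$-string). The hypothesis $f_1u=0$ is exactly what the peeling argument of Lemma~\ref{Lem:orthogonality} needs when $f_1^{(m)}$ is pushed through to the base vector, so your adaptation of that argument to the cross-pairing $(E^{\bm{p}}w,E^{\bm{p}'}f_2w)$ does not go through. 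The same difficulty is present in the interior boundary terms (ii), which you leave unanalyzed; they are not a mere technicality to be ``dispatched'' but carry the same obstruction. In short, moving the $e_2$ across as $f_2$ destroys precisely the vanishing property that makes the $e_1$-peeling work.

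The paper avoids all of this by keeping $e_2$ on the left. After the weight reduction (only $p_5'\notin\{p_5,p_5-1\}$ remains), the case $p_5<p_5'$ is literally the argument of Lemma~\ref{Lem:orthogonality}: one moves $e_1^{(p_5')}$ across by admissibility, and since $f_1$ commutes with $e_2$ the extra $e_2$ on the left is harmless. For $p_5-1>p_5'$ one uses the single commutation identity
\[
 e_2e_1^{(p_5)}=e_1^{(p_5-1)}e_2e_1-[p_5-1]\,e_1^{(p_5)}e_2
\]
from~\eqref{eq:commutation2} to rewrite $e_2E^{\bm{p}}w$ with $e_1^{(p_5-1)}$ or $e_1^{(p_5)}$ outermost; since both exponents still exceed $p_5'$, the Lemma~\ref{Lem:orthogonality} peeling applies to each term. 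No expansion of $f_2E^{\bm{p}'}w$ is ever needed.
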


\begin{proof}
 By the weight consideration, it is enough to show that $(e_2E^{\bm{p}}w,E^{\bm{p}'}w) = 0$ holds if $p_5 < p_5'$ or $p_5-1>p_5'$.
 If $p_5<p_5'$, the proof is similar to that of Lemma~\ref{Lem:orthogonality}.

 Assume that $p_5-1>p_5'$. It follows from~\eqref{eq:commutation2} that
 \begin{equation}\label{eq:review}
  (e_2E^{\bm{p}}w,E^{\bm{p}'}w)=\big((e_1^{(p_5-1)}e_2e_1-[p_5-1]e_1^{(p_5)}e_2)E^{\bm{p}-p_5\bm{\gee}_5}w,E^{\bm{p}'}w).
 \end{equation}
 As in Lemma~\ref{Lem:orthogonality}, it can be proved using $p_5-1>p_5'$ that 
 \[ (e_1^{(p_5-1)}e_2e_1E^{\bm{p}-p_5\bm{\gee}_5}w,E^{\bm{p}'}w) = 0 = (e_1^{(p_5)}e_2E^{\bm{p}-p_5\bm{\gee}_5}w,E^{\bm{p}'}w),
 \]
 and hence the right-hand side of~\eqref{eq:review} is zero. The proof is complete.
\end{proof}

We shall prove Proposition~\ref{Prop:e2epw} by the induction on $\ell$.
By Proposition~\ref{Prop:estimate} (1) with $\ell=1$, we have 
\[ \normsq{e_2E^{\bm{p}}w_1} \in q^{2\min(0,-p_4+p_5)}A \subseteq  q^{2\min(0,-p_4+p_5,p_1-p_4-1)-1}A
\]
for any $\bm{p} \in \Z_{\geq 0}^5$, and hence the induction begins.
Throughout the rest of this section, fix $\ell \in \Z_{>0}$ and assume that~\eqref{eq:e2epw} holds for this $\ell$.
Our goal is to prove~\eqref{eq:e2epw} with $\ell$ replaced by $\ell+1$, that is,
\begin{equation}\label{eq:case_ell+1}
 \normsq{e_2E^{\bm{p}}w_{\ell+1}} \in q^{2\min(0,-p_4+p_5,p_1-p_4-\ell-1)-1}A \ \ \ \text{for any } \bm{p} \in \Z_{\geq 0}^5.
\end{equation}
From now on, we write
\[ m(\bm{p}_1,\bm{p}_2) = m(\bm{p}_1,\bm{p}_2;\varpi_2,\ell\varpi_2) \ \ \ \text{for } \bm{p}_1,\bm{p}_2 \in \Z_{\geq 0}^5
\]
for short (the right-hand side is defined in Lemma~\ref{Lem:coproduct_rule}).
For any $\bm{p}\in \Z_{\geq 0}^5$ we have
\begin{equation}\label{eq:simple_cal}
 E^{\bm{p}}w_1^{\otimes (\ell+1)} = \sum_{\begin{smallmatrix}\bm{p}_1,\bm{p}_2 \in \Z_{\geq 0}^5;\\ \bm{p}_1+\bm{p}_2 = \bm{p}\end{smallmatrix}}
   q^{m(\bm{p}_1,\bm{p}_2)}E^{\bm{p}_1}w_1 \otimes E^{\bm{p}_2}w_1^{\otimes \ell}.
\end{equation}

\begin{Lem}\label{Lem:explixit_m}
 For $\bm{p}_1,\bm{p}_2 \in \Z_{\geq 0}^5$ with $\bm{p}_k=(p_{k1},\ldots,p_{k5})$, we have 
\begin{align*}
 m(\bm{p}_1,\bm{p}_2) =-\sum_{j=1}^5 p_{1j}p_{2j}&+(p_{12}+p_{13}+p_{14})p_{21}\\&+p_{15}(-p_{21}+p_{22}+p_{23}+p_{24})
 +\ell(3p_{11}-p_{12}-p_{13}-p_{14}).
\end{align*}
\end{Lem}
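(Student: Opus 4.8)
The plan is to expand the two-fold coproduct $\gD^{(2)}(E^{\bm{p}})$ applied to $w_1 \otimes w_1^{\otimes \ell}$ straight from the coproduct formula and read off the exponent of $q$ on the surviving term $E^{\bm{p}_1}w_1 \otimes E^{\bm{p}_2}w_1^{\otimes \ell}$. By Lemma~\ref{Lem:coproduct_rule} the only surviving terms are those in which every divided power occurring in $E^{\bm{p}}$ is split in the same proportion, $\bm{p}_1$ to the first factor and $\bm{p}_2$ to the second. The elementary bookkeeping device is this: if $w$ is a weight vector of weight $\mu \in P_\cl$ and in $\gD(e_i^{(m)})$ we select the term sending $k$ copies left and $m-k$ right, then, after moving the scalar $t_i^{-k}$ (which acts on $e_i^{(m-k)}w$, of weight $\mu + (m-k)\ga_i$) through, that term equals $q_i^{-k(m-k) - k\langle h_i,\mu\rangle}\, e_i^{(k)}v \otimes e_i^{(m-k)}w$. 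Thus the total exponent is obtained by processing the factors $e_{10}^{(p_1)}, E_{\bm{i}}^{(p_2)}, E_{\bm{j}}^{(p_3)}, e_2^{(p_4)}, e_1^{(p_5)}$ of $E^{\bm{p}}$ from right to left, at each step adding the exponent above and updating the weight $\mu$ of the right factor by the weight of the divided power just pushed onto it.

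The right factor starts with weight $\ell\varpi_2$, so by Lemma~\ref{Lem:fundamental_properties}(3) its running weight after the processed blocks is the appropriate truncation of $\ell\varpi_2 + p_{21}(\ga_0+\ga_1) + p_{22}\theta_1 + p_{23}\theta_J + p_{24}\ga_2$. For the single-node blocks the exponent is then immediate. For $e_2^{(p_4)}$ I would pair $h_2$ against $\ell\varpi_2 + p_{21}(\ga_0+\ga_1) + p_{22}\theta_1 + p_{23}\theta_J$; using $\langle h_2,\ga_0+\ga_1\rangle = -1$ and $\langle h_2,\theta_1\rangle = \langle h_2,\theta_J\rangle = 0$ (read from Figure~\ref{figure} and Table~\ref{table}) this pairing is $\ell - p_{21}$, giving the contribution $-p_{14}p_{24} - \ell p_{14} + p_{14}p_{21}$. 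For $e_1^{(p_5)}$ I would pair $h_1$ against $\ell\varpi_2 + p_{21}(\ga_0+\ga_1) + p_{22}\theta_1 + p_{23}\theta_J + p_{24}\ga_2$; using $\langle h_1,\ga_0+\ga_1\rangle = 1$ and $\langle h_1,\theta_1\rangle = \langle h_1,\theta_J\rangle = \langle h_1,\ga_2\rangle = -1$ this pairing is $p_{21} - p_{22} - p_{23} - p_{24}$, giving $-p_{15}p_{25} + p_{15}(-p_{21}+p_{22}+p_{23}+p_{24})$.

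The heart of the argument is the multi-factor blocks, where a naive count produces several $-p_{1j}p_{2j}$ terms that must collapse to one. Writing $E_{\bm{i}}^{(p_2)} = E_{i_{\ell_{\bm{i}}}}^{(p_2)} \cdots E_{i_0}^{(p_2)}$ and processing it node by node, the $t$-th factor contributes $-b_{i_t}p_{12}p_{22} - d_{i_t}p_{12}\langle h_{i_t},\mu_t\rangle$, where $\mu_t$ is the right weight just before it, $a_{i_t}$ is the divided-power doubling ($2$ for a short root of $F_4^{(1)}$, else $1$), and $b_{i_t} = a_{i_t}^2(\ga_{i_t},\ga_{i_t})/2$, $d_{i_t} = a_{i_t}(\ga_{i_t},\ga_{i_t})/2$. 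For $t \ge 1$ the defining condition $\langle h_{i_t}, s_{\bm{i}[t-1,1]}(\ga_2)\rangle = -c_\fg$ in~\eqref{equation:condition_for_i} gives $\langle h_{i_t},\mu_t\rangle = \langle h_{i_t},\mu\rangle - c_\fg p_{22}$, so the factor becomes $a_{i_t}(\ga_{i_t},\ga_{i_t})/2\,(c_\fg - a_{i_t})\,p_{12}p_{22} - d_{i_t}p_{12}\langle h_{i_t},\mu\rangle$; since $a_{i_t} = c_\fg$ for every node appearing with $t \ge 1$, the $p_{12}p_{22}$ part vanishes. Only the $t=0$ factor ($i_0 = 2$, with $b_2 = d_2 = 1$) then contributes to the diagonal, giving $-p_{12}p_{22}$, and, because $\langle h_{i_t},\mu\rangle = 0$ for $t\ge 1$ against the block-start weight $\ell\varpi_2 + p_{21}(\ga_0+\ga_1)$ while $\langle h_2,\mu\rangle = \ell - p_{21}$, the linear remainder is $-\ell p_{12} + p_{12}p_{21}$. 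The block $E_{\bm{j}}^{(p_3)}$ is handled identically, the vanishing of the $t\ge 1$ pairings now relying on $\langle h_i,\theta_1\rangle = 0$ for $i \in J$ (Lemma~\ref{Lem:fundamental_properties}(2)); and the two-factor block $e_{10}^{(p_1)} = e_1^{(p_1)}e_0^{(p_1)}$ collapses the same way, with $\langle h_0,\ell\varpi_2\rangle = -3\ell$ producing $-p_{11}p_{21} + 3\ell p_{11}$ and the $e_1$-factor cancelling its own spurious $-p_{11}p_{21}$. Summing the five block contributions yields the stated formula.

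I expect the cancellation of the surplus diagonal terms in the $E_{\bm{i}}$ and $E_{\bm{j}}$ blocks to be the only genuinely delicate point. It rests on the exact match $a_{i_t} = c_\fg$ between the divided-power normalization and the twist parameter, combined with the defining property $\langle h_{i_t}, s_{\bm{i}[t-1,1]}(\ga_2)\rangle = -c_\fg$, and should be verified uniformly rather than type by type; the short-root doubling in $F_4^{(1)}$ is exactly what makes $a_{i_t} = c_\fg = 2$ there. Everything else reduces to reading coroot pairings off Figure~\ref{figure} and Table~\ref{table}.
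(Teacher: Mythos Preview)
Your proposal is correct and follows essentially the same approach as the paper: both compute the coefficient directly from the coproduct formula $e_i^{(p)}(u_1\otimes u_2)=\sum_{p_1+p_2=p} q_i^{-p_1(\langle h_i,\wt(u_2)\rangle+p_2)}e_i^{(p_1)}u_1 \otimes e_i^{(p_2)}u_2$, and both rely on the observation that for the intermediate nodes $i_t$, $j_t$ with $t\ge 1$ the exponent vanishes. The paper phrases this vanishing via the special case $\langle h_i,\wt(u_2)\rangle=-p_2$ (which is exactly the defining condition $\langle h_{i_t}, s_{\bm{i}[t-1,1]}(\ga_2)\rangle=-c_\fg$ combined with the vanishing of $\langle h_{i_t},\mu\rangle$), while you spell out the same cancellation through the identity $a_{i_t}=c_\fg$; these are two ways of saying the same thing.
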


\begin{proof}
 Given weight vectors $u_1,u_2$ of some $U_q'(\fg)$-modules, 
 it follows for $i \in I$ and $p \in \Z_{\geq 0}$ that
 \[ e_i^{(p)}(u_1\otimes u_2)=\sum_{\begin{smallmatrix}p_1,p_2 \in \Z_{\geq 0};\\ p_1+p_2=p\end{smallmatrix}} 
    q_i^{-p_1(\langle h_i,\wt(u_2)\rangle+p_2)}e_i^{(p_1)}u_1 \otimes e_i^{(p_2)}u_2.
 \]
 In particular, if $e_i^{(p_1+1)}u_1=0$, $e_i^{(p_2+1)}u_2=0$ and $\langle h_i,\wt(u_2)\rangle = -p_2$, it follows that $e_i^{(p_1+p_2)}(u_1\otimes u_2)=e_i^{(p_1)}u_1\otimes e_i^{(p_2)}u_2$.
 Using these equalities, the assertion is obtained straightforwardly by calculating the coefficient of $E^{\bm{p}_1}w_1 \otimes E^{\bm{p}_2}w_1^{\otimes \ell}$
 in $E^{\bm{p}_1+\bm{p}_2}w_1^{\otimes (\ell+1)}$.
\end{proof}

\begin{Lem}\label{Lem:nonnegativity0}
 Let $\bm{p}_1,\bm{p}_2 \in \Z_{\geq 0}^5$, and assume that $E^{\bm{p}_1}w_1 \neq 0$
 and $E^{\bm{p}_2}w_1^{\otimes \ell}\neq 0$.
 Then $m(\bm{p}_1,\bm{p}_2) \geq 0$ holds.
\end{Lem}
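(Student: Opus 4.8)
The plan is to start from the closed formula of Lemma~\ref{Lem:explixit_m} and show that the resulting expression is nonnegative whenever both hypotheses $E^{\bm{p}_1}w_1\neq 0$ and $E^{\bm{p}_2}w_1^{\otimes\ell}\neq 0$ hold. Writing $\bm{p}_1=(a_1,\ldots,a_5)$ and $\bm{p}_2=(b_1,\ldots,b_5)$, I would first regroup the formula as
\[
 m(\bm{p}_1,\bm{p}_2)=\ell(3a_1-a_2-a_3-a_4)+b_1(a_2+a_3+a_4-a_1-a_5)+\sum_{j=2}^{4}(a_5-a_j)b_j-a_5b_5,
\]
so that the first summand is visibly nonnegative once we know $a_2,a_3,a_4\le a_1$. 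The whole point is then to control the remaining terms, which requires translating the two nonvanishing hypotheses into usable inequalities on the coordinates.

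First I would collect the bounds already available: Lemma~\ref{Lem:properties_of_W^1} gives $a_1\le 3$, $b_1\le 3\ell$ and $a_j\le a_1$, $b_j\le b_1$ for $j\in\{2,3,4\}$ (the left factor being a single $w_1$, i.e.\ the case $\ell=1$). The essential new input is a bound on the fifth coordinates, for which I would pass to the global-basis realization. By Lemma~\ref{Lem:belonging_to_B}, the vector $w:=e_2^{(a_4)}E_{\bm{j}}^{(a_3)}E_{\bm{i}}^{(a_2)}e_{10}^{(a_1)}v_{-\ell\gL_0}$ lies in $\pm\bB(-\ell\gL_0)\cup\{0\}$, so by Lemma~\ref{Lem:being_global_basis} and crystal theory $E^{\bm{p}_1}v_{-\ell\gL_0}=e_1^{(a_5)}w$ is nonzero exactly when $a_5\le\varepsilon_1(w)$. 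Using that $f_1$ commutes with every operator in $w$ except the innermost $e_1^{(a_1)}$, together with the computation $\varepsilon_2(e_1^{(c)}e_0^{(a_1)}v_{-\ell\gL_0})=c$ (which follows from $f_2v_{-\ell\gL_0}=0$ and a weight count), I would show $\varphi_1(w)=a_1-a_2$ and hence $\varepsilon_1(w)=a_3+a_4$, giving $a_5\le a_3+a_4$. Running the identical argument in $V(\ell\varpi_2)$ in place of $V(-\ell\gL_0)$ (using Proposition~\ref{Prop:being_global_basis_of_level0} and the norm identity of Proposition~\ref{Prop:equal_of_prepolarization} to see $E^{\bm{p}_2}w_1^{\otimes\ell}\neq 0 \Leftrightarrow E^{\bm{p}_2}v_{\ell\varpi_2}\neq 0$) yields $b_5\le b_3+b_4$.

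With these bounds the cross terms can be partly tamed, since $-a_5b_5+a_5(b_2+b_3+b_4)=a_5(b_2+b_3+b_4-b_5)\ge 0$, reducing matters to absorbing $-\sum_j a_jb_j$ and $-a_5b_1$ against the positive $\ell$- and $b_1$-terms via $a_j\le a_1$. The hard part, and where I expect the real work to lie, is that these coordinatewise bounds are \emph{not} by themselves sufficient: there exist tuples $\bm{p}_1$ (for instance with $a_1=a_2=a_3=a_4$ all maximal and $a_5=a_3+a_4$) that satisfy every inequality above yet give $E^{\bm{p}_1}w_1=0$, and for such spurious tuples the displayed expression can be strictly negative. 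So the delicate point is to feed in enough of the genuine nonvanishing condition, most cleanly the requirement that $\wt(\bm{p}_1)+\varpi_2$ lie in the weight polytope of $W^1$ (Proposition~\ref{Prop:properties_of_KR}(3)), which couples the coordinates and excludes exactly these extremal configurations, before carrying out the final, elementary but case-sensitive, verification that $m(\bm{p}_1,\bm{p}_2)\ge 0$.
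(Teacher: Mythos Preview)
Your plan takes a fundamentally different route from the paper's, and you yourself flag that it is incomplete: the coordinatewise inequalities you extract from Lemma~\ref{Lem:properties_of_W^1} together with the crystal bounds $a_5\le a_3+a_4$, $b_5\le b_3+b_4$ do not close the argument, and the ``final, elementary but case-sensitive, verification'' you defer is not carried out. It is far from clear that the weight-polytope condition of Proposition~\ref{Prop:properties_of_KR}~(3) would finish this without a substantial case analysis; the nonvanishing of $E^{\bm{p}_1}w_1$ encodes constraints that are genuinely hard to express as clean inequalities on the five coordinates.

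The paper's proof bypasses all of this with a three-line argument you have missed. Set $\bm{p}=\bm{p}_1+\bm{p}_2$ and expand using~\eqref{eq:simple_cal} and the orthogonality of Lemma~\ref{Lem:orthogonality}:
\[
\normsq{E^{\bm{p}}w_1^{\otimes(\ell+1)}}=\sum_{\bm{p}_1'+\bm{p}_2'=\bm{p}}q^{2m(\bm{p}_1',\bm{p}_2')}\,\normsq{E^{\bm{p}_1'}w_1}\,\normsq{E^{\bm{p}_2'}w_1^{\otimes\ell}}.
\]
By Proposition~\ref{Prop:equal_of_prepolarization}~(2), every nonzero norm appearing here lies in $1+q_sA$. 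The polarization on $(W^1)^{\otimes(\ell+1)}$ is positive definite, so the nonzero summands on the right are all positive in the total order on $\Q(q_s)$ and their lowest-order terms cannot cancel; since the left-hand side lies in $A$, the minimum of $m(\bm{p}_1',\bm{p}_2')$ over the nonzero summands must be $\ge 0$. The key insight you are missing is that Proposition~\ref{Prop:equal_of_prepolarization} already packages \emph{all} of the nonvanishing constraints at once, making any explicit inequality-chasing on the coordinates unnecessary.
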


\begin{proof}
 Let $\bm{p}=\bm{p}_1+\bm{p}_2$.
 By~\eqref{eq:simple_cal} and Lemma~\ref{Lem:orthogonality}, it follows that 
 \[ \normsq{E^{\bm{p}}w_1^{\otimes (\ell+1)}} = \sum_{\bm{p}_1'+\bm{p}_2'=\bm{p}} q^{2m(\bm{p}_1',\bm{p}_2')}\normsq{E^{\bm{p}_1'}w_1}\normsq{E^{\bm{p}_2'}w_1^{\otimes \ell}}.
 \]
 Then Proposition~\ref{Prop:equal_of_prepolarization} (2) implies that, if $E^{\bm{p}_1'}w_1$ and 
 $E^{\bm{p}_2'}w_1^{\otimes \ell}$ are both nonzero, then $m(\bm{p}_1',\bm{p}_2') \geq 0$. Hence the assertion is proved.
\end{proof}

For $\bm{p} \in \Z_{\geq 0}^5$, we have
\[ \normsq{e_2E^{\bm{p}}w_{\ell+1}}=\Big(e_2E^{\bm{p}}\big(\iota_{\ell}(w_1)\otimes \iota_{-1}(w_{\ell})\big),e_2E^{\bm{p}}\big(\iota_{-\ell}(w_1)\otimes 
   \iota_1(w_{\ell})\big)\Big)_1
\]
by Lemma~\ref{Lem:realization_of_KR}, and
\begin{align*}
 e_2E^{\bm{p}}(\iota_{\pm\ell}(w_1)\otimes &\iota_{\mp1}(w_{\ell}))\\
 =\sum_{\begin{smallmatrix} \bm{p}_1,\bm{p}_2 \in \Z_{\geq 0}^5;\\ \bm{p}_1+\bm{p}_2 = \bm{p}\end{smallmatrix}}
 &q^{m(\bm{p}_1,\bm{p}_2)\pm \ell p_{11}\mp p_{21}}\Big(\iota_{\pm\ell}(E^{\bm{p}_1}w_1)\otimes 
 \iota_{\mp 1}(e_2E^{\bm{p}_2}w_{\ell})\\
 &\quad \quad\quad\quad\quad\quad\quad\quad\quad+q^{-\langle h_2,\wt(E^{\bm{p}_2}w_{\ell})\rangle}
 \iota_{\pm\ell}(e_2E^{\bm{p}_1}w_1)\otimes \iota_{\mp 1}(E^{\bm{p}_2}w_{\ell}) \Big).
\end{align*}
Set
\[ x(\bm{p})=-\langle h_2, \wt(E^{\bm{p}}w_\ell)\rangle = p_1-2p_4+p_5-\ell \ \ \text{for} \ \bm{p} \in \Z_{\geq 0}^5.
\]
It follows from Lemma~\ref{Lem:weak_orthogonality} that
\begin{align*}
 \normsq{e_2E^{\bm{p}}w_{\ell+1}}=Z_1+Z_2+Z_3+Z_4,
\end{align*}
where
\begin{align*}
 Z_1&=\sum q^{2m(\bm{p}_1,\bm{p}_2)}\normsq{E^{\bm{p}_1}w_1}\cdot\normsq{e_2E^{\bm{p}_2}w_{\ell}},\\
 Z_2&=[2]_{q^{\ell+1}}\sum q^{m(\bm{p}_1,\bm{p}_2)+m(\bm{p}_1-\bm{a},\bm{p}_2+\bm{a})+x(\bm{p}_2)}
  (e_2E^{\bm{p}_1}w_1,E^{\bm{p}_1-\bm{a}}w_1)(E^{\bm{p}_2}w_{\ell},e_2E^{\bm{p}_2+\bm{a}}w_{\ell}),\\
 Z_3&=2\sum q^{m(\bm{p}_1,\bm{p}_2)+m(\bm{p}_1+\bm{\gee}_4,\bm{p}_2-\bm{\gee}_4)+x(\bm{p}_2)}
  (e_2E^{\bm{p}_1}w_1,E^{\bm{p}_1+\bm{\gee}_4}w_1)(E^{\bm{p}_2}w_{\ell},e_2E^{\bm{p}_2-\bm{\gee}_4}w_{\ell}),\\
 Z_4&=\sum q^{2m(\bm{p}_1,\bm{p}_2)+2x(\bm{p}_2)}\normsq{e_2E^{\bm{p}_1}w_1}\cdot\normsq{E^{\bm{p}_2}w_{\ell}}.
\end{align*}
Here all the sums are over the set $\{\bm{p}_1,\bm{p}_2 \in \Z_{\geq 0}^5\mid \bm{p}_1+\bm{p}_2 = \bm{p}\}$.
Now it suffices to show that $Z_1+Z_2+Z_3+Z_4$ belongs to the subset of $\Q(q_s)$ in~\eqref{eq:case_ell+1}.

First we shall show that $Z_2$ does.
For $k \in \Z$, write
\[ [k]_+ = \begin{cases} [k] & (k > 0) \\ 0 &(k \leq 0) \end{cases}.
\]  

\begin{Lem}\label{Lem:technical0}
 Let $\bm{p} \in \Z_{\geq 0}^5$, and set $k = p_1-p_4-\ell+1$.\\
 {\normalfont(1)} The vector $(f_2E^{\bm{p}}-[k]_+E^{\bm{p}-\bm{\gee}_4})v_{\ell\varpi_2} \in V(\ell\varpi_2)$ 
  belongs to $\pm\bB(\ell\varpi_2) \cup \{0\}$.\\
 {\normalfont(2)} We have $(f_2E^{\bm{p}} - [k]_+ E^{\bm{p}-\bm{\gee}_4})w_1^{\otimes \ell} \in L(W^{1})^{\otimes \ell}$.
\end{Lem}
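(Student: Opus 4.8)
The plan is to establish (1) first and then obtain (2) by transport of structure. Throughout, abbreviate $b = E^{\bm{p}}v_{\ell\varpi_2}$ and $b' = E^{\bm{p}-\bm{\gee}_4}v_{\ell\varpi_2}$, both of which lie in $\pm\bB(\ell\varpi_2)\cup\{0\}$ by Proposition~\ref{Prop:being_global_basis_of_level0}. Since $\varpi_2 = \gL_2 - 3\gL_0$, Lemma~\ref{Lem:hw-lw-lev0} furnishes the surjection $\Psi\colon V(\ell\gL_2)\otimes V(-3\ell\gL_0)\to V(\ell\varpi_2)$ with $\Psi(v_{\ell\gL_2}\otimes v_{-3\ell\gL_0}) = v_{\ell\varpi_2}$, so that $b = \Psi\big(E^{\bm{p}}(v_{\ell\gL_2}\otimes v_{-3\ell\gL_0})\big)$. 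Writing $\bm{q} = (p_1,p_2,p_3,0,0)$ and applying the coproduct rule of Lemma~\ref{Lem:coproduct_rule} to $E^{\bm{q}}(v_{\ell\gL_2}\otimes v_{-3\ell\gL_0})$, every summand whose first tensor factor is $E^{\bm{q}_1}v_{\ell\gL_2}$ with $\bm{q}_1\neq\bm{0}$ vanishes because $v_{\ell\gL_2}$ is killed by all $e_i$; hence $E^{\bm{q}}(v_{\ell\gL_2}\otimes v_{-3\ell\gL_0}) = q^{c}\,v_{\ell\gL_2}\otimes y$ for some power $q^{c}$, where $y := E^{\bm{q}}v_{-3\ell\gL_0}$ satisfies $f_2 y = 0$ by Lemma~\ref{Lem:1}(4) (note $\langle h_2,\theta_J\rangle = 0$) and $e_2^{(p_4)}y\in\pm\bB(-3\ell\gL_0)\cup\{0\}$ by Lemma~\ref{Lem:belonging_to_B}. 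Consequently $b = q^{c}\,\Psi\big(e_1^{(p_5)}e_2^{(p_4)}(v_{\ell\gL_2}\otimes y)\big)$.

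Next I carry out the key computation. Since $f_2$ commutes with $e_1^{(p_5)}$ and with $\Psi$, it suffices to evaluate $f_2 e_2^{(p_4)}(v_{\ell\gL_2}\otimes y)$. Because $e_2 v_{\ell\gL_2} = 0$, the coproduct gives $e_2^{(p_4)}(v_{\ell\gL_2}\otimes y) = v_{\ell\gL_2}\otimes e_2^{(p_4)}y$, and then, using $t_2 v_{\ell\gL_2} = q^{\ell}v_{\ell\gL_2}$, $f_2 y = 0$, and $f_2 e_2^{(p_4)}y = [p_1-p_4+1]\,e_2^{(p_4-1)}y$ (the latter from~\eqref{eq:commutation4}, as $\langle h_2,\wt_P y\rangle = -p_1$), I obtain
\[
 f_2 e_2^{(p_4)}(v_{\ell\gL_2}\otimes y) = (f_2 v_{\ell\gL_2})\otimes e_2^{(p_4)}y + q^{\ell}[p_1-p_4+1]\,v_{\ell\gL_2}\otimes e_2^{(p_4-1)}y.
\]
Applying $q^{c}e_1^{(p_5)}$ and $\Psi$, the second summand becomes $q^{\ell}[p_1-p_4+1]\,b'$, so that
\[
 f_2 b = q^{\ell}[p_1-p_4+1]\,b' + R, \qquad R := q^{c}\,\Psi\big(e_1^{(p_5)}\big((f_2 v_{\ell\gL_2})\otimes e_2^{(p_4)}y\big)\big).
\]

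It remains to match the coefficient with $[k]_+$ and to control $R$. The elementary identity $q^{\ell}[p_1-p_4+1] - [p_1-p_4-\ell+1] = q^{p_1-p_4+1}[\ell]$ shows that the vector $\xi := f_2 b - [k]_+ b'$ to be analysed equals $R + \big(q^{\ell}[p_1-p_4+1]-[k]_+\big)b'$. Now $\xi$ is visibly bar-invariant and lies in $V(\ell\varpi_2)_\Z$, so by Proposition~\ref{Prop:Nakajima_prepolarization}(4) the assertion (1) reduces to showing $\normsq{\xi}\in 1+q_sA$ or $\xi = 0$. \emph{This is the main obstacle.} The difficulty is that the bar involution of a tensor product is not the tensor of the bar involutions, so $(f_2 v_{\ell\gL_2})\otimes e_2^{(p_4)}y$ is not a global basis vector and $R$ is not a priori a single element of $\pm\bB(\ell\varpi_2)$; rather $R$ expands as one element of $\pm\bB(\ell\varpi_2)\cup\{0\}$ together with a scalar multiple of $b'$, and it is exactly this scalar that turns $q^{\ell}[p_1-p_4+1]$ into $[k]_+$ and produces the cut-off at $k=0$. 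I expect to resolve this by computing $\normsq{\xi}$ directly from the admissibility relations~\eqref{eq:admissible}, which express it through inner products that are controlled by the almost-orthonormality of $\bB(\ell\varpi_2)$ (Proposition~\ref{Prop:Nakajima_prepolarization}(3)); the cases $k>0$ and $k\le 0$ must be treated separately, the truncation to $[k]_+$ reflecting the vanishing of the leading global basis contribution when $k\le 0$.

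Finally, (2) follows from (1) by transport. As in the proof of Proposition~\ref{Prop:estimate}, the composite $p^{\otimes\ell}\circ\Phi\colon V(\ell\varpi_2)\to (W^1)^{\otimes\ell}$ is a $U_q'(\fg)$-module homomorphism sending $v_{\ell\varpi_2}$ to $w_1^{\otimes\ell}$ and carrying $L(\ell\varpi_2)$ into $L(W^1)^{\otimes\ell}$ (using \cite[Theorem~1(2)]{MR2074599} and $L(W^1)=p(L(\varpi_2))$). Since $(f_2 E^{\bm{p}}-[k]_+E^{\bm{p}-\bm{\gee}_4})w_1^{\otimes\ell} = p^{\otimes\ell}\circ\Phi(\xi)$ and, by (1), $\xi\in\pm\bB(\ell\varpi_2)\cup\{0\}\subseteq L(\ell\varpi_2)$, the right-hand side lies in $L(W^1)^{\otimes\ell}$, which is precisely the claim of (2).
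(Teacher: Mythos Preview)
Your proof of (2) from (1) is correct and matches the paper's. However, your argument for (1) is incomplete at precisely the point you flag as ``the main obstacle,'' and the route you propose (a direct computation of $\normsq{\xi}$ via admissibility) is harder than necessary.

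The paper's proof avoids this obstacle entirely by working upstairs in $V(\ell\gL_2)\otimes V(-3\ell\gL_0)$ and appealing to the \emph{defining} characterization of the global basis via the balanced triple, rather than the norm characterization of Proposition~\ref{Prop:Nakajima_prepolarization}(4). Since $e_iv_{\ell\gL_2}=0$ for all $i\in I$, the coproduct gives $E^{\bm{p}}(v_{\ell\gL_2}\otimes v_{-3\ell\gL_0})=v_{\ell\gL_2}\otimes E^{\bm{p}}v_{-3\ell\gL_0}$ (so your $q^c$ is $1$), and then $\gD(f_2)$ together with $f_2E^{\bm{p}}v_{-3\ell\gL_0}=[p_1-p_4+1]E^{\bm{p}-\bm{\gee}_4}v_{-3\ell\gL_0}$ yields
\[
(f_2E^{\bm{p}}-[k]_+E^{\bm{p}-\bm{\gee}_4})(v_{\ell\gL_2}\otimes v_{-3\ell\gL_0})
=f_2v_{\ell\gL_2}\otimes E^{\bm{p}}v_{-3\ell\gL_0}
+\big(q^{\ell}[p_1-p_4+1]-[k]_+\big)\,v_{\ell\gL_2}\otimes E^{\bm{p}-\bm{\gee}_4}v_{-3\ell\gL_0}.
\]
The key elementary observation is that the scalar $q^{\ell}[p_1-p_4+1]-[k]_+$ lies in $qA$ whenever the second tensor factor is nonzero (one checks the cases $k>0$ and $k\le 0$ separately, using that $E^{\bm{p}-\bm{\gee}_4}v_{-3\ell\gL_0}=0$ when $p_4-1>p_1$). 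Hence the second summand lies in $q\bigl(L(\ell\gL_2)\otimes_A L(-3\ell\gL_0)\bigr)$, while the first is a tensor of global basis elements $f_2v_{\ell\gL_2}\in\bB(\ell\gL_2)$ and $E^{\bm{p}}v_{-3\ell\gL_0}\in\pm\bB(-3\ell\gL_0)\cup\{0\}$. The total vector is therefore bar-invariant, lies in $V_\Q$ and in the crystal lattice, and reduces modulo $q_s$ to an element of $\pm B(\ell\gL_2)\otimes B(-3\ell\gL_0)\cup\{0\}$; by the inverse of the isomorphism $L\cap\ol{L}\cap V_\Q\to L/q_sL$ it belongs to $\pm\bB(\ell\gL_2,-3\ell\gL_0)\cup\{0\}$, and applying $\Psi$ via Lemma~\ref{Lem:hw-lw-lev0} finishes (1). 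Your concern that ``the bar involution of a tensor product is not the tensor of the bar involutions'' is thus bypassed: one never needs to identify $R$ itself as a global basis element, only to recognise the \emph{whole} bar-invariant vector as a lift of a single crystal basis element.
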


\begin{proof}
 (1) By Lemma~\ref{Lem:hw-lw-lev0}, it is enough to show that $(f_2E^{\bm{p}}-[k]_+E^{\bm{p}-\bm{\gee}_4})(v_{\ell\gL_2}\otimes v_{-3\ell\gL_0})$
  belongs to $\pm\bB(\ell\gL_2,-3\ell\gL_0) \cup \{0\}$.
 The bar-invariance is obvious, and it is easily checked that
 \begin{align*}
  (f_2E^{\bm{p}}-[k]_+&E^{\bm{p}-\bm{\gee}_4})(v_{\ell\gL_2} \otimes v_{-3\ell\gL_0})\\
  &=f_2v_{\ell\gL_2}\otimes E^{\bm{p}}v_{-3\ell\gL_0}+(q^{\ell}[p_1-p_4+1]-[k]_+)v_{\ell\gL_2}\otimes E^{\bm{p}-\bm{\gee}_4}v_{-3\ell\gL_0}.
 \end{align*}
 We have $f_2v_{\ell\gL_2} \in \bB(\ell\gL_2)$, $E^{\bm{p}}v_{-3\ell\gL_0} \in \pm\bB(-3\ell\gL_0)\cup\{0\}$ by Lemma~\ref{Lem:belonging_to_B}, and 
 \[  (q^\ell[p_1-p_4+1]-[k]_+)v_{\ell\gL_2}\otimes E^{\bm{p}-\bm{\gee}_4}v_{-3\ell\gL_0} \in qL(\ell\gL_2)
  \otimes L(-3\ell\gL_0)
 \]
 since $p_1-p_4+1 < 0$ implies $E^{\bm{p}-\bm{\gee}_4}v_{-3\ell\gL_0} = 0$ (see the proof of Lemma~\ref{Lem:belonging_to_L_of_e} (1)).
 Hence we have $(f_2E^{\bm{p}}-[k]_+E^{\bm{p}-\bm{\gee}_4})(v_{\ell\gL_2}\otimes v_{-3\ell\gL_0})
 \in\pm\bB(\ell\gL_2,-3\ell\gL_0) \cup \{0\}$, as required.
 The assertion (2) follows from (1) since the map $p^{\otimes \ell}\circ \Phi\colon V(\ell\varpi_2) \to (W^{1})^{\otimes \ell}$
 sends $L(\ell\varpi_2)$ to $L(W^{1})^{\otimes \ell}$.
\end{proof}

We need the following relation in $W^{\ell}$: there exists a certain element $c_\ell \in \pm 1+q_sA$ such that
\begin{equation}\label{eq:the_relation}
 e_2E^{\bm{p}}w_\ell = c_\ell E^{\bm{p}-\bm{a}+\bm{\gee}_4}f_2w_\ell + [p_4-p_5+1]E^{\bm{p}+\bm{\gee}_4}w_\ell
\end{equation}
for $\bm{p}\in \Z_{\geq 0}^5$.
It is a rather straightforward computation, but we will give a proof in Appendix~\ref{Appendix} (Proposition~\ref{Prop:A}) as it is somewhat lengthy and technical.

\begin{Lem}\label{Lem:min_max}
  Let $\bm{p} \in \Z_{\geq 0}^5$.\\[2pt]
 {\normalfont(1)} We have
  \[ (e_2E^{\bm{p}}w_\ell,E^{\bm{p}-\bm{a}}w_\ell) \in q^{\min(0,p_1-p_4-\ell)}A.
  \]
 {\normalfont(2)} When $\ell =1$, the following stronger statement holds:
  \[ (e_2E^{\bm{p}}w_1,E^{\bm{p}-\bm{a}}w_1) \in q^{\max(0,p_1-p_4-1)}A.
  \]
\end{Lem}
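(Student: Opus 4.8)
The plan is to use the relation~\eqref{eq:the_relation} to collapse the pairing to a single term, and then to evaluate that term either by an explicit top-coefficient computation in $W^{\ell}$ (for part (1)) or by transferring to the extremal weight module $V(\varpi_2)$ where the global basis is almost orthonormal (for part (2)). Throughout I will use that $q_2=q$ and that $t_2$ acts on a weight $\mu$ by $q^{\langle h_2,\mu\rangle}$, so the admissibility~\eqref{eq:admissible} turns a single $e_2$ into $q^{1-\langle h_2,\wt\rangle}f_2$.

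For part (1), I would substitute~\eqref{eq:the_relation} into $(e_2E^{\bm{p}}w_\ell,E^{\bm{p}-\bm{a}}w_\ell)$. Since $w_\ell$ satisfies the hypotheses of Lemma~\ref{Lem:orthogonality} and $\bm{p}+\bm{\gee}_4\neq\bm{p}-\bm{a}$, the term $[p_4-p_5+1](E^{\bm{p}+\bm{\gee}_4}w_\ell,E^{\bm{p}-\bm{a}}w_\ell)$ vanishes, so that $(e_2E^{\bm{p}}w_\ell,E^{\bm{p}-\bm{a}}w_\ell)=c_\ell(E^{\bm{p}-\bm{a}+\bm{\gee}_4}f_2w_\ell,E^{\bm{p}-\bm{a}}w_\ell)$ with $c_\ell\in\pm1+q_sA\subseteq A$. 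I would then apply~\eqref{eq:admissible} repeatedly to move the string $E^{\bm{p}-\bm{a}+\bm{\gee}_4}$ and finally $f_2$ back onto $w_\ell$; because $\dim W^\ell_{\ell\varpi_2}=1$ with $\normsq{w_\ell}=1$ by Proposition~\ref{Prop:properties_of_KR}, this reduces the pairing to extracting the $\ell\varpi_2$-weight component of one explicit vector, which is a product of quantum binomial coefficients coming from~\eqref{eq:commutation4}. Tracking these factors, together with the admissibility prefactor $q^{1+x(\bm{p}-\bm{a})}$, produces the exponent $\min(0,p_1-p_4-\ell)$.

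For part (2) the case $\ell=1$ lets one pass to $V(\varpi_2)$. By the weight argument behind~\eqref{eq:invariance_of_zk} (exactly as in the proof of Proposition~\ref{Prop:equal_of_prepolarization}, only the $z_2^0$ summand survives), one has $(e_2E^{\bm{p}}w_1,E^{\bm{p}-\bm{a}}w_1)=(e_2E^{\bm{p}}v_{\varpi_2},E^{\bm{p}-\bm{a}}v_{\varpi_2})_{V(\varpi_2)}$. Moving $e_2$ across by~\eqref{eq:admissible} and invoking Lemma~\ref{Lem:technical0}(1) with $k=p_1-p_4-1$, I write $f_2E^{\bm{p}-\bm{a}}v_{\varpi_2}=[k]_+E^{\bm{p}-\bm{a}-\bm{\gee}_4}v_{\varpi_2}+b$ with $b\in\pm\bB(\varpi_2)\cup\{0\}$. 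Since $E^{\bm{p}}v_{\varpi_2}\in\pm\bB(\varpi_2)\cup\{0\}$ by Proposition~\ref{Prop:being_global_basis_of_level0} and $\bm{p}\neq\bm{p}-\bm{a}-\bm{\gee}_4$, the almost orthonormality of Proposition~\ref{Prop:Nakajima_prepolarization}(3) forces the first pairing into $q_sA$, while the remaining global-basis pairing lies in $q_sA$ or in $1+q_sA$; combining this with the admissibility prefactor gives the sharp exponent $\max(0,p_1-p_4-1)$.

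The main obstacle will be the precise bookkeeping of powers of $q$. In part (1) one must handle both signs of $p_1-p_4-\ell$ simultaneously, so the binomial factors extracted from~\eqref{eq:commutation4} (or, alternatively, the lattice refinements of Lemmas~\ref{Lem:belonging_to_L} and~\ref{Lem:belonging_to_L_of_e}) must be sharp enough to yield exactly $\min(0,p_1-p_4-\ell)$ rather than a weaker bound. In part (2) the real difficulty is upgrading the crude membership in $A$ that mere almost orthonormality provides to the genuine vanishing $q^{\max(0,p_1-p_4-1)}A$ when $p_1-p_4-1>0$; this needs the explicit coefficient $[k]_+$ weighed against the admissibility prefactor, together with a parity argument, and cannot be obtained from orthonormality alone.
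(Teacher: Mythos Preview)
Your reduction at the start of (1), via~\eqref{eq:the_relation} and Lemma~\ref{Lem:orthogonality}, to $c_\ell(E^{\bm{p}-\bm{a}+\bm{\gee}_4}f_2w_\ell,E^{\bm{p}-\bm{a}}w_\ell)$ is correct and matches the paper. The gap is what comes next. You propose to stay in $W^\ell$ and compute this pairing directly by pushing the operators to one side and extracting the $\ell\varpi_2$-coefficient. But $W^\ell$ has no crystal lattice available at this point (that is exactly what we are in the process of constructing), so there is no lattice-theoretic control on the answer, and your claim that the resulting coefficient is simply ``a product of quantum binomial coefficients coming from~\eqref{eq:commutation4}'' is unsubstantiated: the adjoint of $E^{\bm{p}-\bm{a}+\bm{\gee}_4}$ is a long string of $f_i^{(n)}$'s whose commutators with the $e_j$'s inside $E^{\bm{p}-\bm{a}}$ produce many cross terms, not a single closed product. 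The paper's missing idea is to transfer the pairing to $(W^1)^{\otimes\ell}$ via Lemma~\ref{Lem:equality_of_prepolarizations_on_W} (checking that $X=E^{\bm{p}-\bm{a}+\bm{\gee}_4}f_2$ and $Y=E^{\bm{p}-\bm{a}}$ satisfy the hypotheses), then use the map from $V(\ell\gL_2)\otimes V(-3\ell\gL_0)$ to rewrite $E^{\bm{p}-\bm{a}+\bm{\gee}_4}f_2w_1^{\otimes\ell}$ as $(f_2E^{\bm{p}-\bm{a}+\bm{\gee}_4}+[-p_1+p_4+\ell+1]E^{\bm{p}-\bm{a}})w_1^{\otimes\ell}$, and finally invoke Lemma~\ref{Lem:technical0}(2) and the genuine lattice $L(W^1)^{\otimes\ell}$ to bound the first summand. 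The $[-p_1+p_4+\ell+1]_+$ that emerges is what gives $\min(0,p_1-p_4-\ell)$.

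For (2) your outline is closer in spirit but still incomplete. The $P$-weights of $E^{\bm{p}}v_{\varpi_2}$ and $f_2E^{\bm{p}-\bm{a}}v_{\varpi_2}$ differ by $\gd$ (since $E^{\bm{a}}$ contains one $e_0$), so in~\eqref{eq:invariance_of_zk} it is not the $z_2^0$ summand that survives; this is fixable because $z_2$ preserves $\bB(\varpi_2)$, but it is not what you wrote. More seriously, almost orthonormality only yields $\gd_{vv'}+q_sA$, so you must still rule out that $z_2^kE^{\bm{p}}v_{\varpi_2}$ coincides (up to sign) with $b$ or with $E^{\bm{p}-\bm{a}-\bm{\gee}_4}v_{\varpi_2}$, and your exponent bookkeeping does not yet give $q^{p_1-p_4-1}A$ uniformly. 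The paper instead uses Lemma~\ref{Lem:properties_of_W^1} (which forces $p_1\le3$, $p_4\le p_1-1$) together with part~(1) to reduce to the two cases $p_1-p_4\in\{2,3\}$. For $p_1-p_4=2$ it continues from the expression~\eqref{eq;textrefeq} (so with $f_2E^{\bm{p}-\bm{a}+\bm{\gee}_4}$ rather than your $f_2E^{\bm{p}-\bm{a}}$) and checks linear independence of the two global basis elements by lifting to $V(\gL_2)\otimes V(-3\gL_0)$; for $p_1-p_4=3$ (hence $p_4=0$) the crude bound $(E^{\bm{p}}w_1,f_2E^{\bm{p}-\bm{a}}w_1)\in A$ and the admissibility prefactor $q^{p_5+1}$ with $p_5\ge1$ already give $q^2A$.
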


\begin{proof}
 (1) By~\eqref{eq:the_relation} and Lemma~\ref{Lem:orthogonality}, we have 
 \begin{align}\label{eq:e2epwlepawl}
   (e_2E^{\bm{p}}w_\ell,E^{\bm{p}-\bm{a}}w_\ell) 
   =c_\ell (E^{\bm{p}-\bm{a}+\bm{\gee}_4}f_2w_\ell, E^{\bm{p}-\bm{a}}w_\ell).
 \end{align}
 It is easily checked that $X=E^{\bm{p}-\bm{a}+\bm{\gee}_4}f_2$ and $Y=E^{\bm{p}-\bm{a}}$ satisfy the assumptions of Lemma~\ref{Lem:equality_of_prepolarizations_on_W},
 and hence we have
 \begin{equation}\label{eq:textref}
  (\text{\ref{eq:e2epwlepawl}})=c_\ell(E^{\bm{p}-\bm{a}+\bm{\gee}_4}f_2w_1^{\otimes \ell},E^{\bm{p}-\bm{a}}w_1^{\otimes \ell})_{(W^1)^{\otimes \ell}}.
 \end{equation}
 A calculation using Lemma~\ref{Lem:1} shows that
 \begin{align*}
  E^{\bm{p}-\bm{a}+\bm{\gee}_4}f_2(v_{\ell\gL_2}\otimes v_{-3\ell\gL_0})= (f_2E^{\bm{p}-\bm{a}+\bm{\gee}_4} + [-p_1+p_4+\ell+1]E^{\bm{p}-\bm{a}})
  (v_{\ell\gL_2}\otimes v_{-3\ell\gL_0}),
 \end{align*}
 and then the existence of the map $V(\ell\gL_2)\otimes V(-3\ell\gL_0) \to (W^1)^{\otimes \ell}$ implies that 
 \begin{equation}\label{eq;textrefeq}
  (\text{\ref{eq:textref}})= c_\ell(f_2E^{\bm{p}-\bm{a}+\bm{\gee}_4}w_1^{\otimes \ell}+[-p_1+p_4+\ell+1]E^{\bm{p}-\bm{a}}w_1^{\otimes \ell},
    E^{\bm{p}-\bm{a}}w_1^{\otimes \ell})_{(W^1)^{\otimes \ell}}.
 \end{equation}
 By Lemma~\ref{Lem:technical0} (2), we have
 \begin{align*}
  f_2E^{\bm{p}-\bm{a}+\bm{\gee}_4}w_1^{\otimes \ell}+[-p_1+p_4+\ell+1]&E^{\bm{p}-\bm{a}}w_1^{\otimes \ell}\\
  &\equiv [-p_1+p_4+\ell+1]_+E^{\bm{p}-\bm{a}}w_1^{\otimes \ell} \ \ \ \text{mod } L(W^{1})^{\otimes \ell},
 \end{align*}
 and hence it follows from Proposition~\ref{Prop:equal_of_prepolarization} and~\eqref{eq:LWr1} that
 \[ (f_2E^{\bm{p}-\bm{a}+\bm{\gee}_4}w_1^{\otimes \ell}+[-p_1+p_4+\ell+1]E^{\bm{p}-\bm{a}}w_1^{\otimes \ell},
    E^{\bm{p}-\bm{a}}w_1^{\otimes \ell}) \in q^{\min(0,p_1-p_4-\ell)}A.
 \]
 Now the assertion (1) is proved since $c_\ell \in \pm 1+q_sA$.

 (2) We may assume that $E^{\bm{p}-\bm{a}}w_1 \neq 0$, and hence that $p_4 \leq p_1-1$ by Lemma~\ref{Lem:properties_of_W^1}.
  Then by (1), it is enough to consider the case $p_1-p_4 \geq 2$.
  First assume that $p_1-p_4=2$. 
  By~\eqref{eq:e2epwlepawl} and \eqref{eq;textrefeq}, it suffices to show that
  \begin{equation}\label{eq:belonging_to_qA}
   (f_2E^{\bm{p}-\bm{a}+\bm{\gee}_4}w_1,E^{\bm{p}-\bm{a}}w_1) \in qA,
  \end{equation} 
  and we may assume that the two vectors are both nonzero.
  Since the two vectors $f_2E^{\bm{p}-\bm{a}+\bm{\gee}_4}(v_{\gL_2}\otimes v_{-3\gL_0})$ and $v_{\gL_2}\otimes E^{\bm{p}-\bm{a}}v_{-3\gL_0}$
  both belong to 
  $\pm\bB(\gL_2,-3\gL_0)$ and are obviously linearly independent,
  we see from Lemma~\ref{Lem:hw-lw-lev0} that
  $f_2E^{\bm{p}-\bm{a}+\bm{\gee}_4} v_{\varpi_2}$ and $E^{\bm{p}-\bm{a}}v_{\varpi_2} $ both belong to $\pm\bB(\varpi_2)$ and are linearly independent.
  Moreover since their $P$-weights are the same, 
  $(f_2E^{\bm{p}-\bm{a}+\bm{\gee}_4} v_{\varpi_2}, z_2^k E^{\bm{p}-\bm{a}}v_{\varpi_2}) = 0$ if $k \neq 0$.
  Hence~\eqref{eq:belonging_to_qA} follows from Proposition~\ref{Prop:Nakajima_prepolarization} (3) and~\eqref{eq:invariance_of_zk}.
   
  It remains to show the assertion in the case $p_1-p_4=3$, that is, $p_1=3$ and $p_4=0$.
  By the admissibility, we have 
  \begin{align*}
   (e_2E^{\bm{p}}w_1,E^{\bm{p}-\bm{a}}w_1) &= q^{p_5+1}(E^{\bm{p}}w_1,f_2E^{\bm{p}-\bm{a}}w_1).
  \end{align*}
  Since $E^{\bm{p}}w_1$ and $f_2E^{\bm{p}-\bm{a}}w_1$ both 
  belong to $L(W^{1})$ and $E^{\bm{p}-\bm{a}}w_1 \neq 0$ implies $p_5 \geq 1$, this belongs to
  $q^2A$. The proof is complete.
\end{proof}

Now we show the following proposition, which assures that $Z_2$ belongs to the set in~\eqref{eq:case_ell+1}.

\begin{Prop}
 Let $\bm{p}_1,\bm{p}_2 \in \Z_{\geq 0}^5$, and set $\bm{p} = \bm{p}_1+\bm{p}_2$.
 Then we have 
 \begin{align*}
  q^{m(\bm{p}_1,\bm{p}_2)+m(\bm{p}_1-\bm{a},\bm{p}_2+\bm{a})+x(\bm{p}_2)}
  (e_2E^{\bm{p}_1}w_1,E^{\bm{p}_1-\bm{a}}w_1)(E^{\bm{p}_2}w_{\ell},&e_2E^{\bm{p}_2+\bm{a}}w_{\ell})\\
   &\in q^{\min(0,p_1-p_4-\ell)+p_1-p_4-1}A,
 \end{align*}
 where $\bm{p}=(p_1,\ldots,p_5)$ and $x(\bm{p}_2) = -\langle h_2,\wt(E^{\bm{p}_2}w_\ell)\rangle$.
\end{Prop}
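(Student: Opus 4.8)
The plan is to bound the displayed quantity directly. First I would dispose of the trivial case: if either bilinear factor $(e_2E^{\bm{p}_1}w_1,E^{\bm{p}_1-\bm{a}}w_1)$ or $(E^{\bm{p}_2}w_\ell,e_2E^{\bm{p}_2+\bm{a}}w_\ell)$ vanishes, then the whole expression is $0$, which lies in $q^cA$ for every $c$. So we may assume both factors are nonzero; in particular $E^{\bm{p}_1}w_1$, $E^{\bm{p}_1-\bm{a}}w_1$, $E^{\bm{p}_2}w_\ell$ and $E^{\bm{p}_2+\bm{a}}w_\ell$ are all nonzero. Using the norm identity of Proposition~\ref{Prop:equality} to transfer nonvanishing from $W^\ell$ to $(W^1)^{\otimes\ell}$, together with the constraints of Lemma~\ref{Lem:properties_of_W^1}, the hypotheses of Lemma~\ref{Lem:nonnegativity0} then hold for both index pairs $(\bm{p}_1,\bm{p}_2)$ and $(\bm{p}_1-\bm{a},\bm{p}_2+\bm{a})$.

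Next I would bound the two factors separately. For the first, Lemma~\ref{Lem:min_max}(2) (the case $\ell=1$) gives $(e_2E^{\bm{p}_1}w_1,E^{\bm{p}_1-\bm{a}}w_1)\in q^{\max(0,\,p_{11}-p_{14}-1)}A$. For the second, using the symmetry of the prepolarization and Lemma~\ref{Lem:min_max}(1) applied to $\bm{p}_2+\bm{a}$ in place of $\bm{p}$, I obtain $(E^{\bm{p}_2}w_\ell,e_2E^{\bm{p}_2+\bm{a}}w_\ell)\in q^{\min(0,\,p_{21}+1-p_{24}-\ell)}A$. It therefore remains to show that the total exponent $m(\bm{p}_1,\bm{p}_2)+m(\bm{p}_1-\bm{a},\bm{p}_2+\bm{a})+x(\bm{p}_2)+\max(0,p_{11}-p_{14}-1)+\min(0,p_{21}+1-p_{24}-\ell)$ is at least $\min(0,p_1-p_4-\ell)+p_1-p_4-1$.

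For the two $m$-terms I would use the explicit formula of Lemma~\ref{Lem:explixit_m}. A direct substitution shows $m(\bm{p}_1-\bm{a},\bm{p}_2+\bm{a})=m(\bm{p}_1,\bm{p}_2)+c$ with $c=-(p_{11}-p_{14})+(p_{25}-p_{24})-\ell+1$; writing $m_1=m(\bm{p}_1,\bm{p}_2)$, the two $m$-terms sum to $2m_1+c$. Since both $m_1\ge 0$ and $m_1+c\ge 0$ by Lemma~\ref{Lem:nonnegativity0}, we get the clean bound $m(\bm{p}_1,\bm{p}_2)+m(\bm{p}_1-\bm{a},\bm{p}_2+\bm{a})\ge |c|$, which is exactly what removes the otherwise intractable dependence on $m_1$. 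Setting $\alpha=p_{11}-p_{14}$ and $\beta=p_{21}-p_{24}$ and substituting $x(\bm{p}_2)=\beta-p_{24}+p_{25}-\ell$, the required inequality reduces, after minimizing the part depending on $\gamma=p_{25}-p_{24}$ (whose minimum of $\lvert c\rvert+\gamma$ is $\alpha+\ell-1$), to the purely numerical statement $\max(0,\alpha-1)+\min(0,\beta+1-\ell)\ge\min(0,\alpha+\beta-\ell)$.

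Finally I would verify this last inequality by a short case analysis on the signs of $\alpha-1$, $\beta+1-\ell$ and $\alpha+\beta-\ell$; each of the few cases is immediate, with equality exactly when $\alpha\ge 1$ and $\beta+1\le\ell$. I expect the main obstacle to be bookkeeping rather than anything conceptual: the crux is organizing the exponent computation so that the $m_1$-dependence cancels (the identity $m(\bm{p}_1-\bm{a},\bm{p}_2+\bm{a})=m(\bm{p}_1,\bm{p}_2)+c$ and the resulting lower bound $\ge\lvert c\rvert$), and carefully justifying the nonvanishing reductions needed to legitimately invoke Lemmas~\ref{Lem:nonnegativity0} and~\ref{Lem:min_max}. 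Once those are in place, the concluding inequality is routine.
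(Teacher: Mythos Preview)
Your argument is correct and uses the same ingredients as the paper (Lemmas~\ref{Lem:min_max}, \ref{Lem:nonnegativity0}, and the explicit formula of Lemma~\ref{Lem:explixit_m}), but the paper organizes the exponent computation more economically. Rather than bounding $2m_1+c$ by $|c|$ and then minimizing over $\gamma$, the paper first records the identity
\[
m(\bm{p}_1,\bm{p}_2)+x(\bm{p}_2)=m(\bm{p}_1-\bm{a},\bm{p}_2+\bm{a})+p_1-p_4-1,
\]
which rewrites the whole $q$-exponent as $2m(\bm{p}_1-\bm{a},\bm{p}_2+\bm{a})+p_1-p_4-1$. Then a \emph{single} application of Lemma~\ref{Lem:nonnegativity0} (to the pair $(\bm{p}_1-\bm{a},\bm{p}_2+\bm{a})$) suffices, and the remaining inequality $\max(0,p_{11}-p_{14}-1)+\min(0,p_{21}-p_{24}-\ell+1)\geq\min(0,p_1-p_4-\ell)$ is exactly your final case analysis. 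Your $|c|$ bound is harmless but stronger than needed: already $2m_1+c\geq -c$ (from $m_1+c\geq 0$ alone) gives $-c+x(\bm{p}_2)=\alpha+\beta-1$ directly, with no minimization over $\gamma$.

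One point you should make explicit: transferring nonvanishing from $W^\ell$ to $(W^1)^{\otimes\ell}$ via Proposition~\ref{Prop:equality} requires knowing that the prepolarization on $W^\ell$ is positive definite (so that $E^{\bm{p}_2+\bm{a}}w_\ell\neq 0$ forces $\lVert E^{\bm{p}_2+\bm{a}}w_\ell\rVert^2\neq 0$). The paper supplies this via the induction hypothesis on $\ell$: since (C1), (C2), and (C3) for $i\neq 2$ are already established for all $\ell$, and (C3) for $i=2$ is assumed for $W^\ell$, Proposition~\ref{Prop:criterion} applies and yields positive definiteness. Your reference to Lemma~\ref{Lem:properties_of_W^1} is not what is needed for this step.
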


\begin{proof}
 Set $\bm{p}_i=(p_{i1},\ldots,p_{i5})$ ($i =1,2$).
 It is directly checked from Lemma~\ref{Lem:explixit_m} that 
 \begin{equation}\label{eq:mysterious}
  m(\bm{p}_1,\bm{p}_2) + x(\bm{p}_2) = m(\bm{p}_1-\bm{a},\bm{p}_2+\bm{a})+ p_1-p_4-1.
 \end{equation}
 We may assume that $E^{\bm{p}_1-\bm{a}}w_1\neq 0$ and $E^{\bm{p}_2+\bm{a}}w_\ell \neq 0$.
 By the induction hypothesis, it follows from Proposition~\ref{Prop:criterion} that the prepolarization $(\ ,\ )_{W^\ell}$ is positive definite,
 and hence $E^{\bm{p}_2+\bm{a}}w_\ell \neq 0$ implies 
 $E^{\bm{p}_2+\bm{a}}w_1^{\otimes \ell} \neq 0$ by Proposition~\ref{Prop:equality}.
 Then it follows from Lemmas~\ref{Lem:nonnegativity0} and~\ref{Lem:min_max} that
 \begin{align*}
  &q^{m(\bm{p}_1,\bm{p}_2)+m(\bm{p}_1-\bm{a},\bm{p}_2+\bm{a})+x(\bm{p}_2)}
  (e_2E^{\bm{p}_1}w_1,E^{\bm{p}_1-\bm{a}}w_1)(E^{\bm{p}_2}w_{\ell},e_2E^{\bm{p}_2+\bm{a}}w_{\ell})\\
    &\phantom{aaaa}\in q^{2m(\bm{p}_1-\bm{a},\bm{p}_2+\bm{a})+p_1-p_4-1}\cdot q^{\max(0,p_{11}-p_{14}-1)}\cdot q^{\min(0,p_{21}-p_{24}-\ell+1)}A\\ 
    &\phantom{aaaa}\subseteq q^{\min(0,p_1-p_4-\ell)+p_1-p_4-1}A.
 \end{align*}
 The assertion is proved.
\end{proof}

Next we shall show that $Z_1$ belongs to the set in~\eqref{eq:case_ell+1}.

\begin{Lem}\label{Lem:technical}
 Assume that $\bm{p} \in\Z_{\geq 0}^5$ satisfies $E^{\bm{p}}w_1^{\otimes \ell}\neq 0$.\\
 {\normalfont(1)} If $p_1>p_4+\ell$, then $E^{\bm{p}+\bm{\gee}_4}w_1^{\otimes \ell} \neq 0$.\\
 {\normalfont(2)} If $p_4>p_5$,
 then either $E^{\bm{p}-\bm{\gee}_4}w_1^{\otimes \ell} \neq 0$ or $E^{\bm{p}+\bm{\gee}_5}w_1^{\otimes \ell}\neq 0$ holds.
\end{Lem}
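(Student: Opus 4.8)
The plan is to reduce everything to the single fundamental module $W^1$ by an induction on $\ell$, turning non-vanishing into a combinatorial statement about a Minkowski sum. Set $N_\ell=\{\bm{p}\in\Z_{\geq0}^5 : E^{\bm{p}}w_1^{\otimes\ell}\neq0\}$. Expanding $E^{\bm{p}}w_1^{\otimes\ell}=E^{\bm{p}}(w_1\otimes w_1^{\otimes(\ell-1)})$ by Lemma~\ref{Lem:coproduct_rule} and using the orthogonality of Lemma~\ref{Lem:orthogonality}, exactly as in the proof of Lemma~\ref{Lem:nonnegativity0}, gives
\[
 \normsq{E^{\bm{p}}w_1^{\otimes\ell}}=\sum_{\bm{p}_1+\bm{p}_2=\bm{p}}q^{2m(\bm{p}_1,\bm{p}_2)}\normsq{E^{\bm{p}_1}w_1}\cdot\normsq{E^{\bm{p}_2}w_1^{\otimes(\ell-1)}}.
\]
Every summand is $\geq0$ because the polarizations on $W^1$ and on $(W^1)^{\otimes(\ell-1)}$ are positive definite (as used in Proposition~\ref{Prop:equal_of_prepolarization}~(2)); hence $\normsq{E^{\bm{p}}w_1^{\otimes\ell}}\neq0$, equivalently $E^{\bm{p}}w_1^{\otimes\ell}\neq0$, if and only if $\bm{p}=\bm{p}_1+\bm{p}_2$ for some $\bm{p}_1\in N_1$ and $\bm{p}_2\in N_{\ell-1}$. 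Thus $N_\ell=N_1+N_{\ell-1}$ as a Minkowski sum.

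Granting this, both parts follow from their $\ell=1$ cases by induction on $\ell$ (for $\ell\geq2$); write $\bm{q}_i=(q_{i1},\dots,q_{i5})$ for a decomposition $\bm{p}=\bm{q}_1+\bm{q}_2$ with $\bm{q}_1\in N_1$, $\bm{q}_2\in N_{\ell-1}$. For (1), assume $p_1>p_4+\ell$. If $q_{21}>q_{24}+(\ell-1)$, the inductive hypothesis gives $\bm{q}_2+\bm{\gee}_4\in N_{\ell-1}$, whence $\bm{p}+\bm{\gee}_4\in N_\ell$; otherwise $q_{11}=p_1-q_{21}\geq p_1-q_{24}-(\ell-1)>q_{14}+1$, and the base case gives $\bm{q}_1+\bm{\gee}_4\in N_1$, so again $\bm{p}+\bm{\gee}_4\in N_\ell$. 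For (2), assume $p_4>p_5$; then $q_{14}>q_{15}$ or $q_{24}>q_{25}$, and applying the base case to $\bm{q}_1$ in the first situation, or the inductive hypothesis to $\bm{q}_2$ in the second, yields $\bm{p}-\bm{\gee}_4\in N_\ell$ or $\bm{p}+\bm{\gee}_5\in N_\ell$.

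It remains to settle $\ell=1$, i.e.\ to analyze $N_1$. By Lemma~\ref{Lem:properties_of_W^1} every $\bm{q}\in N_1$ satisfies $q_1\leq3$ and $q_2,q_3,q_4\leq\min(2,q_1)$, so $N_1$ is a small finite set, and the two implications $q_1>q_4+1\Rightarrow\bm{q}+\bm{\gee}_4\in N_1$ and $q_4>q_5\Rightarrow(\bm{q}-\bm{\gee}_4\in N_1\text{ or }\bm{q}+\bm{\gee}_5\in N_1)$ become a finite, type-by-type verification. I would carry this out by identifying $E^{\bm{q}}v_{\varpi_2}$ with an element of $\pm\bB(\varpi_2)\cup\{0\}$ via Proposition~\ref{Prop:being_global_basis_of_level0}, so that membership in $N_1$ becomes a statement about the fundamental crystal, and then tracking the outer operators $e_2^{(q_4)}$ and $e_1^{(q_5)}$ by means of the $\mathfrak{sl}_2$-fact that $e_i^{(k)}u\neq0$ precisely when $k\leq\varepsilon_i(u)$, together with the identity $f_1^{(n)}\big(e_2^{(q_4)}E_{\bm{j}}^{(q_3)}E_{\bm{i}}^{(q_2)}e_1^{(q_1)}e_0^{(q_1)}v_{\varpi_2}\big)=e_2^{(q_4)}E_{\bm{j}}^{(q_3)}E_{\bm{i}}^{(q_2)}e_1^{(q_1-n)}e_0^{(q_1)}v_{\varpi_2}$.

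The hard part is exactly this base case. The operators $e_1^{(q_5)}$ and $e_2^{(q_4)}$ are not Kashiwara operators on $v_{\varpi_2}$ — the relevant intermediate vectors are not annihilated by $f_1$ — so raising $q_4$ or $q_5$ can change $\varphi_1$ and the $h_1$-weight of the intermediate vector simultaneously, and the resulting value of $\varepsilon_1$ is governed by how these invariants descend from the tensor-product crystal $B(\ell\gL_2)\otimes B(-3\ell\gL_0)$ through the homomorphism $\Psi$ of Lemma~\ref{Lem:hw-lw-lev0}, i.e.\ by which cells lie in $\ker\Psi$. This descent is what makes a naive string-by-string count fail in general; the whole point of reducing to $\ell=1$ is that the bounds $q_1\leq3$, $q_j\leq2$ make the kernel, and hence $N_1$, explicitly describable (in particular amenable to a \textsc{SageMath} check).
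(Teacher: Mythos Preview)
Your reduction from general $\ell$ to $\ell=1$ is correct and is essentially what the paper does: the paper also writes $E^{\bm{p}}w_1^{\otimes\ell}\neq0$ iff some tensor $E^{\bm{p}_1}w_1\otimes\cdots\otimes E^{\bm{p}_\ell}w_1$ is nonzero (using the linear independence from Lemma~\ref{Lem:orthogonality}), which is exactly your Minkowski-sum identity $N_\ell=N_1+\cdots+N_1$. Your pigeonhole step (find a summand satisfying the $\ell=1$ hypothesis) matches the paper's.

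The gap is the base case $\ell=1$: you do not prove it, only outline a strategy and defer to a computer check. The paper gives short arguments here that do not require enumerating $N_1$. For part~(1) with $\ell=1$, the key input you are missing is Lemma~\ref{Lem:technical0}: the vector $(f_2E^{\bm{p}+\bm{\gee}_4}-[p_1-p_4-1]E^{\bm{p}})v_{\varpi_2}$ lies in $\pm\bB(\varpi_2)\cup\{0\}$, hence is either zero or not proportional to $E^{\bm{p}}v_{\varpi_2}$; in either case $f_2E^{\bm{p}+\bm{\gee}_4}w_1\neq0$ since $[p_1-p_4-1]\neq0$ and $E^{\bm{p}}w_1\neq0$. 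This one-line global-basis trick replaces the crystal-descent analysis you anticipate being hard. For part~(2) with $\ell=1$, the paper does the finite case analysis you describe, but directly: the bounds force $p_4=2$, $p_5=1$, and then a handful of explicit cases (depending on $p_1-p_2-p_3$, and whether $p_2$ or $p_3$ vanish) are dispatched by weight considerations and~\eqref{eq:elementary_fact}, ending with the single tuple $\bm{p}=(3,1,1,2,1)$. No crystal combinatorics or machine check is needed.
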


\begin{proof}
 (1) First consider the case $\ell=1$.
  By (the proof of) Lemma~\ref{Lem:technical0} (1) and Lemma~\ref{Lem:hw-lw-lev0}, the vector $(f_2E^{\bm{p}+\bm{\gee}_4} - [p_1-p_4-1]E^{\bm{p}})w_1$ is either $0$,
  or not proportional to $E^{\bm{p}}w_1$.
  In both cases we have $f_2E^{\bm{p}+\bm{\gee}_4}w_1 \neq 0$, and hence the assertion (1) is proved for $\ell =1$.

 Assume that $\ell>1$.
 Obviously, $E^{\bm{p}}w_1^{\otimes \ell} \neq 0$ implies $E^{\bm{p}_1}w_1 \otimes \cdots \otimes 
 E^{\bm{p}_\ell}w_1 \neq 0$ for some $\bm{p}_1,\ldots,\bm{p}_\ell \in \Z_{\geq 0}^5$
 such that $\bm{p} = \bm{p}_1+\cdots+\bm{p}_\ell$.
 The assumption implies that there exists some $k$ such that $p_{k1}-p_{k4}>1$, and then $E^{\bm{p}_k+\bm{\gee}_4}w_1 \neq 0$ holds by the argument for
 $\ell=1$.
 Since the nonzero vectors of the form $E^{\bm{p}'_1}w_1 \otimes \cdots \otimes E^{\bm{p}'_\ell}w_1$ are linearly independent by Lemma~\ref{Lem:orthogonality},
 this implies that $E^{\bm{p}+\bm{\gee}_4}w_1^{\otimes \ell}$ is nonzero.
 The assertion is proved.

 (2) First assume that $\ell=1$.
 If $p_5 =0$, $E^{\bm{p}-\bm{\gee}_4}w_1 \neq 0$ obviously holds, and hence we may assume $p_5\geq 1$.  
 That $E^{\bm{p}}w_1 \neq 0$ implies $p_4\leq 2$ by Lemma~\ref{Lem:properties_of_W^1},
 which forces $p_4=2$ and $p_5=1$.
 If 
 \[
 \langle h_1,\wt(E^{\bm{p}-\bm{\gee}_5}w_1)\rangle =p_1-p_2-p_3-2 \leq -2,
 \]
 then $E^{\bm{p}+\bm{\gee}_5}w_1 \neq 0$ follows, and hence we may assume that $p_1 > p_2+p_3$.
 If $p_3=0$, since~\eqref{eq:commutation2} implies $e_1E_{\bm{i}}^{(p_2)}e_{10}^{(p_1)}w_1 = 0$,
 we have  
 \[
 e_2E^{\bm{p}-\bm{\gee}_4}w_1 = E^{\bm{p}}w_1+e_2^{(2)}E^{\bm{p}-2\bm{\gee}_4}w_1=E^{\bm{p}}w_1 \neq 0,
 \]
 which implies $E^{\bm{p}-\bm{\gee}_4}w_1\neq 0$.
 It is also checked similarly that $E^{\bm{p}-\bm{\gee}_4}w_1\neq 0$ holds if $p_2=0$.
 The remaining case is $\bm{p}=(3,1,1,2,1)$ only, and in this case $E^{(3,1,1,1,1)}w_1 \neq 0$ is proved from~\eqref{eq:elementary_fact} and
 \[
 f_1E^{(3,1,1,1,0)}w_1= E^{(0,1,1,1,0)}e_1^{(2)}e_0^{(3)}w_1 \neq 0.
 \]
 The proof for $\ell=1$ is complete.
 Then the same argument used in the proof of (1) also works here, and (2) for general $\ell$ is proved.
\end{proof}

\begin{Lem}\label{Lem:nonnegativity}
 Let $\bm{p}_1,\bm{p}_2 \in \Z_{\geq 0}^5$ be such that $E^{\bm{p}_1}w_1 \neq 0$ and $E^{\bm{p}_2}w_1^{\otimes \ell}\neq 0$.\\
 {\normalfont(1)} If $p_{11} >p_{14}+1$, then $m(\bm{p}_1,\bm{p}_2) \geq -p_{21}+p_{24}+\ell$.\\
 {\normalfont(2)} If $p_{24} >p_{25}$, then we have $m(\bm{p}_1,\bm{p}_2) \geq -p_{14}+p_{15}$.
\end{Lem}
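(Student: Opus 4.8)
The plan is to reduce both estimates to the baseline inequality $m(\bm{p}_1,\bm{p}_2)\geq 0$ of Lemma~\ref{Lem:nonnegativity0}, applied not to the pair $(\bm{p}_1,\bm{p}_2)$ itself but to a carefully chosen \emph{shifted} pair whose two entries are still nonzero. The nonvanishing of the shifted vectors will be supplied by Lemma~\ref{Lem:technical}, and the exact size of the resulting improvement will be read off from the explicit formula of Lemma~\ref{Lem:explixit_m}. The only genuine point is to pick the shift so that the linear correction term produced by that formula matches the target bound exactly.

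For part (1), I would first apply Lemma~\ref{Lem:technical}(1) in the case $\ell=1$ (that is, to $W^1$): the hypothesis $p_{11}>p_{14}+1$ is precisely its hypothesis there, so $E^{\bm{p}_1+\bm{\gee}_4}w_1\neq 0$. Since $E^{\bm{p}_2}w_1^{\otimes\ell}\neq 0$ by assumption, Lemma~\ref{Lem:nonnegativity0} applies to the pair $(\bm{p}_1+\bm{\gee}_4,\bm{p}_2)$ and gives $m(\bm{p}_1+\bm{\gee}_4,\bm{p}_2)\geq 0$. It then remains to substitute into Lemma~\ref{Lem:explixit_m} the one-line identity $m(\bm{p}_1+\bm{\gee}_4,\bm{p}_2)=m(\bm{p}_1,\bm{p}_2)+p_{21}-p_{24}-\ell$, whence $m(\bm{p}_1,\bm{p}_2)\geq -p_{21}+p_{24}+\ell$, as desired.

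For part (2), the hypothesis $p_{24}>p_{25}$ lets me invoke Lemma~\ref{Lem:technical}(2) applied to $\bm{p}_2$ (in the module $(W^1)^{\otimes\ell}$), which splits into two cases. If $E^{\bm{p}_2-\bm{\gee}_4}w_1^{\otimes\ell}\neq 0$, then Lemma~\ref{Lem:nonnegativity0} applied to $(\bm{p}_1,\bm{p}_2-\bm{\gee}_4)$ yields $m(\bm{p}_1,\bm{p}_2-\bm{\gee}_4)\geq 0$, and the identity $m(\bm{p}_1,\bm{p}_2-\bm{\gee}_4)=m(\bm{p}_1,\bm{p}_2)+p_{14}-p_{15}$ from Lemma~\ref{Lem:explixit_m} gives exactly $m(\bm{p}_1,\bm{p}_2)\geq -p_{14}+p_{15}$. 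If instead $E^{\bm{p}_2+\bm{\gee}_5}w_1^{\otimes\ell}\neq 0$, the analogous computation produces $m(\bm{p}_1,\bm{p}_2+\bm{\gee}_5)=m(\bm{p}_1,\bm{p}_2)-p_{15}$, so $m(\bm{p}_1,\bm{p}_2)\geq p_{15}\geq -p_{14}+p_{15}$ since $p_{14}\geq 0$; this alternative is in fact stronger. Either way the claim holds.

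The verifications are not where the difficulty lies: the three shift identities are routine substitutions into the formula of Lemma~\ref{Lem:explixit_m}, and I would not grind through them in detail. The main thing to get right is the matching of shifts to bounds—using $+\bm{\gee}_4$ on $\bm{p}_1$ in part (1) versus $-\bm{\gee}_4$ or $+\bm{\gee}_5$ on $\bm{p}_2$ in part (2)—together with the observation that Lemma~\ref{Lem:technical}(2) really does hand back two alternatives, the second of which must be disposed of by the trivial inequality $p_{15}\geq -p_{14}+p_{15}$. The slight asymmetry (one shift on $\bm{p}_1$, the others on $\bm{p}_2$) is dictated by which factor's nonvanishing Lemma~\ref{Lem:technical} can guarantee, so I expect the only real care to be in invoking the $\ell=1$ instance of Lemma~\ref{Lem:technical}(1) for the $W^1$-factor.
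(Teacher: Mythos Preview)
Your proposal is correct and follows essentially the same approach as the paper: shift by $+\bm{\gee}_4$ on $\bm{p}_1$ for (1) and by $-\bm{\gee}_4$ or $+\bm{\gee}_5$ on $\bm{p}_2$ for (2), invoke Lemma~\ref{Lem:technical} for nonvanishing, apply Lemma~\ref{Lem:nonnegativity0} to the shifted pair, and read off the correction from Lemma~\ref{Lem:explixit_m}. The three shift identities and the handling of the two alternatives in (2) match the paper exactly.
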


\begin{proof}
 (1) By Lemma~\ref{Lem:technical} (1), we have $E^{\bm{p}_1+\bm{\gee}_4}w_1 \neq 0$, and hence $m(\bm{p}_1+\bm{\gee}_4,\bm{p}_2) \geq 0$ follows
  from Lemma~\ref{Lem:nonnegativity0}.
  Since we have 
 \begin{align*}
  m(\bm{p}_1,\bm{p}_2) = m(\bm{p}_1+\bm{\gee}_4,\bm{p}_2) - p_{21}+p_{24}+\ell
 \end{align*}
 by Lemma~\ref{Lem:explixit_m}, the assertion (1) follows.

 (2) By Lemma~\ref{Lem:technical} (2), we have either $E^{\bm{p}_2-\bm{\gee}_4}w_1^{\otimes \ell} \neq 0$ or $E^{\bm{p}_2+\bm{\gee}_5}w_1^{\otimes \ell} 
 \neq 0$, and hence either $m(\bm{p}_1,\bm{p}_2-\bm{\gee}_4) \geq 0$ or $m(\bm{p}_1,\bm{p}_2+\bm{\gee}_5) \geq 0$ holds.
 Since we have
 \begin{align*}
 m(\bm{p}_1,\bm{p}_2) = m(\bm{p}_1,\bm{p}_2-\bm{\gee}_4) - p_{14}+p_{15}\ \ \ \text{and} \ \ \ 
 m(\bm{p}_1,\bm{p}_2) = m(\bm{p}_1,\bm{p}_2+\bm{\gee}_5) + p_{15},
 \end{align*}
 in both cases $m(\bm{p}_1,\bm{p}_2) \geq -p_{14}+p_{15}$ holds, and the proof is complete.
\end{proof}

Now the following proposition implies that $Z_1$ belongs to the set in~\eqref{eq:case_ell+1}.

\begin{Prop}
 Assume that $\bm{p}_1,\bm{p}_2 \in \Z_{\geq 0}^5$ satisfy $E^{\bm{p}_1}w_1 \neq 0$ and $E^{\bm{p}_2}w_{\ell} \neq 0$.
 Setting $\bm{p}=\bm{p}_1+\bm{p}_2$, we have 
 \begin{equation}\label{eq:belonging1}
  q^{2m(\bm{p}_1,\bm{p}_2)}\normsq{E^{\bm{p}_1}w_1}\cdot\normsq{e_2E^{\bm{p}_2}w_{\ell}} \in q^{2\min(0,-p_4+p_5,p_1-p_4-\ell-1)-1}A.
 \end{equation}
\end{Prop}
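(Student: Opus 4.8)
The plan is to bound the three factors on the left of~\eqref{eq:belonging1} separately and then reduce the whole assertion to a single inequality among exponents of $q$. First I would note that $\normsq{E^{\bm{p}_1}w_1} \in 1+q_sA \subseteq A$ by Proposition~\ref{Prop:equal_of_prepolarization}~(2) with $\ell = 1$, so this factor contributes no negative power of $q$. For $\normsq{e_2E^{\bm{p}_2}w_\ell}$ I would invoke the induction hypothesis~\eqref{eq:e2epw}, which places it in $q^{2\min(0,-p_{24}+p_{25},p_{21}-p_{24}-\ell)-1}A$. To apply the later lemmas I first need $E^{\bm{p}_2}w_1^{\otimes\ell}\neq 0$: the induction hypothesis together with Proposition~\ref{Prop:criterion} makes $(\ ,\ )_{W^\ell}$ positive definite, so $E^{\bm{p}_2}w_\ell \neq 0$ gives $\normsq{E^{\bm{p}_2}w_\ell}\neq 0$, whence $\normsq{E^{\bm{p}_2}w_1^{\otimes\ell}}\neq 0$ by Proposition~\ref{Prop:equality} and thus $E^{\bm{p}_2}w_1^{\otimes\ell}\neq 0$. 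Finally $m(\bm{p}_1,\bm{p}_2)\geq 0$ by Lemma~\ref{Lem:nonnegativity0}.

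Writing $M = m(\bm{p}_1,\bm{p}_2)$ and $m_L = \min(0,-p_{24}+p_{25},p_{21}-p_{24}-\ell)$, multiplying the three bounds places the left-hand side of~\eqref{eq:belonging1} in $q^{2M+2m_L-1}A$. Since $q^nA\subseteq q^{n'}A$ whenever $n\geq n'$, it then suffices to prove
\[ M + m_L \geq m_R, \qquad m_R := \min(0,-p_4+p_5,p_1-p_4-\ell-1), \]
where $p_j = p_{1j}+p_{2j}$ throughout. Observe that $m_R\leq 0$, since $0$ is one of its three terms.

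I would establish this inequality by a case analysis on which term realizes $m_L$. If $m_L = 0$, then $M+m_L = M\geq 0\geq m_R$. If $m_L = -p_{24}+p_{25}<0$, then $p_{24}>p_{25}$, so Lemma~\ref{Lem:nonnegativity}~(2) gives $M\geq -p_{14}+p_{15}$, whence
\[ M + m_L \geq (-p_{14}+p_{15})+(-p_{24}+p_{25}) = -p_4+p_5 \geq m_R. \]
If $m_L = p_{21}-p_{24}-\ell<0$, I split once more. When $p_{11}>p_{14}+1$, Lemma~\ref{Lem:nonnegativity}~(1) gives $M\geq -p_{21}+p_{24}+\ell$, so $M+m_L\geq 0\geq m_R$. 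When $p_{11}\leq p_{14}+1$, I instead use $p_1-p_4-\ell-1 = (p_{11}-p_{14}-1)+(p_{21}-p_{24}-\ell) \leq p_{21}-p_{24}-\ell = m_L \leq M+m_L$, so again $m_R\leq M+m_L$.

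The only real content lies in matching the refined estimates of Lemma~\ref{Lem:nonnegativity} to the candidate terms of $m_R$ via the splittings $-p_4+p_5 = (-p_{14}+p_{15})+(-p_{24}+p_{25})$ and $p_1-p_4-\ell-1 = (p_{11}-p_{14}-1)+(p_{21}-p_{24}-\ell)$; these are exactly what let each case close. I anticipate no serious obstacle beyond checking that in each case the correct branch of Lemma~\ref{Lem:nonnegativity} is available, which the side conditions $p_{24}>p_{25}$ and $p_{11}>p_{14}+1$ respectively guarantee.
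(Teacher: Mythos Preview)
Your proof is correct and follows essentially the same approach as the paper's: bound the three factors separately, reduce to the exponent inequality $m(\bm{p}_1,\bm{p}_2)+N_2\geq N$, and close it by the same case analysis on which term realizes $N_2$, invoking Lemma~\ref{Lem:nonnegativity0} and the two parts of Lemma~\ref{Lem:nonnegativity} exactly as the paper does. You are in fact slightly more careful than the paper in explicitly deducing $E^{\bm{p}_2}w_1^{\otimes\ell}\neq 0$ from $E^{\bm{p}_2}w_\ell\neq 0$ (via positive definiteness from the induction hypothesis and Proposition~\ref{Prop:equality}), which is the hypothesis those lemmas actually require.
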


\begin{proof}
 Set 
 \[ N = \min(0,-p_4+p_5,p_1-p_4-\ell-1) \ \ \text{and} \ \ N_2 = \min(0,-p_{24}+p_{25},p_{21}-p_{24}-\ell).
 \]
 Since $\normsq{E^{\bm{p}_1}w_1} \in 1+q_sA$ by Proposition~\ref{Prop:equal_of_prepolarization} and $\normsq{e_2E^{\bm{p}_2}w_{\ell}}
 \in q^{2N_2-1}A$ by~\eqref{eq:e2epw} with $\bm{p}$ replaced by $\bm{p}_2$ (which we are assuming to hold), it suffices to show that
 \begin{equation}\label{eq:mM2}
  m(\bm{p}_1,\bm{p}_2) + N_2 \geq N.
 \end{equation}
 If $N_2 = 0$, this follows from Lemma~\ref{Lem:nonnegativity0}.
 Moreover if $N_2 = -p_{24}+p_{25}<0$, this holds since
 \[ m(\bm{p}_1,\bm{p}_2) + (-p_{24}+p_{25})\geq (-p_{14}+p_{15})+(-p_{24}+p_{25}) = -p_4+p_5
 \] 
 by Lemma~\ref{Lem:nonnegativity} (2).
 Finally assume that $N_2 =p_{21}-p_{24}-\ell$. 
 If $p_{11} \leq p_{14} + 1$, then~\eqref{eq:mM2} holds since
 \[ N_2 \geq N_2 + (p_{11}-p_{14}-1) = p_1-p_4-\ell-1.
 \]
 On the other hand if $p_{11}>p_{14} + 1$,~\eqref{eq:mM2} follows from Lemma~\ref{Lem:nonnegativity} (1).
 The proof is complete.  
\end{proof}

Finally, we shall show that $Z_3+Z_4$ belongs to the set in~\eqref{eq:case_ell+1}, 
which completes the proof of Proposition~\ref{Prop:e2epw}.
By a similar calculation that we did for $\normsq{e_2E^{\bm{p}}w_{\ell+1}}$, we have
\begin{equation*}
 \normsq{e_2E^{\bm{p}}w_{1}^{\otimes (\ell+1)}} = W_1+W_2+W_3+W_4,
\end{equation*}
where 
\begin{align*}
 W_1&=\sum q^{2m(\bm{p}_1,\bm{p}_2)}\normsq{E^{\bm{p}_1}w_1}\cdot\normsq{e_2E^{\bm{p}_2}w_1^{\otimes \ell}},\\
 W_2&=2\sum q^{m(\bm{p}_1,\bm{p}_2)+m(\bm{p}_1-\bm{a},\bm{p}_2+\bm{a})+x(\bm{p}_2)}
  (e_2E^{\bm{p}_1}w_1,E^{\bm{p}_1-\bm{a}}w_1)(E^{\bm{p}_2}w_1^{\otimes \ell},e_2E^{\bm{p}_2+\bm{a}}w_1^{\otimes \ell}),\\
 W_3&=2\sum q^{m(\bm{p}_1,\bm{p}_2)+m(\bm{p}_1+\bm{\gee}_4,\bm{p}_2-\bm{\gee}_4)+x(\bm{p}_2)}
  (e_2E^{\bm{p}_1}w_1,E^{\bm{p}_1+\bm{\gee}_4}w_1)(E^{\bm{p}_2}w_1^{\otimes \ell},e_2E^{\bm{p}_2-\bm{\gee}_4}w_1^{\otimes \ell}),\\
 W_4&=\sum q^{2m(\bm{p}_1,\bm{p}_2)+2x(\bm{p}_2)}\normsq{e_2E^{\bm{p}_1}w_1}\cdot\normsq{E^{\bm{p}_2}w_1^{\otimes \ell}}.
\end{align*}
We have $W_4 = Z_4$ by Proposition~\ref{Prop:equality}.
Moreover, the equality
\[ (E^{\bm{p}_2}w_1^{\otimes \ell},e_2E^{\bm{p}_2-\bm{\gee}_4}w_1^{\otimes \ell})_{(W^1)^{\otimes \ell}}=(E^{\bm{p}_2}w_\ell,e_2E^{\bm{p}_2-\bm{\gee}_4}w_\ell)_{W^\ell}
\]
is proved for any $\bm{p}_2$ by checking $X=E^{\bm{p}_2}$ and $Y=e_2E^{\bm{p}_2-\bm{\gee}_4}$ satisfy the assumptions of Lemma 
\ref{Lem:equality_of_prepolarizations_on_W}, and hence $W_3=Z_3$ follows.
On the other hand, the left-hand side $\normsq{e_2E^{\bm{p}}w_{1}^{\otimes (\ell+1)}}$ belongs to $q^{2\min(0,-p_4+p_5)}$ by Proposition~\ref{Prop:estimate} (1).
Hence in order to show that $Z_3+Z_4(=W_3+W_4)$ belongs to the set in~\eqref{eq:case_ell+1}, 
it is enough to prove that both $W_1$ and $W_2$ do.
The assertion for $W_1$ is deduced from the following lemma.

\begin{Lem}
 For any $\bm{p}_1,\bm{p}_2 \in \Z_{\geq 0}$, we have
 \[ q^{2m(\bm{p}_1,\bm{p}_2)}\normsq{E^{\bm{p}_1}w_1} \cdot \normsq{ e_2E^{\bm{p}_2}w_1^{\otimes \ell}} \in q^{2\min(0,-p_4+p_5)}A,
 \]
 where we set $\bm{p} = \bm{p}_1+\bm{p}_2$.
\end{Lem}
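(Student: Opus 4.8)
The plan is to bound the two norm factors separately using results already established and thereby reduce the assertion to a single numerical inequality among exponents of $q$. Write $\bm{p}_k = (p_{k1},\ldots,p_{k5})$ for $k=1,2$, so that $p_j = p_{1j}+p_{2j}$, and set $N = \min(0,-p_4+p_5)$ and $N_2 = \min(0,-p_{24}+p_{25})$. First I would dispose of the degenerate cases: if $E^{\bm{p}_1}w_1 = 0$ then $\normsq{E^{\bm{p}_1}w_1}=0$, and if $E^{\bm{p}_2}w_1^{\otimes\ell}=0$ then $e_2E^{\bm{p}_2}w_1^{\otimes\ell}=0$ so the second factor vanishes; in either situation the left-hand side is $0$, which lies in every $q^nA$. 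Hence I may assume $E^{\bm{p}_1}w_1\neq 0$ and $E^{\bm{p}_2}w_1^{\otimes\ell}\neq 0$.

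Under this assumption, Proposition~\ref{Prop:equal_of_prepolarization}~(2) (applied with $\ell=1$) gives $\normsq{E^{\bm{p}_1}w_1}\in 1+q_sA\subseteq A$, and Proposition~\ref{Prop:estimate}~(1), applied with $\bm{p}$ replaced by $\bm{p}_2$, gives $\normsq{e_2E^{\bm{p}_2}w_1^{\otimes\ell}}\in q^{2N_2}A$. Multiplying, the left-hand side lies in $q^{2m(\bm{p}_1,\bm{p}_2)+2N_2}A$. Since $q^aA\subseteq q^bA$ exactly when $a\geq b$, it therefore suffices to prove the inequality
\[
 m(\bm{p}_1,\bm{p}_2) + N_2 \geq N.
\]

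The verification is a short case analysis, entirely parallel to the proof of the preceding proposition bounding $Z_1$ but simpler, since the relevant minimum has only two terms rather than three. If $N_2 = 0$, then $m(\bm{p}_1,\bm{p}_2)\geq 0$ by Lemma~\ref{Lem:nonnegativity0} while $N\leq 0$, so the inequality is immediate. If instead $N_2 = -p_{24}+p_{25} < 0$, i.e.\ $p_{24}>p_{25}$, then Lemma~\ref{Lem:nonnegativity}~(2) yields $m(\bm{p}_1,\bm{p}_2)\geq -p_{14}+p_{15}$, whence
\[
 m(\bm{p}_1,\bm{p}_2) + N_2 \geq (-p_{14}+p_{15}) + (-p_{24}+p_{25}) = -p_4+p_5 \geq N,
\]
using $p_j = p_{1j}+p_{2j}$ and $N\leq -p_4+p_5$. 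This settles the inequality and hence the lemma. I do not anticipate a genuine obstacle here: the content is purely the combinatorial reduction above, and the only point requiring care is matching the two regimes of $N_2$ to the correct nonnegativity estimate (Lemma~\ref{Lem:nonnegativity0} when $N_2=0$, Lemma~\ref{Lem:nonnegativity}~(2) when $p_{24}>p_{25}$), together with remembering to handle the vanishing cases first so that the cited propositions apply.
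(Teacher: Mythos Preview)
Your proof is correct and follows essentially the same approach as the paper's: reduce to the nonvanishing case, invoke Proposition~\ref{Prop:equal_of_prepolarization}~(2) and Proposition~\ref{Prop:estimate}~(1) to bound the two norms, then split on whether $-p_{24}+p_{25}\geq 0$ (using Lemma~\ref{Lem:nonnegativity0}) or $p_{24}>p_{25}$ (using Lemma~\ref{Lem:nonnegativity}~(2)). Your version is slightly more explicit in justifying the degenerate cases and in isolating the exponent inequality, but the argument is the same.
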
 

\begin{proof}
 We may assume that $E^{\bm{p}_1}w_1 \neq 0$ and $E^{\bm{p}_2}w_1^{\otimes \ell} \neq 0$.
 We have $\normsq{E^{\bm{p}_1}w_1} \in 1+q_sA$ by Proposition~\ref{Prop:equal_of_prepolarization}, 
 and $\normsq{e_2E^{\bm{p}_2}w_1^{\otimes \ell}} \in q^{2\min(0,-p_{24}+p_{25})}A$ by Proposition~\ref{Prop:estimate} (1).
 If $-p_{24}+p_{25} \geq 0$, the assertion follows from Lemma~\ref{Lem:nonnegativity0}.
 Otherwise we have $m(\bm{p}_1,\bm{p}_2)\geq -p_{14}+p_{15}$ by Lemma~\ref{Lem:nonnegativity} (2), and hence
 the assertion is proved.  
\end{proof}

The assertion for $W_2$ is easily proved from the following lemma and~\eqref{eq:mysterious}.

\begin{Lem}
 For any $\bm{p} \in \Z_{\geq 0}^5$, we have 
 \begin{equation}\label{eq:final}
  (e_2E^{\bm{p}}w_1^{\otimes \ell},E^{\bm{p}-\bm{a}}w_1^{\otimes \ell}) \in A.
 \end{equation}
\end{Lem}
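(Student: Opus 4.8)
The plan is to argue by induction on $\ell$. The base case $\ell=1$ is already contained in Lemma~\ref{Lem:min_max}(2), which gives the sharper statement $(e_2E^{\bm{p}}w_1,E^{\bm{p}-\bm{a}}w_1)\in q^{\max(0,p_1-p_4-1)}A\subseteq A$. Throughout we may assume $E^{\bm{p}-\bm{a}}w_1^{\otimes\ell}\neq 0$, since otherwise the left-hand side of~\eqref{eq:final} vanishes.

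For the inductive step I would decompose $(W^1)^{\otimes\ell}=W^1\otimes(W^1)^{\otimes(\ell-1)}$ and expand both arguments of the pairing by the coproduct, applying Lemma~\ref{Lem:coproduct_rule} to $E^{\bm{p}-\bm{a}}w_1^{\otimes\ell}$ and~\eqref{eq:coproduct_rule2} to $e_2E^{\bm{p}}w_1^{\otimes\ell}$. Since the form on $(W^1)^{\otimes\ell}$ is the product of the forms on the two slots, the pairing factorizes slot by slot, and I would use Lemma~\ref{Lem:orthogonality} to force matching multi-indices in the slot carrying no $e_2$ and Lemma~\ref{Lem:weak_orthogonality} to constrain the slot carrying $e_2$. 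This collapses the double sum into exactly two sums over $\bm{p}_1+\bm{p}_2=\bm{p}$, namely
\[
 F_1=\sum_{\bm{p}_1+\bm{p}_2=\bm{p}} q^{\,m(\bm{p}_1,\bm{p}_2)+m(\bm{p}_1-\bm{a},\bm{p}_2)+x_{\ell-1}(\bm{p}_2)}\,(e_2E^{\bm{p}_1}w_1,E^{\bm{p}_1-\bm{a}}w_1)\,\normsq{E^{\bm{p}_2}w_1^{\otimes(\ell-1)}},
\]
coming from $e_2$ acting on the first factor, and
\[
 F_2=\sum_{\bm{p}_1+\bm{p}_2=\bm{p}} q^{\,m(\bm{p}_1,\bm{p}_2)+m(\bm{p}_1,\bm{p}_2-\bm{a})}\,\normsq{E^{\bm{p}_1}w_1}\,(e_2E^{\bm{p}_2}w_1^{\otimes(\ell-1)},E^{\bm{p}_2-\bm{a}}w_1^{\otimes(\ell-1)}),
\]
coming from $e_2$ acting on the second factor; here $x_{\ell-1}(\bm{p}_2)=p_{21}-2p_{24}+p_{25}-(\ell-1)$ is the level-$(\ell-1)$ analogue of $x$ (supplied by $t_2^{-1}$ in $\gD(e_2)$), and $m(-,-)$ abbreviates $m(-,-;\varpi_2,(\ell-1)\varpi_2)$.

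The term $F_2$ is the easy one. Its inner pairing lies in $A$ by the inductive hypothesis, the norm lies in $1+q_sA$ by Proposition~\ref{Prop:equal_of_prepolarization}, and both exponents $m(\bm{p}_1,\bm{p}_2)$ and $m(\bm{p}_1,\bm{p}_2-\bm{a})$ are $\geq 0$ by the (evident level-$(\ell-1)$ version of the) nonnegativity Lemma~\ref{Lem:nonnegativity0} whenever the relevant factors are nonzero; hence $F_2\in A$. For $F_1$ I would feed in the sharp single-site estimate $(e_2E^{\bm{p}_1}w_1,E^{\bm{p}_1-\bm{a}}w_1)\in q^{\max(0,p_{11}-p_{14}-1)}A$ of Lemma~\ref{Lem:min_max}(2) together with $\normsq{E^{\bm{p}_2}w_1^{\otimes(\ell-1)}}\in A$.

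The hard part is $F_1$: its exponent $m(\bm{p}_1,\bm{p}_2)+m(\bm{p}_1-\bm{a},\bm{p}_2)+x_{\ell-1}(\bm{p}_2)+\max(0,p_{11}-p_{14}-1)$ is not manifestly nonnegative, the obstruction occurring exactly when $x_{\ell-1}(\bm{p}_2)<0$ (i.e.\ when the fourth coordinate of $\bm{p}_2$ is large, so that a genuine pole in $q$ threatens, just as in the analysis of $Z_1$ and $Z_2$). To settle this I would run a case analysis parallel to the main argument: when $p_{11}\leq p_{14}+1$ the $\max$-term drops and one boosts $m(\bm{p}_1,\bm{p}_2)$ via Lemma~\ref{Lem:nonnegativity}(2), while when $p_{11}>p_{14}+1$ one boosts it via Lemma~\ref{Lem:nonnegativity}(1); in each case the identity~\eqref{eq:mysterious} (in its level-$(\ell-1)$ form) is used to rewrite $m(\bm{p}_1,\bm{p}_2)+x_{\ell-1}(\bm{p}_2)$ so that the gains from the nonnegativity lemmas and from the sharp bound of Lemma~\ref{Lem:min_max}(2) precisely cancel the negative contribution. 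I expect this bookkeeping of $q$-exponents, rather than any conceptual difficulty, to be the only real obstacle; (one can also quickly dispose of the ranges $p_4\leq p_5$ and $x(\bm{p})\geq 1$ directly, by combining Proposition~\ref{Prop:estimate}(1) with the admissibility relation~\eqref{eq:admissible} and Lemma~\ref{Lem:technical0}, which isolates the troublesome region before the induction is invoked).
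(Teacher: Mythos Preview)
Your inductive framework and the decomposition into $F_1$ and $F_2$ are exactly what the paper does, and your treatment of $F_2$ matches the paper's. The divergence is in handling $F_1$, where you anticipate a delicate case analysis via Lemmas~\ref{Lem:nonnegativity}(1)(2) and the identity~\eqref{eq:mysterious}; the paper instead dispatches $F_1$ in one line. The trick is to use a variant of~\eqref{eq:mysterious} in which $\bm{p}_2$ is \emph{not} shifted: directly from Lemma~\ref{Lem:explixit_m} one has
\[
 m(\bm{p}_1,\bm{p}_2)+x(\bm{p}_2)=m(\bm{p}_1-\bm{a},\bm{p}_2)+p_{21}-p_{24},
\]
so the total exponent in $F_1$ becomes $2m(\bm{p}_1-\bm{a},\bm{p}_2)+(p_{21}-p_{24})$. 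The first summand is $\geq 0$ by Lemma~\ref{Lem:nonnegativity0}, and for the second the paper invokes Lemma~\ref{Lem:properties_of_W^1}: if $E^{\bm{p}_2}w_1^{\otimes\ell}\neq 0$ then $p_{24}\leq p_{21}$. Hence $F_1\in A$ immediately, using only that $(e_2E^{\bm{p}_1}w_1,E^{\bm{p}_1-\bm{a}}w_1)\in A$ from the base case (the sharper $q^{\max(0,p_{11}-p_{14}-1)}$ estimate is not needed here).

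Your proposed route via~\eqref{eq:mysterious} runs into the nuisance that the resulting $m(\bm{p}_1-\bm{a},\bm{p}_2+\bm{a})$ term is only known to be $\geq 0$ when $E^{\bm{p}_2+\bm{a}}w_1^{\otimes(\ell-1)}\neq 0$, which is not automatic; and the case split on $p_{11}$ versus $p_{14}+1$ combined with Lemma~\ref{Lem:nonnegativity} does not obviously close (e.g.\ when $p_{11}>p_{14}+1$ and $p_{24}>p_{25}$ simultaneously, the bounds you cite leave a residual $-p_{24}+p_{25}+p_{11}-p_{14}-1$ which can be negative). So while the bookkeeping might eventually be salvageable, the paper's use of the unshifted identity together with Lemma~\ref{Lem:properties_of_W^1} is both the missing ingredient and a genuine simplification.
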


\begin{proof}
 We proceed by the induction on $\ell$.
 The assertion for the base case of $\ell = 1$ follows from Lemma~\ref{Lem:min_max} (2).

 Assume~\eqref{eq:final} for a fixed $\ell$ and any $\bm{p}$.
 Our task is to prove this with $\ell$ replaced by $\ell+1$. 
 We have
 \begin{align*}
  (e_2E^{\bm{p}}&w_1^{\otimes(\ell+1)},E^{\bm{p}-\bm{a}}w_1^{\otimes (\ell+1)})\\
   &=\sum_{\bm{p}_1+\bm{p}_2 = \bm{p}}
   q^{m(\bm{p}_1,\bm{p}_2)}\Big(q^{m(\bm{p}_1-\bm{a},\bm{p}_2)+x(\bm{p}_2)}(e_2E^{\bm{p}_1}w_1,E^{\bm{p}_1-\bm{a}}w_1)
  \normsq{E^{\bm{p}_2}w_1^{\otimes \ell}}\\
   & \hspace{150pt}+q^{m(\bm{p}_1,\bm{p}_2-\bm{a})}\normsq{E^{\bm{p}_1}w_1}(e_2E^{\bm{p}_2}w_1^{\otimes \ell}, E^{\bm{p}_2-\bm{a}}w_1^{\otimes \ell})\Big).
 \end{align*}
 By the induction hypothesis and Lemma~\ref{Lem:nonnegativity0},
 \[ q^{m(\bm{p}_1,\bm{p}_2)+m(\bm{p}_1,\bm{p}_2-\bm{a})}\normsq{E^{\bm{p}_1}w_1}(e_2E^{\bm{p}_2}w_1^{\otimes \ell}, E^{\bm{p}_2-\bm{a}}w_1^{\otimes \ell}) \in A
 \]
 holds.
 On the other hand, $E^{\bm{p}_2}w_1^{\otimes \ell}\neq 0$ implies $p_{21}\geq p_{24}$ by Lemma~\ref{Lem:properties_of_W^1}.
 Since 
 \[ m(\bm{p}_1,\bm{p}_2)+x(\bm{p}_2)= m(\bm{p}_1-\bm{a},\bm{p}_2)+p_{21}-p_{24}
 \]
 by Lemma~\ref{Lem:explixit_m}, it also follows from the induction hypothesis that
 \[ q^{m(\bm{p}_1,\bm{p}_2)+m(\bm{p}_1-\bm{a},\bm{p}_2)+x(\bm{p}_2)}
    (e_2E^{\bm{p}_1}w_1,E^{\bm{p}_1-\bm{a}}w_1)
  \normsq{E^{\bm{p}_2}w_1^{\otimes \ell}} \in A.
 \]
 The proof is complete.
\end{proof}

\appendix
\section{}\label{Appendix}

The goal of this appendix is to show the following.

\begin{PropA}\label{Prop:A}
 Let $\ell \in \Z_{>0}$. 
 There exists an element $c_\ell \in \pm 1+q_sA$ such that
 \[ e_2E^{\bm{p}}w_\ell = c_\ell E^{\bm{p}-\bm{a}+\bm{\gee}_4}f_2w_\ell + [p_4-p_5+1]E^{\bm{p}+\bm{\gee}_4}w_\ell
 \]
 for any $\bm{p} \in \Z_{\geq 0}^5$.
\end{PropA}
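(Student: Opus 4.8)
The plan is to prove \eqref{eq:the_relation} as a \emph{formal} consequence of the annihilation relations that $w_\ell$ shares with its relatives, namely $e_iw_\ell=0$ ($i\in I_0$), $f_iw_\ell=0$ ($i\neq 2$) and $\wt_{P_\cl}(w_\ell)=\ell\varpi_2$ (Proposition~\ref{Prop:properties_of_KR}), using only the commutation formulas \eqref{eq:commutation2}--\eqref{eq:commutation4}, and to pin down the scalar $c_\ell$ only afterwards. First I would note that all three vectors in \eqref{eq:the_relation} lie in a single $P_\cl$-weight space, so it is a genuine homogeneous identity. Then I would apply \eqref{eq:commutation2} (with $i=2$, $j=1$) to the leading factor $e_2e_1^{(p_5)}e_2^{(p_4)}$ of $e_2E^{\bm{p}}$. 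In the range $p_5\le p_4$ the sum collapses to exactly two surviving terms: the term indexed by $m=p_4-p_5$, with coefficient $\left[\begin{smallmatrix}p_4-p_5+1\\1\end{smallmatrix}\right]_2=[p_4-p_5+1]$, equals $e_1^{(p_5)}e_2^{(p_4+1)}E_{\bm{j}}^{(p_3)}E_{\bm{i}}^{(p_2)}e_{10}^{(p_1)}w_\ell=[p_4-p_5+1]E^{\bm{p}+\bm{\gee}_4}w_\ell$, accounting for the second summand, while the term indexed by $m=p_4-p_5+1$ (coefficient $1$) equals $e_1^{(p_5-1)}e_2^{(p_4+1)}\cdot e_1E_{\bm{j}}^{(p_3)}E_{\bm{i}}^{(p_2)}e_{10}^{(p_1)}w_\ell$. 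Since the front factor $e_1^{(p_5-1)}e_2^{(p_4+1)}$ also appears in $E^{\bm{p}-\bm{a}+\bm{\gee}_4}f_2w_\ell$, the whole proposition reduces to the single auxiliary identity, which I will call $(\star)$:
\[
 e_1 E_{\bm{j}}^{(p_3)} E_{\bm{i}}^{(p_2)} e_{10}^{(p_1)} w_\ell
 = c_\ell\, E_{\bm{j}}^{(p_3-1)} E_{\bm{i}}^{(p_2-1)} e_{10}^{(p_1-1)} f_2 w_\ell .
\]
The boundary value $p_5=p_4+1$ (one surviving term, and $[p_4-p_5+1]=[0]=0$), the complementary range $p_5\ge p_4+2$ where \eqref{eq:commutation2} does not apply, and the degenerate cases $p_1p_2p_3=0$ where the right-hand side of $(\star)$ vanishes, are all treated separately by showing that both sides of \eqref{eq:the_relation} reduce to the same (possibly zero) expression.

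To establish $(\star)$ I would push $e_1$ to the right. By Lemma~\ref{Lem:fundamental_properties}~(1), the subsequences $\bm{i}[\ell_{\bm{i}},1]$ and $\bm{j}[\ell_{\bm{j}},1]$ consist of nodes in $I_0\setminus\{1,2\}$, all non-adjacent to $1$, so $e_1$ commutes with $E_{\bm{j}}^{(p_3)}$ and $E_{\bm{i}}^{(p_2)}$ except past their rightmost factors $e_2^{(p_3)}$ and $e_2^{(p_2)}$ (recall $j_0=i_0=2$). Hence $e_1$ meets only the two $e_2$-blocks and the block $e_{10}^{(p_1)}$. Applying \eqref{eq:commutation2}--\eqref{eq:commutation3} at each $e_2$-block and \eqref{eq:commutation4} at $e_1^{(p_1)}e_0^{(p_1)}$ produces a short sum of raising monomials applied to $w_\ell$; using $e_iw_\ell=0$ and $f_1w_\ell=f_0w_\ell=0$ together with a weight count, all but one of these are killed, and the surviving monomial is, via an $A_2$-type straightening among nodes $\{1,2\}$, exactly a raising monomial applied to $f_2w_\ell$. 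This yields the right-hand side of $(\star)$ with some coefficient; applying the stripped front factor $e_1^{(p_5-1)}e_2^{(p_4+1)}$ to both sides recovers \eqref{eq:the_relation}.

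It remains to determine $c_\ell$ and to verify $c_\ell\in\pm1+q_sA$. Because the entire derivation above uses only the relations common to $w_\ell$, it holds verbatim for the vector $v_{\ell\gL_2}\otimes v_{-3\ell\gL_0}\in V(\ell\gL_2)\otimes V(-3\ell\gL_0)$, which satisfies the same annihilation relations and has $\cl$-weight $\ell\varpi_2$, and for which $f_2(v_{\ell\gL_2}\otimes v_{-3\ell\gL_0})=f_2v_{\ell\gL_2}\otimes v_{-3\ell\gL_0}$. In that setting Lemma~\ref{Lem:belonging_to_B} and \eqref{eq:gl.base_of_hwlw} show that the vectors appearing in $(\star)$ are, up to sign, elements of the global basis $\bB(\ell\gL_2,-3\ell\gL_0)$ or zero, so when nonzero they are almost orthonormal with respect to the polarization by Proposition~\ref{Prop:Nakajima_prepolarization}~(3),(4). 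Pairing $(\star)$ against a fixed such basis element therefore forces the a priori $\bm{p}$-dependent coefficient to be independent of $\bm{p}$ and to lie in $\pm1+q_sA$; evaluating at the convenient point $\bm{p}=\bm{a}$, where $E^{\bm{\gee}_4}f_2$ acts as $e_2f_2=[\ell]\,\mathrm{id}$ on weight $\ell\varpi_2$, gives a closed expression for $c_\ell$ confirming this.

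The hard part will be the combinatorial control in $(\star)$: tracking the $q$-power exponents through the iterated commutations and verifying that, after invoking the annihilation relations, every unwanted raising monomial vanishes on $w_\ell$ so that precisely one $f_2$-term survives. Pinning down $c_\ell$ modulo $q_sA$ by a direct coefficient chase would be delicate, and the global-basis comparison in $V(\ell\gL_2)\otimes V(-3\ell\gL_0)$ is what makes this last step clean rather than a brute-force computation.
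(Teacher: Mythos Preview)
Your first reduction---applying \eqref{eq:commutation2} to $e_2e_1^{(p_5)}e_2^{(p_4)}$ and isolating the auxiliary identity $(\star)$---is exactly what the paper does, and your formulation of $(\star)$ is the paper's Lemma~\ref{LemA:hard_formula} combined with the scalar relation $E^{\bm{a}}w_\ell=c_\ell f_2w_\ell$.

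The gap is in your proof of $(\star)$. You write $E_{\bm{j}}^{(p_3)}=[\text{front}]\cdot e_2^{(p_3)}$ with the front a product of $e_{j_k}^{(c_\fg p_3)}$ for $k\ge 1$, push $e_1$ past the front (which is fine), and then use $e_1e_2^{(p_3)}=e_2^{(p_3-1)}T_1(e_2)+q^{-p_3}e_2^{(p_3)}e_1$. But the resulting expression $[\text{front}]\cdot e_2^{(p_3-1)}\cdot(\ldots)$ is \emph{not} of the form $E_{\bm{j}}^{(p_3-1)}\cdot(\ldots)$: the exponents in the front are still $c_\fg p_3$, not $c_\fg(p_3-1)$. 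There is no elementary commutation that repairs this mismatch, and the same obstruction recurs at $E_{\bm{i}}^{(p_2)}$. The paper resolves this by invoking Lusztig's braid group action: on vectors annihilated by the appropriate $e_i$'s one has $E_{\bm{j}}^{(p)}v=T_{\bm{j}_0}(e_2^{(p)})v$ and $E_{\bm{i}}^{(p)}v=T_{\bm{i}_0}(e_2^{(p)})v$ (Lemma~\ref{LemA:many}~\ref{en1-1},\ref{en1-5}), so $E_{\bm{j}}^{(p)}$ is effectively a single $T$-twisted divided power rather than a long product, and the commutation of $e_1$ past it is clean (via $T_{\bm{j}_0}(e_1)=e_1$). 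The thirteen identities of Lemma~\ref{LemA:many} are not decoration; they are what make the ``one surviving term'' phenomenon actually happen.

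Your treatment of $c_\ell$ is also more roundabout than necessary, and the global-basis claims you invoke are not established: Lemma~\ref{Lem:belonging_to_B} covers $E^{\bm{p}}v_{-\ell\gL_0}$ but not $e_1E_{\bm{j}}^{(p_3)}E_{\bm{i}}^{(p_2)}e_{10}^{(p_1)}$ applied to the tensor vector, nor expressions involving $f_2v_{\ell\gL_2}$ on the left tensor factor. The paper instead proves Lemma~\ref{LemA:hard_formula} with coefficient exactly $1$ and right-hand side ending in $E^{\bm{a}}w_\ell$; then the identity $E^{\bm{a}}w_\ell=c_\ell f_2w_\ell$ follows simply because $\dim W^\ell_{\ell\varpi_2-\ga_2}=1$ (a consequence of (C1), already proved), and $c_\ell\in\pm1+q_sA$ because both $\normsq{E^{\bm{a}}w_\ell}$ and $\normsq{f_2w_\ell}$ lie in $1+q_sA$ by Propositions~\ref{Prop:equal_of_prepolarization} and~\ref{Prop:equality} and a direct computation.
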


A fundamental tool for the proof is the braid group action on $U_q(\fg)$ introduced by Lusztig.
For $i \in I$, let $T_i = T_{i,1}''$ be the algebra automorphism of $U_q(\fg)$ in \cite[Chapter~37]{MR1227098}.
For a sequence $i_p\cdots i_1$ of elements of $I$, write $T_{i_p\cdots i_1} = T_{i_p}\cdots T_{i_1}$.
Here we collect the properties of $T_i$; for the proofs, see~\cite{lusztig1990finite,MR1227098}.

\begin{LemA}\label{LemA:2}\
 \begin{enumerate}
  \renewcommand{\labelenumi}{(\alph{enumi})}
  \renewcommand{\theenumi}{(\alph{enumi})}
  \setlength{\parskip}{1pt} 
  \setlength{\itemsep}{1pt} 
  \setlength{\leftmargin}{2pt}
 \item For $i \in I$ and $\ga \in Q$, we have $T_iU_q(\fg)_\ga = U_q(\fg)_{s_i(\ga)}$.\label{en0-1}
 \item\label{en0-2} For $i,j \in I$ and $p \in \Z_{> 0}$, we have
  \[ T_i(e_j^{(p)}) = \sum_{k=0}^{-c_{ij}p} (-q_i)^{-k}e_i^{(-c_{ij}p-k)}e_j^{(p)}e_i^{(k)}.
  \]
 \item\label{en0-3} For $i, j\in I$, we have $\underbrace{T_iT_j\cdots}_{c_{ij}c_{ji}+2} = \underbrace{T_jT_i\cdots}_{c_{ij}c_{ji}+2}$.
 \item\label{en0-4} If $i_p\cdots i_1$ is a reduced word, then $T_{i_p\cdots i_{2}}(e_{i_1}) \in U_q(\fn_+)$.
  Moreover, if we further assume that $s_{i_p}\cdots s_{i_2}(\ga_{i_1}) = \ga_j$ for some $j \in I$,
  then we have $T_{i_p\cdots i_{2}}(e_{i_1})=e_j$.
 \item\label{en0-5} Let $i,j\in I$ be such that $c_{ij}=c_{ji}=-1$ and $p \in \Z_{>0}$. Then we have
  \[ e_ie_j^{(p)} = e_j^{(p-1)}T_i(e_j)+q^{-p}e_j^{(p)}e_i\ \ \ \text{and} \ \ \ T_i(e_j)e_j = qe_jT_i(e_j).
  \]
 \item Let $M$ be an integrable $U_q(\fg)$-module, and $i\in I$. There is a $\Q(q_s)$-linear automorphism $\wti{T}_i$
  (denoted by $T_{i,1}''$ in~\cite{MR1227098}) satisfying $\wti{T}_i(Xm) = T_i(X)\wti{T}_i(m)$ for $X \in U_q(\fg)$ and $m \in M$.
  Moreover if $m \in M_\gl$ for $\gl \in D^{-1}P$ and $f_im=0$, we have 
  \[ \wti{T}_i(e_i^{(p)}m)=(-1)^pq_i^{p(-\gl_i -p+1)}e_i^{(-\gl_i -p)}m
  \]
  for $p \in \Z_{\geq 0}$, where we set $\gl_i = \langle h_i,\gl\rangle$.\label{en0-6}
 \end{enumerate}
\end{LemA}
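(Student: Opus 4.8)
The plan is to recognize Lemma~\ref{LemA:2} as a compilation of standard facts about Lusztig's braid symmetry $T_i=T_{i,1}''$ on $U_q(\fg)$, so that each part is either a direct consequence of the defining action of $T_i$ on the generators or a result quoted from~\cite{MR1227098} (Chapters~37 and~39) and~\cite{lusztig1990finite}. Throughout I would first pin down the normalization: the convention $T_{i,1}''$ is the one acting on $e_i,f_i,q^h$ by the formulas of~\cite[\S37.1]{MR1227098}, and every displayed identity below must be matched against that convention to fix the signs and powers of $q_i$.

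For part~\ref{en0-1} I would argue multiplicatively: since $T_i$ is an algebra automorphism, it suffices to check on the generators that $T_i(e_j)$, $T_i(f_j)$, $T_i(q^h)$ have weights $s_i(\ga_j)$, $-s_i(\ga_j)$, $0=s_i(0)$ respectively, which is immediate from their explicit images, and the grading statement then follows because $s_i$ is additive on $Q$. Part~\ref{en0-2} for $p=1$ is essentially the definition of $T_{i,1}''(e_j)$, namely
\[
 T_i(e_j)=\sum_{k=0}^{-c_{ij}}(-q_i)^{-k}e_i^{(-c_{ij}-k)}e_je_i^{(k)},
\]
and the divided-power version is obtained by induction on $p$, using the quantum Serre relation together with the $q$-Vandermonde identity for the coefficients $\begin{bmatrix} m\\ n\end{bmatrix}_i$. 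Part~\ref{en0-4} is the construction of PBW root vectors: for a reduced word the weight $s_{i_p}\cdots s_{i_2}(\ga_{i_1})$ is a positive root, and a standard induction (using~\ref{en0-1} to track weights) shows $T_{i_p\cdots i_2}(e_{i_1})\in U_q(\fn_+)$; when this weight equals a simple root $\ga_j$, the weight space $U_q(\fn_+)_{\ga_j}$ is one-dimensional and spanned by $e_j$, and comparing the suitably normalized, bar-invariant generator forces the value $e_j$.

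The remaining parts rely on slightly more input. Part~\ref{en0-3} is the braid relation, which I would simply quote as~\cite[Theorem~39.4.3]{MR1227098}; this is the deepest ingredient and reproving it is outside the scope here. For part~\ref{en0-5}, where $c_{ij}=c_{ji}=-1$ forces $q_i=q_j=q$, one reads off $T_i(e_j)=e_ie_j-q^{-1}e_je_i$ from~\ref{en0-2}, and both identities reduce to rank-two computations inside the subalgebra generated by $e_i,e_j$: the relation $T_i(e_j)e_j=qe_jT_i(e_j)$ is a one-line consequence of the quantum Serre relation between $e_i$ and $e_j$, and $e_ie_j^{(p)}=e_j^{(p-1)}T_i(e_j)+q^{-p}e_j^{(p)}e_i$ follows by induction on $p$ from the same relation. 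Part~\ref{en0-6} is the module-level symmetry: existence of $\wti T_i$ and the intertwining property $\wti T_i(Xm)=T_i(X)\wti T_i(m)$ are quoted from~\cite[Chapter~37]{MR1227098}, while the explicit formula on $e_i^{(p)}m$ with $f_im=0$ is a direct $U_{q_i}(\f{sl}_2)$ computation, since the vectors $e_i^{(p)}m$ form a single lowest-weight string on which $\wti T_i$ acts as the standard reflection.

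The main obstacle is not conceptual but bookkeeping: every formula is convention-sensitive, so the real work is to verify that the $T_{i,1}''$-normalization of~\cite{MR1227098} produces exactly the signs $(-q_i)^{-k}$ in~\ref{en0-2}, the factor $q^{-p}$ and the coefficient in~\ref{en0-5}, and the sign $(-1)^p$ together with the exponent $p(-\gl_i-p+1)$ in~\ref{en0-6}. I would therefore carry out the cases $p=1,2$ of~\ref{en0-2} and the lowest-weight $\f{sl}_2$-string of~\ref{en0-6} explicitly to lock down these constants, after which the general statements follow by the inductions indicated above.
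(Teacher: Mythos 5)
Your reading is exactly the paper's: the paper offers no proof of this lemma at all, stating only that these properties of $T_i=T_{i,1}''$ are collected from \cite{lusztig1990finite,MR1227098}, so treating each item as either a definitional consequence of the $T_{i,1}''$-normalization or a quotation from those references is the same approach, and your added derivations of (a), (b), (e) and the rank-one string formula in (f) are sound. One caveat on your sketch of the second claim in (d): $T_{i_p\cdots i_2}(e_{i_1})$ is not a priori bar-invariant (the bar involution intertwines $T_{i,1}''$ with $T_{i,-1}''$, not with itself), so after the one-dimensionality of $U_q(\fn_+)_{\ga_j}$ reduces the element to a scalar multiple of $e_j$, pinning that scalar to $1$ genuinely requires the length-induction with rank-two braid computations (or a direct citation, e.g.\ Jantzen, \emph{Lectures on Quantum Groups}, Proposition 8.20) rather than a normalization-by-bar argument.
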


\begin{LemA}\label{LemA:concatenation}
 The word $\bm{j}\bm{i} =(j_{L'}\cdots j_0i_{L}\cdots i_0)$ is reduced.
\end{LemA}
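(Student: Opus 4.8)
The plan is to reduce the claim to a length‑additivity statement for Weyl group elements and then to separate two inversion sets by pairing with $\theta_1$. Reading sequences right to left as throughout the paper, the word $\bm j\bm i$ corresponds to the product $s_{\bm j\bm i}=s_{\bm j}s_{\bm i}$ and consists of $(\ell_{\bm j}+1)+(\ell_{\bm i}+1)$ letters. Since $s_{\bm i}$ and $s_{\bm j}$ are already reduced by Lemma~\ref{Lem:fundamental_properties}~(4), the word $\bm j\bm i$ is reduced if and only if $\ell(s_{\bm j}s_{\bm i})=\ell(s_{\bm j})+\ell(s_{\bm i})$. Writing $\mathrm{Inv}(w)=\{\ga\in R^+\mid w\ga\in -R^+\}$, I would invoke the standard identity $\ell(s_{\bm j}s_{\bm i})=\ell(s_{\bm j})+\ell(s_{\bm i})-2\,|\mathrm{Inv}(s_{\bm j})\cap \mathrm{Inv}(s_{\bm i}^{-1})|$, so that the whole statement comes down to the disjointness $\mathrm{Inv}(s_{\bm j})\cap\mathrm{Inv}(s_{\bm i}^{-1})=\emptyset$.

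To prove disjointness I would use the functional $\ga\mapsto\langle h_\ga,\theta_1\rangle$ as a separating invariant. First I would show that every $\gamma\in\mathrm{Inv}(s_{\bm i}^{-1})$ satisfies $\langle h_\gamma,\theta_1\rangle>0$. Using $i_0=2$ to factor $s_{\bm i}=s_{\bm i[\ell_{\bm i},1]}s_2$, one has $s_{\bm i}^{-1}=s_2\,s_{\bm i[\ell_{\bm i},1]}^{-1}$; hence for $\gamma\in\mathrm{Inv}(s_{\bm i}^{-1})$ either $s_{\bm i[\ell_{\bm i},1]}^{-1}\gamma=\ga_2$, forcing $\gamma=\theta_1$ with $\langle h_{\theta_1},\theta_1\rangle=2>0$, or $s_{\bm i[\ell_{\bm i},1]}^{-1}\gamma\in -R^+$, in which case Lemma~\ref{Lem:fundamental_properties}~(5) gives $\langle h_\gamma,\theta_1\rangle>0$ directly.

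Second I would show that every $\gamma\in\mathrm{Inv}(s_{\bm j})$ satisfies $\langle h_\gamma,\theta_1\rangle=0$. Since $\bm j$ has entries in $J$, the element $s_{\bm j}$ lies in the parabolic subgroup $W_J$, so $\mathrm{Inv}(s_{\bm j})\subseteq R_J^+$. By Lemma~\ref{Lem:fundamental_properties}~(2) we have $\langle h_i,\theta_1\rangle=0$, equivalently $(\ga_i,\theta_1)=0$, for all $i\in J$; hence $(\gamma,\theta_1)=0$ and therefore $\langle h_\gamma,\theta_1\rangle=0$ for every $\gamma\in R_J$. Comparing the two signs forces $\mathrm{Inv}(s_{\bm j})\cap\mathrm{Inv}(s_{\bm i}^{-1})=\emptyset$, which finishes the argument.

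The main obstacle is that this disjointness is invisible to crude support considerations: both $s_{\bm i}$ and $s_{\bm j}$ lie in $W_{I_{01}}$, so both inversion sets are contained in $R_1^+$ and genuinely overlap in their supports. The essential step is to identify the correct separating invariant, namely the pairing with $\theta_1$, and to feed its two halves from Lemma~\ref{Lem:fundamental_properties}~(5) and~(2) respectively; once this is set up, verifying the inversion‑set length identity and the factorization $s_{\bm i}^{-1}=s_2\,s_{\bm i[\ell_{\bm i},1]}^{-1}$ is routine.
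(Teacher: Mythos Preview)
Your proof is correct and rests on the same key observation as the paper's: the pairing $\langle h_\gamma,\theta_1\rangle$ separates the roots coming from $\bm i$ (strictly positive pairing) from those coming from $\bm j$ (zero pairing, since $J\perp\theta_1$ by Lemma~\ref{Lem:fundamental_properties}~(2)). The paper packages this slightly differently: rather than invoking the length--additivity identity via disjoint inversion sets, it verifies directly that $s_{\bm j}s_{\bm i[\ell_{\bm i},k+1]}(\ga_{i_k})\in R_1^+$ for each $0\le k\le\ell_{\bm i}$ by computing
\[
 \big\langle s_{\bm j}s_{\bm i[\ell_{\bm i},k+1]}(h_{i_k}),\theta_1\big\rangle
 =\big\langle h_{i_k},s_{\bm i[k,1]}(\ga_2)\big\rangle>0
\]
(using $s_{\bm j}^{-1}\theta_1=\theta_1$), and then appeals to the already known reducedness of $\bm j$ for the remaining letters. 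Your inversion--set formulation is a clean structural rephrasing of exactly this computation; neither approach gains or loses anything substantive over the other.
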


\begin{proof}
 For any $0\leq k \leq L$, we have
 \[ \langle s_{\bm{j}}s_{\bm{i}[L,k+1]}(h_{i_k}), \theta_1\rangle =\langle h_{i_k}, s_{\bm{i}[k,1]}(\ga_2)\rangle >0,
 \]
 which implies $s_{\bm{j}}s_{\bm{i}[L,k+1]}(\ga_{i_k}) \in R_1^+$.
 This, together with Lemma~\ref{Lem:fundamental_properties} (4), implies the assertion.
\end{proof}

In the sequel, we write $\bm{i}_0=\bm{i}[L,1]$ and $\bm{j}_0 =\bm{j}[L',1]$ for short.

\begin{LemA}\label{LemA:many}
 Let $M$ be an integrable $U_q(\fg)$-module, $v \in M\setminus \{0\}$, and $p \in \Z_{>0}$.
 \begin{enumerate}
  \renewcommand{\labelenumi}{(\arabic{enumi})}
  \renewcommand{\theenumi}{(\arabic{enumi})}
  \setlength{\parskip}{1pt} 
  \setlength{\itemsep}{1pt} 
  \setlength{\leftmargin}{0pt}
  \item\label{en0} If $e_iv=0$ $(i\in \{1\}\sqcup J \setminus \{2\})$ and $e_{1}e_{2}v=0$, then $T_{\bm{j}}(e_1)v=0$.
  \item\label{en1-2} If $e_iv=0$ $(i\in \{1\}\sqcup J \setminus \{2\})$, then $T_{\bm{j}_01}(e_2^{(p)})v=E_{1\bm{j}}^{(p)}v=(-q)^pT_{\bm{j}}(e_1^{(p)})v$.
  \item\label{en1-1} If $e_iv = 0$ $(i \in J \setminus\{2\})$, then $T_{\bm{j}_0}(e_2^{(p)})v = E_{\bm{j}}^{(p)}v$.
  \item\label{en1-4} We have $T_{\bm{i}_01}(e_2)=e_1T_{\bm{i}_0}(e_2)-q^{-1}T_{\bm{i}_0}(e_2)e_1$.
  \item\label{en1-3} If $e_iv=0$ $(i\in I_0)$, then $T_{\bm{i}1}(e_0)v=E_{\bm{i}10}v$.
  \item\label{en2-5} We have $e_1T_{\bm{i}_0}(e_2^{(p)})=T_{\bm{i}_0}(e_2^{(p-1)})T_{\bm{i}_01}(e_2)+
   q^{-p}T_{\bm{i}_0}(e_2^{(p)})e_1$.
  \item\label{en2-1} If $e_1v = 0$, then $e_1T_{\bm{j}_0}(e_2^{(p)})v= T_{\bm{j}_0}(e_2^{(p-1)})T_{\bm{j}_01}(e_2)v$.
  \item\label{en2-2} If $e_iv=0$ $(i \in I_{01})$, then $T_{\bm{i}_0}(e_2)e_1^{(p)}v=e_1^{(p-1)}T_{\bm{i}}(e_1)v$.
  \item\label{en2-3} If $e_iv=0$ $(i \in I_0)$, then $T_{\bm{i}}(e_1)e_0^{(p)}v=e_0^{(p-1)}T_{\bm{i}1}(e_0)v$.
  \item\label{en2-4} We have $T_{\bm{j}}(e_1)e_0^{(p)} = e_0^{(p-1)}T_{\bm{j}1}(e_0)+q^{-p}e_0^{(p)}T_{\bm{j}}(e_1)$.
  \item\label{en3-1} We have $T_{\bm{j}}(e_1)e_1^{(p)}=q^{p}e_1^{(p)}T_{\bm{j}}(e_1).$
  \item\label{en4-1} We have $e_1T_{\bm{j}1}(e_0)T_{\bm{i}1}(e_0)=T_{\bm{j}1}(e_0)T_{\bm{i}1}(e_0)e_1$.
  \item\label{en1-5} If $e_iv=0$ $(i \in I_0 \setminus\{1,2\})$, then $T_{\bm{i}_0}
   (e_2^{(p)})v=E_{\bm{i}}^{(p)}v=a^{p}T_{\bm{j}\bm{i}_0}(e_2^{(p)})v$ with some nonzero $a \in \Q(q_s)$.
 \end{enumerate}
\end{LemA}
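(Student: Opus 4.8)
The plan is to derive all thirteen identities from the elementary properties of Lusztig's braid operators collected in Lemma~\ref{LemA:2}, organising the work around two recurring mechanisms. The first is a \emph{collapse principle}: by the explicit expansion in Lemma~\ref{LemA:2}\ref{en0-2}, if $e_iv=0$ and $c_{ij}=-c_{\fg}$, then every summand of $T_i(e_j^{(p)})v$ except the leading one carries a factor $e_i^{(k)}$ with $k>0$ on the right and so annihilates $v$, leaving $T_i(e_j^{(p)})v=E_i^{(p)}e_j^{(p)}v$ (the $F_4^{(1)}$ doubling being absorbed into the capital $E$). The second is a \emph{transport principle}: each rank-two relation of Lemma~\ref{LemA:2}\ref{en0-5} can be pushed forward by one of $T_{\bm{i}_0},T_{\bm{j}_0},T_{\bm{i}},T_{\bm{j}}$ to produce one of the desired operator identities, once one knows how that operator acts on $e_0$ and $e_1$.

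The first step is therefore to record these actions. Since node $0$ is adjacent only to node $1$, node $1$ only to nodes $0$ and $2$, and $\bm{i}_0,\bm{j}_0$ contain none of $0,1,2$ (Lemma~\ref{Lem:fundamental_properties}~(1)), Lemma~\ref{LemA:2}\ref{en0-2} gives $T_{\bm{i}_0}(e_0)=e_0$, $T_{\bm{i}_0}(e_1)=e_1$, and likewise for $\bm{j}_0$; the full words $\bm{i},\bm{j}$ still fix $e_0$ but no longer $e_1$, because they contain $i_0=j_0=2$. With this, the purely operator-theoretic identities (4), (6), (10), (11), (12) follow by transport: (4) and (6) come from applying $T_{\bm{i}_0}$ to $T_1(e_2)=e_1e_2-q^{-1}e_2e_1$ and to Lemma~\ref{LemA:2}\ref{en0-5} with $(i,j)=(1,2)$, using $T_{\bm{i}_0}(T_1(e_2))=T_{\bm{i}_01}(e_2)$; (10) and (11) arise by applying $T_{\bm{j}}$ and $T_{\bm{j}_0}$ to the two halves of Lemma~\ref{LemA:2}\ref{en0-5} with $(i,j)=(1,0)$ and $(2,1)$; and (12) follows from a short Serre-relation computation together with the reducedness of $\bm{j}\bm{i}$ (Lemma~\ref{LemA:concatenation}).

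Next I would establish the two foundational \emph{collapse identities}, namely (3) and the first equality of (13), which identify $T_{\bm{j}_0}(e_2^{(p)})v$ with $E_{\bm{j}}^{(p)}v$ and $T_{\bm{i}_0}(e_2^{(p)})v$ with $E_{\bm{i}}^{(p)}v$. These are proved by induction along the words $\bm{j}_0$ and $\bm{i}_0$: at each step the collapse principle peels off one factor, and the newly created non-leading terms must be seen to vanish on $v$. This vanishing is exactly the assertion that certain weights fail to lie in $-\ell\gL_0+Q^+$, which is controlled by the root-theoretic Lemma~\ref{Lem:fundamental_properties}~(5), precisely as in the proofs of Lemma~\ref{Lem:1} and Lemma~\ref{Lem:belonging_to_B}. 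The remaining ``modulo $v$'' statements (1), (2), (5), (7), (8), (9) are then obtained by feeding the collapse identities and the transport identities into the appropriate rank-two relation and discarding the terms killed by the hypotheses $e_iv=0$; for instance (1) reduces via (3) to a multiple of $e_1E_{\bm{j}}^{(1)}v$, which vanishes because $e_1$ commutes with each $e_{j_k}$ and $e_1e_2v=0$, while (2) combines the $\bm{j}_0$-collapse with a rank-two comparison of the root vectors $T_1(e_2^{(p)})$ and $T_2(e_1^{(p)})$ attached to $\ga_1+\ga_2$, the factor $(-q)^p$ emerging from the module operator of Lemma~\ref{LemA:2}\ref{en0-6}.

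The hard part will be the second equality of (13), namely $E_{\bm{i}}^{(p)}v=a^pT_{\bm{j}\bm{i}_0}(e_2^{(p)})v$ for some nonzero $a\in\Q(q_s)$. Both $T_{\bm{i}_0}(e_2^{(p)})$ and $T_{\bm{j}\bm{i}_0}(e_2^{(p)})=T_{\bm{j}}\bigl(T_{\bm{i}_0}(e_2^{(p)})\bigr)$ are root vectors of weight $p\theta_1$, since $s_{\bm{j}}$ fixes $\theta_1$ by Lemma~\ref{Lem:fundamental_properties}~(2); the task is to show that the extra $T_{\bm{j}}$ merely rescales $E_{\bm{i}}^{(p)}v$ by a nonzero scalar. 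The plan is to pass to the module automorphism $\wti T_{\bm{j}}$ of Lemma~\ref{LemA:2}\ref{en0-6} and evaluate it on the extremal configuration produced by the collapse, where the explicit formula of that part supplies the factor $a^p$ and guarantees $a\neq 0$, the reducedness of $\bm{j}\bm{i}$ (Lemma~\ref{LemA:concatenation}) ensuring that the intermediate vectors remain in $U_q(\fn_+)v$ so that no cancellation occurs. Throughout, the twisted geometries of $F_4^{(1)}$ (short simple roots, $c_{\fg}=2$, and the divided-power doubling $E_i=e_i^{(2)}$) and of $E_6^{(2)}$ (the reversed double bond, where $c_{ij}=-2$ leaves more than one surviving summand in each collapse) demand parallel but separate bookkeeping, and checking these two cases by hand is where most of the remaining effort will lie.
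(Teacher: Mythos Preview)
Your transport arguments for (4), (6), (10), (11), and (12) are fine and essentially coincide with the paper's. The gap is in the ``collapse'' mechanism underlying (2), (3), (5) and the first equality of (13), and in your plan for the second equality of (13).

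The collapse principle you state is correct for a \emph{single} $T_i$ applied to a divided power $e_j^{(p)}$, but it does not iterate along a word. After the first step you have $T_{\bm{j}[k,1]}(e_2^{(p)})$ as the argument of $T_{j_{k+1}}$; this is no longer a divided power, so formula~\ref{en0-2} of Lemma~\ref{LemA:2} is unavailable, and there is no expansion whose ``non-leading terms'' you can kill. Your appeal to weights lying outside $-\ell\gL_0+Q^+$ is also misplaced: the module $M$ is arbitrary, not a lowest weight module. The paper handles this by first observing that the hypotheses $e_iv=0$ (for the relevant $i$) produce a $U_q(\fn_{+,L})$-module map $V_L(-\ell\gL_2)\to M$ sending the lowest weight vector to $v$, so one may assume $v=v_{-\ell\gL_2}$. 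In that module $\wti T_i(v)=v$ for every $i\neq 2$, whence $T_{\bm{j}_0}(e_2^{(p)})v=\wti T_{\bm{j}_0}(e_2^{(p)}v)$, and part~\ref{en0-6} of Lemma~\ref{LemA:2} evaluates this step by step to $E_{\bm{j}}^{(p)}v$. This reduction is the missing idea; once you have it, items (1), (2), (3), (5) and the first half of (13) follow as you outlined.

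For the second equality of (13) your sketch is too vague to succeed. The paper does not compute $\wti T_{\bm{j}}$ on an extremal vector; instead, after reducing to $v=v_{-\ell\gL_2}$ it observes that both $E_{\bm{i}}^{(p)}v$ and $T_{\bm{j}\bm{i}_0}(e_2^{(p)})v$ lie in the weight space $V_{I_{01}}(-\ell\gL_2)_{-\ell\gL_2+p\theta_1}$, which is one-dimensional by a PBW argument (this is where $s_{\bm{j}}(\theta_1)=\theta_1$ is used). This gives a nonzero scalar $a_p$ for each $p$, and a separate short induction shows $a_p=a_1^p$ via the identity $E_{\bm{i}}^{(1)}E_{\bm{i}}^{(p-1)}v=[p]E_{\bm{i}}^{(p)}v$. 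Neither the one-dimensionality nor the induction on $p$ appears in your plan, and without them there is no reason the proportionality constants should form a geometric sequence.
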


\begin{proof}
 Let us prepare some notation.
 For a subset $L \subseteq I$ and $\gL \in -P^+$, denote by $V_L(\gL)$ the $U_q(\fg_L)$-submodule of $V(\gL)$ generated by $v_{\gL}$, which is isomorphic to the simple 
 lowest weight $U_q(\fg_L)$-module whose lowest weight is the restriction of $\gL$ on $\sum_{i \in L} D^{-1}h_i$.

 Let us prove the assertion~\ref{en0}. 
 Set $J'=\{1\}\sqcup J$, and $\ell= \max\{m \in \Z_{\geq 0}\mid e_2^{(m)}v\neq 0\}$.
 By the well-known fact for the defining relations (see the proof of Lemma~\ref{Lem:coproduct_rule}),
 there is a $U_q(\fn_{+,J'})$-module homomorphism from $V_{J'}(-\ell\gL_2)$ to $M$ mapping $v_{-\ell\gL_2}$ to $v$. 
 Hence we may assume that $v=v_{-\ell\gL_2}$,
 and then the assertion~\ref{en0} is proved as follows: By Lemma~\ref{LemA:2}~\ref{en0-2} and~\ref{en0-6},
 \[ T_{\bm{j}}(e_1)v = T_{\bm{j}_0}(e_2e_1-q^{-1}e_1e_2)v = \widetilde{T}_{\bm{j}_0}\big((e_2e_1-q^{-1}e_1e_2)v\big)=0.
 \]

 Next we shall prove the assertion~\ref{en1-2}.
 As above, we may assume that $v=v_{-\ell\gL_2}$ for some $\ell \in \Z_{>0}$.
 The first equality is proved using Lemma~\ref{LemA:2}~\ref{en0-6} as follows:
 \[ T_{\bm{j}_01}(e_2^{(p)})v=\wti{T}_{\bm{j}_01}(e_2^{(p)}v)=E_{1\bm{j}}^{(p)}v.
 \]
 By Lemma~\ref{LemA:2}~\ref{en0-2}, we have
 \[ T_{\bm{j}}(e_1^{(p)})v=\sum_k(-q)^{-k} T_{\bm{j}_0}(e_2^{(p-k)}e_1^{(p)}e_2^{(k)})v= \sum_k (-q)^{-k}T_{\bm{j}_0}(e_2^{(p-k)})e_1^{(p)}T_{\bm{j}_0}(e_2^{(k)})v,
 \]
 and since $f_1T_{\bm{j}_0}(e_2^{(k)})v=0$, $e_1^{(p)}T_{\bm{j}_0}(e_2^{(k)})v=0$ holds unless $k=p$.
 Now the second equality is proved similarly as above.
 The proofs of the assertions~\ref{en1-1}--\ref{en1-3} are similar.

 The assertion~\ref{en2-5} is proved as follows: By Lemma~\ref{LemA:2}~\ref{en0-2} and~\ref{en0-5}, we have
 \begin{align*}
  e_1T_{\bm{i}_0}(e_2^{(p)}) &= T_{\bm{i}_0}(e_1e_2^{(p)})=T_{\bm{i}_0}(e_2^{(p-1)}T_1(e_2)+q^{-p}e_2^{(p)}e_1)\\
  &=T_{\bm{i}_0}(e_2^{(p-1)})T_{\bm{i}_01}(e_2)+q^{-p}T_{\bm{i}_0}(e_2^{(p)})e_1.
 \end{align*}
 The assertions~\ref{en2-1}--\ref{en2-4} are proved similarly.

 The assertion~\ref{en3-1} is proved as follows: By Lemma~\ref{LemA:2}~\ref{en0-5}, we have
 \[
 T_{\bm{j}}(e_1)e_1^{(p)}=T_{\bm{j}_0}\left(T_2(e_1)e_1^{(p)}\right)=q^{p}T_{\bm{j}_0}\left(e_1^{(p)}T_2(e_1)\right) = q^{p}e_1^{(p)}T_{\bm{j}}(e_1).
 \]

 The assertion~\ref{en4-1} is proved as follows: Since $s_2s_1(\ga_2)=\ga_1$, from  Lemma~\ref{LemA:2}~\ref{en0-4}, it follows that
 \begin{align*} e_1T_{\bm{j}1}(e_0)T_{\bm{i}1}(e_0)&=T_{\bm{j}1}\left(e_2e_0\right)T_{\bm{i}1}(e_0)=T_{\bm{j}1}(e_0)e_1T_{\bm{i}1}(e_0)\\
  &=T_{\bm{j}1}(e_0)T_{\bm{i}1}(e_2e_0)=T_{\bm{j}1}(e_0)T_{\bm{i}1}(e_0)e_1.
 \end{align*}

 Finally let us show the assertion~\ref{en1-5}. 
 As above, setting $\ell = \max\{m \in \Z_{\geq 0}\mid e_2^{(m)}v \neq 0\}$, we may assume that $v = v_{-\ell\gL_2}$,
 and the first equality is proved similarly.
 To prove the other one, note first that $\wt_P\big(T_{\bm{j}\bm{i}_0}(e_2^{(p)})\big) =p\theta_1$, and
 \begin{equation}\label{equation:dimension}
  \dim V_{I_{01}}(-\ell\gL_2)_{-\ell\gL_2+p\theta_1} = \begin{cases} 1 & (0 \leq p \leq \ell), \\ 0 & (p>\ell),\end{cases} \tag{A.1}
 \end{equation}
 which is proved by taking the classical limit and applying the Poincar\'{e}--Birkhoff--Witt theorem.
 Moreover, since
 \[ T_{\bm{j}\bm{i}_0}(e_2^{(p)})v=\wti{T}_{\bm{j}\bm{i}_0}(e_2^{(p)}\wti{T}_{\bm{j}\bm{i}_0}^{-1}(v)) \ \ \text{and} \ \ 
    \langle h_2, \wt_P\wti{T}_{\bm{j}\bm{i}_0}^{-1}(v)\rangle = \langle h_{\theta_1}, -\ell\gL_2\rangle = -\ell,
 \]
 we have $T_{\bm{j}\bm{i}_0}(e_2^{(p)})v \neq 0$ if and only if $0\leq p \leq \ell$.
 Hence for each $1\leq p \leq \ell$ there is some nonzero $a_p \in \Q(q_s)$ such that $a_pT_{\bm{j}\bm{i}_0}(e_2^{(p)})v = E_{\bm{i}}^{(p)}v$,
 and $T_{\bm{j}\bm{i}_0}(e_2^{(p)})v = E_{\bm{i}}^{(p)}v=0$ if $p>\ell$.
 It remains to prove that $a_p = a_1^p$, which we show by the induction on $p$. The case $p=1$ is trivial.
 Assume that $p>1$. 
 By Lemma~\ref{Lem:1} and weight considerations, we see that $e_iE_{\bm{i}}^{(p-1)}v=0$ for $i \in I_{0}\setminus\{1,2\}$,
 and hence it follows from the induction hypothesis that
 \[ T_{\bm{j}\bm{i}_0}(e_2^{(p)})v = a_1^{-p+1}[p]^{-1}T_{\bm{j}\bm{i}_0}(e_2)E_{\bm{i}}^{(p-1)}v=a_1^{-p}[p]^{-1}E_{\bm{i}}^{(1)}E_{\bm{i}}^{(p-1)}v
 \]
 (note that $E_{\bm{i}}^{(p)}\neq \Big(E_{\bm{i}}^{(1)}\Big)^{(p)}$ by our convention~\eqref{eq:epEp}).
 Hence it suffices to show that $E_{\bm{i}}^{(1)}E_{\bm{i}}^{(p-1)}v=[p]E_{\bm{i}}^{(p)}v$.
 It is proved by a direct calculation that
 \[ f_{i_1}^{(c_\fg p)}\cdots f_{i_L}^{(c_\fg p)} E_{\bm{i}}^{(1)}E_{\bm{i}}^{(p-1)}v = e_2e_2^{(p-1)}v=[p] e_2^{(p)}v
    =[p]f_{i_1}^{(c_\fg p)}\cdots f_{i_L}^{(c_\fg p)}E_{\bm{i}}^{(p)}v,
 \]
 which implies $E_{\bm{i}}^{(1)}E_{\bm{i}}^{(p-1)}v=[p]E_{\bm{i}}^{(p)}v$ by~\eqref{equation:dimension}.
 The proof of~\ref{en1-5} is complete.
\end{proof}

\begin{LemA}\label{LemA:hard_formula}
 For any $\ell \in \Z_{>0}$ and $(p_1,p_2,p_3) \in \Z_{\geq 0}^3$, we have
 \[ e_1E_{\bm{j}}^{(p_3)}E_{\bm{i}}^{(p_2)}e_{10}^{(p_1)}w_\ell = E_{\bm{j}}^{(p_3-1)}E_{\bm{i}}^{(p_2-1)}e_{10}^{(p_1-1)}E^{\bm{a}}w_\ell.
 \]
\end{LemA}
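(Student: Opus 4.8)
The plan is to unwind the right-hand side as $E^{\bm a}w_\ell = e_1 E_{\bm j}^{(1)}E_{\bm i}^{(1)}e_{10}^{(1)}w_\ell$ and to prove the identity by commuting the leading $e_1$ rightward through the three blocks $E_{\bm j}^{(p_3)}$, $E_{\bm i}^{(p_2)}$, $e_{10}^{(p_1)}$, lowering each exponent by one and depositing one copy of each factor into the trailing $E^{\bm a}$. The mechanism is Lusztig's braid operators $T_i$ together with the identities collected in Lemma~\ref{LemA:many}: on vectors produced from $w_\ell$, each $E$-block is a braid image of a divided power of $e_2$ (namely $E_{\bm i}^{(p)}=T_{\bm i_0}(e_2^{(p)})$ and $E_{\bm j}^{(p)}=T_{\bm j_0}(e_2^{(p)})$ whenever the acted-on vector is killed by the relevant $e_i$, cf.\ parts \ref{en1-5} and \ref{en1-1}), while the innermost $e_{10}$-block is reached through part \ref{en1-3}.

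Concretely, I would first record the prototype clean computation for the bottom block: for any $v$ with $e_iv=0$ $(i\in I_0)$, parts \ref{en2-2}, \ref{en2-3}, and \ref{en1-3} of Lemma~\ref{LemA:many} combine to give
\[ T_{\bm i_0}(e_2)\,e_1^{(p_1)}e_0^{(p_1)}v = e_1^{(p_1-1)}e_0^{(p_1-1)}E_{\bm i}^{(1)}e_1e_0\,v, \]
which is exactly a single $\bm a$-layer being peeled off. I would then propagate the leading $e_1$ through the $\bm j$- and $\bm i$-blocks using the commutation identities \ref{en2-5} and \ref{en2-1}: each passage of $e_1$ past $T_{\bm j_0}(e_2^{(p_3)})$ (resp.\ $T_{\bm i_0}(e_2^{(p_2)})$) splits off a factor $T_{\bm j_0 1}(e_2)$ (resp.\ $T_{\bm i_0 1}(e_2)$) while lowering the exponent by one, and by part \ref{en1-4} (together with \ref{en1-2}) such a factor again carries an $e_1$ onto the next block, so that the reduction propagates coherently down to the bottom block. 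The relations \ref{en3-1} and \ref{en4-1} are used to slide the emitted factors into their final positions inside $E^{\bm a}$, and an induction on $p_3$ (then on $p_2$ and $p_1$) organizes the reduction into the single clean term.

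The main obstacle is not any individual identity but the accompanying bookkeeping. At each intermediate stage one must verify the $e_i$-annihilation hypotheses demanded by the parts of Lemma~\ref{LemA:many} being invoked; these are checked by weight and positivity arguments in the spirit of Lemma~\ref{Lem:1} and Lemma~\ref{Lem:fundamental_properties}~(5), using that $\langle h_i,\theta_1\rangle=0$ for $i\in J$ and that $\bm i$, $\bm j$ encode reduced words. The second difficulty is that the operator identities \ref{en2-5} and \ref{en2-4} each carry a trailing cross term of the form $q^{-p}(\cdots)e_1$, and one must confirm that these tails, once pushed all the way down to $w_\ell$, recombine so that only the single term on the right-hand side survives, while simultaneously tracking the accumulated powers of $q$. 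This is the lengthy and technical computation alluded to in the introduction and is the reason the argument is relegated to this appendix.
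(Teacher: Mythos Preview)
Your overall strategy---rewrite the blocks as braid images via Lemma~\ref{LemA:many} \ref{en1-1}, \ref{en1-5}, \ref{en1-3} and then push $e_1$ through them using \ref{en2-1}, \ref{en2-5}, \ref{en2-2}--\ref{en2-4}---is the paper's approach, and the $\bm j$-step works just as you indicate: one lands at $E_{\bm j}^{(p_3-1)}\,e_1E_{\bm j}\,w'$ with $w'=E_{\bm i}^{(p_2)}w$. The gap is at the $\bm i$-block. The operator now sitting to the left of $E_{\bm i}^{(p_2)}$ is $e_1E_{\bm j}=-qT_{\bm j}(e_1)$ (by \ref{en1-2}), not a bare $e_1$, so identity \ref{en2-5} cannot be applied directly: the factor $E_{\bm j}$ emitted by the $\bm j$-step is wedged between $e_1$ and $T_{\bm i_0}(e_2^{(p_2)})$. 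The paper's key move, absent from your outline, is the \emph{second} equality of \ref{en1-5}, namely $E_{\bm i}^{(p_2)}w=a^{p_2}T_{\bm j\bm i_0}(e_2^{(p_2)})w$. This lets one write $T_{\bm j}(e_1)E_{\bm i}^{(p_2)}w=a^{p_2}T_{\bm j}\bigl(e_1T_{\bm i_0}(e_2^{(p_2)})\bigr)w$ and apply \ref{en2-5} \emph{inside} the $T_{\bm j}$-conjugate; the resulting tail then lands on $T_{\bm j}(e_1)w$, which vanishes by \ref{en0}. Neither \ref{en0} nor this half of \ref{en1-5} appears in your sketch, and without them the middle stage does not go through.

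Two smaller points. The induction framing is misleading: after the $\bm j$-step the residual expression $E_{\bm j}^{(p_3-1)}e_1E_{\bm j}w'$ is not an instance of the statement with $p_3$ replaced by $p_3-1$, so an induction on $p_3$ does not close. The paper's argument is a direct three-stage computation, not an induction. Second, in the bottom stage the behaviour of the cross term from \ref{en2-4} is the opposite of what you describe: it is the \emph{main} term $e_1^{(p_1-1)}e_0^{(p_1-2)}T_{\bm j1}(e_0)T_{\bm i1}(e_0)w_\ell$ that vanishes (via \ref{en4-1} together with $e_1w_\ell=0$ and \eqref{eq:commutation3}), while the $q^{-p}$-tail is the term that survives and becomes $e_{10}^{(p_1-1)}E^{\bm a}w_\ell$.
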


\begin{proof}
 If $p_1<p_2$, the left-hand side is $0$ by~\eqref{eq:commutation3}, and so is the right-hand side since
 \[ e_{i_1}e_2E^{\bm{a}}w_\ell \in W^\ell_{\ell\varpi_2+ \ga_{i_1}}=0.
 \]
 Hence we may assume that $p_1\geq p_2$.
 Set
 \[ w=e_{10}^{(p_1)}w_\ell, \ \ \text{and} \ \ w'=E_{\bm{i}}^{(p_2)}e_{10}^{(p_1)}w_\ell.
 \]
 We have 
 \[ e_iw'=0 \ \ \text{for} \ i\in \{1\}\sqcup J \setminus \{2\} \ \ \ \text{and} \ \ \ e_iE_{\bm{j}}^{(1)}w'=0 \ \ \text{for} \ i \in J 
 \]
 by Lemma~\ref{Lem:1} and~\eqref{eq:commutation2}, and therefore we have the following;
 \begin{align*}
  e_1E_{\bm{j}}^{(p_3)}E_{\bm{i}}^{(p_2)}e_{10}^{(p_1)}w_\ell&=e_1E_{\bm{j}}^{(p_3)}w'\stackrel{\text{\ref{en1-1}}}{=} e_1T_{\bm{j}_0}(e_2^{(p_3)})w'\\
   &\stackrel{\text{\ref{en2-1}}}{=} 
     T_{\bm{j}_0}(e_2^{(p_3-1)})T_{\bm{j}_01}(e_2)w'\stackrel{\text{\ref{en1-2} \ref{en1-1}}}{=}E_{\bm{j}}^{(p_3-1)}e_1E_{\bm{j}}w',
 \end{align*}
 where a number over an equality indicates which assertion of Lemma~\ref{LemA:many} is used there.
 Since $e_iw = 0$ for $i \in I_0\setminus \{2\}$ and $e_1e_2w=0$, we have the following;
 \begin{align*}
  e_1E_{\bm{j}}w'&\stackrel{\text{\ref{en1-2}}}{=}
  -qT_{\bm{j}}(e_1)E_{\bm{i}}^{(p_2)}w\stackrel{\text{\ref{en1-5}}}{=}-qa^{p_2}T_{\bm{j}}(e_1)T_{\bm{j}\bm{i}_0}(e_2^{(p_2)})w\\
  &\stackrel{\text{\ref{en2-5}}}{=} -qa^{p_2}T_{\bm{j}}\left(T_{\bm{i}_0}(e_2^{(p_2-1)})T_{\bm{i}_01}(e_2)+ q^{-p_2}T_{\bm{i}_0}(e_2^{(p)})e_1\right)w\\
  &\stackrel{\text{\ref{en0} \ref{en1-4}}}{=}-qa^{p_2}T_{\bm{j}\bm{i}_0}(e_2^{(p_2-1)})T_{\bm{j}}(e_1)T_{\bm{j}\bm{i}_0}(e_2)w\\
  &\stackrel{\text{\ref{en1-2} \ref{en1-5}}}{=} E_{\bm{i}}^{(p_2-1)} e_1E_{\bm{j}}^{(1)}E_{\bm{i}}^{(1)}w.
 \end{align*}
 Finally, we have
 \begin{align*}
  e_1E_{\bm{j}}^{(1)}E_{\bm{i}}^{(1)}w&=e_1E_{\bm{j}}^{(1)}E_{\bm{i}}^{(1)}e_{10}^{(p_1)}w_\ell\stackrel{\text{\ref{en1-2} \ref{en1-5}}}{=} -qT_{\bm{j}}(e_1)T_{\bm{i}_0}(e_2)e_{10}^{(p_1)}
   w_\ell\\
  &\stackrel{\text{\ref{en2-2} \ref{en2-3}}}{=} -qT_{\bm{j}}(e_1)e_{10}^{(p_1-1)}T_{\bm{i}1}(e_0)w_\ell\\
  &\stackrel{\text{\ref{en3-1}}}{=} 
  -q^{p_1}e_1^{(p_1-1)}T_{\bm{j}}(e_1)e_0^{(p_1-1)}T_{\bm{i}1}(e_0)w_\ell\\
  &\stackrel{\text{\ref{en2-4}}}{=}  -q^{p_1}e_1^{(p_1-1)}\left(e_0^{(p_1-2)}T_{\bm{j}1}(e_0)+q^{-p_1+1}e_0^{(p_1-1)}T_{\bm{j}}(e_1)\right)T_{\bm{i}1}(e_0)w_\ell\\
  &\stackrel{\text{\ref{en4-1}}}{=} -qe_{10}^{(p_1-1)}T_{\bm{j}}(e_1)T_{\bm{i}1}(e_0)w_\ell\\
  &\stackrel{\text{\ref{en1-2} \ref{en1-3}}}{=} e_{10}^{(p_1-1)}E^{\bm{a}}w_\ell.
 \end{align*}
 The assertion is proved.
\end{proof}

\noindent\textit{Proof of Proposition~\ref{Prop:A}.}\
By~\eqref{eq:commutation2} and Lemma~\ref{LemA:hard_formula}, we have
\begin{align*}
 e_2E^{\bm{p}}w_\ell &= \left(e_1^{(p_5-1)}e_2^{(p_4+1)}e_1E_{\bm{j}}^{(p_3)}E_{\bm{i}}^{(p_2)}e_{10}^{(p_1)}+[p_4-p_5+1]E^{\bm{p}+\bm{\gee}_4}\right)w_\ell\\
 &= \left(E^{\bm{p}-\bm{a}+\bm{\gee}_4}E^{\bm{a}}+[p_4-p_5+1]E^{\bm{p}+\bm{\gee}_4}\right)w_\ell.
\end{align*}
Hence it suffices to show that $E^{\bm{a}}w_\ell = c_\ell f_2w_\ell$ holds for some $c_\ell \in \pm 1 + q_sA$.
We see from Proposition~\ref{Prop:critical_3} (C1) that $\dim W^{\ell}_{\ell\varpi_2-\ga_2} = 1$,
and hence we have $E^{\bm{a}}w_\ell = c_\ell f_2w_\ell$ for some $c_\ell \in \Q(q_s)$.
Now $c_\ell \in \pm 1 +q_sA$ follows since both $\normsq{E^{\bm{a}}w_\ell}$ and $\normsq{f_2w_\ell}$ belong to $1+q_sA$.
The proof is complete. \qed

\newcommand{\etalchar}[1]{$^{#1}$}
\def\cprime{$'$} \def\cprime{$'$} \def\cprime{$'$} \def\cprime{$'$}


\begin{thebibliography}{KKM{\etalchar{+}}92}

\bibitem[BS20]{biswal2019existence}
R.\ Biswal and T.~Scrimshaw.
\newblock Existence of {K}irillov--{R}eshetikhin crystals for multiplicity free
  nodes.
\newblock {\em Publ. Res. Inst. Math. Sci.}, 56(4):761-778, 2020.

\bibitem[CP94]{CPbook}
V.~Chari and A.~Pressley.
\newblock {\em A guide to quantum groups},
\newblock Cambridge University Press, Cambridge, 1994.

\bibitem[DFK08]{MR2428305}
P.~Di~Francesco and R.~Kedem.
\newblock Proof of the combinatorial {K}irillov-{R}eshetikhin conjecture.
\newblock {\em Int. Math. Res. Not. IMRN}, (7):Art. ID rnn006, 57, 2008.

\bibitem[Her06]{MR2254805}
D.~Hernandez.
\newblock The {K}irillov-{R}eshetikhin conjecture and solutions of
  {$T$}-systems.
\newblock {\em J. Reine Angew. Math.}, 596:63--87, 2006.

\bibitem[Her10]{Her10}
David Hernandez.
\newblock Kirillov--{R}eshetikhin conjecture: the general case.
\newblock {\em Int. Math. Res. Not. IMRN}, (1):149--193, 2010.

\bibitem[HKO{\etalchar{+}}99]{MR1745263}
G.~Hatayama, A.~Kuniba, M.~Okado, T.~Takagi, and Y.~Yamada.
\newblock Remarks on fermionic formula.
\newblock In {\em Recent developments in quantum affine algebras and related
  topics ({R}aleigh, {NC}, 1998)}, volume 248 of {\em Contemp. Math.}, pages
  243--291. Amer. Math. Soc., Providence, RI, 1999.

\bibitem[HKO{\etalchar{+}}02]{MR1903978}
G.~Hatayama, A.~Kuniba, M.~Okado, T.~Takagi, and Z.~Tsuboi.
\newblock Paths, crystals and fermionic formulae.
\newblock In {\em Math{P}hys odyssey, 2001}, volume~23 of {\em Prog. Math.
  Phys.}, pages 205--272. Birkh\"auser Boston, Boston, MA, 2002.

\bibitem[Kac90]{MR1104219}
V.G. Kac.
\newblock {\em Infinite-dimensional {L}ie algebras}.
\newblock Cambridge University Press, Cambridge, third edition, 1990.

\bibitem[Kas91]{MR1115118}
M.~Kashiwara.
\newblock On crystal bases of the {$Q$}-analogue of universal enveloping
  algebras.
\newblock {\em Duke Math. J.}, 63(2):465--516, 1991.

\bibitem[Kas94]{MR1262212}
M.~Kashiwara.
\newblock Crystal bases of modified quantized enveloping algebra.
\newblock {\em Duke Math. J.}, 73(2):383--413, 1994.

\bibitem[Kas02]{MR1890649}
M.~Kashiwara.
\newblock On level-zero representations of quantized affine algebras.
\newblock {\em Duke Math. J.}, 112(1):117--175, 2002.

\bibitem[KKM{\etalchar{+}}92]{MR1194953}
S.-J. Kang, M.~Kashiwara, K.~C. Misra, T.~Miwa, T.~Nakashima, and
  A.~Nakayashiki.
\newblock Perfect crystals of quantum affine {L}ie algebras.
\newblock {\em Duke Math. J.}, 68(3):499--607, 1992.

\bibitem[Kle98]{kleber1998finite}
M.~Kleber.
\newblock Finite dimensional representations of quantum affine algebras.
\newblock ProQest LLC, Ann Arbor, MI, 1998.
\newblock Thesis (Ph.D.)--University of California, Berkeley.

\bibitem[Lus90]{lusztig1990finite}
G~Lusztig.
\newblock Finite dimensional hopf algebras arising from quantized universal
  enveloping algebras.
\newblock {\em J. Amer. Math. Soc.}, 3(1):257--296, 1990.

\bibitem[Lus92]{MR1180036}
G.~Lusztig.
\newblock Canonical bases in tensor products.
\newblock {\em Proc. Nat. Acad. Sci. U.S.A.}, 89(17):8177--8179, 1992.

\bibitem[Lus93]{MR1227098}
G.~Lusztig.
\newblock {\em Introduction to quantum groups}, volume 110 of {\em Progress in
  Mathematics}.
\newblock Birkh\"auser Boston Inc., Boston, MA, 1993.

\bibitem[Nak03]{MR1993360}
H.~Nakajima.
\newblock {$t$}-analogs of {$q$}-characters of {K}irillov-{R}eshetikhin modules
  of quantum affine algebras.
\newblock {\em Represent. Theory}, 7:259--274 (electronic), 2003.

\bibitem[Nak04]{MR2074599}
H.~Nakajima.
\newblock Extremal weight modules of quantum affine algebras.
\newblock In {\em Representation theory of algebraic groups and quantum
  groups}, volume~40 of {\em Adv. Stud. Pure Math.}, pages 343--369. Math. Soc.
  Japan, Tokyo, 2004.

\bibitem[Nao18]{naoi2018existence}
K.~Naoi.
\newblock Existence of {K}irillov--{R}eshetikhin crystals of type ${G}_2^{(1)}$
  and ${D}_4^{(3)}$.
\newblock {\em J. Algebra}, 512:47--65, 2018.

\bibitem[OS08]{MR2403558}
M.~Okado and A.~Schilling.
\newblock Existence of {K}irillov-{R}eshetikhin crystals for nonexceptional
  types.
\newblock {\em Represent. Theory}, 12:186--207, 2008.

\bibitem[Sage19]{sage}
The Sage Developers.
\newblock {\em {S}age {M}athematics {S}oftware ({V}ersion 8.8)}.
\newblock The Sage Development Team, 2019.
\newblock \url{http://www.sagemath.org}.

\bibitem[Scr20]{scrimshaw2017uniform}
T.~Scrimshaw.
\newblock Uniform description of the rigged configuration bijection.
\newblock {\em Selecta Math. (N.S.)}, 26(3):article 42, 2020.

\end{thebibliography}

\end{document}